\documentclass[leqno,11pt]{amsart}

\usepackage{amssymb, amsmath}
\usepackage{amssymb,amsfonts}
\usepackage{amsmath,latexsym}
\usepackage{pdfsync}
\usepackage{bbm}
\usepackage{xcolor}

\textheight 659pt
\textwidth  444pt
\oddsidemargin  -1mm
\evensidemargin -1mm
\topmargin      -8mm
\frenchspacing
\pagestyle{plain}


\def\inte#1{
\displaystyle\mathop{#1\kern0pt}^\circ }


\let\pa=\partial
\let\al=\alpha

\let\d=\delta
\let\e=\varepsilon

\let\r=\rho

\let\vf=\varphi
\let\p=\psi

\let\D=\Delta

\let\wt=\widetilde


\def\cC{{\mathcal C}}

\def\cE{{\mathcal E}}

\def\ve{\varepsilon}

\def\cS{{\mathcal S}}
\def\cT{{\mathcal T}}

\def\fg{\frak{ g}}

\def\pa{\partial}
\def\grad{\nabla}
\def\Ta{{\cT}}
\def\wr{w_{\rm R}}
\def\wi{w_{\rm I}}
\def\fw{\frak{w}}


\def\h{{\rm h}}

\def\v{{\rm v}}
\def\virgp{\raise 2pt\hbox{,}}
\def\cdotpv{\raise 2pt\hbox{;}}

\def\eqdefa{\buildrel\hbox{\footnotesize def}\over =}

\def\C{\mathop{\mathbb C\kern 0pt}\nolimits}
\def\DD{\mathop{\mathbb D\kern 0pt}\nolimits}
\def\EE{\mathop{{\mathbb E \kern 0pt}}\nolimits}
\def\K{\mathop{\mathbb K\kern 0pt}\nolimits}
\def\N{\mathop{\mathbb N\kern 0pt}\nolimits}
\def\Q{\mathop{\mathbb Q\kern 0pt}\nolimits}
\def\R{\mathop{\mathbb R\kern 0pt}\nolimits}
\def\SS{\mathop{\mathbb S\kern 0pt}\nolimits}
\def\ZZ{\mathop{\mathbb Z\kern 0pt}\nolimits}
\def\TT{\mathop{\mathbb T\kern 0pt}\nolimits}
\def\P{\mathop{\mathbb P\kern 0pt}\nolimits}

\def\dive{\mathop{\rm div}\nolimits}


\def\no{\noindent}
\def\na{\nabla}
\def\p{\partial}

\newcommand{\beq}{\begin{equation}}
\newcommand{\eeq}{\end{equation}}
\newcommand{\ben}{\begin{eqnarray}}
\newcommand{\een}{\end{eqnarray}}
\newcommand{\beno}{\begin{eqnarray*}}
\newcommand{\eeno}{\end{eqnarray*}}
\newcommand{\andf}{\quad\hbox{and}\quad}
\newcommand{\with}{\quad\hbox{with}\quad}
\newtheorem{defi}{Definition}[section]
\newtheorem{thm}{Theorem}[section]
\newtheorem{lem}{Lemma}[section]
\newtheorem{rmk}{Remark}[section]

\newtheorem{prop}{Proposition}[section]
\renewcommand{\theequation}{\thesection.\arabic{equation}}


\newdimen\eqjot \eqjot = 1\jot
\def\openupeq{\openup \the\eqjot}

\def\pofbox#1 #2$#3${\setbox0=\hbox{$#3$}\ht0=0pt\dp0=0pt\wd0=0pt\hskip-#1pt\raise#2pt\box0\hskip#1pt}

\begin{document}

\title[Semiclassical limit of  GP equation with  Dirichlet boundary condition]
{Semiclassical limit of  Gross-Pitaevskii equation with  Dirichlet boundary condition}

\author[G. Gui]{Guilong Gui}
\address[G. Gui]{School of Mathematics, Northwest University, Xi'an 710069, China.} \email{glgui@amss.ac.cn}
\author[P. Zhang]{Ping Zhang} \address[P. Zhang]{Academy of
Mathematics $\&$ Systems Science and  Hua Loo-Keng Key Laboratory of
Mathematics, The Chinese Academy of Sciences, Beijing 100190, China, and School of Mathematical Sciences,
University of Chinese Academy of Sciences, Beijing 100049, China.} \email{zp@amss.ac.cn}

\begin{abstract}
In this paper, we justify the semiclassical limit of Gross-Pitaevskii equation with Dirichlet boundary condition on the 3-D upper space under the assumption that the leading order terms to both initial amplitude and initial phase function are sufficiently small in some high enough Sobolev norms. We remark that the main difficulty of the proof lies in the fact that the boundary layer appears in the leading order terms of the amplitude functions and the gradient of the phase functions to the WKB expansions of the solutions. In particular, we partially solved the open question proposed in \cite{CR2009, PNB2005} concerning the semiclassical limit of Gross-Pitaevskii equation with Dirichlet boundary condition.
\end{abstract}
\date{\today}

\maketitle

\noindent {\sl Keywords\/}: Semiclassical limit, Schr\"odinger equation, Boundary layer, Successive complementary expansion method

\vskip 0.2cm

\noindent {\sl AMS Subject Classification} (2000): 35Q40, 35Q55.

\renewcommand{\theequation}{\thesection.\arabic{equation}}
\setcounter{equation}{0}

\section{Introduction}

We consider here the semiclassical limit of Gross-Pitaevskii equation (GP equation for short) with the Dirichlet boundary condition in the three-dimensional upper space $\R^3_{+}:$
\begin{equation}\label{NLS-0}
\begin{cases}
& i \, \varepsilon \, \partial_t \Psi^{\varepsilon}+\frac{\varepsilon^2}{2} \Delta \Psi^{\varepsilon}-(|\Psi^{\varepsilon}|^2-1)\Psi^{\varepsilon}=0, \quad (t, x) \in \mathbb{R}_{+} \times \R^3_{+},\\
&\Psi^{\varepsilon}|_{z=0}=1, \quad \Psi^{\varepsilon}|_{t=0}=a^{\varepsilon}_{0}\, \exp\left(i\frac{\varphi_{0}^{\varepsilon}}{\varepsilon}\right),
\end{cases}
\end{equation}
where $x=(y,z)\in\R^2_\h\times\R_+,$ $a^{\varepsilon}_{0} \geq 0$ and $\varphi_{0}^{\varepsilon}$ are real-valued functions. We assume that
\beq \label{S1eq1}
\begin{split}
a_0^\e=&a_{0,0}^{{\rm in}}+\sum_{j=0}^{m+2} \e^ja_{j,0}^{{\rm in}}+\e^{m+2}R_{a,0}^{{\e}} \with \lim_{\e\to 0}\|R_{a,0}^{{\e}}\|_{H^{s_0-2m-5}}=0,\\
\vf_0^\e=&\vf_{0,0}^{{\rm in}}+\sum_{j=0}^{m+2} \e^j\vf_{j,0}^{{\rm in}}+\e^{m+2}R_{\vf,0}^{{\e}} \with \lim_{\e\to 0}\|\na R_{\vf,0}^{{\e}}\|_{H^{s_0-2m-5}}=0,
\end{split}
\eeq for some $s_0$ large enough.
 We also impose the following condition at infinity:
\begin{equation}\label{infinity-c-1}
\Psi^{\varepsilon}(t, x) \to e^{\frac{i}{\ve}\bigl(u^{\infty}\cdot x-\frac{t}2|u^{\infty}|^2\bigr)} \quad\mbox{as}\quad
 |x|\rightarrow +\infty.
\end{equation}
In what follows, we assume that the constant vector $u^{\infty}=0$ for simplicity.

The motivation for us to study the problem \eqref{NLS-0} comes from many interesting issues concerning a superfluid passing an obstacle (see
for example \cite{FPR92, JP01, PNB2005}). Classical
 Madelung transform introduces two real variables:  $a^{\varepsilon} \geq 0$ and $\varphi^{\varepsilon}$  so that
\begin{equation}\label{m-t}
\Psi^{\varepsilon} = a^{\varepsilon}\, \exp\Bigl(i\frac{\varphi^{\varepsilon}}{\varepsilon} \Bigr).
\end{equation}
By substituting \eqref{m-t} into \eqref{NLS-0} and separating the real and imaginary parts, we find
\begin{equation}\label{NLS-1}
\begin{cases}
&\partial_t a^{\varepsilon}+\nabla \varphi^{\varepsilon}\cdot \nabla a^{\varepsilon}+\frac{1}{2}a^{\varepsilon}\Delta \varphi^{\varepsilon}=0,
\quad (t, x) \in \mathbb{R}_{+} \times \R^3_{+},\\
&a^{\varepsilon}\bigl(\partial_t \varphi^{\varepsilon}+\frac{1}{2}|\nabla \varphi^{\varepsilon}|^2+(a^{\varepsilon})^2-1\bigr)=\frac{\varepsilon^2}{2}\Delta a^{\varepsilon},
\end{cases}
\end{equation}
with the initial-boundary conditions
\begin{equation}\label{NLS-1-ibc}
a^{\varepsilon}|_{z=0}=1, \ \  \varphi^{\varepsilon}|_{z=0}=0,\ \   a^\e\to 1\ \ \mbox{as}\ \ |x|\to \infty \andf a^{\varepsilon}|_{t=0}=a^{\varepsilon}_{0}, \ \  \varphi^{\varepsilon}|_{t=0}=\varphi^{\varepsilon}_{0}.
\end{equation}

 We denote $\rho^{\varepsilon}\eqdefa (a^{\varepsilon})^2$  and  $\r^\e u^{\varepsilon}\eqdefa \r^\e\nabla\varphi^{\varepsilon}$, which
  corresponds to the quantum density and momentum respectively in quantum mechanics (see \cite{LL85} for instance). This allows to rewrite (\ref{NLS-1}-\ref{NLS-1-ibc}) as the following hydrodynamical system:
\begin{equation}\label{hydro-system-1}
\begin{cases}
&\partial_t \rho^{\varepsilon}+\nabla \cdot(\rho^{\varepsilon}u^{\varepsilon})=0, \\
&\rho^{\varepsilon}\bigl(\partial_t u^{\varepsilon}+u^{\varepsilon}\cdot\nabla u^{\varepsilon}\bigr)+\nabla \, p(\rho^{\varepsilon})=\frac{\varepsilon^2}{2}\rho^{\varepsilon}\nabla\left(\frac{\Delta \sqrt{\rho^{\varepsilon}}}{\sqrt{\rho^{\varepsilon}}}\right),
\end{cases}
\end{equation}
with the initial-boundary conditions
\begin{equation}\label{hydro-system-1-ibc}
\begin{cases}
&\rho^{\varepsilon}|_{z=0}=1, \quad \varphi^{\varepsilon}|_{z=0}=0 \andf \r^\e\to 1\ \ \mbox{as}\ \ |x|\to \infty \\
&\rho^{\varepsilon}|_{t=0}=(a^{\varepsilon}_{0})^2, \quad u^{\varepsilon}|_{t=0}=\nabla\varphi^{\varepsilon}_{0},
\end{cases}
\end{equation}
and the pressure law  $p(\rho^{\varepsilon})=\frac{1}{2}(\rho^{\varepsilon})^2.$

The system \eqref{hydro-system-1} is called quantum compressible Euler system,
and the right hand-side of the $u^\e$ equation in \eqref{hydro-system-1} is called quantum pressure.
As $\varepsilon$ approaches to $0$, the quantum pressure is formally negligible and the system (\ref{hydro-system-1}-\ref{hydro-system-1-ibc})
 approaches to the classical compressible Euler equation
\begin{equation}\label{euler-system-1}
\begin{cases}
&\partial_t \rho +\nabla \cdot(\rho\,u )=0,\\
&\rho \left(\partial_t u +u\cdot\nabla u \right)+\nabla \, p(\r)=0,
\end{cases}
\end{equation}
with the initial-boundary conditions
\begin{equation}\label{euler-system-1-ibc}
 \int_0^{+\infty}u \cdot {\bf n}\,dz=0,\ \ \r\to 1\ \ \mbox{as}\ \ |x|\to \infty \andf \rho|_{t=0}=(a_{0,0}^{\rm{in}})^2, \quad u|_{t=0}=\nabla\varphi_{0,0}^{\rm{in}}.
\end{equation}

The justification of the above formal limit has  attracted the interests by many authors. In the whole space case,
G\'erard \cite{Gerard92} proved the limit with analytical initial data. Grenier \cite{Grenier98} solved the limit
problem before the formation of singularity in the limit system  with initial data in Sobolev spaces. The main
idea in \cite{Grenier98} is to use the symmetrizer of the limit system \eqref{euler-system-1} to get $H^s$ energy
estimates which are uniform in $\e$ for a singularly perturbed system. Nevertheless  this method does not work for the semiclassical limit
of  Schr\"odinger-Poisson equations, as the resulting limit
system is not a symmetric hyperbolic one.
Motivated by the work of Brenier \cite{Brenier00}, where the author proved the local-in time
convergence of the scaled Vlasov-Poisson equations to the incompressible Euler
equations, the second author \cite{Z1} used the Wigner measure approach (see \cite{LP93, Z08}) to study the semiclassical
limit of Schr\"odinger-Poisson equation (see  \cite{Z2} for more general nonlinearity).

In order to solve the semiclassical limit of GP equation in the exterior domain with Neumann boundary condition (which corresponds to
the non-slip boundary condition $u\cdot {\bf n}=0$ for the limit system \eqref{euler-system-1}), where we can not
use Wigner transform, the authors \cite{lz3} simplified
the modulated energy functional in \cite{Z1,Z2} and proved that
\beno
|\Psi^\e|^2-\r \to 0 \quad\mbox{in}\quad L^\infty(]0,T[; L^2) \andf \e{\rm Im}\left(\bar{\Psi}^\e\na\Psi^\e\right)-\r u \to 0
\quad\mbox{in}\quad L^\infty(]0,T[; L^1_{\mbox{loc}}),
\eeno
before the formation of singularity in the limit system. This idea has been used and extended by the authors in \cite{Alazard09a,Serfaty17}.
Interested readers may  check \cite{Alazard09b, CR2009}
for the so-called supercritical geometric optics where they allow $p'(0)=0$ for the pressure function in \eqref{euler-system-1}.
One may also  check the books \cite{C08,Z08} and references therein for more information in this context.

For the problem \eqref{NLS-0}, by comparing  \eqref{hydro-system-1-ibc} with \eqref{euler-system-1-ibc}, we find that the boundary condition $\rho^{\varepsilon}|_{z=0}=1$ in \eqref{hydro-system-1-ibc} does not match the boundary condition for $\rho$ in \eqref{euler-system-1-ibc} at the boundary $\{z=0\},$
where we do not have any  restriction on $\r.$ This results in a strong boundary layer near the boundary $\{z=0\}$.
 In fact, if we formally seek for WKB expansions $\Psi^{\varepsilon} = a^{\varepsilon}\, \exp\left(i\frac{\varphi^{\varepsilon}}{\varepsilon} \right)$ of the form
\begin{equation}\label{WKB-expansion-1}
\begin{cases}
& a^{\varepsilon}(t, x)=\sum_{k=0}^{\infty}\varepsilon^{k}\left(a_{k}(t, y, z)+A_{k}(t, y, Z)\right), \\
& \varphi^{\varepsilon}(t, x)=\sum_{k=0}^{\infty}\varepsilon^{k}\left(\varphi_{k}(t, y, z)+\Phi_{k}(t, y, Z)\right),
\end{cases}
\end{equation}
we shall find below that $\Phi_{0}\equiv 0$, and $a_0$, $a_{k}$, $A_{k}$, $\varphi_{k}$, $\Phi_{k}$ with $k=1,2,3,...$ are non-trivial. In this case, we have $\nabla\,u^{\varepsilon}=\na^2\vf^\e \thicksim O(\frac{1}{\varepsilon})$ and $\nabla\,a^{\varepsilon} \thicksim O(\frac{1}{\varepsilon}).$ In some sense, this phenomena has some similarity with the strong boundary layer caused by vanishing viscosity of incompressible Navier-Stokes system
to Euler system (see \cite{OS99}). Lately there are a lot of progresses on this topic (see for instance \cite{GMM18,GN17} and the references therein).

 On the other hand, for the case of the semiclassical limit of  GP equation with the Neumann boundary condition in $\R^3_+,$ that is, $\partial_z\Psi^{\varepsilon}|_{\partial \R^3_{+}}=0$, Chiron and Rousset \cite{CR2009} justified the validity of the WKB expansions on
 some finite time interval $[0,T].$ We remark that
 in this case,  the boundary layer profiles $A_0=\Phi_0=\Phi_1\equiv 0$ in \eqref{WKB-expansion-1}, which implies $\nabla\,u^{\varepsilon} \thicksim O(1)$ and $\nabla\,a^{\varepsilon} \thicksim O(1)$. This weak boundary layer plays a key role in the study of the  nonlinear stability to the WKB expansions.
 Nevertheless, the semiclassical limit of nonlinear Schr\"odinger equation with Dirichlet boundary condition was left open in \cite{CR2009, PNB2005}.

In this paper, we are going to answer this question proposed in  \cite{CR2009, PNB2005} under the condition that both $a_{0,0}^{\rm{in}}-1$ and $\na\vf_0$ are sufficiently small
in some regular enough Sobolev space.


\medbreak  Let us end this introduction by some notations that will be used in all that follows.

For operators $A,B,$ we denote $[A;B]=AB-BA$ to be the  commutator of $A$ and $B.$ For~$a\lesssim b$, we mean that there is a uniform constant $C,$ which may be different on different lines, such that $a\leq Cb$.  We denote by $\int_{\R^3}f|g\,dx$ the $L^2(\R^3_+)$ inner product of $f$ and $g,$ and $L^p(\R^3_+)$ by $L^p_+.$ Finally we shall always denote $\na_y$ by $\na_\h$ and  ${\cT}\eqdefa\,(\partial_t,\, \nabla_{\h}).$

\renewcommand{\theequation}{\thesection.\arabic{equation}}
\setcounter{equation}{0}

\section{Formal asymptotic analysis}\label{Sect2}

\subsection{Outer expansion}\

Since the boundary layer is  concentrated in the $\e-$ neighborhood of $\{z =0\}$, we call the domain $\frak{O}\eqdefa
\bigl\{\ x=(y, z):\, y \in \mathbb{R}^2, \, z>\e\ \bigr\}$ the outer region, and the associated vertical variable $z$ is called  outer variable. We also denote the inner region by $\frak{I}\eqdefa\left\{\ x=(y, z):\, y \in \mathbb{R}^2, \, 0 \leq z < \varepsilon\ \right\}$, and call $Z=\frac{z}{\varepsilon}$ the inner variable, which makes us to specify the so-called ``inner limit process".

 In the outer region $\frak{O},$ we formally seek  the solution $(a^{\varepsilon}, \varphi^{\varepsilon})$ of \eqref{NLS-1} with the form:
\begin{equation}\label{outer-expansion-1}
\begin{split}
a^{\varepsilon}(t, x)=\sum_{k=0}^{\infty}\varepsilon^{k}a_{k}(t, y, z) \andf
 \varphi^{\varepsilon}(t, x)=\sum_{k=0}^{\infty}\varepsilon^{k}\varphi_{k}(t, y, z).
\end{split}
\end{equation}
By substituting \eqref{outer-expansion-1} into \eqref{NLS-1}, we get
\begin{equation}\label{outer-conti}
\begin{split}
\sum_{k=0}^{\infty}\varepsilon^{k}\partial_t a_{k}+\sum_{k_1, k_2=0}^{\infty}\varepsilon^{k_1+k_2}\bigl(\nabla \varphi_{k_1}\cdot \nabla a_{k_2}+\frac{1}{2}a_{k_1}\Delta \varphi_{k_2}\bigr)=0
\end{split}
\end{equation}
and
\begin{equation}\label{outer-bern}
\begin{split}
\sum_{k_1, k_2=0}^{\infty}\varepsilon^{k_1+k_2}a_{k_1}\partial_t \varphi_{k_2}+\sum_{k_1, k_2,k_3=0}^{\infty}\varepsilon^{k_1+k_2+k_3}&\frac{a_{k_1}}{2}\bigl(\nabla \varphi_{k_2} \cdot \nabla \varphi_{k_3}+2a_{k_2}a_{k_3}\bigr)\\
&\qquad-\sum_{k=0}^{\infty}\varepsilon^{k}a_{k}=\frac{1}{2}\sum_{k=0}^{\infty}\varepsilon^{k+2}\Delta a_{k}.
\end{split}
\end{equation}
Vanishing the coefficients to the zeroth order  of $\e$ in \eqref{outer-conti} and \eqref{outer-bern} gives
\begin{equation}\label{S2eq4}
\begin{cases}
&\partial_t a_{0}+\nabla \varphi_{0}\cdot \nabla a_{0}+\frac{1}{2}a_{0}\Delta \varphi_{0}=0,\quad (t, x) \in \mathbb{R}_{+} \times \R^3_{+},\\
&\partial_t \varphi_{0}+\frac{1}{2} |\nabla \varphi_{0}|^2 +(a_{0}^2-1)=0,
\end{cases}
\end{equation}
where we used the fact that $a_0$ has a positive lower bound which will be justified in Section \ref{sec-hj}.

In view of \eqref{S1eq1} and \eqref{NLS-1-ibc}, we implement the system \eqref{S2eq4} with the initial and  boundary conditions:
\begin{equation}\label{outer-order-0-bc}
a_0|_{t=0}=a_{0,0}^{\rm{in}}, \quad  \vf_0|_{t=0}=\vf_{0,0}^{\rm{in}} \andf \varphi_{0}|_{z=0}=0,\quad a_0\to 1\quad\mbox{as}\quad |x|\to\infty.
\end{equation}
We shall prove the local well-posedness of the above problem with sufficiently smooth initial data in Section \ref{sec-hj}.

Vanishing the coefficients of  $\varepsilon^{1}$ in \eqref{outer-conti} and \eqref{outer-bern} leads to the coupled system
 $(a_1, \, \varphi_1):$
\begin{equation}\label{outer-order-1}
\begin{cases}
&\partial_t a_{1}+\nabla a_{0}\cdot   \nabla\varphi_{1}+\nabla \varphi_{0}\cdot \nabla a_{1}+\frac{1}{2}a_{1}\Delta \varphi_{0}+\frac{1}{2}a_{0}\Delta \varphi_{1}=0, \quad (t, x) \in \mathbb{R}_{+} \times \R^3_{+},\\
& \partial_t \varphi_{1}+ \nabla \varphi_{0} \cdot \nabla \varphi_{1}+2a_0 a_{1}=0,\\
&a_1|_{t=0}=a_{1,0}^{\rm{in}}, \quad  \vf_1|_{t=0}=\vf_{1,0}^{\rm{in}},
\end{cases}
\end{equation}
where $(a_{1,0}^{\rm{in}},\vf_{1,0}^{\rm{in}})$ is given by \eqref{S1eq1}.

In general, by vanishing the coefficients of  $\varepsilon^{k+2}$ in \eqref{outer-conti}  and  \eqref{outer-bern}  for $k=0,\cdots, m,$  and
using  \eqref{S1eq1}, we find
\begin{equation}\label{outer-order-m}
\begin{cases}
&\partial_t a_{k+2}+\nabla a_{0}\cdot\nabla \varphi_{k+2} +\nabla \varphi_{0}\cdot \nabla a_{k+2}+\frac{\Delta \varphi_{0}}{2}a_{k+2}+\frac{a_{0}}{2}\Delta \varphi_{k+2}=f_{k+1}^a\\
& \partial_t \varphi_{k+2}+ \nabla \varphi_{0} \cdot \nabla \varphi_{k+2}+2a_0 a_{k+2}=\frac{1}{2a_0}\left(\Delta a_{k}+g_{k+1}^{\varphi}\right),\\
&a_{k+2}|_{t=0}=a_{k+2,0}^{\rm{in}}, \quad  \vf_{k+2}|_{t=0}=\vf_{k+2,0}^{\rm{in}},
\end{cases}
\end{equation}
where the source terms $(f_{k+1}^a, \, g_{k+1}^{\varphi})$ are determined by
\begin{equation}\label{remainder-inner-1}
\begin{split}
&f_{k+1}^a\eqdefa-\sum_{k_1=1}^{k+1}\bigl(\nabla \varphi_{k_1}\cdot \nabla a_{k+2-k_1}+\frac{1}{2}a_{k_1}\Delta \varphi_{k+2-k_1}\bigr),\\
&g_{k+1}^{\varphi}\eqdefa-\sum_{k_1=1}^{k+1}a_{k_1}\partial_t \varphi_{k+2-k_1}
-\sum_{\substack{
k_1+k_2+k_3=k+2\\
0 \leq k_1,\,k_2,\,k_3\leq k+1}}
 \frac{a_{k_1}}{2}\bigl(\nabla \varphi_{k_2} \cdot \nabla \varphi_{k_3}+2a_{k_2}a_{k_3}\bigr).
\end{split}
\end{equation}
We shall implement
the above systems with boundary conditions  in Subsection \ref{Sub2.2}.

\subsection{Uniformly valid approximation}\label{Sub2.2}

In  all that follows, we shall always denote
\beq \label{S6eq0}
[f]_\e(x)\eqdefa f\bigl(y, \frac{z}\e\bigr).
\eeq

With the  outer solution $(a_j, \, \varphi_j)$ for $j=0, 1, 2, ...$ in hand, we shall use the Successive Complementary Expansion Method (SCEM for short, see \cite{CM07}) to seek a Uniformly Valid Approximate solutions (UVA for short)  to \eqref{NLS-1}.
In order to do so, we take the following ansatz
\begin{equation}\label{uniform-expansion-1}
\begin{split}
& a^{\varepsilon}(t, x)=\sum_{k=0}^{\infty}\varepsilon^{k}\left(a_{k}(t, x)+[A_{k}]_\e(t, x)\right) \andf\\
& \varphi^{\varepsilon}(t, x)=\sum_{k=0}^{\infty}\varepsilon^{k}\left(\varphi_{k}(t,x)+[\Phi_{k}]_\e(t,x)\right).
\end{split}
\end{equation}
We require that both $A_{k}(t, y, Z)$ and $\Phi_{k}(t, y, Z)$ together with  all of their derivatives are rapidly vanishing as $Z \rightarrow +\infty$.

We denote $\overline{g}(t,y)$ the trace of $g(t, y, z)$ on the boundary   $\{z=0\},$ that is,
\begin{equation}\label{bdyvalue-1}
\overline{g}(t, y)\eqdefa g(t, y, z=0).
\end{equation}
Formally let us write that
\beq \label{S2eq8}
g(t,y,z)=\sum_{j=0}^\infty \frac{z^j}{j!}\overline{\p_z^jg}(t,y).
\eeq
By plugging the ansatz \eqref{uniform-expansion-1} into \eqref{NLS-1} and using  \eqref{outer-conti}, \eqref{outer-bern} and
\eqref{S2eq8}, we obtain
\begin{equation}\label{u-conti}
\begin{split}
&\sum_{k=0}^{\infty}\varepsilon^{k}\partial_t A_{k}+\sum_{k_1, k_2=0}^{\infty}\varepsilon^{k_1+k_2}\Bigl(\bigl(\nabla_{\rm h} \Phi_{k_1}\cdot \nabla_{\rm h} A_{k_2}+\frac{1}{2}A_{k_1}\Delta_{\h} \Phi_{k_2}\bigr)\\
&\qquad\qquad\qquad\qquad\qquad\qquad\qquad\qquad+\varepsilon^{-2}\bigl(\partial_Z \Phi_{k_1}\partial_Z A_{k_2}+\frac{1}{2}A_{k_1}\partial_Z^2 \Phi_{k_2}\bigr)\Bigr)\\
&+\sum_{k_1, k_2, j=0}^{\infty}\varepsilon^{k_1+k_2+j}\frac{Z^{j}}{j!}
\Bigl(\overline{\nabla_y\partial_{z}^{j}\varphi_{k_1}}\cdot \nabla_{\h} A_{k_2}+\nabla_{\h} \Phi_{k_1}\cdot \overline{\nabla_{\h} \partial_{z}^{j} a_{k_2}}+\frac{\overline{\partial_{z}^{j}a_{k_1}}}{2} \Delta_{\h} \Phi_{k_2}\\
&\qquad+\frac{A_{k_1}}{2}\overline{\Delta \partial_{z}^{j} \varphi_{k_2}}+
\varepsilon^{-1}\bigl(\overline{\partial_{z}^{j+1}\varphi_{k_1}}\partial_Z A_{k_2}+\partial_{Z}\Phi_{k_1} \overline{\partial_{z}^{j+1} a_{k_2}}\bigr)+\varepsilon^{-2}\frac{\overline{\partial_{z}^{j}a_{k_1}}}{2}\partial_Z^2 \Phi_{k_2}\Bigr)=0,
\end{split}
\end{equation}
and
\begin{equation}\label{u-bern}
\begin{split}
&\sum_{k_1, k_2, j=0}^{\infty}\varepsilon^{k_1+k_2+j}\frac{Z^{j}}{j!}\bigl(\overline{\partial_z^ja_{k_1}}\partial_t \Phi_{k_2}+A_{k_1}\overline{\partial_t\partial_z^j \varphi_{k_2}}\bigr)+\sum_{k_1, k_2=0}^{\infty}\varepsilon^{k_1+k_2}(A_{k_1}\partial_t \Phi_{k_2})\\
&-\sum_{k=0}^{\infty}\varepsilon^{k}A_{k}+\sum_{k_1, k_2,k_3, j_1, j_2=0}^{\infty}\varepsilon^{k_1+k_2+k_3+ j_1+j_2}\frac{Z^{j_1+j_2}}{j_1!j_2!}\Bigl(\overline{\partial_{z}^{j_1}a_{k_1}}  \overline{\nabla_{\h} \partial_{z}^{j_2}\varphi_{k_2}} \cdot \nabla_{\h} \Phi_{k_3}+\\
&\qquad\qquad+\frac{A_{k_1}}{2} \overline{\nabla \partial_{z}^{j_1}\varphi_{k_2}} \cdot \overline{\nabla\partial_{z}^{j_2} \varphi_{k_3}}+3\overline{\partial_{z}^{j_1}a_{k_1}}\overline{\partial_{z}^{j_2}a_{k_2}}A_{k_3}
+\varepsilon^{-1}\,\overline{\partial_{z}^{j_1}a_{k_1}}
\overline{\partial_{z}^{j_2+1}\varphi_{k_2}}\partial_Z\Phi_{k_3}\,\Bigr) \\
&+\sum_{k_1, k_2,k_3, j_1=0}^{\infty}\varepsilon^{k_1+k_2+k_3+j_1}\frac{Z^{j_1}}{j_1!}
\Bigl( A_{k_1}  \overline{\nabla_{\h}\partial_{z}^{j_1}\varphi_{k_2}} \cdot \nabla_{\h}\Phi_{k_3} +\frac{\overline{\partial_{z}^{j_1}a_{k_1}}}{2}\nabla_{\h}\Phi_{k_2} \cdot \nabla_{\h}\Phi_{k_3}+\\
&\qquad
+3\overline{\partial_{z}^{j_1}a_{k_1}}A_{k_2}A_{k_3}+
\varepsilon^{-1} A_{k_1}  \overline{\partial_{z}^{j_1+1}\varphi_{k_2}}\partial_Z\Phi_{k_3}
+\varepsilon^{-2}\frac{\overline{\partial_{z}^{j_1}a_{k_1}}}{2}
\partial_{Z}\Phi_{k_2}\partial_Z\Phi_{k_3}\Bigr)\\
&+\sum_{k_1, k_2,k_3=0}^{\infty}\varepsilon^{k_1+k_2+k_3}\frac{A_{k_1}}{2}\Bigl(\nabla_{\h}\Phi_{k_2} \cdot \nabla_{\h}\Phi_{k_3}+2A_{k_2}A_{k_3}+\varepsilon^{-2}
\partial_{Z}\Phi_{k_2}\partial_Z\Phi_{k_3}\Bigr)\\
&=\frac{1}{2}\sum_{k=0}^{\infty}\varepsilon^{k}\partial_Z^2 A_{k}+\frac{1}{2}\sum_{k=0}^{\infty}\varepsilon^{k+2}\Delta_{\h} A_{k}\with \D_\h=\p_{y_1}^2+\p_{y_2}^2.
\end{split}
\end{equation}
The coefficients  of  $\varepsilon^{-2}$ in \eqref{u-conti} and \eqref{u-bern} yields
\begin{equation}\label{a-negative-2}
\begin{split}
\frac{1}{2}(A_{0}+\overline{a_{0}})\partial_Z^2 \Phi_{0}+\partial_Z \Phi_{0}\partial_Z A_{0}=0,
\end{split}
\end{equation}
and
\begin{equation}\label{phi-negative-2}
\begin{split}
\frac{1}{2}(A_0+\overline{a}_0)|\partial_Z \Phi_{0}|^2=0,
\end{split}
\end{equation}
respectively.

We first assume that $A_0+\overline{a}_0$ has a positive lower bound, which we shall justify in Section \ref{sec-blp}. Then due to $\Phi_{0}|_{Z=\infty}=0$, we deduce from \eqref{phi-negative-2}  that
\begin{equation}\label{phi-negative-2-a}
\begin{split}
\Phi_{0}(t, y, Z)\equiv 0.
\end{split}
\end{equation}

Similarly by virtue of \eqref{S2eq4} and \eqref{phi-negative-2-a},  we find that the coefficients   of $\varepsilon^{-1}$ in \eqref{u-conti}  and  of $\varepsilon^{0}$ in \eqref{u-bern} give respectively
\begin{equation}\label{phi-negative-1}
\begin{split}
\frac{1}{2}(A_0+\overline{a}_0)\partial_Z^2 \Phi_{1}+(\partial_Z \Phi_{1}+\overline{\partial_z \varphi}_{0}) \partial_Z A_0=0,
\end{split}
\end{equation}
and
\begin{equation}\label{a-order-0}
\begin{split}
\frac{1}{2}\partial_Z^2A_0=(A_0+\overline{a}_0)\partial_Z\Phi_{1}\overline{\partial_z \varphi}_{0}+\frac{\overline{a}_0+A_0}{2} (\partial_Z \Phi_1)^2+A_0^3+3 \overline{a}_0 A_0^2+2 \overline{a}_0^2 A_0.
\end{split}
\end{equation}

According to the  boundary condition $
a^{\varepsilon}|_{z=0}=1$ in \eqref{NLS-1-ibc}, we have
the matched condition on the boundary $\{z=0 \}$ that $A_0(Z=0)+a_0(z=0)=1.$ So that we impose the following
boundary condition for $A_0:$
\begin{equation}\label{a-order-0-bc}
A_0|_{Z=0}=1-a_0(t,y,0).
\end{equation}
 Furthermore, we require that both $A_{0}(t, y, Z)$ and $\Phi_{1}(t, y, Z)$ along with  all of their derivatives are rapidly vanishing as $Z \rightarrow +\infty.$   We shall present the unique solvability of
  the required solution to the system (\ref{phi-negative-1}-\ref{a-order-0})   in Section \ref{sec-blp}.

Notice the boundary condition $
 \varphi^{\varepsilon}|_{z=0}=0$ in \eqref{NLS-1-ibc} and $
\varphi_{0}|_{z=0}=0$ in \eqref{outer-order-0-bc},
we have the following matched condition of $\vf_1$ on the boundary $\{z=0 \}$
\begin{equation}\label{varphi-order-0-bc}
\varphi_1|_{z=0}=-\Phi_1(t,y,0).
\end{equation}

We implement the system \eqref{outer-order-1}
with the Dirichlet boundary condition \eqref{varphi-order-0-bc}, and we shall prove its unique solvability  in Section \ref{sec-asy}.

Inductively, assuming that we already obtain $\left(a_0,  \varphi_0\right),$  $\left(a_{j+1},\, \varphi_{j+1}\right)$ and
$\left(A_{j}, \Phi_{j+1}\right)$ with $0 \leq j \leq k \leq m-1,$ we get, by
vanishing of the coefficients of $\varepsilon^{k}$ in \eqref{u-conti} and of  $\varepsilon^{k+1}$ in \eqref{u-bern}, that
\begin{equation}\label{phi-order-m}
\begin{split}
\frac{\overline{a}_0+A_0}{2}\partial_Z^2 \Phi_{k+2}+\partial_Z A_0 \partial_Z \Phi_{k+2} +(\partial_Z \Phi_{1}+\overline{\partial_z \varphi}_{0}) \partial_Z A_{k+1}+\frac{A_{k+1}}{2} \partial_Z^2 \Phi_{1} =F_k,
\end{split}
\end{equation}
and
\begin{equation}\label{a-order-m1}
\begin{split}
\frac{1}{2}\partial_Z^2A_{k+1}=&(A_0+\overline{a}_0)(\partial_Z\Phi_{1}+\overline{\partial_z \varphi}_{0}) \partial_Z \Phi_{k+2}\\
&+(3A_0^2+6 \overline{a}_0 A_0+2 \overline{a}_0^2+\partial_Z\Phi_{1}\overline{\partial_z \varphi}_{0}+\frac{1}{2}|\partial_Z\Phi_{1}|^2) A_{k+1}+G_k,
\end{split}
\end{equation}
where the source terms $(F_k,\, G_k)$ depend only on $\left(\overline{\p_z^\ell a_0}, \overline{\p_z^\ell\varphi_0}\right)$ for $\ell\leq k+2,$ and   $\left(\overline{\p_z^\ell a_{j+1}}, \overline{\p_z^k\varphi_{\ell+1}}\right)$ and $\left(A_{j},  \na\Phi_{j+1}\right)$ for $0 \leq j \leq k$
and $0\leq \ell+j\leq k+1$, the explicit form of  which will be presented in the Appendix \ref{appA-re}.

Thanks to the matched boundary condition on  $\{z=0 \}$ for the outer and inner solutions,  we impose the following condition for $A_{k+1}:$
\begin{equation}\label{a-order-m1-bc}
A_{k+1}|_{Z=0}=-a_{k+1}(t,y,0).
\end{equation}
We also require that both $A_{k+1}(t, y, Z)$ and $\Phi_{k+2}(t, y, Z)$ together with  all of their derivatives are rapidly vanishing as $Z \rightarrow +\infty.$

Finally, with thus obtained $\Phi_{k+2}$, according to  the matched boundary condition on  $\{z=0 \}$ for the outer and inner solutions, we implement the system \eqref{outer-order-m} with the Dirichlet boundary condition
\beq \label{S2eq23}
\varphi_{k+2}|_{z=0}=-\Phi_{k+2}(t,y,0). \eeq

The unique solvability of the systems (\ref{phi-order-m}-\ref{a-order-m1}) and \eqref{outer-order-m} with the boundary condition \eqref{S2eq23} will be outlined in Section \ref{sec-asy}.

\renewcommand{\theequation}{\thesection.\arabic{equation}}
\setcounter{equation}{0}

\section{The main result and its sketch of the proof} We first  observe from \eqref{S2eq4} that
\beq \label{vf10}
\begin{split}
\partial_t \varphi_{0}|_{t=0}=&-\left(\frac{1}{2} |\nabla \varphi_{0}|^2 +(\r_{0}-1)\right)|_{t=0}\\
=&1-\left(a_{0,0}^{\rm{in}}\right)^2-\frac12|\na\vf_{0,0}^{\rm{in}}|^2\eqdefa \vf_{0,1}^{\rm{in}}.
\end{split}
\eeq

To guarantee the local existence of smooth solutions to (\ref{S2eq4}-\ref{outer-order-0-bc}), we need
the following compatibility conditions for the initial data:

\no ($\mathcal{A}_0$):  Let $(\vf_{0,0}^{\rm{in}}, \, \vf_{0,1}^{\rm{in}}) \in H^{s_0} \times H^{s_0-1}$ for  $4\leq s_0\in\N.$ We
assume that the data satisfies $\partial_t^j\varphi_0(0,y,0)=0$ for $y\in\R^2$ and $j=0,\cdots, s_0-1.$

\begin{defi}\label{S2def1}
Let $s, T>0,$ we define the functional space $W^{s}_{T}\eqdefa \bigcap_{j=0}^{[s]}C^{j}([0, T]; \, H^{s-j}(\R^3_{+})),$
where $[s]$ denotes the integer part of $s$ and its norm is given by
\begin{equation}\label{outer-order-0-17-b}
\begin{split}
\|\varphi(t)\|_{W^{s}}^2
\eqdefa \sum_{j=0}^{[s]}\|\partial_t^j&\varphi(t)\|_{H^{s-j}}^2 \andf \|\varphi\|_{W^{s}_T}\eqdefa \sup_{t\in [0,T]}\|\varphi(t)\|_{W^{s}}.
\end{split}
\end{equation}
\end{defi}

We shall prove in Section \ref{sec-hj} that

\begin{prop}\label{S2prop1}
{\sl Let $4\leq s_0\in\N $ and $a_{0,0}^{\rm{in}}-1\in H^{s_0-1},$  $\vf_{0,0}^{\rm{in}} \in H^{s_0}$ which satisfies the compatibility condition $(\mathcal{A}_0)$. We assume that
\begin{equation}\label{S3eq27}
\bigl\|\bigl(a_{0,0}^{\rm{in}}-1,\na\vf_{0,0}^{\rm{in}}\bigr)\bigr\|_{H^{s_0-1}}\leq c,
\end{equation} for some sufficiently small positive constant $c$, then
there exists a positive constant ${\cC}$ so that for $T_0\eqdefa {\cC}c^{-1},$ the system (\ref{S2eq4}-\ref{outer-order-0-bc}) has a unique solution $(a_0,\varphi_0)$ on $[0, T_0],$
which satisfies
\begin{equation}\label{S3eq28}
\left\|\left(a_0-1,\p_t\vf_0,\na\vf_0\right)\right\|_{W_{T_0}^{s_0-1}} \leq C \, \bigl\|\bigl(a_{0,0}^{\rm{in}}-1,\na\vf_{0,0}^{\rm{in}}\bigr)\bigr\|_{H^{s_0-1}}.
\end{equation}}
\end{prop}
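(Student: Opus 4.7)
The proof of Proposition~\ref{S2prop1} will follow the classical Picard-iteration / energy-method scheme for a quasilinear symmetric hyperbolic system in the half-space, adapted to the Dirichlet condition $\vf_0|_{z=0}=0$. Setting $b_0\eqdefa a_0-1$, system \eqref{S2eq4} becomes a perturbation of a linear wave system around $(b_0,\vf_0)=(0,0)$: taking the gradient of the phase equation, the pair $(b_0,\na\vf_0)$ satisfies a quasilinear compressible-Euler-type system which is symmetric hyperbolic with symmetriser $\mathrm{diag}(4(1+b_0)^2,\,\mathrm{Id})$. I would construct approximate solutions $\{(a_0^n,\vf_0^n)\}_{n\ge 0}$ iteratively: starting from $(a_0^0,\vf_0^0)\equiv(1,0)$, define $(a_0^{n+1},\vf_0^{n+1})$ by linearising \eqref{S2eq4} (transport and coupling coefficients frozen at level $n$, the $(n{+}1)$-st iterate in the principal part), subject to $\vf_0^{n+1}|_{z=0}=0$, the initial data of \eqref{outer-order-0-bc}, and decay at spatial infinity. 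Each linear problem is uniquely solvable in $W^{s_0-1}_T$ by standard linear hyperbolic theory.

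The heart of the proof is a uniform $W^{s_0-1}_T$ bound for the sequence. Apply the tangential multi-derivatives $\cT^\alpha=\p_t^{\alpha_0}\na_\h^{\alpha'}$ with $|\alpha|\le s_0-1$ to the linearised system. Because $\cT$ is tangent to $\{z=0\}$, and the compatibility conditions $(\mathcal{A}_0)$ are inherited by the iterates, one has $\cT^\alpha\vf_0^{n+1}|_{z=0}=0$ for every admissible $\alpha$. Pairing the differentiated equations with the appropriate symmetrising weights and integrating by parts on $\R^3_+$, the boundary contributions at $\{z=0\}$ either vanish (thanks to $\cT^\alpha\vf_0^{n+1}|_{z=0}=0$) or reduce to algebraic expressions in $(b_0^{n+1},\p_z\vf_0^{n+1})|_{z=0}$ that are themselves controlled via the phase equation restricted to $\{z=0\}$. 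This yields $L^2$-control of $\cT^\alpha(b_0^{n+1},\na\vf_0^{n+1})$. Pure normal derivatives $\p_z^k$ are then recovered algebraically: $\p_z^2\vf_0^{n+1}$ is isolated from the linearised $b_0$-equation (the coefficient $a_0^n/2$ being positive and bounded away from zero), while $\p_z b_0^{n+1}$ comes from the $z$-component of the linearised momentum equation. Iterating this normal/tangential trade-off closes the $W^{s_0-1}$ norm of Definition~\ref{S2def1}.

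All nonlinearities in \eqref{S2eq4} are at least quadratic in $(b_0,\na\vf_0)$, so the resulting energy inequality takes the schematic form
\begin{equation*}
\frac{d}{dt}E_{n+1}(t)\lesssim \bigl(E_n(t)^{1/2}+E_{n+1}(t)^{1/2}\bigr)\,E_{n+1}(t),\qquad E_n(t)\eqdefa \bigl\|(a_0^n-1,\p_t\vf_0^n,\na\vf_0^n)\bigr\|_{W^{s_0-1}}^2.
\end{equation*}
Combined with $E_n(0)\lesssim c^2$ from \eqref{S3eq27}, a Gronwall bootstrap on $[0,T_0]$ with $T_0=\cC c^{-1}$ yields $E_n(t)\lesssim c^2$ uniformly in $n$. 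Convergence in the weaker norm $W^{s_0-3}_{T_0}$ is obtained by the same method applied to the differences $(a_0^{n+1}-a_0^n,\vf_0^{n+1}-\vf_0^n)$, and uniqueness of the limit follows from an identical estimate on differences of two solutions, giving \eqref{S3eq28}.

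I expect the main obstacle to be twofold. First, one must propagate the compatibility conditions $(\mathcal{A}_0)$ through the iteration so that $\p_t^j\vf_0^{n+1}|_{z=0}=0$ holds for $0\le j\le s_0-1$; this is what prevents corner singularities at $\{t=0,z=0\}$ from spoiling the tangential energy argument, and it must be verified by inspecting the linearised equations evaluated on the boundary at $t=0$. Second, one must maintain a uniform positive lower bound on $a_0^n$ throughout $[0,T_0]\times\R^3_+$ in order to invert the quasilinear coupling when recovering normal derivatives; this bound is propagated using the Hamilton--Jacobi identity $(a_0^n)^2=1-\p_t\vf_0^n-\tfrac12|\na\vf_0^n|^2$ on the previous iterate, together with the smallness of $E_n$.
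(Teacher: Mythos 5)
Your proposal takes the Grenier-style symmetric-hyperbolic route: rewrite \eqref{S2eq4} as a first-order quasilinear system for $(b_0,\nabla\varphi_0)$ with a symmetrizer, iterate linear problems, and close a tangential/normal energy estimate. This is precisely the approach the paper discards at the outset (see the paragraph immediately following the statement of Proposition~\ref{S2prop1}), and the step you gloss over is exactly where it breaks. The Dirichlet condition $\varphi_0|_{z=0}=0$ gives $\nabla_\h\varphi_0|_{z=0}=0$ and $\partial_t\varphi_0|_{z=0}=0$, so the tangential velocity and all time/tangential derivatives of $\varphi_0$ vanish on the wall, but neither the normal velocity $\partial_z\varphi_0|_{z=0}$ nor the density trace $b_0|_{z=0}$ is prescribed. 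After integrating by parts in the symmetrized energy identity for the first-order system, the boundary flux on $\{z=0\}$ is a quadratic form in exactly these two uncontrolled traces, and there is no reason for it to vanish or to be sign-definite. You claim these terms are ``controlled via the phase equation restricted to $\{z=0\}$,'' but that equation yields only one algebraic relation, $\rho_0|_{z=0}=1-\tfrac12(\partial_z\varphi_0|_{z=0})^2$, which gives neither a sign nor a bound for the flux. This is a genuine obstruction, not a technicality to be verified.

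What the paper actually does is eliminate $\rho_0$ through the Bernoulli identity \eqref{S3eq2} and derive a single second-order nonlinear wave equation \eqref{S3eq4}/\eqref{outer-order-0-b} for the scalar $\varphi_0$, for which the Dirichlet condition is exactly the right count of boundary data. Pairing with the multiplier $\partial_t\Ta^\ell\varphi_0$, the only boundary flux produced by integration by parts is $\int_{\{z=0\}}\rho_0\,\partial_z\Ta^\ell\varphi_0\,\partial_t\Ta^\ell\varphi_0\,dy$, and this vanishes identically because $\partial_t\Ta^\ell\varphi_0|_{z=0}=0$; your first-order formulation has no analogous cancellation. Even inside the wave formulation there is a second subtlety your proposal does not address (Remark~\ref{rmk3.1}): applying $\Ta^\ell$ directly to \eqref{outer-order-0-b} loses a derivative because $\rho_0$ itself carries $\partial_t\varphi_0$; the paper first commutes a single $\Ta$ through to obtain \eqref{outer-order-0-6} and only then applies $\Ta^\ell$, which is what allows the top-order estimate \eqref{outer-order-ellh-13} to close.
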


We remark that the main difficulty in the proof of Proposition \ref{S2prop1} lies in the boundary condition
$\varphi_{0}|_{z=0}=0$ in \eqref{outer-order-0-bc} so that one can not apply the standard theory on symmetric hyperbolic system to prove its local well-posedness. The new idea here is to reformulate (\ref{S2eq4}-\ref{outer-order-0-bc}) to be an initial and boundary value problem of a nonlinear wave equation (\ref{S3eq4}-\ref{outer-order-0-initial}) (see Section \ref{sec-hj}) under the smallness initial condition \eqref{S3eq27}.

In order to solve the boundary layer equation, we recall the boundary layer profile space from \cite{CR2009}:

\begin{defi}\label{S2def2}
{\sl Let $s \in \R^+$ and $\gamma_0>0$ be a  positive constant, we  define $W^{s}_{\gamma_0}(\R^3_{+})$ as
the completion of $\left\{F(y, Z) \in H^s(\mathbb{R}_{\h}^{2};\, H^{\infty}(\mathbb{R}^+_{Z}))\, \right\}$
 with $\|F\|_{W^{s}_{\gamma_0}(\R^3_{+})}$ being finite, and
\begin{equation*}\label{space-def-1}
\mathcal{W}^{s}_{\gamma_0, T}=\mathcal{W}^{s}_{\gamma_0}([0, T]\times\R^3_{+} )\eqdefa \bigcap_{j=0}^{[s]}C^{j}([0, T]; \, W^{s-j}_{\gamma_0}(\R^3_{+})),
\end{equation*}
where the norms are given by
\beq \label{S4eq1}
\begin{split}
&\|F\|_{W^{s}_{\gamma_0}}\eqdefa \max_{0\leq\ell\leq [s]}\sup_{Z\in\R^+}\bigl(e^{\gamma_0 \,Z} \|\partial_Z^{\ell}F(\cdot,Z)\|_{H^s(\mathbb{R}^2_{\h})}\bigr) \andf\\
& \|G\|_{\mathcal{W}^{s}_{\gamma_0,T}}
\eqdefa  \max_{0\leq j\leq [s]}\sup_{t\in [0,T]}\|\p_t^jG(t)\|_{W^{s-j}_{\gamma_0}}.
\end{split}
\eeq}
\end{defi}

We shall prove in Section \ref{sec-blp} that

\begin{prop}\label{S2prop2}
{\sl Under the assumptions of Proposition \ref{S2prop1},  the coupled equations (\ref{phi-negative-1}-\ref{a-order-0}) with the boundary condition \eqref{a-order-0-bc} has a unique solution $(A_0,\, \Phi_1)$ in $\mathcal{W}^{s_0-\frac32}_{1, T_0},$ where $T_0$ is determined by Proposition \ref{S2prop1}. Furthermore, there holds
\begin{equation}\label{phi-negative-1-solution}
\bigl\|(A_0,\, \Phi_1)\bigr\|_{\mathcal{W}^{s_0-\frac32}_{1, T_0}}\leq C\bigl\|\bigl(a_{0,0}^{\rm{in}}-1,\na\vf_{0,0}^{\rm{in}}\bigr)\bigr\|_{H^{s_0-1}}.
\end{equation}}
\end{prop}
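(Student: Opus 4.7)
The plan is to exploit a hidden $Z$-conservation law in \eqref{phi-negative-1} to reduce the coupled ODE system \eqref{phi-negative-1}-\eqref{a-order-0} (with $(t,y)$ as parameters) to a single scalar first-order separable ODE in $Z$, and then to propagate $(t,y)$-regularity by differentiation combined with tame estimates in the exponentially weighted anisotropic Sobolev spaces of Definition \ref{S2def2}.

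\emph{Algebraic reduction and first integral.} Setting $b\eqdefa\overline{a}_0+A_0$, equation \eqref{phi-negative-1} multiplied by $2b$ is exactly the $Z$-conservation law $\p_Z\bigl[b^2(\p_Z\Phi_1+\overline{\p_z\vf}_0)\bigr]=0$. Combined with the decay at $Z=+\infty$ this yields the explicit algebraic identity
\[
\p_Z\Phi_1=\overline{\p_z\vf}_0\Bigl(\frac{\overline{a}_0^{\,2}}{(\overline{a}_0+A_0)^2}-1\Bigr)=-\frac{\overline{\p_z\vf}_0\,A_0(2\overline{a}_0+A_0)}{(\overline{a}_0+A_0)^2}.
\]
Substituting into \eqref{a-order-0} reduces the coupled system to a scalar autonomous second-order ODE $\p_Z^2A_0=V'(A_0)$ in $Z$, with $(t,y)$ entering only through $\overline{a}_0$ and $\overline{\p_z\vf}_0$. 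Multiplying by $\p_Z A_0$ and integrating from $Z$ to $+\infty$, with the help of the decay conditions, gives the first integral
\[
(\p_Z A_0)^2=\frac{A_0^{\,2}(A_0+2\overline{a}_0)^2\bigl((\overline{a}_0+A_0)^2-(\overline{\p_z\vf}_0)^2\bigr)}{(\overline{a}_0+A_0)^2}.
\]

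\emph{Solvability, exponential decay, and $(t,y)$-regularity.} By Proposition \ref{S2prop1} and the trace theorem on $\{z=0\}$, both $\overline{a}_0-1$ and $\overline{\p_z\vf}_0$ are small in $L^\infty_{t,y}$, and so is the boundary value $A_0|_{Z=0}=1-\overline{a}_0$. Consequently $b$ remains uniformly close to $1$, the right-hand side of the first integral above is smooth and strictly positive for small $A_0\neq 0$, and the square root can be extracted with the sign chosen so that $A_0\to 0$ at infinity. Linearising at the equilibrium $A_0=0$ gives $\p_Z A_0\sim-2A_0\sqrt{\overline{a}_0^{\,2}-(\overline{\p_z\vf}_0)^2}$, so every such solution decays exponentially at a rate strictly greater than $1$. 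Separating variables produces, for each $(t,y)$, a unique global-in-$Z$ decaying solution $A_0(t,y,Z)$ with $\|e^Z A_0(t,y,\cdot)\|_{L^\infty_Z}$ small. To get the $\cW^{s_0-3/2}_{1,T_0}$-bound I then differentiate the first integral by $\cT^k=(\p_t,\na_\h)^k$ for $|k|\leq s_0-\frac{3}{2}$; the resulting linear equations for $\cT^k A_0$ are driven by tame nonlinear combinations of lower-order $\cT$-derivatives of $A_0$ and of the traces $\overline{a}_0,\overline{\p_z\vf}_0$, which by the trace theorem applied to Proposition \ref{S2prop1} belong to $\bigcap_j C^j([0,T_0];H^{s_0-\frac{3}{2}-j}(\R^2_\h))$. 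Because the linearised decay rate at $A_0=0$ is preserved under these derivatives, a Moser-type induction in the spaces $W^{s_0-\frac{3}{2}-j}_{1}$ closes and yields \eqref{phi-negative-1-solution} for $A_0$. Finally $\Phi_1$ is recovered by integrating the explicit formula for $\p_Z\Phi_1$ above from $Z$ to $+\infty$; since the integrand is $O(|A_0|)$, the exponential decay and $\cW^{s_0-3/2}_{1,T_0}$-regularity of $\Phi_1$ follow immediately from those of $A_0$.

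\emph{Main obstacle.} The principal technical challenge is to combine the $Z$-exponential decay with $(t,y)$-Sobolev regularity in a quantitative way: the tame Moser estimates in the anisotropic weighted space $\cW^{s_0-3/2}_{1,T_0}$ must be executed while preserving both the pointwise positivity $b\geq c>0$ (which underlies the conservation law) and the strict positivity $(\overline{a}_0+A_0)^2>(\overline{\p_z\vf}_0)^2$ (which underlies the square-root extraction). The loss of $3/2$ derivatives compared to the regularity of $(a_0,\vf_0)$ reflects exactly the cost of taking the trace at $\{z=0\}$.
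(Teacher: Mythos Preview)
Your reduction is essentially the paper's: you find the same conservation law $b^2(\p_Z\Phi_1+\overline{\p_z\vf}_0)=\overline{a}_0^{\,2}\,\overline{\p_z\vf}_0$, substitute to decouple, multiply by $\p_ZA_0$, and obtain the same first integral (after rewriting $\widetilde{A_0}^2-\overline{a}_0^{\,2}=A_0(A_0+2\overline{a}_0)$ the paper's formula \eqref{a-order-0-9} is exactly yours).

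The routes diverge only in the final step. You propose to extract the square root, linearise at $A_0=0$, and run a Moser-type induction in $\cW^{s_0-3/2}_{1,T_0}$ by tangentially differentiating the first-order ODE. The paper instead pushes the algebra one step further: the substitution $B_0\eqdefa\bigl((\overline{a}_0+A_0)^2-(\overline{\p_z\vf}_0)^2\bigr)^{1/2}$ reduces the first integral to the \emph{elementary} equation $\p_ZB_0=\pm(B_0^2-h_0^2)$ with $h_0\eqdefa\bigl(\overline{a}_0^{\,2}-(\overline{\p_z\vf}_0)^2\bigr)^{1/2}$, which integrates in closed form to $B_0=h_0\,(1+C_2e^{-2h_0Z})/(1-C_2e^{-2h_0Z})$ for an explicit $C_2(t,y)$ of size $O(\|\overline{a}_0-1\|)$. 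From this explicit formula the $\cW^{s_0-3/2}_{1,T_0}$ bound and the lower bound $\overline{a}_0+A_0\geq c_1>0$ are read off directly, with no induction required. Your approach is more robust (it would survive if the first integral did not factorise so cleanly) but requires carefully managing the simultaneous bootstrap of positivity, decay rate, and tangential regularity; the paper's explicit solution sidesteps all of this at the cost of one additional algebraic observation.
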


To solve the systems \eqref{outer-order-1} and \eqref{outer-order-m} for the inner expansions, we are going
to solve first a linear wave equation in  Section \ref{sec8}. More precisely, let
$\vf_0$ be determined by Proposition \ref{S2prop1} and $f\in W^{s-1}_{T}(\R^3_+)$ for $s\leq s_0-1$ and $T\leq T_0,$ we are going to solve the following linear wave equation
\beq\label{S8eq1a}
\begin{split}
 P(\vf_0, D)\vf\eqdefa& \partial_t^2 \varphi-\dive \left(a^2_0\nabla\varphi\right)+2 \nabla \varphi_{0} \cdot \nabla \partial_t\varphi\\
 &+\dive \bigl((\nabla\varphi_0 \cdot \nabla\varphi)\nabla\varphi_0 \bigr)+\na\partial_t \varphi_0 \cdot\nabla\varphi+\D\vf_0\p_t\vf=f,
\end{split}
\eeq
together with the following initial and boundary conditions:
\beq \label{S8eq2}
\vf(t,y,0)=g(t,y)\in W^{s+\frac12}_T(\R^2) \andf \vf|_{t=0}=\vf_{,0}^{\rm in}, \quad \p_t\vf|_{t=0}=\vf_{,1}^{\rm in}.
\eeq

The result about the unique solvability of the system (\ref{S8eq1a}-\ref{S8eq2}) states as follows:

\begin{thm}\label{S8thm1}
{\sl Let $(\vf_{,0}^{\rm{in}}, \vf_{,1}^{\rm{\rm{in}}})$ satisfy
$\na\vf_{,0}^{\rm{in}}, \vf_{,1}^{\rm{\rm{in}}}\in \,H^{s-1}$ and  the compatibility condition:
   $\partial_t^\ell\left(\varphi-g\right)(0,y,0)=0$ for $y\in\R^2$ and $\ell=0,\cdots, s-1.$
Let $f\in W^{s-1}_T$ for some integer $s\in [4, s_0]$. Then under the assumptions of Proposition \ref{S2prop1}, the system (\ref{S8eq1a}-\ref{S8eq2}) has a unique solution $\vf$ on $[0, T],$ which satisfies
\beno
\left\|\left(\p_t\vf,\na\vf\right)\right\|_{W_{T}^{s-1}}\leq  C\bigl(\|g\|_{ W^{s+\frac12}_{T}(\R^2)}+\|\na \vf_{,0}^{\rm{in}}\|_{H^{s-1}}+\|\vf_{,1}^{\rm{in}}\|_{H^{s-1}}+\|f\|_{W^{s-1}_{T}}\bigr).
\eeno
}
\end{thm}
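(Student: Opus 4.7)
The first step is to eliminate the inhomogeneous Dirichlet condition. I would construct a lifting $\tw\in W^s_T(\R^3_+)$ of $g$ with $\tw|_{z=0}=g$ and $\|\tw\|_{W^s_T}\lesssim \|g\|_{W^{s+\frac12}_T(\R^2)}$, for instance by Poisson extension in the normal variable smoothly cut off away from the boundary. Setting $\psi\eqdef\vf-\tw$, the problem becomes $P(\vf_0,D)\psi=\tilde f\eqdef f-P(\vf_0,D)\tw\in W^{s-1}_T$, with modified initial data $(\vf_{,0}^{\rm in}-\tw|_{t=0},\vf_{,1}^{\rm in}-\pa_t\tw|_{t=0})$ and homogeneous trace $\psi|_{z=0}=0$. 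The compatibility hypothesis translates into $\pa_t^\ell\psi(0,y,0)=0$ for $\ell=0,\dots,s-1$, which is exactly what will be needed to make tangential energy estimates close at $t=0$.

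\noindent\textbf{Basic energy identity.}
The principal part of $P(\vf_0,D)$ rewrites as
\begin{equation*}
\pa_t^2+2\na\vf_0\cdot\na\pa_t-\pa_i\!\bigl((a_0^2\delta_{ij}-\pa_i\vf_0\,\pa_j\vf_0)\,\pa_j\cdot\bigr),
\end{equation*}
which is a second-order hyperbolic operator whenever $a_0^2I-\na\vf_0\otimes\na\vf_0$ is uniformly positive definite. Proposition \ref{S2prop1} together with the smallness assumption \eqref{S3eq27} guarantees that this matrix dominates $\tfrac12 I$ on $[0,T_0]$. The natural energy
\begin{equation*}
\cE(\psi)(t)\eqdef \tfrac12\int_{\R^3_+}\Bigl(|\pa_t\psi|^2+a_0^2|\na\psi|^2-|\na\vf_0\cdot\na\psi|^2\Bigr)\,dx
\end{equation*}
is therefore coercive. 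Multiplying $P(\vf_0,D)\psi=\tilde f$ by $\pa_t\psi$ and integrating by parts on $\R^3_+$---every boundary contribution on $\{z=0\}$ vanishes because $\pa_t\psi|_{z=0}=0$---I obtain $\tfrac{d}{dt}\cE(\psi)\lesssim \cE(\psi)+\|\tilde f\|_{L^2}^2$, and Gronwall closes the base estimate.

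\noindent\textbf{Higher tangential regularity and recovery of normal derivatives.}
Applying $\Ta^\b=(\pa_t,\na_\h)^\b$ for $|\b|\leq s-1$ preserves both the flat boundary condition and the principal part modulo commutators, so the same energy identity holds for $\Ta^\b\psi$ with an additional source $[P(\vf_0,D),\Ta^\b]\psi+\Ta^\b\tilde f$. Tame product estimates in $W^{s-1}_T$, combined with the regularity $(\pa_t\vf_0,\na\vf_0,a_0-1)\in W^{s_0-1}_{T_0}$ furnished by Proposition \ref{S2prop1} and the requirement $s\leq s_0$, let me bound these commutators and sum over $|\b|\leq s-1$. Normal derivatives are then recovered algebraically: from the equation,
\begin{equation*}
\bigl(a_0^2-(\pa_z\vf_0)^2\bigr)\pa_z^2\psi=\pa_t^2\psi+2\na\vf_0\cdot\na\pa_t\psi-a_0^2\D_\h\psi+(\text{terms with at most one }\pa_z)-\tilde f,
\end{equation*}
and the coefficient on the left is bounded below by smallness. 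Iterating yields every $\pa_z^k\psi$ in terms of tangential derivatives, giving $(\pa_t\psi,\na\psi)\in W^{s-1}_T$.

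\noindent\textbf{Existence, uniqueness, and main obstacle.}
Uniqueness is immediate from the base energy estimate applied to the difference of two solutions. For existence I would use a Galerkin scheme on an exhaustion of $H^1_0(\R^3_+)$, or equivalently a vanishing-viscosity regularization $P(\vf_0,D)-\nu\D$, both of which yield approximants whose energies are uniformly bounded by the estimates above; passing to the limit recovers $\psi$ in $W^{s-1}_T$. Adding back $\tw$ and using $\|\tw\|_{W^s_T}\lesssim\|g\|_{W^{s+\frac12}_T(\R^2)}$ produces the final bound. The chief obstacle is controlling the commutators $[P(\vf_0,D),\Ta^\b]\psi$ up to order $s-1$: these involve up to $s$ derivatives of $(a_0,\vf_0,\pa_t\vf_0)$ and must be tamed by sharp product inequalities, which is precisely why the hypothesis $s\leq s_0$ and the regularity provided by Proposition \ref{S2prop1} enter. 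A secondary subtlety is ensuring the lifting $\tw$ preserves the compatibility conditions at $t=0$, which is why the boundary datum lives in the fractional trace space $W^{s+\frac12}_T(\R^2)$.
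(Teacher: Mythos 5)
Your proposal follows essentially the same route as the paper: lift the Dirichlet boundary data $g$ to the interior (the paper uses $G=\chi\bigl(z(1+|D_\h|^2)^{\frac12}\bigr)g$, you propose a cutoff Poisson extension, two variants of the same device), reduce to a homogeneous-trace problem, prove a basic energy identity with the same coercive energy $\int_{\R^3_+}\bigl((\pa_t\psi)^2+\rho_0|\na\psi|^2-(\na\vf_0\cdot\na\psi)^2\bigr)\,dx$, propagate to higher order by commuting with $\cT=(\pa_t,\na_\h)$, and recover normal derivatives iteratively from the equation using that $a_0^2-(\pa_z\vf_0)^2$ stays bounded away from zero under the smallness hypothesis. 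This matches Lemmas \ref{S8lem1}, \ref{S8lem3}, and \ref{S8lem4} of the paper.

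There is one bookkeeping slip in your reduction step: a trace lifting of $g\in W^{s+\frac12}_T(\R^2)$ lands in $W^{s+1}_T(\R^3_+)$, not in $W^{s}_T(\R^3_+)$ as you wrote, and this extra derivative is exactly what is needed. Since $P(\vf_0,D)$ is second order, the membership $P(\vf_0,D)\tw\in W^{s-1}_T$ requires $\tw\in W^{s+1}_T$; with $\tw$ only in $W^s_T$ your modified source $\tilde f$ would land in $W^{s-2}_T$ and the higher-order tangential estimates would not close at level $s-1$. Correcting the exponent to $s+1$ (which your Poisson extension does in fact deliver, and which is what the paper records for $G$) makes the rest of your argument go through unchanged.
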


With $(a_0,\vf_0)$ being determined by Proposition \ref{S2prop1} and $(A_0,\, \Phi_1)$ being determined by Proposition \ref{S2prop2},
 we are going to solve the system \eqref{outer-order-1} with the boundary condition \eqref{varphi-order-0-bc}.

 We first observe from \eqref{S2eq4} and  \eqref{outer-order-1} that
\begin{equation*}\label{outer-conti-1-1}
\begin{split}
\partial_t (a_{1}a_0)+\dive \, (a_0a_1 \nabla \varphi_0)+\frac{1}{2}\dive\,( a_{0}^2 \nabla \varphi_{1})=0.
\end{split}
\end{equation*}
Then we get, by taking $\p_t$ to the $\vf_1$ equation of \eqref{outer-order-1} and inserting the above equation to the resulting one, that
\begin{equation}\label{outer-conti-1-2}
\begin{split}
 \partial_t^2 \varphi_{1}-\Delta \varphi_1&+\dive \bigl((\partial_t \varphi_0+\frac{1}{2}|\nabla \varphi_0|^2)\nabla\varphi_1\bigr)+ \partial_t(\nabla \varphi_{0} \cdot \nabla \varphi_{1})\\
 &\qquad\quad+\dive (\partial_t \varphi_1 \nabla\varphi_0 )+\dive \bigl((\nabla\varphi_0 \cdot \nabla\varphi_1)\nabla\varphi_0 \bigr)=0.
\end{split}
\end{equation}
Noticing from \eqref{outer-order-1} that
\beq \label{S5eq2}
 \partial_t \varphi_{1}\bigl|_{t=0}=-\nabla \varphi_{0} \cdot \nabla \varphi_{1}\bigl|_{t=0}-2a_0 a_{1}\bigl|_{t=0}
 =\na\vf_{0,0}^{\rm in}\cdot\na \vf_{1,0}^{\rm in}-2a_{0,0}^{\rm in}a_{1,0}^{\rm in}\eqdefa \vf_{1,1}^{\rm in},
 \eeq
we complement the equation \eqref{outer-conti-1-2} with the boundary condition \eqref{varphi-order-0-bc} and the initial data \begin{equation}\label{varphi-1-initial-c}
\varphi_1|_{t=0}=\vf_{1,0}^{\rm in}, \quad \partial_t \varphi_1|_{t=0}=\vf_{1,1}^{\rm in}.
\end{equation}

By applying Theorem \ref{S8thm1}, we shall prove in Section \ref{sec-asy} that

\begin{prop}\label{S2prop3}
{\sl Let $s_0\geq 6$ be an integer. Let  $a_{1,0}^{\rm in}, \na\vf_{1,0}^{\rm in}\in H^{s_0-3}$  which satisfy the compatibility condition:
   $\partial_t^\ell\left(\varphi_1+\Phi_1\right)(0,y,0)=0$ for $y\in\R^2$ and $\ell=0,\cdots, s_0-3.$ Then under the assumptions of Proposition \ref{S2prop1},
the system \eqref{outer-order-1} with boundary condition \eqref{varphi-order-0-bc}    has a unique solution
$(a_1,\varphi_1)$  on $[0, T_0]$ such that
\begin{equation}\label{S5eq4}
\begin{split}
\bigl\|\bigl(a_1,\p_t\vf_1, \na\vf_1\bigr)\bigr\|_{W^{s_0-3}_{T_0}}\leq C\bigl(\bigl\|\bigl(a_{1,0}^{\rm in}, \na\vf_{1,0}^{\rm in}\bigr)\bigr\|_{ H^{s_0-3}}
+\bigl\|\bigl(a_{0,0}^{\rm{in}}-1,\na\vf_{0,0}^{\rm{in}}\bigr)\bigr\|_{H^{s_0-1}}\bigr).\end{split} \end{equation}
for  $T_0$ being determined by Proposition \ref{S2prop1}.}
\end{prop}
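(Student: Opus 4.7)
The plan is to reduce Proposition \ref{S2prop3} to an application of the linear wave equation Theorem \ref{S8thm1}, solve for $\varphi_1$, and then recover $a_1$ algebraically from the Bernoulli-type equation in \eqref{outer-order-1}. First I would take the system \eqref{outer-order-1} as given and observe that its second equation determines $a_1$ in terms of $\varphi_1$ via the relation $a_0a_1=-\tfrac12(\partial_t\varphi_1+\nabla\varphi_0\cdot\nabla\varphi_1)$; since Proposition \ref{S2prop1} yields $a_0\geq 1/2$ on $[0,T_0]$ by smallness, this inversion is lawful in all the relevant Sobolev spaces. The decoupled wave equation \eqref{outer-conti-1-2} for $\varphi_1$ is exactly the result of applying $\partial_t$ to the Bernoulli equation, using the consequence $\partial_t(a_0a_1)+\dive(a_0a_1\nabla\varphi_0)+\tfrac12\dive(a_0^2\nabla\varphi_1)=0$ derived in the text from the continuity equation, and then rewriting $-\dive(a_0^2\nabla\varphi_1)=-\D\varphi_1+\dive((1-a_0^2)\nabla\varphi_1)$ with $1-a_0^2=\partial_t\varphi_0+\tfrac12|\na\vf_0|^2$. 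A short rearrangement, using $\partial_t(\nabla\varphi_0\cdot\nabla\varphi_1)+\dive(\partial_t\varphi_1\nabla\varphi_0)=\nabla\partial_t\varphi_0\cdot\nabla\varphi_1+2\nabla\varphi_0\cdot\nabla\partial_t\varphi_1+\D\varphi_0\partial_t\varphi_1$, shows that \eqref{outer-conti-1-2} coincides with $P(\varphi_0,D)\varphi_1=0$ in the notation of \eqref{S8eq1a}.

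Next I would apply Theorem \ref{S8thm1} with regularity index $s=s_0-2$, source term $f=0$, and boundary trace $g(t,y)=-\Phi_1(t,y,0)$. Since $s_0\geq 6$ gives $s\in[4,s_0]$, the theorem applies. The required boundary regularity $g\in W^{s+\frac12}_{T_0}(\R^2)=W^{s_0-3/2}_{T_0}(\R^2)$ follows from Proposition \ref{S2prop2}: the definition of $\cW^{s_0-3/2}_{1,T_0}$ together with the exponential factor in $Z$ yields, upon taking the trace $Z=0$ and controlling all time derivatives up to $[s_0-3/2]$, the estimate $\|\Phi_1(\cdot,\cdot,0)\|_{W^{s_0-3/2}_{T_0}(\R^2)}\lesssim \bigl\|(a_{0,0}^{\rm in}-1,\na\vf_{0,0}^{\rm in})\bigr\|_{H^{s_0-1}}$. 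The initial data $\nabla\varphi_{1,0}^{\rm in}\in H^{s_0-3}$ is assumed, and $\varphi_{1,1}^{\rm in}$ given by \eqref{S5eq2} lies in $H^{s_0-3}$ by the Sobolev algebra property (which needs $s_0-3\geq 2$, ensured by $s_0\geq 6$) combined with Proposition \ref{S2prop1}. The compatibility condition of Theorem \ref{S8thm1} reads $\partial_t^\ell(\varphi_1-g)(0,y,0)=0$ for $\ell=0,\dots,s_0-3$, which is precisely the assumption $\partial_t^\ell(\varphi_1+\Phi_1)(0,y,0)=0$ stated in the proposition.

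Having obtained $\varphi_1$ with the estimate $\|(\partial_t\varphi_1,\na\varphi_1)\|_{W^{s_0-3}_{T_0}}$ controlled by the sum in \eqref{S5eq4}, I would then define $a_1\eqdef -\tfrac1{2a_0}(\partial_t\varphi_1+\nabla\varphi_0\cdot\nabla\varphi_1)$; the product/quotient estimates in $W^{s_0-3}_{T_0}$, using the already controlled $\|a_0-1\|_{W^{s_0-1}_{T_0}}$ and the lower bound $a_0\geq 1/2$, give the desired bound on $a_1$ and, by construction, the Bernoulli equation. To verify the continuity equation for $(a_1,\varphi_1)$, I would show by direct computation that multiplying it by $a_0$ and substituting $a_0a_1=-\tfrac12(\partial_t\varphi_1+\nabla\varphi_0\cdot\nabla\varphi_1)$ produces exactly $\partial_t(a_0a_1)+\dive(a_0a_1\nabla\varphi_0)+\tfrac12\dive(a_0^2\nabla\varphi_1)-a_1\bigl(\partial_t a_0+\nabla a_0\cdot\nabla\varphi_0+\tfrac12 a_0\D\varphi_0\bigr)$, and the first bracket vanishes because it is equivalent to the wave equation solved by $\varphi_1$ (read in integrated form via $a_0a_1|_{t=0}$ matching $a_{0,0}^{\rm in}a_{1,0}^{\rm in}$), while the second vanishes by the $a_0$-equation in \eqref{S2eq4}. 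Uniqueness follows by applying the same reduction to the difference of two solutions and invoking the uniqueness part of Theorem \ref{S8thm1}.

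The main obstacle I expect is not the algebraic reduction itself but confirming the matching of regularity and compatibility: one must carefully check that the trace of $\Phi_1$ on $\{Z=0\}$ lives in $W^{s_0-3/2}_{T_0}(\R^2)$ (which uses both the tangential $H^{s_0-3/2}$ regularity and control of all time derivatives permitted by Proposition \ref{S2prop2}), and that the compatibility condition imposed on $\varphi_1+\Phi_1$ is indeed the correct one produced by the Cauchy--Kovalevskaya-type expansion of the equation \eqref{outer-conti-1-2} at $\{t=0,\, z=0\}$. A subsidiary point is the implicit loss of derivatives: Proposition \ref{S2prop1} yields $s_0-1$ regularity for $(a_0,\varphi_0)$ while the wave equation for $\varphi_1$ only gives $s_0-3$, which is why $s_0\geq 6$ enters as a hypothesis and why the coefficients $\na\vf_0$, $\D\vf_0$, $\na\p_t\vf_0$ that appear in \eqref{S8eq1a} can be treated as smooth multipliers of the unknown.
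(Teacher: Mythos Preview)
Your proposal is correct and follows essentially the same route as the paper: reduce \eqref{outer-order-1} to the linear wave equation \eqref{outer-conti-1-2}, recognize it as $P(\varphi_0,D)\varphi_1=0$, apply Theorem \ref{S8thm1} with $s=s_0-2$ and boundary trace $g=-\Phi_1|_{Z=0}\in W^{s_0-3/2}_{T_0}(\R^2)$ supplied by Proposition \ref{S2prop2}, and then recover $a_1=-\tfrac1{2a_0}(\partial_t\varphi_1+\nabla\varphi_0\cdot\nabla\varphi_1)$ via Lemma \ref{S3lem1}. Your additional verification that the continuity equation is recovered from the wave equation plus the $a_0$-equation is a useful consistency check that the paper leaves implicit.
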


Let us  turn to  the solvability of
the boundary layer problem (\ref{phi-order-m}-\ref{a-order-m1}) with the boundary condition \eqref{a-order-m1-bc} for $0\leq k\leq m.$
In fact,  we shall prove in Section \ref{sec-asy} that

\begin{prop}\label{S2prop4}
{\sl Let $s_0\geq 2k+5$ be an integer. Let $a_{j,0}^{\rm in}, \na\vf_{j,0}^{\rm in}\in H^{s_0-2j-1}$ with $j=1, ..., k+1,$ which satisfy the
compatibility conditions:

\no ($\mathcal{A}_{k+1}$):   $\partial_t^\ell \left(\varphi_j+\Phi_j\right)(0,y,0)=0$ for $y\in\R^2,$ $\ell=0,\cdots, s_0-2j-1$
and $j=1,\cdots, k+1.$

\no Then under the assumptions of Proposition \ref{S2prop1}, the system (\ref{phi-order-m}-\ref{a-order-m1}) with the boundary condition \eqref{a-order-m1-bc}  has  a unique  solution $(A_{k+1},\, \Phi_{k+2})$ in
$\mathcal{W}^{s_0-2(k+2)+\frac12}_{1, T_0}.$ Moreover, there holds
\begin{equation}\label{phi-negative-1-solution}
\begin{split}
\bigl\|(A_{k+1},\, \Phi_{k+2})\bigr\|_{\mathcal{W}^{s_0-2(k+2)+\frac12}_{1, T_0}}\leq C\Bigl(&\|(a_{0,0}^{\rm{in}}-1,\na\vf_{0,0}^{\rm{in}})\|_{H^{s_0-1}}+\sum_{j=1}^{k+1}\|(a_{j,0}^{\rm{in}},\na\vf_{j,0}^{\rm{in}})\|_{H^{s_0-2j-1}}\Bigr).
\end{split}
\end{equation}}
\end{prop}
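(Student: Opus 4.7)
The plan is to work at fixed $k$, with the outer profiles $(a_j,\varphi_j)$ for $0\leq j\leq k+1$ and the inner profiles $(A_j,\Phi_{j+1})$ for $0\leq j\leq k$ being already available from Propositions \ref{S2prop1}--\ref{S2prop3} and previous applications of Proposition \ref{S2prop4} itself, with enough regularity that the source terms $F_k$ and $G_k$ in (\ref{phi-order-m})-(\ref{a-order-m1}) lie in $\mathcal{W}^{s_0-2(k+2)+\frac12}_{1,T_0}$. The system (\ref{phi-order-m})-(\ref{a-order-m1}) then becomes a \emph{linear} ODE in $Z$ for the unknown pair $(A_{k+1},\Phi_{k+2})$, with $(t,y)$ acting only as parameters, complemented by the Dirichlet condition \eqref{a-order-m1-bc} at $Z=0$ and rapid decay at $Z=+\infty$.

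The key algebraic observation is that equation (\ref{phi-order-m}) admits a first integration. Multiplying it by $(\overline{a}_0+A_0)$ turns the left-hand side into
\begin{equation*}
\tfrac12\partial_Z\bigl[(\overline{a}_0+A_0)^2\partial_Z \Phi_{k+2}\bigr]+(\overline{a}_0+A_0)\Bigl[(\partial_Z\Phi_1+\overline{\partial_z\varphi}_0)\partial_Z A_{k+1}+\tfrac{A_{k+1}}{2}\partial_Z^2\Phi_1\Bigr],
\end{equation*}
and integration from $Z$ to $+\infty$ together with the uniform positivity of $\overline{a}_0+A_0$ (inherited from Propositions \ref{S2prop1} and \ref{S2prop2} via the smallness of $c$) yields an explicit expression for $\partial_Z\Phi_{k+2}$ as a $Z$-nonlocal linear functional of $A_{k+1}$ plus known sources. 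Substituting this expression into (\ref{a-order-m1}) reduces the whole problem to a single scalar second-order integro-differential equation
\begin{equation*}
\partial_Z^2 A_{k+1}-V(t,y,Z)\,A_{k+1}=\cK[A_{k+1}](t,y,Z)+R(t,y,Z),
\end{equation*}
where $V$ is the explicit potential read from (\ref{a-order-m1}), $\cK$ is a nonlocal linear operator of order zero whose kernel carries the exponentially decaying boundary-layer coefficients, and $R\in\mathcal{W}^{s_0-2(k+2)+\frac12}_{1,T_0}$ collects the known terms.

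The decisive feature is the behavior of $V$ at infinity. Since $A_0$, $\partial_Z A_0$ and $\partial_Z\Phi_1$ decay exponentially at a rate strictly larger than $1$ by Proposition \ref{S2prop2}, $V(t,y,Z)\to 4\bigl(\overline{a}_0^{\,2}-(\overline{\partial_z\varphi}_0)^2\bigr)$ as $Z\to+\infty$, and the smallness assumption \eqref{S3eq27} makes this limit bounded from below by, say, $3$. The homogeneous equation $\partial_Z^2\Psi=V\Psi$ therefore admits an exponential dichotomy on $[0,+\infty)$ with a decaying fundamental solution $\Psi_-\sim e^{-\mu Z}$ for some $\mu>1$ and a growing one $\Psi_+$. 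I would then represent
\begin{equation*}
A_{k+1}(t,y,Z)=c(t,y)\Psi_-(t,y,Z)+\int_0^{+\infty}G(Z,Z')\bigl(\cK[A_{k+1}]+R\bigr)(t,y,Z')\,dZ',
\end{equation*}
where $G$ is the Green's function built from $\Psi_{\pm}$ with the Dirichlet-at-zero normalization, and fix $c(t,y)$ by imposing $A_{k+1}|_{Z=0}=-a_{k+1}(t,y,0)$. Because $\cK$ carries the small coefficients $\overline{\partial_z\varphi}_0$ and $\partial_Z\Phi_1$ (both of size $O(c)$ by Propositions \ref{S2prop1} and \ref{S2prop2}), the map $\Id-G\cK$ is invertible on the exponentially weighted space with weight $e^{Z}$ via a Neumann-series / contraction argument, producing a unique $A_{k+1}$ at the base level of tangential regularity; $\partial_Z\Phi_{k+2}$ is then recovered from the formula of the first step.

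To promote this solution to the full class $\mathcal{W}^{s_0-2(k+2)+\frac12}_{1,T_0}$, I would differentiate the reduced integro-differential equation with $\cT=(\partial_t,\nabla_{\h})$ and re-run the same Green's-function inversion at each order up to $[s_0-2(k+2)+\frac12]$; the new source terms are linear combinations of $\cT^\beta(A_{k+1},\Phi_{k+2})$ with $|\beta|$ strictly smaller and commutators with $V$, $\cK$ controlled by Propositions \ref{S2prop1}--\ref{S2prop2}, so the induction on $|\beta|$ closes. The compatibility conditions $(\mathcal{A}_{k+1})$ enter precisely to ensure that the time derivatives of $A_{k+1}$ at $t=0$, computed from the differentiated equations, agree with those of the boundary datum $-a_{k+1}|_{z=0}$ read off from the outer problem, so that the iteration in $t$ is well-defined on $[0,T_0]$. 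The main obstacle I anticipate is the bookkeeping of the $+\tfrac12$ in the regularity index $s_0-2(k+2)+\frac12$: it reflects that the Dirichlet datum at $Z=0$ is the trace of the interior function $a_{k+1}\in W^{s_0-2k-3}_{T_0}(\R^3_+)$, losing exactly one half of a tangential derivative, and this half-derivative accounting must be carefully tracked through every commutator with $V$ and $\cK$ at each step of the induction on $|\beta|$.
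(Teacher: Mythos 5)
Your reduction starts the same way as the paper's: multiply \eqref{phi-order-m} by $A_0+\overline{a}_0$, recognize the $\Phi_{k+2}$-terms as $\tfrac12\partial_Z\bigl[(A_0+\overline{a}_0)^2\partial_Z\Phi_{k+2}\bigr]$, integrate on $[Z,\infty)$, and substitute back into \eqref{a-order-m1}. You also land on the correct limiting potential $4\bigl(\overline{a}_0^{\,2}-(\overline{\partial_z\varphi}_0)^2\bigr)=4h_0^2$, which is the $Z\to\infty$ limit of the paper's coefficient $\fg$ in \eqref{phi-order-m-ind-7-1}. But you misdescribe the outcome as a genuinely \emph{integro-differential} equation with a nonlocal operator $\cK$ acting on $A_{k+1}$, and you build your solvability argument (Green's function plus Neumann series, invoking the smallness of $\cK$) around that premise. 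The paper shows something sharper: using the inner equations \eqref{phi-negative-1} and \eqref{phi-negative-1-4} for the leading-order layer, the $A_{k+1}$-dependent terms in the first integral collapse into the exact derivative $\partial_Z\bigl[(\overline{a}_0)^2\overline{\partial_z\varphi}_0\,A_{k+1}/(A_0+\overline{a}_0)\bigr]$, so the integrated relation \eqref{phi-order-m-6} expresses $\partial_Z\Phi_{k+2}$ \emph{pointwise} in $A_{k+1}(Z)$; the only $\int_Z^\infty$ remaining acts on the known source $F_k$ and lives in $\widetilde G_k$, not on the unknown. Hence \eqref{a-order-m1-2} is a \emph{local} linear ODE $\partial_Z^2 A_{k+1}=\fg\,A_{k+1}+\widetilde G_k$. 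The Neumann series on $\cK$ is therefore a phantom step: $\cK\equiv 0$ after the correct algebra. The paper then dispenses with the Green's-function construction entirely and instead homogenizes the Dirichlet datum by $\widetilde A_{k+1}=A_{k+1}+e^{-3Z}\bar a_{k+1}$ and runs a weighted energy estimate against $-e^{2Z}\cT^\ell\widetilde A_{k+1}$, exploiting the coercive lower bound $\fg-2\geq 3/2$ (which follows from $\|\fg-4\|_{L^\infty}$ being $O(c)$, hence from the smallness hypothesis \eqref{S3eq27}); this directly produces the $e^{\gamma_0 Z}$-weighted bounds required by $\mathcal W^{s_{0,k}}_{1,T_0}$ and is applied for each $\cT^\ell$ and $\partial_Z^k$ to get the full norm.

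The two routes are not incompatible: with a corrected $\cK=0$, your Green's-function representation built from the exponential dichotomy (decay rate $\sqrt{4h_0^2}=2h_0>1$, so the weight $e^Z$ is admissible) would also solve the local ODE, and your explanation of the $+\tfrac12$ in the regularity index as the trace loss from $a_{k+1}\in W^{s_0-2k-3}_{T_0}(\R^3_+)$ down to $a_{k+1}|_{z=0}\in W^{s_0-2k-\frac72}_{T_0}(\R^2)$ is exactly the right accounting. What you genuinely missed is the second algebraic identity — the combination of \eqref{phi-negative-1} and \eqref{phi-negative-1-4} — that kills the nonlocal coupling; this is the structural ingredient that lets the paper avoid any contraction argument and is the main insight of this step. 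You should also make explicit (as the paper does) that the constants in the Green's function or energy estimate are uniform on $[0,T_0]$ precisely because the positivity of $\fg-2$ is inherited from Proposition \ref{S2prop1} and the smallness of $c$; without that uniformity the induction on $k$ and the promotion to $\mathcal W^{s_{0,k}}_{1,T_0}$ would not close.
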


Then along the same line to the proof of Proposition \ref{S2prop3}, we have

\begin{prop}\label{S2prop5}
{\sl  Let $s_0\geq 2k+7$ be an integer. Let $a_{k+2,0}^{\rm in}, \na\vf_{k+2,0}^{\rm in}\in H^{s_0-2k-5}$ which satisfy the compatibility condition $(\mathcal{A}_{k+2}).$ Then under the assumptions of Propositions \ref{S2prop1} and \ref{S2prop4},
the system \eqref{outer-order-m} with boundary condition \eqref{S2eq23}    has a unique solution
$(a_{k+2},\varphi_{k+2})$  on $[0, T_0]$ such that
\begin{equation}\label{Saeq1}
\begin{split}
\bigl\|\bigl(a_{k+2},\,\p_t\vf_{k+2}, \,\na\vf_{k+2}\bigr)\bigr\|_{W^{s_0-1-2(k+2)}_{T_0}}
\leq C\Bigl(&\|(a_{0,0}^{\rm{in}}-1,\na\vf_{0,0}^{\rm{in}})\|_{H^{s_0-1}}\\
&+\sum_{j=1}^{k+2}\|(a_{j,0}^{\rm{in}},\na\vf_{j,0}^{\rm{in}})\|_{H^{s_0-2j-1}}\Bigr)
\end{split}
\end{equation}
for  $T_0$ being determined by Proposition \ref{S2prop1}.}
\end{prop}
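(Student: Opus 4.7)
The plan is to reduce the coupled system \eqref{outer-order-m} with the Dirichlet boundary condition \eqref{S2eq23} to a single linear wave equation in $\varphi_{k+2}$ of exactly the form treated by Theorem~\ref{S8thm1}, and then recover $a_{k+2}$ algebraically from the second (Bernoulli-type) equation of \eqref{outer-order-m}. This mirrors the strategy used for Proposition~\ref{S2prop3}, the only differences being that the source terms now depend inductively on the lower-order profiles $(a_j,\varphi_j)_{j\leq k+1}$ and $(A_j,\Phi_{j+1})_{j\leq k+1}$ furnished by Propositions~\ref{S2prop1}, \ref{S2prop3} and \ref{S2prop4}, and that the Dirichlet datum on $\{z=0\}$ is nontrivial.

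First step (derivation of the wave equation). Multiplying the first line of \eqref{outer-order-m} by $a_0$ and combining with the continuity equation for $a_0$ in \eqref{S2eq4}, we rewrite the amplitude equation in conservation form
\[
\partial_t(a_0 a_{k+2}) + \dive(a_0 a_{k+2}\nabla\varphi_0) + \frac{1}{2}\dive(a_0^2\nabla\varphi_{k+2}) = a_0\, f_{k+1}^a.
\]
Applying $\partial_t$ to the second line of \eqref{outer-order-m} and using both this identity and the algebraic relation $2a_0 a_{k+2} = -\partial_t\varphi_{k+2} - \nabla\varphi_0\cdot\nabla\varphi_{k+2} + (2a_0)^{-1}(\Delta a_k + g_{k+1}^\varphi)$ to eliminate $a_{k+2}$, we arrive at
\[
P(\varphi_0, D)\varphi_{k+2} = F_{k+2},
\]
where $P(\varphi_0,D)$ is precisely the operator in \eqref{S8eq1a} and $F_{k+2}$ is a universal affine combination of $\Delta a_k$, $g_{k+1}^\varphi$, $a_0 f_{k+1}^a$ and their $\partial_t$- and $\dive$-variants.

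Second step (application of Theorem~\ref{S8thm1}). The Dirichlet data $g_{k+2}(t,y)\eqdefa -\Phi_{k+2}(t,y,0)$ lies in $W^{s_0-2(k+2)+\frac{1}{2}}_{T_0}(\R^2)$ by trace from the bound produced by Proposition~\ref{S2prop4}; the initial data are $\varphi_{k+2,0}^{\rm in}$ and the value of $\partial_t\varphi_{k+2}$ at $t=0$ dictated by the second equation of \eqref{outer-order-m}; and the compatibility condition $(\mathcal{A}_{k+2})$ is precisely what is required to ensure that $\partial_t^\ell(\varphi_{k+2}-g_{k+2})(0,y,0)=0$ for every $\ell$ up to the order demanded by Theorem~\ref{S8thm1}. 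With the index $s$ chosen so that $W^{s-1}_{T_0} = W^{s_0-2k-5}_{T_0}$, Theorem~\ref{S8thm1} produces a unique $\varphi_{k+2}$ satisfying the estimate on the right-hand side of \eqref{Saeq1}.

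Third step (recovery of $a_{k+2}$ and closure). We \emph{define} $a_{k+2}$ via the algebraic identity above; Moser-type product estimates combined with the bounds already available on $\varphi_{k+2}$, $a_k$ and $g_{k+1}^\varphi$ give $a_{k+2}\in W^{s_0-2k-5}_{T_0}$ with the bound \eqref{Saeq1}. It remains to verify the first equation of \eqref{outer-order-m} and the initial condition $a_{k+2}|_{t=0}=a_{k+2,0}^{\rm in}$; both follow by reversing the $\partial_t$ taken in Step~1, which is exactly permitted by the particular choice of $\partial_t\varphi_{k+2}|_{t=0}$ (equivalently, by the compatibility condition $(\mathcal{A}_{k+2})$ at order $\ell=0,1$). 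The principal obstacle will be the careful bookkeeping of $F_{k+2}$: it contains the dangerous term $\Delta a_k$ which costs two derivatives, so the target estimate on $(a_{k+2},\varphi_{k+2})$ in $W^{s_0-2k-5}_{T_0}$ requires control of $a_k$ in $W^{s_0-2k-3}_{T_0}$, exactly what the previous induction step (or Proposition~\ref{S2prop1} when $k=0$) supplies. Accounting for the half-derivative loss incurred at each stage by the boundary-layer trace is what fixes the Sobolev exponents in \eqref{Saeq1} and the lower bound on $s_0$ in the hypothesis.
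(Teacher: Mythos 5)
Your proposal follows exactly the paper's strategy: multiply the amplitude equation by $a_0$, combine with the continuity equation for $a_0$ to obtain the conservation form for $a_0 a_{k+2}$, apply $\partial_t$ to the Bernoulli equation and substitute to obtain $P(\varphi_0,D)\varphi_{k+2}=F_{k+2}$, invoke Theorem~\ref{S8thm1} with the Dirichlet trace $-\Phi_{k+2}|_{Z=0}$ supplied by Proposition~\ref{S2prop4}, and then recover $a_{k+2}$ algebraically from the Bernoulli equation. Your explicit verification that the first-order system is recovered after undoing the $\partial_t$ and your remarks on the derivative accounting are sound and merely make explicit what the paper leaves implicit.
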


Let $(a_j,\vf_j)$  for $j=0, \cdots, m+2,$ and $(A_j, \Phi_{j+1})$  for $j=0, \cdots, m+1,$ be constructed in the previous propositions.
We denote
\beq \label{S6eq1}
\begin{split}
 &\Psi^{a, m}\eqdefa a^{\varepsilon, m} e^{\frac{i}{\varepsilon}\varphi^{\varepsilon, m}}\with
 a^{\varepsilon, m}=a^{\text{int}, \varepsilon, m}+[a^{\text{b}, \varepsilon, m}]_\e,\quad \vf^{\varepsilon, m}=\vf^{\text{int}, \varepsilon, m}+[\vf^{\text{b}, \varepsilon, m}]_\e, \\
& a^{\text{int}, \varepsilon, m}= \sum_{j=0}^{m+1}\varepsilon^j a_j,\quad
 a^{\text{b}, \varepsilon, m}=\sum_{j=0}^{m+1}\varepsilon^j A_j,\quad\vf^{\text{int}, \varepsilon, m}
=\sum_{j=0}^{m+2}\varepsilon^j\varphi_j, \quad\vf^{\text{b}, \varepsilon, m}=\sum_{j=1}^{m+2}\varepsilon^j\Phi_j,
\end{split}
\eeq
and
\beq \label{initial-0}
\begin{split}
 &\mathcal{E}_{0}\eqdefa \|(a_{0,0}^{\rm{in}}-1,\na\vf_{0,0}^{\rm{in}})\|_{H^{s_0-1}}^2
+\sum_{j=1}^{m+2}\|(a_{j,0}^{\rm{in}},\na\vf_{j,0}^{\rm{in}})\|_{H^{s_0-2j-1}}^2.
\end{split}
\eeq

Next let $w$ and $\phi$ be  real-valued functions, we are going to seek the true solution of \eqref{NLS-0} with the form:
\begin{equation}\label{S7eq1}
\begin{split}
\Psi^\varepsilon=\bigl(a^{\varepsilon,m}+w\bigr)e^{i\left(\frac{\varphi^{\varepsilon,m}}{\varepsilon}+\phi\right)},
\end{split}
\end{equation}
where $(w,\phi)$ satisfy the boundary conditions:
\begin{equation*}
\begin{split}
&w|_{z=0}=0,\quad \phi|_{z=0}=0.
\end{split}
\end{equation*}
In view of \eqref{S7eq1}, we write
\begin{equation}\label{expr-Phi-1}
\begin{split}
\Psi^\varepsilon=\Psi^{a,m}+\frak{w}e^{\frac{i}{\varepsilon}\varphi^{\varepsilon,m}} \quad\text{with}\quad \frak{w}= w+(a^{\varepsilon,m}+w)(e^{i\phi}-1)\eqdefa \wr+i\wi.
\end{split}
\end{equation}
It turns out that it is more convenient to handle the estimate of $\fw$ than that of $(w,\phi).$  As a matter of fact, we shall derive in
Section \ref{sec-valid-wkb} that $(\wr,\wi)$ verifies
\begin{equation}\label{w-phi-eqns}
\begin{cases}
&\varepsilon\bigl(\partial_{t}\wr+\mathcal{S}_{u^{\varepsilon, m}}(\wr)\bigr)
+\frac{\varepsilon^{2}}{2}\,\Delta \wi\\
&\quad=\Bigl(\frac{\varepsilon^2}{2}\frac{\Delta a^{\varepsilon, m}}{a^{\varepsilon, m}}
+\varepsilon^{m+2} r^m_\vf \Bigr)\wi-\varepsilon^{m+2}r^m_\vf+\text{Im}(Q^\varepsilon({\fw})) \quad \text{\rm{in}} \quad \R_+\times\mathbb{R}^3_+,\\
&\varepsilon \bigl(\partial_{t}\wi+\mathcal{S}_{u^{\varepsilon, m}}(\wi)\bigr)-\frac{\varepsilon^{2}}{2} \Delta \wr  +\Bigl(2(a^{\varepsilon,m})^{2}+\frac{\varepsilon^2}{2}\frac{\Delta a^{\varepsilon, m}}{a^{\varepsilon, m}}
+\varepsilon^{m+2} r^m_\vf \Bigr) \wr \\
&\quad=\varepsilon^{m+1} a^{\e,m}r_a^m-\text{Re}(Q^\varepsilon({\fw})),\\
& \wr |_{z=0}=0,\quad  \wi |_{z=0}=0,
\end{cases}
\end{equation}
where $u^{\e,m}\eqdefa \na\vf^{\e,m},$ $\cS_f(g)\eqdefa f\cdot\na g+\frac12g\na\cdot f,$ and $Q^\varepsilon({\fw}), r^m_a$ and $ r^m_\vf$ are given  respectively by \eqref{def-Q-0} and  \eqref{defrap}.

By crucially using the symmetric property of the operator $\cS_f(g)$ (see Lemma \ref{lem-est-conv-2}) and the special structure of the system \eqref{w-phi-eqns} (especially that we can have the estimate of $\|\wr\|_{L^2_+}$), we shall prove in Section \ref{sec-valid-wkb} the following proposition:

\begin{prop}\label{S2prop6}
{\sl Let $m, N$ and $s_0$ be integers so that $m, N\geq 4$ and $s_0\geq 2m+9+N.$    Let $\Psi^{a, m}=a^{\varepsilon, m} e^{\frac{i}{\varepsilon}\varphi^{\varepsilon, m}}$ be the approximate solutions of \eqref{NLS-0} constructed in \eqref{S6eq1}. Then under the assumptions of Proposition \ref{S2prop5}, there exists a small enough positive constant $\varepsilon_0>0$ such that for any $\e\in (0,\e_0),$ the system \eqref{NLS-0}
 has a unique solution $\Psi^\varepsilon=(a^{\varepsilon,m}+\frak{w})e^{i\frac{\varphi^{\varepsilon,m}}{\varepsilon}}$ on $[0, T_0].$ Moreover, for all $t\in [0, T_0]$ and  ${\cT}\eqdefa\,(\partial_t,\, \nabla_{\h}),$ there holds
\begin{equation}\label{zhang4}
\begin{split}
&\sum_{j=0}^{N-1}\bigl(\|{\Ta}^j\wr\|_{L^2_+}^2 +\|\varepsilon{\Ta}^j\fw\|_{H^1}^2\bigr)\lesssim \mathcal{E}_{0}\varepsilon^{2m+2}.
\end{split}
\end{equation}
}
\end{prop}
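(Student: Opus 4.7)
The plan is to establish a quasilinear energy estimate for the modulated system \eqref{w-phi-eqns} and close it via a bootstrap argument on \eqref{zhang4}. Local existence of $(\wr,\wi)$ for fixed $\e>0$ follows from standard Schr\"odinger theory with Dirichlet data and a contraction-mapping argument; the real work is the $\e$-uniform propagation of the bound \eqref{zhang4} on the full interval $[0,T_0]$ determined by Proposition~\ref{S2prop1}.

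The central observation is the hidden Bogoliubov structure. If one freezes the coefficients and discards the transport, commutator and nonlinear terms, the linearisation of \eqref{w-phi-eqns} reduces to $\e\p_t\wr=-\frac{\e^2}{2}\D\wi$ and $\e\p_t\wi=\frac{\e^2}{2}\D\wr-2(a^{\e,m})^2\wr$, which together conserve
\[
E_0(t)\eqdefa \int_{\R^3_+}\Bigl(2(a^{\e,m})^2|\wr|^2+\tfrac{\e^2}{2}|\na\wr|^2+\tfrac{\e^2}{2}|\na\wi|^2\Bigr)\,dx.
\]
This matches exactly the quantities on the left of \eqref{zhang4}, and is the characteristic Bogoliubov-scaling energy that distinguishes $\wr$ (controlled in $L^2$) from $\wi$ (controlled only in $\e\cdot H^1$). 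Accordingly, I would define $E(t)=\sum_{j=0}^{N-1}E_j(t)$ with $E_j$ obtained by replacing $(\wr,\wi)$ by $({\Ta}^j\wr,{\Ta}^j\wi)$. Only the tangential derivatives ${\Ta}=(\p_t,\na_\h)$ are admissible here: normal derivatives $\p_z$ would destroy the boundary conditions $\wr|_{z=0}=\wi|_{z=0}=0$ and the integration-by-parts identities beneath.

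The time derivative of $E_j$ is then computed by pairing the ${\Ta}^j$-differentiated first equation against $2(a^{\e,m})^2{\Ta}^j\wr$ and the second against $-\D{\Ta}^j\wi$. Two cancellations produce a closed estimate. First, Lemma~\ref{lem-est-conv-2} supplies the skew-symmetry $\int_{\R^3_+}\cS_{u^{\e,m}}(g)\,g\,dx=0$ whenever $g|_{z=0}=0$, which annihilates the transport operators on every derivative level. Second, the Laplacian cross-terms $\int(\D\wi){\Ta}^j\wr-(\D\wr){\Ta}^j\wi$ cancel after integration by parts, which is legal precisely because ${\Ta}^j\wr$ and ${\Ta}^j\wi$ still satisfy the Dirichlet condition. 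What survives is $\frac{d}{dt}E_j$ equal to commutators of ${\Ta}^j$ with the variable coefficients, the forcing from $\e^{m+1}a^{\e,m}r_a^m$ and $\e^{m+2}r_\vf^m$, and the nonlinear $Q^\e(\fw)$-contribution. The forcing is immediately $O(\e^{m+1})\sqrt{E_j}$ by Propositions~\ref{S2prop1}--\ref{S2prop5} and the explicit form of $r_a^m,r_\vf^m$; the commutators are handled by Moser-type inequalities, the apparent loss of $\e^{-1}$ in $\p_z[\,\cdot\,]_\e$ being absorbed exactly by the $\e$-weight on $\na\wr,\na\wi$ in $E_j$.

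The main obstacle will be controlling $Q^\e(\fw)$ at tangential-derivative level $N-1$. Under the bootstrap assumption that \eqref{zhang4} holds with some constant $K$, Sobolev embedding on $\R^3_+$ together with the Dirichlet boundary condition yields $\|\fw\|_{L^\infty}\lesssim K\sqrt{\mathcal{E}_0}\,\e^{m-1/2}$, so the quadratic/cubic nonlinear terms are a genuine perturbation of the Bogoliubov dynamics for $\e$ small. Gagliardo--Nirenberg and Moser-type product estimates reduce tangential derivatives of $Q^\e(\fw)$ to expressions absorbable into $K\sqrt{E_j}\,\e^{m+1}$, and a precise accounting of how many derivatives are spent in Sobolev embedding versus products is what forces the regularity assumption $s_0\ge 2m+9+N$. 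Assembling these estimates gives $\frac{dE}{dt}\lesssim E+\mathcal{E}_0\e^{2m+2}$, and Gronwall on $[0,T_0]$ together with the initial bound $E(0)\lesssim \mathcal{E}_0\e^{2m+2}$ (which follows from \eqref{S1eq1} and the construction of $\Psi^{a,m}$) improves the bootstrap constant. This yields \eqref{zhang4} and simultaneously extends the local solution to the full interval $[0,T_0]$.
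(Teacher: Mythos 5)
Your structural reading is right: the Bogoliubov-scaled energy $\int 2(a^{\e,m})^2\wr^2+\frac{\e^2}{2}(|\na\wr|^2+|\na\wi|^2)$ is the conserved quantity of the frozen linearised system, $\wr$ is controlled in $L^2$ while $\wi$ only in $\e H^1$, and propagating only tangential derivatives ${\Ta}=(\p_t,\na_\h)$ so as to preserve the Dirichlet conditions is the correct move. Two steps would not go through as written, however. The multipliers $(2(a^{\e,m})^2{\Ta}^j\wr,\,-\D{\Ta}^j\wi)$ do not produce the claimed cancellations: pairing the first equation against $2(a^{\e,m})^2\wr$ yields a cross-term $\e^2\int(a^{\e,m})^2\wr\D\wi\,dx$, pairing the second against $-\D\wi$ yields $-2\int(a^{\e,m})^2\wr\D\wi\,dx$, and these mismatch by the factor $\e^2/2$; moreover a spurious $\frac{\e^2}{2}\int\D\wi\D\wr\,dx$ term survives. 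The variational-derivative multipliers are $(2(a^{\e,m})^2\wr-\frac{\e^2}{2}\D\wr,\,-\frac{\e^2}{2}\D\wi)$, with the $\D\wr$ piece essential. The paper sidesteps this bookkeeping entirely by pairing instead against $(-\p_t\wi,\p_t\wr)$ in Lemma~\ref{lem-linear-high-1}: the $\e\int\p_t\wr\p_t\wi$ terms then cancel exactly, the transport cross-terms reorganise into a total time derivative via the \emph{third} identity of Lemma~\ref{lem-est-conv-2} (the identity $\int\cS_u(g)g\,dx=0$ that you invoke does not arise under either choice of multiplier), and--crucially--the sources paired against $\p_t\wi,\p_t\wr$ are handled by back-substituting the other equation, which is the step that yields tractable $\e^{-1}$-weighted source pairings.

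Second, the conclusion "$\frac{dE}{dt}\lesssim E+\mathcal{E}_0\e^{2m+2}$" conceals exactly the mechanism that makes $m\ge 4$ necessary. The nonlinearity $Q^\e(\fw)$ enters through $\e^{-2}\sum_j\bigl(\|{\Ta}^jQ^\e\|_{L^2_+}^2+\|\e\na{\Ta}^jQ^\e\|_{L^2_+}^2\bigr)$, and after the anisotropic interpolation inequalities (isotropic $H^s\hookrightarrow L^\infty$ is not available here, since the energy controls no unweighted normal derivatives; one needs the $L^\infty_\v$/$H^s_\h$ splitting of \eqref{aniso-sobolev-infty}) Lemma~\ref{S8lem6} gives the bound $\e^{-8}\mathfrak{E}_N(1+\e^{-2}\mathfrak{E}_N)\mathfrak{E}_N$. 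Under the bootstrap $\mathfrak{E}_N\lesssim\mathcal{E}_0\e^{2m+2}$ this becomes $\lesssim\mathcal{E}_0\e^{2m-6}\mathfrak{E}_N$, and $m\ge 4$ is precisely what drives the prefactor to zero as $\e\to 0$ so that Gronwall closes. Your heuristic $\|\fw\|_{L^\infty}\lesssim\e^{m-1/2}$ does not track the $\e^{-8}$ loss, which is the quantitative heart of the argument.
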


The main result of this paper states as follows, the proof of which will be presented in Section \ref{sec-valid-wkb}.

\begin{thm}\label{thmmain}
{\sl Let $m\geq 4$ and $s_0\geq 2m+13$ be integers. Let $a^{\e,m}, \vf^{\e,m}$ and   $\Psi^{a, m}$ be  constructed in \eqref{S6eq1}.  Let $a_{j,0}^{\rm in}, \na\vf_{j,0}^{\rm in}\in H^{s_0-2j-1}$ with $j=1, ..., m+2,$ which satisfy the compatibility conditions, $\mathcal{A}_{m+2}.$ Then there exist sufficiently small positive constants $c$ and   $\varepsilon_0$ such that  under the condition \eqref{S3eq27}, for any $\e\in (0,\e_0),$ \eqref{NLS-0}
 has a unique  solution $\Psi^\varepsilon,$ which satisfies
\begin{equation}\label{ener-infty-1}
\begin{split}
&\left\|e^{-i\frac{\varphi^{\varepsilon,m}}{\varepsilon}}(\Psi^\varepsilon-\Psi^{a, m})\right\|_{L^\infty_{T_0}(W^{1, \infty)}}\leq C\,\mathcal{E}_{0}^{\frac{1}{2}} \varepsilon^{m-1},
\end{split}
\end{equation} for the positive time $T_0$ being determined by Proposition \ref{S2prop1}.}
\end{thm}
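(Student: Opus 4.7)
The plan is to extract the pointwise bound \eqref{ener-infty-1} from the high-order tangential energy estimate \eqref{zhang4} of Proposition \ref{S2prop6}, using the system \eqref{w-phi-eqns} itself to trade tangential regularity for normal regularity and then invoking a Sobolev embedding into $W^{1,\infty}(\R^3_+)$. First, from the ansatz \eqref{S7eq1} and the definition of $\fw$ in \eqref{expr-Phi-1} one has
\begin{equation*}
e^{-i\varphi^{\varepsilon,m}/\varepsilon}\bigl(\Psi^\varepsilon-\Psi^{a,m}\bigr)=\fw,
\end{equation*}
so it suffices to prove $\|\fw(t,\cdot)\|_{W^{1,\infty}(\R^3_+)}\lesssim \mathcal{E}_0^{1/2}\varepsilon^{m-1}$ uniformly on $[0,T_0]$. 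Since $s_0\geq 2m+13\geq 2m+9+N$ with $N=4$, the smallness condition \eqref{S3eq27} and the compatibility conditions $(\mathcal{A}_{m+2})$ are in force, so Propositions \ref{S2prop1}--\ref{S2prop6} all apply; in particular Proposition \ref{S2prop6} delivers, for $0\le j\le 3$,
\begin{equation*}
\|\cT^j\wr\|_{L^2_+}\lesssim \mathcal{E}_0^{1/2}\varepsilon^{m+1},\qquad \|\cT^j\fw\|_{H^1(\R^3_+)}\lesssim \mathcal{E}_0^{1/2}\varepsilon^{m}.
\end{equation*}

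The core of the argument is to upgrade this purely tangential control into full $H^3(\R^3_+)$ control of $\fw$. Writing $\p_z^2=\Delta-\Delta_\h$ and solving the two equations of \eqref{w-phi-eqns} for $\tfrac12\varepsilon^2\Delta\wi$ and $\tfrac12\varepsilon^2\Delta\wr$ respectively, we obtain identities of the schematic form
\begin{equation*}
\varepsilon^2\p_z^2\wi = -2\varepsilon\bigl(\p_t\wr+\cS_{u^{\varepsilon,m}}(\wr)\bigr)+\varepsilon^2\Delta_\h\wi+O(\wi,\wr)+\varepsilon^{m+2}\,r+Q^\varepsilon(\fw),
\end{equation*}
and analogously for $\p_z^2\wr$. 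Because $a^{\varepsilon,m}$, $u^{\varepsilon,m}$ and their tangential derivatives are uniformly bounded in $L^\infty$, while their normal derivatives carry at most one factor of $\varepsilon^{-1}$ coming from the boundary-layer profiles $[A_j]_\e,\,[\Phi_j]_\e$ controlled in the exponentially weighted spaces $\mathcal{W}^{s}_{\gamma_0,T}$ of Definition \ref{S2def2}, each normal derivative recovered through this identity costs precisely one power of $\varepsilon^{-1}$. Plugging in the tangential estimates gives first $\|\p_z^2\fw\|_{L^2}\lesssim \mathcal{E}_0^{1/2}\varepsilon^{m}$; then differentiating the identity once in $z$ and using that $\|\cT^j\p_z\wr\|_{L^2}\lesssim \mathcal{E}_0^{1/2}\varepsilon^{m}$ is already available from the $H^1$ bound on $\cT^j\fw$, we obtain $\|\p_z^3\fw\|_{L^2}\lesssim \mathcal{E}_0^{1/2}\varepsilon^{m-1}$; this $\varepsilon^{-1}$ loss is what ultimately sets the rate in \eqref{ener-infty-1}. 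Commuting $\cT^k$ with the equation before solving for the Laplacian, for $k=0,1$, yields the mixed bounds $\|\p_z^2\p_\h\fw\|_{L^2},\,\|\p_z^2\p_t\fw\|_{L^2}\lesssim \mathcal{E}_0^{1/2}\varepsilon^{m-1}$, and combining these with the tangential bounds on $\cT^j\fw$ produces
\begin{equation*}
\|\fw\|_{L^\infty_{T_0}(H^3(\R^3_+))}\lesssim \mathcal{E}_0^{1/2}\varepsilon^{m-1}.
\end{equation*}

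The three-dimensional Sobolev embedding $H^3(\R^3_+)\hookrightarrow W^{1,\infty}(\R^3_+)$ (valid since $3>5/2$) then yields \eqref{ener-infty-1}. Uniqueness of $\Psi^\varepsilon$ on $[0,T_0]$ is inherited from the uniqueness clause of Proposition \ref{S2prop6}: any two solutions of \eqref{NLS-0} of the form \eqref{S7eq1} with the prescribed boundary and initial data would produce two remainders $\fw_1,\,\fw_2$ solving \eqref{w-phi-eqns}, and a standard energy estimate on their difference forces $\fw_1\equiv\fw_2$.

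The principal obstacle is the bookkeeping at the stage where the Laplacian identity is iterated: one must verify that every term produced on the right-hand side (nonlinear pieces $Q^\varepsilon(\fw)$, source terms $\varepsilon^{m+2} r^m_\varphi$, commutators between $\cT^k$ and the $z$-dependent coefficients supported in the boundary layer) still lies in a space where the Proposition \ref{S2prop6} bounds can be applied, and that the singular normal derivatives of $[A_j]_\e$ and $[\Phi_j]_\e$ do not erode the $\varepsilon^{m+1}$ gain on $\wr$. The large integer $s_0\geq 2m+13$ has been chosen precisely so that enough tangential regularity is available at the top of the WKB stack of Propositions \ref{S2prop1}--\ref{S2prop5} for this bookkeeping to close.
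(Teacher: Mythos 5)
Your broad strategy — using the equation \eqref{w-phi-eqns} to trade tangential regularity from Proposition \ref{S2prop6} for normal regularity, then embedding into $W^{1,\infty}$ — matches the paper's. However, your chosen route through a full $H^3(\R^3_+)$ bound together with the isotropic embedding $H^3\hookrightarrow W^{1,\infty}$ is not the paper's route, and your power counting at the key step does not close.

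The intermediate claim $\|\p_z^2\fw\|_{L^2}\lesssim \mathcal{E}_0^{1/2}\varepsilon^{m}$ is off by a factor of $\varepsilon$. When you isolate $\varepsilon^2\Delta\wi$ (or $\varepsilon^2\Delta\wr$) from \eqref{w-phi-eqns}, the dominant term on the right-hand side is not the source $\varepsilon^{m+2}r^m_\vf$ but the zero-order term: $\bigl(\tfrac{\varepsilon^2}{2}\tfrac{\Delta a^{\varepsilon,m}}{a^{\varepsilon,m}}+\varepsilon^{m+2}r^m_\vf\bigr)\wi$ in the first equation, and $2(a^{\varepsilon,m})^2\wr$ in the second. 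Using Lemma \ref{lem-est-singu-1} and Proposition \ref{S2prop6} these are only $O(\mathcal{E}_0^{1/2}\varepsilon^{m+1})$, so after dividing by $\varepsilon^2$ one lands at $\|\Delta\fw\|_{L^2}\lesssim\mathcal{E}_0^{1/2}\varepsilon^{m-1}$ and hence $\|\p_z^2\fw\|_{L^2}\lesssim\mathcal{E}_0^{1/2}\varepsilon^{m-1}$, consistent with \eqref{est-sec-order-1}. Your next step — differentiating the Laplacian identity once more in $z$ to reach $\p_z^3\fw$ — then costs at least one additional factor $\varepsilon^{-1}$: terms such as $\varepsilon\cS_{\p_z u^{\varepsilon,m}}(\wi)$ contain $\p_z u^{\varepsilon,m}\cdot\nabla\wi$, and here $\p_z u^{\varepsilon,m}$ carries an $O(\varepsilon^{-1})$ boundary-layer singularity that Hardy's inequality cannot absorb because $\nabla\wi$ does not vanish at $z=0$. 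The natural outcome is $\|\p_z^3\fw\|_{L^2}\lesssim\mathcal{E}_0^{1/2}\varepsilon^{m-2}$, not $\varepsilon^{m-1}$, and the embedding $H^3\hookrightarrow W^{1,\infty}$ would then produce only $\varepsilon^{m-2}$, short of \eqref{ener-infty-1}.

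The paper avoids this by never taking a third pure normal derivative. It only estimates $\Delta({\Ta}^j\wr)$, $\Delta({\Ta}^j\wi)$ for $j=0,1,2$ directly from the system \eqref{w-phi-eqns-j} (so $\p_z^2$ always appears in company with tangential derivatives), and then closes with the anisotropic interpolation
\begin{equation*}
\|\nabla f\|_{L^\infty_+}^2\lesssim \sum_{j=0}^{3}\|{\Ta}^j f\|_{H^1}^2+\sum_{j=0}^{2}\|\Delta{\Ta}^j f\|_{L^2_+}^2,
\end{equation*}
derived from the two-dimensional Gagliardo--Nirenberg inequality in the horizontal variables together with $\|\p_z g\|_{H^1}\lesssim\|g\|_{H^1}+\|\nabla_\h g\|_{H^1}+\|\Delta g\|_{L^2}+\|\nabla_\h^2 g\|_{L^2}$. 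Plugging in \eqref{est-sec-order-1} and \eqref{zhang5} then yields $\|\nabla\fw\|_{L^\infty}\lesssim\mathcal{E}_0^{1/2}\varepsilon^{m-1}$, with the $\varepsilon^{-1}$ loss coming entirely from the single division by $\varepsilon^2$ in the Laplacian step. To repair your argument you would need to replace the isotropic $H^3$ embedding by this anisotropic one, at which point you no longer need $\p_z^3\fw$ at all.
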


\renewcommand{\theequation}{\thesection.\arabic{equation}}
\setcounter{equation}{0}

\section{The local well-posedness of the limit system (\ref{S2eq4}-\ref{outer-order-0-bc})}\label{sec-hj}

In this section, we shall prove the local existence of smooth solutions to the
initial-boundary value problem of the limit system (\ref{S2eq4}-\ref{outer-order-0-bc}).
Let us denote $\rho_0 \eqdefa \,a_0^2.$ We rewrite  (\ref{S2eq4}-\ref{outer-order-0-bc}) as
\begin{equation}\label{S3eq1}
\begin{cases}
&\partial_t \rho_0+\nabla \cdot(\rho_0\,\nabla\,\varphi_{0})=0, \quad (t, x) \in \mathbb{R}_{+} \times \R^3_{+},\\
&\partial_t \varphi_{0}+\frac{1}{2} |\nabla \varphi_{0}|^2 +(\r_{0}-1)=0,\\
&\varphi_{0}|_{z=0}=0\andf \r_0\to 1\quad\mbox{as}\quad |x|\to\infty,\\
&\r_0|_{t=0}=\left(a_{0,0}^{\rm{in}}\right)^2, \quad \vf|_{t=0}=\vf_{0,0}^{\rm{in}}.
\end{cases}
\end{equation}
By substituting the equivalent form of the second equation in \eqref{S3eq1}
\beq \label{S3eq2}
\r_0=-\bigl(\partial_t \varphi_{0}+\frac{1}{2} |\nabla \varphi_{0}|^2 -1\bigr)\eeq
into the first equation of \eqref{S3eq1},
we obtain
\begin{equation}\label{outer-order-0-b}
\partial_t^2 \varphi_{0}+ \nabla \varphi_{0}\cdot \nabla \partial_t\varphi_{0}-\nabla \cdot(\rho_0\,\nabla\,\varphi_{0})=0,
\end{equation}
which can also be equivalently written as
\begin{equation}\label{S3eq4}
\begin{split}
\partial_t^2 \varphi_{0}-\Delta \varphi_{0}+ \nabla \varphi_{0} \cdot \nabla \partial_t\varphi_{0} +\dive\,\bigl( (\partial_t \varphi_{0}+\frac{1}{2} |\nabla \varphi_{0}|^2) \nabla \varphi_{0} \bigr)=0,
\end{split}
\end{equation}
or
\begin{equation}\label{S3eq5}
\partial_t^2 \varphi_{0}+ \nabla \varphi_{0}\cdot \nabla \partial_t\varphi_{0}+\partial_t \rho_0=0.
\end{equation}
Let $\vf_{0,1}^{\rm{in}}$ be given by \eqref{vf10}. We implement the wave equation \eqref{S3eq4}
with the initial-boundary conditions:
\begin{equation}\label{outer-order-0-initial}
\varphi_0|_{t=0}=\vf_{0,0}^{\rm{in}}, \quad \partial_t\varphi_0|_{t=0}=\vf_{0,1}^{\rm{in}} \andf \varphi_0|_{z=0}=0.
\end{equation}

Before proceeding, let us first present the following product law in the space $W^s_T,$ the proof of which will
be postponed in the Appendix \ref{appA}.

\begin{lem}\label{S3lem1}
{\sl Let $2\leq s$ and $W^s_T$ be given by Definition \ref{S2def1}. Then for any $f,g\in W^s_T,$ one has
\beq \label{S3eq6}
\|fg\|_{W^s_T}\leq C_s\|f\|_{W^s_T}\|g\|_{W^s_T}.
\eeq}
\end{lem}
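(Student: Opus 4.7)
The goal is to control $\partial_t^j(fg)$ in $H^{s-j}(\R^3_+)$ for each $0 \leq j \leq [s]$ uniformly in $t \in [0,T]$. The plan is to apply Leibniz's formula in time to split $\partial_t^j(fg)$ into terms where the time and space regularity are redistributed between $f$ and $g$, and then invoke the classical bilinear Sobolev product law, which is available because $s \geq 2 > d/2 = 3/2$.

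First I would write, by the Leibniz rule,
\begin{equation*}
\partial_t^j(fg)(t) = \sum_{k=0}^j \binom{j}{k}\,\partial_t^k f(t)\cdot \partial_t^{j-k}g(t),
\end{equation*}
where, by the definition of $W^s_T$, we have $\partial_t^k f(t)\in H^{s-k}(\R^3_+)$ and $\partial_t^{j-k}g(t)\in H^{s-j+k}(\R^3_+)$. Next I would appeal to the standard bilinear estimate: for nonnegative exponents $\sigma\leq\min(\sigma_1,\sigma_2)$ satisfying $\sigma_1+\sigma_2-\sigma > 3/2$,
\begin{equation*}
\|h_1h_2\|_{H^\sigma(\R^3_+)}\leq C\,\|h_1\|_{H^{\sigma_1}(\R^3_+)}\,\|h_2\|_{H^{\sigma_2}(\R^3_+)}.
\end{equation*}
Setting $\sigma=s-j$, $\sigma_1=s-k$, $\sigma_2=s-j+k$, one checks that $\sigma\leq \sigma_1,\sigma_2$ (since $0\leq k\leq j$) and $\sigma_1+\sigma_2-\sigma = s\geq 2 > 3/2$, so the estimate applies to each summand.

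Combining these bounds yields
\begin{equation*}
\|\partial_t^k f(t)\cdot\partial_t^{j-k}g(t)\|_{H^{s-j}}\leq C\,\|\partial_t^kf(t)\|_{H^{s-k}}\,\|\partial_t^{j-k}g(t)\|_{H^{s-j+k}}\leq C\,\|f(t)\|_{W^s}\,\|g(t)\|_{W^s}.
\end{equation*}
Summing over $k\in\{0,\dots,j\}$ and $j\in\{0,\dots,[s]\}$ and taking the supremum in $t\in[0,T]$ produces \eqref{S3eq6}. The only nontrivial technical point is justifying the bilinear Sobolev estimate on the half space $\R^3_+$ rather than on $\R^3$; this reduces to the whole-space product law (proved e.g. by paraproduct decomposition or Bony's decomposition) by means of a fixed continuous linear extension operator from $H^\sigma(\R^3_+)$ to $H^\sigma(\R^3)$ (for instance Stein's universal extension, which works simultaneously on all $H^\sigma$ with $\sigma\geq 0$). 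I expect this extension step to be the only mildly delicate part of the argument; everything else is routine bookkeeping.
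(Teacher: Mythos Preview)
Your proof is correct and follows the same underlying strategy as the paper: expand $\partial_t^j(fg)$ by Leibniz in time and control each term via a spatial Sobolev product law on $\R^3_+$, reduced to the whole-space law by a Stein-type extension (exactly as the paper does for its inequality~\eqref{S3eq7a}). The only difference is packaging: the paper splits into three cases---using the algebra property of $H^{s-k}$ when $k<s-\tfrac32$, the estimate $\|fg\|_{H^\tau}\lesssim\|f\|_{H^\tau}\|g\|_{H^2}$ for $\tau\in[0,2)$ when $k=[s]-1$, and the low-regularity product law $H^{s_1}\cdot H^{s_2}\hookrightarrow H^{s_1+s_2-3/2}$ when $k=[s]$---whereas you invoke a single bilinear estimate $H^{\sigma_1}\cdot H^{\sigma_2}\hookrightarrow H^\sigma$ (valid for $0\le\sigma\le\min(\sigma_1,\sigma_2)$ and $\sigma_1+\sigma_2-\sigma>3/2$) that subsumes all three cases at once, which is cleaner.
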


The main result of this section states as follows:

\begin{thm}\label{thm-hj-order-0}
{\sl Let $4\leq s_0\in\N $ and $(\vf_{0,0}^{\rm{in}}, \, \vf_{0,1}^{\rm{in}}) \in H^{s_0}(\R^3_{+}) \times H^{s_0-1}(\R^3_{+})$ which satisfies the compatibility condition $(\mathcal{A}_0)$. We assume that
\begin{equation}\label{small-1}
\|\na\vf_{0,0}^{\rm{in}}\|_{H^{s_0-1}}+ \|\vf_{0,1}^{\rm{\rm{in}}}\|_{H^{s_0-1}} \leq c_0,
\end{equation} for some $c_0$ sufficiently small, then
there exists a positive constant ${\cC}$ so that for $T={\cC}c_0^{-1},$ (\ref{S3eq4}-\ref{outer-order-0-initial}) has  a unique solution $\varphi_0$ on $[0, T],$
which satisfies
\begin{equation}\label{solution-order-0}
\begin{split}
\left\|\left(\p_t\vf_0,\na\vf_0\right)\right\|_{W_T^{s_0-1}}
\leq C \,(\|\na\vf_{0,0}^{\rm{in}}\|_{H^{s_0-1}}+ \|\vf_{0,1}^{\rm{\rm{in}}}\|_{H^{s_0-1}}).
\end{split}
\end{equation}
}
\end{thm}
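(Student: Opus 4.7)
The plan is to reformulate (\ref{S3eq1}) as the quasilinear wave equation (\ref{S3eq4}) with Dirichlet boundary condition \eqref{outer-order-0-initial}, for which the boundary condition is natural, and then solve it by a classical Picard iteration combined with tangential-derivative energy estimates. Expanding the divergence in (\ref{S3eq4}), the equation can be written in the hyperbolic form
\begin{equation*}
\partial_t^2 \varphi_0 - (1 - \rho)\Delta\varphi_0 + \nabla\rho\cdot\nabla\varphi_0 + \nabla\varphi_0\cdot\nabla\partial_t\varphi_0 = 0,
\quad \rho = \partial_t\varphi_0 + \tfrac{1}{2}|\nabla\varphi_0|^2,
\end{equation*}
whose principal coefficient $1-\rho = \rho_0$ is bounded from below by a positive constant provided $\|\p_t\vf_0\|_{L^\infty}+\|\na\vf_0\|_{L^\infty}$ stays small, which is guaranteed by the smallness hypothesis \eqref{small-1} and a bootstrap argument.

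I would set up the iteration by picking $\vf^{(0)}\equiv 0$ and, given a smooth $\vf^{(n)}$ with $\p_t\vf^{(n)}$ and $\na\vf^{(n)}$ small in $W^{s_0-1}_T$, solve for $\vf^{(n+1)}$ the linear wave equation
\begin{equation*}
\partial_t^2 \vf^{(n+1)} - \dive\bigl(\rho^{(n)}\na\vf^{(n+1)}\bigr) + \na\vf^{(n)}\cdot\na\p_t\vf^{(n+1)} = 0
\end{equation*}
with $\rho^{(n)} = 1 - \p_t\vf^{(n)} - \tfrac12|\na\vf^{(n)}|^2$, the same initial data $(\vf_{0,0}^{\rm in},\vf_{0,1}^{\rm in})$, and Dirichlet boundary $\vf^{(n+1)}|_{z=0}=0$. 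Since the boundary is flat, the tangential operator ${\cT} = (\p_t,\na_\h)$ commutes with the trace, so for $|\alpha|\le s_0-1$ one may apply ${\cT}^\alpha$, multiply by $\p_t{\cT}^\alpha\vf^{(n+1)}$, and integrate by parts; no boundary terms appear because $\p_t{\cT}^\alpha\vf^{(n+1)}|_{z=0}=0$. Using Lemma~\ref{S3lem1} to handle the variable coefficients and commutators, this yields the tangential energy bound
\begin{equation*}
\sum_{|\alpha|\leq s_0-1}\bigl\|(\p_t{\cT}^\alpha\vf^{(n+1)}, \na{\cT}^\alpha\vf^{(n+1)})\bigr\|_{L^\infty_T L^2}^2 \lesssim E_0 + T\,P\bigl(\|(\p_t\vf^{(n)},\na\vf^{(n)})\|_{W^{s_0-1}_T}\bigr)\cdot\|(\p_t\vf^{(n+1)},\na\vf^{(n+1)})\|_{W^{s_0-1}_T}^2,
\end{equation*}
with $E_0=\|\na\vf_{0,0}^{\rm in}\|_{H^{s_0-1}}^2+\|\vf_{0,1}^{\rm in}\|_{H^{s_0-1}}^2$. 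Normal derivatives $\p_z^k$ for $k\ge 2$ are then recovered inductively by solving algebraically for $\p_z^2\vf^{(n+1)}$ from the equation itself, written as $\rho^{(n)}\p_z^2\vf^{(n+1)} = \p_t^2\vf^{(n+1)}-\rho^{(n)}\D_\h\vf^{(n+1)}+\dots$, and differentiating; the lower bound on $\rho^{(n)}$ makes this inversion admissible.

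Choosing initial smallness $c_0$ and $T={\cC}c_0^{-1}$ appropriately, the above bound closes as a uniform-in-$n$ estimate $\|(\p_t\vf^{(n)},\na\vf^{(n)})\|_{W^{s_0-1}_T}\le C\sqrt{E_0}$, and a parallel estimate in one lower regularity for the difference $\vf^{(n+1)}-\vf^{(n)}$ shows Cauchy convergence in $W^{s_0-2}_T$. A standard Aubin–Lions/weak-limit argument upgrades the limit to a $W^{s_0-1}_T$ solution, and uniqueness follows from the same difference estimate applied to two solutions. The compatibility condition $(\mathcal{A}_0)$ is used precisely to ensure that each time derivative $\p_t^j\vf^{(n+1)}|_{t=0}$, computed from the equation, vanishes on $\{z=0\}$ up to the required order, so that the tangential energies are finite at $t=0$.

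The main obstacle, in my view, is the compatibility book-keeping at the corner $\{t=0,z=0\}$: one must check that at every step of the iteration the initial data and their time derivatives inferred from the equation remain compatible with the Dirichlet boundary, and that the commutator $[{\cT}^\alpha,\rho^{(n)}\D]$ estimates, which mix tangential and normal derivatives, still fit into $W^{s_0-1}_T$. Once the iteration is set up so that these compatibilities are preserved (which is where condition $(\mathcal{A}_0)$ enters inductively), the remaining quasilinear energy scheme is standard and produces \eqref{solution-order-0} with the lifespan $T\sim c_0^{-1}$ dictated by the nonlinear term $T\,P(\cdot)$ above.
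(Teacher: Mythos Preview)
Your proposal correctly identifies the overall architecture (tangential energy estimates plus elliptic recovery of normal derivatives from the equation), but the tangential step as you describe it does not close. The difficulty is a loss of one derivative hidden in the commutator $[\Ta^\alpha,\rho^{(n)}]\,\nabla\vf^{(n+1)}$ when $|\alpha|=s_0-1$. After multiplying by $\partial_t\Ta^\alpha\vf^{(n+1)}$ and integrating by parts, the forcing term you must bound in $L^2$ is $\nabla\cdot\bigl([\Ta^\alpha;\rho^{(n)}]\nabla\vf^{(n+1)}\bigr)$, and any product estimate of the type in Lemma~\ref{S3lem1} gives a bound like $\|\rho^{(n)}-1\|_{W^{s_0}}\|\nabla\vf^{(n+1)}\|_{W^{s_0-1}}$. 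Since $\rho^{(n)}=1-\partial_t\vf^{(n)}-\tfrac12|\nabla\vf^{(n)}|^2$, controlling $\|\rho^{(n)}-1\|_{W^{s_0}}$ requires $\|\partial_t\vf^{(n)}\|_{W^{s_0}}$, i.e.\ $\|\partial_t^{s_0+1}\vf^{(n)}\|_{L^2}$, which is one order beyond the norm you propagate. The iteration therefore cannot be closed with a uniform $W^{s_0-1}_T$ bound; this is exactly the obstruction the paper singles out in Remark~\ref{rmk3.1}.

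The paper avoids this loss by exploiting the specific nonlinear relation $\rho_0=1-\partial_t\varphi_0-\tfrac12|\nabla\varphi_0|^2$. Instead of applying $\Ta^{\ell}$ directly to \eqref{outer-order-0-b}, the paper applies $\Ta^{\ell}$ (with $\ell\le s_0-2$) to the once-differentiated equation \eqref{outer-order-0-6}, in which the top-order piece $-\nabla\cdot(\Ta^{\ell+1}\rho_0\,\nabla\varphi_0)$ has been isolated. Using $\Ta^{\ell+1}\rho_0=-\partial_t\Ta^{\ell+1}\varphi_0-\nabla\varphi_0\cdot\nabla\Ta^{\ell+1}\varphi_0+\text{l.o.t.}$, the dangerous contribution is absorbed into a \emph{modified} energy that contains the cross term $-|\nabla\varphi_0\cdot\nabla\Ta^{\ell+1}\varphi_0|^2$ (see \eqref{outer-order-ellh-12} and the functional $\widetilde{E}_{s,\text{tan}}$ in \eqref{S3eq11}). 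This structural cancellation is lost in your Picard linearization because the coefficient $\rho^{(n)}$ no longer equals $1-\partial_t\vf^{(n+1)}-\tfrac12|\nabla\vf^{(n+1)}|^2$. To salvage your scheme you would need either a quasilinear iteration that preserves this relation, or to build the modified energy into the linear estimate; in either case the key missing idea is the $-|\nabla\varphi_0\cdot\nabla\Ta^{\ell+1}\varphi_0|^2$ correction, not the corner compatibility you flagged.
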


\begin{proof} It is well-known that the existence of solutions to a nonlinear partial differential equation
can be obtained  by first constructing the appropriate approximate solutions, and then performing uniform estimates for such
approximate solutions, and finally applying
a compactness argument.
For simplicity, here we just present the {\it a priori} estimates for sufficiently smooth solutions of (\ref{S3eq4}-\ref{outer-order-0-initial}) on
$[0,T^\ast[$ with $T^\ast$ being the maximal time of existence.

In what follows, we shall separate the proof into the following steps:

\no{\bf Step 1.} $H^1_{\text{tan}}$ estimate

Due to $\p_t\vf|_{z=0}=0,$ by
taking the $L^2$ inner product of \eqref{outer-order-0-b} with $\partial_t \varphi_0$ and using integration by parts, we get
\begin{equation*}\label{outer-order-0-5}
\begin{split}
\frac{d}{dt}\int_{\R^3_{+}}(|\partial_t \varphi_{0}|^2
+\rho_0|\nabla \varphi_{0}|^2)\,dx=\int_{\R^3_{+}} \bigl(\partial_t\rho_0|\nabla \varphi_{0}|^2-2(\nabla \varphi_{0} \cdot \nabla \partial_t\varphi_{0})\partial_t\varphi_0\bigr) \,dx,
\end{split}
\end{equation*}
from which and \eqref{S3eq2},  we infer
\begin{equation}\label{outer-order-0-5}
\begin{split}
\frac{d}{dt}\int_{\R^3_{+}}(|\partial_t \varphi_{0}|^2
+\rho_0|\nabla \varphi_{0}|^2)\,dx\lesssim &\bigl(\|\p_t\varphi_0\|_{L^2_+}^2+\|\nabla\varphi_0\|_{L^2_+}^2\bigr)\\
&\times\bigl(\|\partial_t^2\varphi_0\|_{L^\infty_+}+(1+\|\nabla \varphi_{0}\|_{L^\infty_+} ) \|\nabla\partial_t\varphi_0\|_{L^\infty_+}\bigr).
\end{split}
\end{equation}

\no{\bf Step 2.}  {$H^2_{\text{tan}}$ estimate}\

Recall that ${\cT}\eqdefa\,(\partial_t,\, \nabla_{\h}).$
Applying $\Ta $ to \eqref{outer-order-0-b} gives
\begin{equation}\label{outer-order-0-6}
\begin{split}
\partial_t^2\Ta \varphi_{0}-\nabla\cdot\left( \rho_0\nabla \Ta\varphi_{0} \right)+ \nabla \varphi_{0} \cdot \nabla \partial_t\Ta\varphi_{0} + \nabla \partial_t\varphi_{0}\cdot\nabla \Ta\varphi_{0} -\nabla\cdot\left( \Ta\rho_{0} \nabla \varphi_{0} \right)=0.
\end{split}
\end{equation}
Due to $\p_t\Ta\vf|_{z=0}=0,$ by
taking the $L^2$ inner product of \eqref{outer-order-0-6} with $\partial_t\Ta \varphi_0$ and using integration by parts, one has
\begin{equation}\label{outer-order-0-7}
\begin{split}
\frac{1}{2}&\frac{d}{dt}\int_{\R^3_{+}}\bigl(|\partial_t\Ta \varphi_{0}|^2+ \rho_0|\nabla \Ta\varphi_{0}|^2\bigr)\, dx
 -\frac{1}{2}\int_{\R^3_{+}}  \partial_t\rho_0|\nabla \Ta\varphi_{0}|^2 \,dx\\
& +\int_{\R^3_{+}} (\nabla \varphi_{0} \cdot \nabla \partial_t\Ta\varphi_{0})\partial_t\Ta \varphi_{0} \,dx +\int_{\R^3_{+}}  \left(\nabla \partial_t \varphi_{0}\cdot \nabla \Ta \varphi_{0}\right) \partial_t\Ta \varphi_{0} \,dx\\
&+\int_{\R^3_{+}} \Ta\rho_0 \,\nabla \varphi_{0} \cdot\nabla\partial_t\Ta \varphi_{0} \,dx =0.
\end{split}
\end{equation}
According to \eqref{S3eq2}, it is easy to observe that
\begin{equation*}\label{outer-order-0-8c}
\begin{split}
&\int_{\R^3_{+}} (\nabla \varphi_{0} \cdot \nabla \partial_t\Ta\varphi_{0})\partial_t\Ta \varphi_{0} \,dx+\int_{\R^3_{+}} \Ta\rho_0\, \,\nabla \varphi_{0} \cdot\nabla\partial_t\Ta \varphi_{0} \,dx \\
&=-\int_{\R^3_{+}} (\nabla \varphi_{0}\cdot  \nabla \Ta\varphi_{0})(\nabla \varphi_{0} \cdot \nabla\partial_t\Ta \varphi_{0} ) \,dx\\
&=-\frac{1}{2}\frac{d}{dt}\int_{\R^3_{+}} (\nabla \varphi_{0}\cdot  \nabla \Ta\varphi_{0})^2\,dx+\int_{\R^3_{+}}(\nabla \varphi_{0} \cdot \nabla\Ta \varphi_{0} )  ( \nabla \partial_t\varphi_{0}\cdot\nabla\Ta \varphi_{0}) \,dx.
\end{split}
\end{equation*}
Plugging the above estimate into \eqref{outer-order-0-7} and using the equation \eqref{S3eq5} yields
\begin{equation*}\label{outer-order-0h-12}
\begin{split}
&\frac{d}{dt}\int_{\R^3_{+}}\bigl(|\partial_t\Ta\varphi_{0}|^2+\rho_0|\nabla \Ta \varphi_{0}|^2- |\nabla \varphi_{0}  \cdot\nabla\Ta \varphi_{0} |^2\bigr)\,dx\\
&=\int_{\R^3_{+}} \left( \partial_t\rho_0|\nabla \Ta \varphi_{0}|^2 -2 (\nabla \partial_t\varphi_{0}\cdot\nabla \Ta\varphi_{0} )(\partial_t\Ta \varphi_{0}+\nabla \varphi_{0}\cdot\nabla\Ta\varphi_0)
\right) \,dx,
\end{split}
\end{equation*}
which together with \eqref{S3eq2} ensures that
\begin{equation}\label{outer-order-0-12a}
\begin{split}
\frac{1}{2}&\frac{d}{dt}\int_{\R^3_{+}}\bigl(|\partial_t\Ta \varphi_{0}|^2+\rho_0|\nabla\Ta \varphi_{0}|^2-|\nabla \varphi_{0}\cdot  \nabla \Ta\varphi_{0}|^2\bigr)\, dx \\
&\lesssim \bigl(\|\partial_t^2\varphi_0\|_{L^\infty_+}+(1+\|\nabla \varphi_{0}\|_{L^\infty_+} ) \|\nabla\partial_t\varphi_0\|_{L^\infty_+}\bigr)\bigl(\|\partial_t\Ta\varphi_0\|_{L^2_+}^2+\|\nabla \Ta\varphi_{0}\|_{L^2_+}^2\bigr).
\end{split}
\end{equation}

\no{\bf Step 3.}  {High-order tangential derivatives estimate}

Let $\ell\in\N, $ by applying the operator ${\Ta}^{\ell} $ (with ${\Ta}=(\partial_t, \nabla_{\h})$ and $\Ta^\ell=\p_t^{\al_1}\na_\h^{\al_2}$ for  $\al_1+|\al_2|=\ell \in \mathbb{N}$) to \eqref{outer-order-0-6}, we find
\begin{equation}\label{outer-order-ellh-6}
\begin{split}
&\partial_t^2{\Ta}^{\ell+1} \varphi_{0}-\nabla\cdot( \rho_0\nabla {\Ta}^{\ell+1}\varphi_{0} )+ \nabla \varphi_{0} \cdot \nabla \partial_t {\Ta}^{\ell+1}\varphi_{0}-\nabla\cdot( {\Ta}^{\ell+1}\rho_{0} \nabla \varphi_{0})=g_{\ell}\\
\end{split}
\end{equation}
with
\begin{equation*}
\begin{split}
&g_{\ell}\eqdefa \nabla\cdot\bigl( [{\Ta}^{\ell};\rho_0]\nabla \Ta\varphi_{0} \bigr)- [{\Ta}^{\ell};\nabla \varphi_{0}] \cdot \nabla \partial_t \Ta\varphi_{0} \\
&\qquad\qquad\qquad\qquad -  {\Ta}^{\ell}(\nabla \Ta\varphi_{0} \cdot \nabla \partial_t \varphi_{0})  +\nabla\cdot( [{\Ta}^{\ell}; \nabla \varphi_{0}]\Ta\rho_{0} ).
\end{split}
\end{equation*}
Noticing that $\partial_t{\Ta}^{\ell+1} \varphi_0|_{z=0}=0,$ by taking the $L^2$ inner product of \eqref{outer-order-ellh-6} with $\partial_t{\Ta}^{\ell+1}\varphi_0$ and using integration by parts, one has
\begin{equation}\label{outer-order-ellh-7}
\begin{split}
\frac{1}{2}&\frac{d}{dt}\int_{\R^3_{+}}\bigl(|\partial_t{\Ta}^{\ell+1}\varphi_{0}|^2+ \rho_0|\nabla {\Ta}^{\ell+1} \varphi_{0}|^2\bigr)\,dx-\frac{1}{2}\int_{\R^3_{+}}  \partial_t\rho_0|\nabla {\Ta}^{\ell+1}\varphi_{0}|^2 \,dx \\
&-\int_{\R^3_{+}}\D\vf_0|\partial_t{\Ta}^{\ell+1} \varphi_{0}|^2\,dx -\int_{\R^3_{+}}  {\Ta}^{\ell}(\nabla \varphi_{0}\cdot\nabla\Ta\varphi_0) | (\nabla \varphi_{0}  \cdot\nabla\partial_t{\Ta}^{\ell+1} \varphi_{0}) \,dx\\
 = &\int_{\R^3_{+}} g_{\ell}\, | \partial_t{\Ta}^{\ell+1} \varphi_{0} \,dx.
\end{split}
\end{equation}
By using integration by parts, one has
\begin{equation*}\label{outer-order-ellh-8c}
\begin{split}
-&\int_{\R^3_{+}}  {\Ta}^{\ell}(\nabla \varphi_{0}\cdot\nabla\Ta\varphi_0) | (\nabla \varphi_{0}  \cdot\nabla\partial_t{\Ta}^{\ell+1} \varphi_{0}) \,dx \\
=&-\frac{1}{2}\frac{d}{dt}\int_{\R^3_{+}}  |\nabla \varphi_{0}  \cdot\nabla{\Ta}^{\ell+1}\varphi_{0}|^2  \,dx
+\int_{\R^3_{+}} \Bigl( (\nabla \varphi_{0}\cdot\nabla{\Ta}^{\ell+1}\varphi_0)( \nabla\partial_t \varphi_{0}  \cdot\nabla{\Ta}^{\ell+1} \varphi_{0}) \\
&\,+\left( \nabla([{\Ta}^{\ell}; \nabla \varphi_{0} ] \cdot\nabla\Ta \varphi_{0}) \cdot \nabla \varphi_{0} + [{\Ta}^{\ell}; \nabla \varphi_{0} ] \cdot\nabla\Ta \varphi_{0}  \Delta \varphi_{0}\right) \partial_t{\Ta}^{\ell+1} \varphi_{0}  \Bigr)\,dx.
\end{split}
\end{equation*}
Plugging the above equality into \eqref{outer-order-ellh-7} yields
\begin{equation}\label{outer-order-ellh-12}
\begin{split}
&\frac{1}{2}\frac{d}{dt}\int_{\R^3_{+}}\bigl(|\partial_t{\Ta}^{\ell+1}\varphi_{0}|^2+ \rho_0|\nabla {\Ta}^{\ell+1} \varphi_{0}|^2-|\nabla \varphi_{0}  \cdot\nabla{\Ta}^{\ell+1} \varphi_{0}|^2\bigr)  \,dx=\mathfrak{R}_{\ell},
\end{split}
\end{equation}
with
\begin{equation*}
\begin{split}
\mathfrak{R}_{\ell}\eqdefa &\frac{1}{2} \int_{\R^3_{+}}  \partial_t\rho_0 |\nabla {\Ta}^{\ell+1} \varphi_{0}|^2
\,dx+\int_{\R^3_{+}}\D\vf_0|\partial_t{\Ta}^{\ell+1} \varphi_{0}|^2\,dx\\
&-\int_{\R^3_{+}} \Bigl( (\nabla \varphi_{0}\cdot\nabla{\Ta}^{\ell+1}\varphi_0)( \nabla\partial_t \varphi_{0}  \cdot\nabla{\Ta}^{\ell+1}
 \varphi_{0}) \\
&\,+\left(g_{\ell}- \nabla([{\Ta}^{\ell}; \nabla \varphi_{0} ] \cdot\nabla\Ta \varphi_{0}) \cdot \nabla \varphi_{0} - [{\Ta}^{\ell}; \nabla \varphi_{0} ] \cdot\nabla\Ta\varphi_{0}  \Delta \varphi_{0}\right) \partial_t{\Ta}^{\ell+1} \varphi_{0}  \Bigr)\,dx,
\end{split}
\end{equation*}
from which, we infer
\begin{equation*}\label{outer-order-ellh-13}
\begin{split}
|\mathfrak{R}_{\ell}|\lesssim &\bigl(\|\partial_t\rho_0\|_{L^\infty_+}+ \|\nabla \varphi_{0}\|_{L^\infty_+}\|\nabla\partial_t \varphi_{0}\|_{L^\infty_+}  \bigr) \|\nabla {\Ta}^{\ell+1} \varphi_{0}\|_{L^2_+}^2+
\|\D\vf_0\|_{L^\infty_+}\|\partial_t{\Ta}^{\ell+1} \varphi_{0}\|_{L^2_+}^2\\
&
+\bigl\|\bigl(g_{\ell}- \nabla([{\Ta}^{\ell}; \nabla \varphi_{0} ] \cdot\nabla\Ta \varphi_{0}) \cdot \nabla \varphi_{0} - [{\Ta}^{\ell}; \nabla \varphi_{0} ] \cdot\nabla\Ta \varphi_{0}  \Delta \varphi_{0}\bigr)\bigr\|_{L^2_+}\|\partial_t{\Ta}^{\ell+1} \varphi_{0}\|_{L^2_+}.
\end{split}
\end{equation*}

Recall that for $s\in\N,$ $\|\vf_0(t)\|_{W^s}^2=\sum_{j=0}^s\|\p_t^j\vf_0(t)\|_{H^{s-j}}.$ Then  by virtue of  \eqref{S3eq2},
 and the Sobolev embedding theorem: $H^2(\mathbb{R}^3_+) \hookrightarrow L^\infty(\mathbb{R}^3_+), $  we deduce that
\begin{equation}\label{outer-order-ell-14}
\begin{split}
&\|\partial_t\rho_0\|_{L^\infty_+}+ \|\nabla \varphi_{0}\|_{L^\infty_+}\,\|\nabla \partial_t \varphi_{0} \|_{L^\infty_+}
\lesssim \bigl(1+\|\na\varphi_0\|_{L^\infty_+}\bigr)\|\p_t\varphi_0\|_{W^3}.
\end{split}
\end{equation}

Next for $4\leq s\in\N,$ we claim that
\beq \label{S3eq7}
\|fg\|_{H^1}\lesssim \|f\|_{H^1}\|g\|_{H^2},
\eeq
and
\beq\label{S3eq10}
\sum_{\ell=1}^{s-2}\|\na([\Ta^\ell;f]g)\|_{L^2_+}\lesssim \|f\|_{W^{s-1}}\|g\|_{W^{s-2}}.
\eeq

Indeed, it follows from Sobolev embedding theorem that
\beno
\begin{split}
\|fg\|_{H^1}=&\|fg\|_{L^2_+}+\|g\na f \|_{L^2_+}+\|f\na g\|_{L^2_+}\\
\leq &\bigl(\|f\|_{L^2_+}+\|\na f\|_{L^2_+}\bigr)\|g\|_{L^\infty_+}+\|f\|_{L^6_+}\|\na g\|_{L^3_+}\lesssim \|f(t)\|_{H^1}\|g(t)\|_{H^2},
\end{split}
\eeno
which yields \eqref{S3eq7}.

By applying \eqref{S3eq7}, we find
\begin{equation*}
\begin{split}
\|\nabla\cdot( [{\Ta}^{\ell}; f]g)\|_{L^2_+}\lesssim& \sum_{i=0}^{\ell-1}\|{\Ta}^{\ell-i} f {\Ta}^{i}g\|_{H^1}\\
\lesssim& \sum_{i=1}^{\ell-1}\|{\Ta}^{\ell-i} f\|_{H^2}\| {\Ta}^{i}g\|_{H^1}
+\|{\Ta}^{\ell} f\|_{H^1}\|g\|_{H^2}\\
\lesssim& \sum_{i=1}^{\ell-1}\|f\|_{W^{\ell-i+2}}\|g\|_{W^{i+1}}
+\|f\|_{W^{\ell+1}}\|g\|_{H^2},
\end{split}
\end{equation*}
which leads to \eqref{S3eq10}.

Notice that $$[{\Ta}^{\ell};\nabla \varphi_{0}] \cdot \nabla \partial_t \Ta\varphi_{0}
=\na\cdot\bigl([{\Ta}^{\ell};\nabla \varphi_{0}]  \partial_t \Ta\varphi_{0}\bigr)-[{\Ta}^{\ell};\D \varphi_{0}]  \partial_t \Ta\varphi_{0},
$$
we get, by applying \eqref{S3eq10} and Lemma \ref{S3lem1}, that
\beno
\begin{split}
\sum_{\ell=1}^{s_0-2}\|g_\ell\|_{L^2_+}\lesssim \bigl(\|\p_t\vf_0\|_{W^{s_0-1}}+&\|\na\vf_0\|_{W^{s_0-1}}+\|\na\vf_0\|_{W^{s_0-1}}^2\bigr)\\
&\times\bigl(
\|\p_t\Ta\vf_0\|_{W^{s_0-2}}+\|\na\Ta\vf_0\|_{W^{s_0-2}}\bigr).
\end{split}
\eeno
Along the same line, one has
\begin{equation*}\label{outer-order-ellh-14}
\begin{split}
&\sum_{\ell=1}^{s_0-2}\bigl\|\bigl(\nabla([{\Ta}^{\ell}; \nabla \varphi_{0} ] \cdot\nabla\Ta \varphi_{0}) \cdot \nabla \varphi_{0} + [{\Ta}^{\ell}; \nabla \varphi_{0} ] \cdot\nabla\Ta \varphi_{0}  \Delta \varphi_{0}\bigr)\bigr\|_{L^2_+}\\
&\lesssim \|\na\varphi_0\|_{L^\infty_+} \|\na\varphi_0\|_{W^{s_0-1}}\|\na\Ta\vf_0\|_{W^{s_0-2}}+\bigl\|[{\Ta}^{\ell}; \nabla \varphi_{0} ] \cdot\nabla\Ta \varphi_{0}\|_{L^6_+}\|\Delta \varphi_{0}\|_{L^3_+}\\
&\lesssim \|\na\varphi_0\|_{H^2}\|\na\varphi_0\|_{W^{s_0-1}}\|\na\Ta\vf_0\|_{W^{s_0-2}}.
\end{split}
\end{equation*}
This together with \eqref{outer-order-ell-14} ensures that
\begin{equation}\label{outer-order-ellh-16}
\begin{split}
\sum_{\ell=1}^{s_0-2}|\mathfrak{R}_{\ell}|\
\lesssim & \bigl(
\|\p_t\Ta\vf_0\|_{W^{s_0-2}}^2+\|\na\Ta\vf_0\|_{W^{s_0-2}}^2\bigr) \\
&\times
\bigl(\|\p_t\vf_0\|_{W^{s_0-1}}+\|\na\vf_0\|_{W^{s_0-1}}+\|\na\vf_0\|_{W^{s_0-1}}^2\bigr).
\end{split}
\end{equation}
Inserting the estimate \eqref{outer-order-ellh-16} into \eqref{outer-order-ellh-12} leads to
\begin{equation}\label{outer-order-ellh-13}
\begin{split}
\sum_{\ell=1}^{s_0-2}&\frac{d}{dt}\int_{\R^3_{+}}\bigl(|\partial_t{\Ta}^{\ell+1}\varphi_{0}|^2+ \rho_0|\nabla {\Ta}^{\ell+1} \varphi_{0}|^2-|\nabla \varphi_{0}  \cdot\nabla{\Ta}^{\ell+1} \varphi_{0}|^2\bigr)  \,dx \\
\lesssim & \bigl(\|\p_t\vf_0\|_{W^{s_0-1}}+\|\na\vf_0\|_{W^{s_0-1}}+\|\na\vf_0\|_{W^{s_0-1}}^2\bigr)\bigl(
\|\p_t\Ta\vf_0\|_{W^{s_0-2}}^2+\|\na\Ta\vf_0\|_{W^{s_0-2}}^2\bigr).
\end{split}
\end{equation}

Let us define three energy functionals of $\vf_0$ as
\beq \label{S3eq11}
\begin{split}
E_s(t)\eqdefa& \|\p_t\vf_0(t)\|_{W^{s-1}}^2+\|\na\vf_0(t)\|_{W^{s-1}}^2;\\
{E}_{s, \text{tan}}(t) \eqdefa& \sum_{\ell=0}^{s-1}\bigl(\|\partial_t{\cT}^\ell\varphi_0(t)\|_{L^2(\R^3_{+})}^2
+\|\nabla{\Ta}^\ell\varphi_0(t)\|_{L^2(\R^3_{+})}^2\bigr);\\
\widetilde{{E}}_{s, \text{tan}}(t) \eqdefa& \sum_{\ell=0}^{s-1}\int_{\R^3_{+}}\bigl(|\partial_t{\Ta}^\ell\varphi_0(t)|^2
+\rho_0|\nabla{\Ta}^\ell\varphi_0(t)|^2-|\nabla \varphi_{0}\cdot \nabla {\Ta}^\ell\varphi_{0}|^2\bigr)\,dx.
\end{split}
\eeq
Then by summing up the estimates, \eqref{outer-order-0-5}, \eqref{outer-order-0-12a} and \eqref{outer-order-ellh-13}, we achieve
\begin{equation}\label{outer-order-14}
\begin{split}
&\frac{d}{dt} {{E}}_{s_0, \text{tan}}(t) \leq C\bigl(1+E_{s_0}(t)\bigr)E_{s_0}^{\frac32}(t).
\end{split}
\end{equation}
For $\d>0$ being sufficiently small, which will be determined later on, we define
\begin{equation}\label{assume-1}
 T^\star_1\eqdefa \sup \Bigl\{\ t< T^\ast:\  E_{s_0}(t)\leq \d\ \ \Bigr\}.
\end{equation}
Then for $t\leq T_1^\star,$
we observe from \eqref{S3eq2} that  there exits a positive constant $C_0$ such that
\begin{equation}\label{tan-equiv-norm-1a}
 C_0^{-1} {E}_{s_0, \text{tan}}(t) \leq\widetilde{{E}}_{s_0, \text{tan}}(t)\leq C_0 {E}_{s_0, \text{tan}}(t),
\end{equation}
provided that $\d$ is sufficiently small in \eqref{assume-1}.

\no{\bf Step 4.} {Full energy estimates}

Let  $
E_s(t),$ $
{E}_{s, \text{tan}}(t)$ be given by \eqref{S3eq11}. We  claim that
\beq \label{S3eq12}
E_\ell(t)\leq C_\ell {E}_{\ell, \text{tan}}(t)\quad \mbox{for}\quad t\leq T_1^\star \andf \ell=2,\cdots,s_0,
\eeq
provided that $\d$ is sufficiently small in \eqref{assume-1}.

When $\ell=2,$ we have
\beq \label{S3eq13}
E_2(t)\leq \|\p_t\vf_0\|_{H^1}^2+\|\p_t^2\vf_0\|_{L^2_+}^2+\|\na\na_\h\vf\|_{L^2_+}^2+\|\p_3^2\vf\|_{L^2_+}^2
+\|\na\p_t\vf_0\|_{L^2_+}^2.
\eeq
Yet in view of \eqref{S3eq4}, we have
\beq \label{S3eq14}
\p_3^2\varphi_{0}=\partial_t^2 \varphi_{0}-\D_\h\vf_0+ \nabla \varphi_{0} \cdot \nabla \partial_t\varphi_{0}
+\dive\bigl((\partial_t \varphi_{0}+\frac{1}{2} |\nabla \varphi_{0}|^2) \na \varphi_{0} \bigr),
\eeq
which implies that
\beno
\begin{split}
\|\p_3^2\varphi_{0}\|_{L^2_+}\leq &\|\partial_t^2 \varphi_{0}\|_{L^2_+}+\|\D_\h\vf_0\|_{L^2_+}+ C\bigl(\|\nabla \varphi_{0}\|_{L^\infty_+}^2\|\na^2\vf_0\|_{L^2_+}\\
&+(\|\nabla \varphi_{0}\|_{L^\infty_+}
+\|\D\vf_0\|_{L^3_+})\|\nabla \partial_t\varphi_{0}\|_{L^2_+}\bigr)\\
\leq &C\bigl(\bigl(1+\|\na\vf_0\|_{H^2}^2\bigr){E}_{2, \text{tan}}^{\frac12}(t)+\|\na\vf_0\|_{H^2}^2\|\p_3^2\varphi_{0}\|_{L^2_+}\bigr).
\end{split}
\eeno
So that as long as $\d$ is sufficiently small in  \eqref{assume-1}, we obtain
\beno
\|\p_3^2\varphi_{0}\|_{L^2_+}\leq C {E}_{2, \text{tan}}^{\frac12}(t).
\eeno
Inserting the above estimate into \eqref{S3eq13} gives rise to
\beno
E_2(t) \leq C {E}_{2, \text{tan}}(t).
\eeno
This proves \eqref{S3eq12} for $\ell=2.$

Now we assume that \eqref{S3eq12} holds for $\ell=k,$ we are going to prove that \eqref{S3eq12} holds for
$\ell=k+1\leq s_0.$ We first notice by the definition that
\beno
E_{k+1}(t)\leq \sum_{j=0}^k\bigl(\|\p_t^{j+1}\vf_0(t)\|_{H^{k-j}}^2+\|\na\p_t^j\vf_0(t)\|_{H^{k-j}}^2\bigr).
\eeno

\noindent$\bullet$\underline{
When $j=k.$}

We observe from  \eqref{S3eq11} that
\beno
\|\p_t^{k+1}\vf_0(t)\|_{L^2_+}^2+\|\na \p_t^{k}\vf_0(t)\|_{L^2_+}^2\leq {E}_{k+1, \text{tan}}(t).
\eeno

\noindent$\bullet$\underline{
When $j=k-1.$}

It follows from  \eqref{S3eq11} that
\beno
\|\p_t^{k}\vf_0\|_{H^1}^2=\|\p_t^{k}\vf_0\|_{L^2_+}^2+\|\na \p_t^{k}\vf_0\|_{L^2_+}^2\leq {E}_{k+1, \text{tan}}(t).
\eeno
Whereas notice that
\beno
\begin{split}
\|\na\p_t^{k-1}\vf_0(t)\|_{H^1}^2=&\|\na\p_t^{k-1}\vf_0(t)\|_{L^2_+}^2+\|\na^2\p_t^{k-1}\vf_0(t)\|_{L^2_+}^2\\
\leq &\|\p_t\vf_0(t)\|_{W^{k-1}}^2+\|\na\na_\h\p_t^{k-1}\vf_0(t)\|_{L^2_+}^2+\|\p_3^2\p_t^{k-1}\vf_0(t)\|_{L^2_+}^2.
\end{split}
\eeno
We deduce from \eqref{S3eq14} that
\beno
\begin{split}
\|\p_3^2\p_t^{k-1}\varphi_{0}\|_{L^2_+}\leq \|\p_t^{k+1} \varphi_{0}\|_{L^2_+}&+\|\D_\h\p_t^{k-1}\vf_0\|_{L^2_+}
 +\|\p_t^{k-1}(\na\vf_0\cdot\na\p_t\vf_0)\|_{L^2_+}\\
 &\qquad+\bigl\|\p_t^{k-1}\dive\bigl((\p_t\vf_0+\frac12|\na\vf_0|^2)\na\vf_0\bigr)\bigr\|_{L^2_+}.
\end{split}
\eeno
Yet for $j\in [0,k-2],$ it follows from the law of product, Lemma \ref{S3lem1}, that
\beq\label{S3eq19}
\begin{split}
\|\p_t^{j+1}(\na\vf_0\cdot\na\p_t\vf_0)\|_{H^{k-j-2}}\leq &\|\na\vf_0\cdot\na\p_t\vf_0\|_{W^{k-1}}\\
\leq &C\|\na\vf_0\|_{W^{k-1}}\|\na\p_t\vf_0\|_{W^{k-1}},
\end{split}
\eeq
and
\beq\label{S3eq20}
\begin{split}
\bigl\|\p_t^{j+1}\dive\bigl((\p_t\vf_0+\frac12|\na\vf_0|^2)\na\vf_0\bigr)\bigr\|_{H^{k-j-2}}\leq &\bigl\|(\p_t\vf_0+\frac12|\na\vf_0|^2)\na\vf_0\bigr\|_{W^{k}}\\
\leq &C\bigl(\|\p_t\vf_0\|_{W^k}+\|\na \vf_0\|_{W^k}^2\bigr)\|\na \vf_0\|_{W^k}.
\end{split}
\eeq
Therefore, we obtain
\beno
\|\p_3^2\p_t^{k-1}\varphi_{0}\|_{L^2_+}\leq C\left({E}^{\frac12}_{k+1, \text{tan}}(t)+(1+\|\na\vf_0\|_{W^k})\|\na\vf_0\|_{W^k} {E}^{\frac12}_{k+1}(t)\right).
\eeno
 $\|\na\p_t^{k-1}\vf_0(t)\|_{H^1}$ shares the same estimate.

  As a result, it comes out
\beno
\|\p_t^{k}\vf_0\|_{H^1}^2+\|\na\p_t^{k-1}\vf_0(t)\|_{H^1}^2\leq  C\Bigl({E}_{k+1, \text{tan}}(t)+(1+\|\na\vf_0\|_{W^k}^2)\|\na\vf_0\|_{W^k}^2 {E}_{k+1}(t)\Bigr).
\eeno

\noindent$\bullet$\underline{
When $k-j\geq 2.$}

We have
\beno
\begin{split}
\|\p_t^{j+1}\vf_0\|_{H^{k-j}}^2=&\|\p_t^{j+1}\vf_0\|_{H^{k-j-1}}^2+\|\na^2\p_t^{j+1}\vf_0\|_{H^{k-j-2}}^2\\
\leq &\|\p_t\vf_0\|_{W^{k-1}}^2+\|\na\na_\h\p_t^{j+1}\vf_0\|_{H^{k-j-2}}^2+\|\p_3^2\p_t^{j+1}\vf_0\|_{H^{k-j-2}}^2.
\end{split}
\eeno
By virtue of \eqref{S3eq14}, we find
\beno
\begin{split}
\|\p_3^2&\p_t^{j+1}\vf_0\|_{H^{k-j-2}}\leq \|\p_t^{j+3}\vf_0\|_{H^{k-j-2}}+\|\p_t^{j+1}\D_\h\vf_0\|_{H^{k-j-2}}\\
&+\|\p_t^{j+1}(\na\vf_0\cdot\na\p_t\vf_0)\|_{H^{k-j-2}}+\bigl\|\p_t^{j+1}\dive\bigl((\p_t\vf_0+\frac12|\na\vf_0|^2)\na\vf_0\bigr)\bigr\|_{H^{k-j-2}},
\end{split}
\eeno which together with \eqref{S3eq19} and \eqref{S3eq20} ensures that
\beq \label{S3eq15}
\begin{split}
\|\p_t^{j+1}\vf_0\|_{H^{k-j}}\leq& \|\p_t\vf_0\|_{W^{k-1}}+\|\na\na_\h\p_t^{j+1}\vf_0\|_{H^{k-j-2}}+\|\p_t^{j+3}\vf_0\|_{H^{k-j-2}}\\
&+\|\p_t^{j+1}\D_\h\vf_0\|_{H^{k-j-2}}+C\bigl(1+\|\na\vf_0\|_{W^k}\bigr)\|\na\vf_0\|_{W^k}E_{k+1}^{\frac12}(t).
\end{split}
\eeq
In the case when $k-j\geq 3,$ we have
\beno
\begin{split}
\|\na\na_\h\p_t^{j+1}\vf_0\|_{H^{k-j-2}}=&\|\na\na_\h\p_t^{j+1}\vf_0\|_{H^{k-j-3}}+\|\na^2\na_\h\p_t^{j+1}\vf_0\|_{H^{k-j-3}}\\
\leq & \|\p_t\vf_0\|_{W^{k-1}}+\|\na\na_\h^2\p_t^{j+1}\vf_0\|_{H^{k-j-3}}+\|\p_3^2\na_\h\p_t^{j+1}\vf_0\|_{H^{k-j-3}}.
\end{split}
\eeno
Yet it follows from \eqref{S3eq14} that
\beno
\begin{split}
\|\p_3^2&\na_\h\p_t^{j+1}\vf_0\|_{H^{k-j-3}}\leq   \|\na_\h\p_t^{j+3}\vf_0\|_{H^{k-j-3}}+\|\p_t^{j+1}\na^3_\h\vf_0\|_{H^{k-j-3}}\\
&+\|\na_\h\p_t^{j+1}(\na\vf_0\cdot\na\p_t\vf_0)\|_{H^{k-j-3}}+\bigl\|\na_\h\p_t^{j+1}\dive\bigl((\p_t\vf_0+\frac12|\na\vf_0|^2)\na\vf_0\bigr)\bigr\|_{H^{k-j-3}},
\end{split}
\eeno
from which and \eqref{S3eq19}, \eqref{S3eq20}, we infer
\beno
\begin{split}
\|\p_3^2\na_\h\p_t^{j+1}\vf_0\|_{H^{k-j-3}}\leq &  \|\na_\h\p_t^{j+3}\vf_0\|_{H^{k-j-3}}+\|\p_t^{j+1}\na^3_\h\vf_0\|_{H^{k-j-3}}\\
&+C\bigl(1+\|\na\vf_0\|_{W^k}\bigr)\|\na\vf_0\|_{W^k} E_{k+1}^{\frac12}(t).
\end{split}
\eeno
Inserting the above estimate into \eqref{S3eq15} gives rise to
\beno
\begin{split}
\|\p_t^{j+1}\vf_0\|_{H^{k-j}}\leq & 2\|\p_t\vf_0\|_{W^{k-1}}+\|\p_t^{j+3}\vf_0\|_{H^{k-j-2}}+\|\p_t^{j+1}\D_\h\vf_0\|_{H^{k-j-2}}\\
&+\|\na\na_\h^2\p_t^{j+1}\vf_0\|_{H^{k-j-3}}+\|\na_\h\p_t^{j+3}\vf_0\|_{H^{k-j-3}}\\
&+\|\p_t^{j+1}\na^3_\h\vf_0\|_{H^{k-j-3}}+C\bigl(1+\|\na\vf_0\|_{W^k}\bigr)\|\na\vf_0\|_{W^k} E_{k+1}^{\frac12}(t).
\end{split}
\eeno
By finite steps of iteration and using the inductive assumption for $\ell=k,$ we deduce that
\beq \label{S3eq16}
\|\p_t^{j+1}\vf_0\|_{H^{k-j}}^2\leq C_k{E}_{k+1, \text{tan}}(t)+C\bigl(1+\|\na\vf_0\|_{W^k}^2\bigr)\|\na\vf_0\|_{W^k}^2 E_{k+1}(t).
\eeq
The same estimate holds for $\|\na\p_t^j\vf_0(t)\|_{H^{k-j}}.$

Therefore we conclude that
\beno
E_{k+1}(t)\leq C_k\Bigl({E}_{k+1, \text{tan}}(t)+\bigl(1+\|\na\vf_0\|_{W^k}^2\bigr)\|\na\vf_0\|_{W^k}^2 E_{k+1}(t)\Bigr).
\eeno
Then in view of \eqref{assume-1}, as long as $\d$ is small enough, we deduce \eqref{S3eq12} for $\ell=k+1.$

Now we are in a position to complete the proof of Theorem \ref{thm-hj-order-0}.

Thanks to \eqref{outer-order-14} and \eqref{S3eq12}, we obtain for $t\leq T_1^\star$ that
\beno
\frac{d}{dt}\wt{E}_{s_0, \text{tan}}(t)\leq C_{s_0}\sqrt{\d} {E}_{s_0 \text{tan}}(t)\leq C_{s_0}C_0\sqrt{\d} \wt{E}_{s_0, \text{tan}}(t),
\eeno
where we used \eqref{tan-equiv-norm-1a} in the last step. Applying Gronwall's inequality gives rise to
\beq \label{S3eq17a}
\begin{split}
\wt{E}_{s_0, \text{tan}}(t)\leq &\wt{E}_{s_0, \text{tan}}(0)\exp\left(C_{s_0}C_0\sqrt{\d} t\right)\\
\leq &C_0 {E}_{s_0, \text{tan}}(0)\exp\left(C_{s_0}C_0\sqrt{\d} t\right).
\end{split}
\eeq
On the other hand, we deduce from \eqref{S3eq4} and \eqref{small-1} that
$$
{E}_{s_0, \text{tan}}(0)\leq C_{s_0}(\|\na\vf_{0,0}^{\rm{in}}\|_{H^{s_0-1}}+ \|\vf_{0,1}^{\rm{\rm{in}}}\|_{H^{s_0-1}})^2\leq C_{s_0} c_0^2,
$$
which together with \eqref{S3eq17a} ensures that
\beq \label{S3eq17}
\wt{E}_{s_0, \text{tan}}(t)\leq  C_0C_{s_0}c_0^2 \exp\left(C_{s_0}C_0\sqrt{\d} t\right)\quad\mbox{for}\ \ t\leq T_1^\star.
\eeq

Let us denote $\bar{T}\eqdefa \min\bigl(T_1^\star, (C_0C_{s_0}\sqrt{\d})^{-1}\bigr).$  If we assume by a contradict argument that $T_1^\star <(C_0C_{s_0}\sqrt{\d})^{-1}$, then for $t\leq \bar{T}=T_1^\star,$ we deduce from
\eqref{S3eq17} that
\beno
\wt{E}_{s_0, \text{tan}}(t)\leq C_0C_{s_0}c_0^2 e,
\eeno
from which, and \eqref{S3eq12}, we infer
\beq \label{S3eq18}
\begin{split}
E_{s_0}(t)
\leq C_{s_0}{E}_{s_0, \text{tan}}(t)\leq C_{s_0}C_0\wt{E}_{s_0, \text{tan}}(t)\leq C_0^2C_{s_0}^{2}\,e\,c_0^2.
\end{split}
\eeq
Then as long as we take the positive $c_0$ to be so small that $C_0^2C_{s_0}^{2}\,e\,c_0^2=\frac{\d}{2},$ we find
\beno
E_{s_0}(t)\leq \frac{\d}{2} \quad \mbox{for}\ t\leq \bar{T}=T_1^\star.
\eeno
This contradicts with the definition of $T_1^\star$ given by   \eqref{assume-1}. This in turn  shows that $T_1^\star \geq (C_0C_{s_0}\sqrt{\d})^{-1}=\bigl(\sqrt{2e}\,C_0^{2}C_{s_0}^2\,c_0\bigr)^{-1}.$
This together with \eqref{S3eq18} completes the proof of Theorem \ref{thm-hj-order-0}.
\end{proof}

\begin{rmk}\label{rmk3.1} We remark that it is crucial to apply $\Ta^{\ell}$ to \eqref{outer-order-0-6}, and then perform
the energy estimate for the equation \eqref{outer-order-ellh-6}. Otherwise,
let $\ell\in\N, $ by applying the operator ${\Ta}^{\ell} $  to \eqref{outer-order-0-b}, we find
\begin{equation}\label{outer-order-ell-6a}
\begin{split}
&\partial_t^2 {\Ta}^{\ell}\varphi_{0}-\nabla\cdot\bigl( \rho_0\nabla {\Ta}^{\ell}\varphi_{0} \bigr)+ {\Ta}^{\ell}(\nabla \varphi_{0} \cdot \nabla \partial_t\varphi_{0})=\nabla\cdot\bigl( [{\Ta}^{\ell}; \rho_0]\nabla \varphi_{0}  \bigr).
\end{split}
\end{equation}
Due to $\partial_t{\Ta}^{\ell} \varphi_0|_{z=0}=0,$ by taking the $L^2$ inner product of \eqref{outer-order-ell-6a} with $\partial_t{\Ta}^{\ell} \varphi_0$ and using integration by parts,  we find
\beq\label{S3eq8a}
\begin{split}
&\frac{1}{2}\frac{d}{dt}\int_{\R^3_{+}}\bigl((\partial_t{\Ta}^{\ell} \varphi_{0})^2 +\rho_0|\nabla {\Ta}^{\ell}\varphi_{0}|^2 \bigr) \,dx=\frac{1}{2}\int_{\R^3_{+}}\p_t\rho_0|\nabla {\Ta}^{\ell}\varphi_{0}|^2  \,dx\\
&\qquad-\int_{\R^3_{+}} {\Ta}^{\ell}(\nabla \varphi_{0} \cdot \nabla \partial_t\varphi_{0}) | \partial_t {\Ta}^{\ell}\varphi_{0} \,dx+\int_{\R^3_{+}} \nabla\cdot\bigl( [{\Ta}^{\ell}; \rho_0]\nabla \varphi_{0}  \bigr) | \partial_t {\Ta}^{\ell}\varphi_{0} \,dx.
\end{split}
\eeq
It is easy to observe that
\begin{equation*}\label{outer-order-ell-8c}
\begin{split}
-\int_{\R^3_{+}} {\Ta}^{\ell}(\nabla \varphi_{0} \cdot \nabla \partial_t\varphi_{0}) | \partial_t {\Ta}^{\ell}\varphi_{0} \,dx
=&\frac12\int_{\R^3_{+}} \D\vf_0  (\partial_t {\Ta}^{\ell} \varphi_{0})^2  \,dx\\
&-\int_{\R^3_{+}}    [{\Ta}^{\ell};\nabla \varphi_{0}]  \nabla \partial_t\varphi_{0} | \partial_t {\Ta}^{\ell}\varphi_{0} \,dx.
\end{split}
\end{equation*}
Plugging the above equality into \eqref{S3eq8a} and summing up the resulting inequalities for $\ell$ varying from
$1$ to $s_0-1$ yields
\begin{equation}\label{outer-order-ell-12a}
\begin{split}
&\sum_{\ell=1}^{s_0-1}\frac{d}{dt}\int_{\R^3_{+}}\bigl(|{\Ta}^{\ell} \partial_t\varphi_{0}|^2\, + \rho_0 |\nabla {\Ta}^{\ell}\varphi_{0}|^2\bigr)\,dx\\
&\lesssim \sum_{\ell=1}^{s_0-1}\Bigl(\|\p_t\rho_0\|_{L^\infty_+}\|\nabla {\Ta}^{\ell} \varphi_{0}\|_{L^2_+}^2+\|\D\vf_0\|_{L^\infty_+}\|\p_t {\Ta}^{\ell} \varphi_{0}\|_{L^2_+}^2\\
&\qquad+
\bigl(\| [{\Ta}^{\ell}; \na\vf_0]\nabla \partial_t\varphi_{0})\|_{L^2_+}+
\|\nabla\cdot([{\Ta}^{\ell};\r_0] \na \varphi_{0})\|_{L^2_+}\bigr)\|\partial_t {\Ta}^{\ell}\varphi_{0}\|_{L^2_+}\Bigr).
\end{split}
\end{equation}
Applying \eqref{S3eq10} gives
\beno\begin{split}
\sum_{\ell=1}^{s_0-1}\|\nabla\cdot([{\Ta}^{\ell};\r_0] \na \varphi_{0})\|_{L^2_+}\lesssim & \|\r_0-1\|_{W^{s_0}}\|\na\vf_0\|_{W^{s_0-1}}\\
\lesssim &\bigl(\|\p_t\vf_0\|_{W^{s_0}}+\|\na\vf_0\|_{W^{s_0}}^2\bigr)\|\na\vf_0\|_{W^{s_0-1}},
\end{split}
\eeno
which make us impossible to close the estimate in \eqref{outer-order-ell-12a}.
\end{rmk}

Now let us present  the proof of Proposition \ref{S2prop1}.

\begin{proof}[Proof of Proposition \ref{S2prop1}] We first deduce from \eqref{vf10} and \eqref{S3eq27} that \eqref{small-1} holds as long as $c$ is small enough in \eqref{S3eq27}. Then it follows from Theorem \ref{thm-hj-order-0} that (\ref{S3eq4}-\ref{outer-order-0-initial}) has a unique solution $\varphi_0$ on $[0, T_0]$ with $T_0={\cC}c_0^{-1}$ which satisfies \eqref{solution-order-0} when we take $c_0=c$ in \eqref{small-1}.
Moreover, we deduce from \eqref{S3eq2} and Theorem \ref{thm-hj-order-0} that
\begin{equation*}
1-\r_0=  \partial_t \varphi_{0}+\frac{1}{2} |\nabla \varphi_{0}|^2 \in W^{s_0-1}_{T_0},
\end{equation*}
and
\beq \label{S3eq21}
\begin{split}
&\r_0(t,x)-1\to 0\quad\mbox{as}\quad |x|\to \infty \andf\\
&\|(1-\r_0)\|_{W^{s_0-1}_{T_0}}\leq C_{s_0}\bigl(1+\|\na\vf_0\|_{W^{s_0-1}_{T_0}}\bigr)E^{\frac12}_{s_0}(t)\leq C_{s_0} \|(a_{0,0}^{\rm{\rm{in}}}-1,\,\na\vf_{0,0}^{\rm{in}})\|_{H^{s_0-1}}.
\end{split}
\eeq
Let us define $a_0\eqdefa \sqrt{\r_0}.$ Then we deduce from \eqref{S3eq21} that
\beno
\begin{split}
&a_0(t,x)-1=\frac{\r_0-1}{\sqrt{\r_0}+1}\to 0\quad\mbox{as}\quad |x|\to \infty \andf \\
&\|(1-a_0)\|_{W^{s_0-1}_{T_0}}\leq  C_{s_0} \|(a_{0,0}^{\rm{\rm{in}}}-1,\,\na\vf_{0,0}^{\rm{in}})\|_{H^{s_0-1}}.
\end{split}
\eeno
It is easy to observe that thus obtained $(a_0, \vf_0)$ is indeed the unique solution of (\ref{S2eq4}-\ref{outer-order-0-bc}).
Moreover there holds \eqref{S3eq28}. This completes the proof of Proposition \ref{S2prop1}.
\end{proof}

\renewcommand{\theequation}{\thesection.\arabic{equation}}
\setcounter{equation}{0}

\section{Solvability of the boundary layer equations (\ref{phi-negative-1}-\ref{a-order-0}) }\label{sec-blp}

The goal of  this section is to prove the existence of smooth solutions to  the boundary layer equations (\ref{phi-negative-1}-\ref{a-order-0}),
namely the proof of Proposition \ref{S2prop2}.

\begin{proof}[Proof of Proposition \ref{S2prop2}] Once again, we shall only present the {\it a priori} estimates. In view of \eqref{phi-negative-1}, we write
\begin{equation*}\label{phi-negative-1-1}
\begin{split}
\frac{1}{2}(A_0+\overline{a}_0)\partial_Z(\partial_Z \Phi_{1}+\overline{\partial_z \varphi}_{0})+\partial_Z (A_0+\overline{a}_0) (\partial_Z \Phi_{1}+ \overline{\partial_z \varphi}_{0})=0.
\end{split}
\end{equation*}
Multiplying the above equation by $A_0+\overline{a}_0$ yields
\begin{equation*}\label{phi-negative-1-3}
\begin{split}
\partial_Z\left((A_0+\overline{a}_0)^2(\partial_Z \Phi_{1}+\overline{\partial_z \varphi}_{0})\right)=0,
\end{split}
\end{equation*}
which together with the boundary conditions $A_0|_{Z=+\infty}=0=\partial_Z \Phi_1|_{Z=+\infty}$ ensures that
\begin{equation}\label{phi-negative-1-4}
\begin{split}
(A_0+\overline{a}_0)^2(\partial_Z \Phi_{1}+\overline{\partial_z \varphi}_{0})=(\overline{a}_0)^2 \overline{\partial_z \varphi}_{0}.
\end{split}
\end{equation}
On the other hand, we deduce from \eqref{a-order-0} that
\begin{equation*}\label{a-order-0-1}
\begin{split}
\frac{1}{2}\partial_Z^2A_0=&\frac{1}{2}(A_0+\overline{a}_0)(\partial_Z\Phi_{1}+\overline{\partial_z \varphi}_{0})^2\\
&-\frac{1}{2}(A_0+\overline{a}_0) (\overline{\partial_z \varphi}_{0})^2 +A_0(A_0+\overline{a}_0)(A_0+2\overline{a}_0).
\end{split}
\end{equation*}
Inserting \eqref{phi-negative-1-4} into the above equation leads to
\begin{equation}\label{a-order-0-2}
\begin{split}
\frac{1}{2}\partial_Z^2A_0=\frac{1}{2}(\overline{a}_0)^4 (\overline{\partial_z \varphi}_{0})^2(A_0+\overline{a}_0)^{-3}
-\frac{1}{2}(\overline{\partial_z \varphi}_{0})^2(A_0+\overline{a}_0) +A_0(A_0+\overline{a}_0)(A_0+2\overline{a}_0).
\end{split}
\end{equation}
Let us denote
\begin{equation}\label{denote-1}
\widetilde{A_0} \eqdefa A_0+\overline{a}_0 \andf q_0 \eqdefa \frac{d \widetilde{A_0}}{d Z}.
\end{equation}
Then under the assumption that $ A_0+\overline{a}_0>0,$ (which we shall justify below),  one has
\beno  \partial_Z^2 A_0=q_0 \partial_{\widetilde{A_0}} q_0, \eeno
and  it follows from \eqref{a-order-0-2} that
\begin{equation*}\label{a-order-0-3}
\begin{split}
\frac{1}{4}\frac{d\,q_0^2}{d\, \widetilde{A_0}} =\frac{1}{2}(\overline{a}_0)^4 (\overline{\partial_z \varphi}_{0})^2\widetilde{A_0}^{-3}-\frac{1}{2}(\overline{\partial_z \varphi}_{0})^2\widetilde{A_0} +\widetilde{A_0}\bigl(\widetilde{A_0}^2-(\overline{a}_0)^2\bigr),
\end{split}
\end{equation*}
from which, we infer
\begin{equation}\label{a-order-0-4}
\begin{split}
q_0^2 =-(\overline{a}_0)^4 (\overline{\partial_z \varphi}_{0})^2 \widetilde{A_0}^{-2}-(\overline{\partial_z \varphi}_{0})^2\widetilde{A_0}^2 +\widetilde{A_0}^4-2(\overline{a}_0)^2\widetilde{A_0}^2+C_1(t,y).
\end{split}
\end{equation}
Thanks to the conditions $\frac{d A_0}{d Z}|_{Z=+\infty}=A_0|_{Z=+\infty}=0$, one gets
\begin{equation*}\label{a-order-0-6}
\begin{split}
0 =-(\overline{a}_0)^4 (\overline{\partial_z \varphi}_{0})^2 (\overline{a}_0)^{-2}-\bigl(2(\overline{a}_0)^2+(\overline{\partial_z \varphi}_{0})^2\bigr)(\overline{a}_0)^2 +(\overline{a}_0)^4+C_1(t,y),
\end{split}
\end{equation*}
which gives
\begin{equation*}\label{a-order-0-7}
\begin{split}
C_1(t,y) =(\overline{a}_0)^2 \left((\overline{a}_0)^2 + 2(\overline{\partial_z \varphi}_{0})^2\right).
\end{split}
\end{equation*}
By inserting the above equality into \eqref{a-order-0-4} and
multiplying the resulting equality  by $\widetilde{A_0}^2,$  we find
\begin{equation*}\label{a-order-0-8}
\begin{split}
\left(\widetilde{A_0}\frac{d \widetilde{A_0}}{d Z}\right)^2= &\widetilde{A_0}^6-\bigl(2(\overline{a}_0)^2+(\overline{\partial_z \varphi}_{0})^2\bigr)\widetilde{A_0}^4\\
&+(\overline{a}_0)^2 \bigl((\overline{a}_0)^2 + 2(\overline{\partial_z \varphi}_{0})^2\bigr)\widetilde{A_0}^2-(\overline{a}_0)^4 (\overline{\partial_z \varphi}_{0})^2,
\end{split}
\end{equation*}
that is
\begin{equation}\label{a-order-0-9}
\begin{split}
\frac{1}{4}\left(\frac{d \widetilde{A_0}^2}{d Z}\right)^2= \bigl(\widetilde{A_0}^2-(\overline{a}_0)^2\bigr)^2
 \bigl(\widetilde{A_0}^2-(\overline{\partial_z \varphi}_{0}\bigr)^2).
\end{split}
\end{equation}
Notice that according to Proposition \ref{S2prop1}, we have $$(\overline{a}_0)^2-(\overline{\partial_z \varphi}_{0})^2\geq \frac14$$
as long as $c$ is small enough in \eqref{S3eq27}.

Let us denote
\begin{equation}\label{denote-2}
h_0 \eqdefa \left((\overline{a}_0)^2-(\overline{\partial_z \varphi}_{0})^2\right)^{\frac{1}{2}}\geq\frac12 \andf B_0\eqdefa \left(\widetilde{A_0}^2-(\overline{\partial_z \varphi}_{0})^2\right)^{\frac12}
\end{equation}
in case of $\widetilde{A_0}^2-(\overline{\partial_z \varphi}_{0})^2>0,$ which we shall justify later on.

By virtue of \eqref{a-order-0-9} and \eqref{denote-2}, we write
\begin{equation*}\label{a-order-0-10}
\begin{split}
\frac{1}{4}\left(\frac{d B_0^2}{d Z}\right)^2= \bigl(B_0^2-h_0^2\bigr)^2 B_0^2,
\end{split}
\end{equation*}
that is,
\begin{equation}\label{a-order-0-11}
\begin{split}
\frac{d B_0}{d Z}=\pm \bigl(B_0^2-h_0^2\bigr),
\end{split}
\end{equation}
from which, we infer
\begin{equation*}\label{a-order-0-13}
\begin{split}
\frac{B_0-h_0}{B_0+h_0}=C_2(t,y) e^{\pm 2h_0 Z}.
\end{split}
\end{equation*}
Since $B_0-h_0$ is rapidly decaying to zero  as $Z \rightarrow +\infty$, we have
\begin{equation*}\label{a-order-0-14}
\begin{split}
\frac{B_0-h_0}{B_0+h_0}=C_2(t,y) e^{- 2h_0 Z},
\end{split}
\end{equation*}
which gives
\begin{equation}\label{a-order-0-15}
\begin{split}
B_0=h_0 \frac{1+C_2(t,y)e^{- 2h_0 Z}}{1-C_2(t,y)e^{- 2h_0 Z}}.
\end{split}
\end{equation}
While according to the boundary condition \eqref{a-order-0-bc}, one has
\begin{equation*}\label{a-order-0-16}
\begin{split}
B_0^2|_{Z=0}=(A_0+\overline{a}_0)^2|_{Z=0}-  (\overline{\partial_z \varphi}_{0})^2=1-(\overline{\partial_z \varphi}_{0})^2,
\end{split}
\end{equation*}
which together with \eqref{a-order-0-15} ensures that
\begin{equation*}\label{a-order-0-17}
\begin{split}
\sqrt{1-(\overline{\partial_z \varphi}_{0})^2}=h_0 \frac{1+C_2(t,y)}{1-C_2(t,y)}.
\end{split}
\end{equation*}
As a result, it comes out
\begin{equation*}
\begin{split}
C_2(t,y)=&\frac{\sqrt{1-(\overline{\partial_z \varphi}_{0})^2}-h_0}{\sqrt{1-(\overline{\partial_z \varphi}_{0})^2}+h_0}\\
=&\frac{1-(\overline{a}_0)^2}{\bigl(\sqrt{1-(\overline{\partial_z \varphi}_{0})^2}+h_0\bigr)^2},
\end{split}
\end{equation*}
from which,  Proposition \ref{S2prop1} and  trace theorem, we deduce that
\beq \label{a-order-0-18}
\|C_2(\cdot)\|_{W^{s_0-\frac32}_{T_0}(\R^2)}\leq C\|(a_{0,0}^{\rm{\rm{in}}}-1,\,\na\vf_{0,0}^{\rm{in}})\|_{H^{s_0-1}}\leq Cc.
\eeq

Whereas it follows from   \eqref{denote-2} and \eqref{a-order-0-15} that
\begin{equation*}\label{a-order-0-19}
\begin{split}
(A_0+\overline{a}_0)^2-(\overline{\partial_z \varphi}_{0})^2=h_0^2 \Bigl(\frac{1+C_2e^{- 2h_0 Z}}{1-C_2e^{- 2h_0 Z}}\Bigr)^2,
\end{split}
\end{equation*}
which implies that
\begin{equation*}
\begin{split}
A_0&=-\overline{a}_0+\bigg( \frac{(\overline{a}_0)^2(1+C_2^2e^{- 4h_0 Z} )+2C_2(h_0^2-(\overline{\partial_z \varphi}_{0})^2)e^{- 2h_0 Z}}{(1-C_2(t,y)e^{- 2h_0 Z})^2} \bigg)^{\frac{1}{2}}\\
&=4C_2h_0^2e^{-2h_0Z}\bigg(\overline{a}_0+\Bigl( \frac{(\overline{a}_0)^2(1+C_2^2e^{- 4h_0 Z} )+2C_2
(h_0^2-(\overline{\partial_z \varphi}_{0})^2)e^{- 2h_0 Z}}{(1-C_2e^{- 2h_0 Z})^2} \Bigr)^{\frac{1}{2}}\bigg)^{-1}.
\end{split}
\end{equation*}
This together with \eqref{denote-2} and \eqref{a-order-0-18} in particular shows that $A_0\in \mathcal{W}^{s_0-\frac32}_{1, T_0},$ and
there exists some positive constant $c_1$ such that
\begin{equation}\label{a-away-0} \|A_0\|_{\mathcal{W}^{s_0-\frac32}_{1, T_0}}\leq C\|(a_{0,0}^{\rm{\rm{in}}}-1,\,\na\vf_{0,0}^{\rm{in}})\|_{H^{s_0-1}} \andf
A_0+\overline{a}_0 \geq c_1>0.
\end{equation}
Then we rigorously justify that $B_0>0$ for $Z\in\R^+$ as long as $c$ is small enough in \eqref{S3eq27}.

With such $A_0$, in view of \eqref{phi-negative-1-4}, we write
\begin{equation*}\label{phi-negative-1-4-a}
\begin{cases}
&\partial_Z \Phi_{1}=(A_0+\overline{a}_0)^{-2}(\overline{a}_0)^2 \overline{\partial_z \varphi}_{0}-\overline{\partial_z \varphi}_{0},\\
&\Phi_{1}|_{Z=+\infty}=0.
\end{cases}
\end{equation*}
It is easy to observe from the above equation that
\begin{equation*}
\Phi_{1}(Z)=-\overline{\partial_z \varphi}_{0} \,\int_{Z}^{\infty}\frac{A_0(A_0+2\overline{a}_0)}{(A_0+\overline{a}_0)^{2}}\, dZ.
\end{equation*}
which together with \eqref{a-away-0} shows that
\beq \label{phi-negative-1-4-b}
 \|\Phi_1\|_{\mathcal{W}^{s_0-\frac32}_{1, T_0}}\leq C\|(a_{0,0}^{\rm{\rm{in}}}-1,\,\na\vf_{0,0}^{\rm{in}})\|_{H^{s_0-1}}^2.
 \eeq
This ends the proof of Proposition \ref{S2prop2}.
\end{proof}

\renewcommand{\theequation}{\thesection.\arabic{equation}}
\setcounter{equation}{0}

\section{The existence of solutions to a linear wave  equation}\label{sec8}

The goal of this section is to present the proof of Theorem \ref{S8thm1}.
Let $\chi(\tau)\in C_c^\infty(\R)$ with $\chi(\tau)=1$ in a neighborhood of $0.$ We denote
\beq \label{S8eq3}
G\eqdefa \chi\left(z(1+|D_\h|^2)^{\frac12}\right)g(t,\cdot) \andf \vf=\phi+G.
\eeq
Then one has $G\in W^{s+1}_T(\R^3),$ and $\phi$ verifies
\begin{equation}\label{S8eq4}
\begin{cases}
& P(\vf_0,D)\phi=-P(\vf_0,D)G+f\eqdefa F\in W^{s-1}_T(\R^3_+),\\
&  \vf|_{z=0}=0,\\
&\phi|_{t=0}=\vf_{,0}^{\rm in}-G|_{t=0}\eqdefa \phi_{0}^{\rm in}, \andf \p_t\vf|_{t=0}=\vf_{,1}^{\rm in}-\p_tG|_{t=0}\eqdefa \phi_{1}^{\rm in}.
\end{cases}
\end{equation}
And the proof of Theorem \ref{S8thm1} is reduced to the following one:

\begin{thm}\label{thm-hlin-order}
{\sl  Let $T\leq T_0$ and $4\leq$  be an integer. Let $F\in W^{s-1}_T$ and $(\phi_0^{\rm{in}},\phi_1^{\rm{\rm{in}}})$ satisfy
$\na\phi_0^{\rm{in}}, \phi_1^{\rm{\rm{in}}}\in \,H^{s-1}$  and the compatibility condition that $\p_t^\ell\phi(0,y,0)=0$ for $\ell=0, \cdots,s-1.$  Then under the assumptions
of Proposition \ref{S2prop1},
\eqref{S8eq4} has  a unique solution $\phi$ on $[0, T],$
which satisfies
\beq \label{S8eq8}
\left\|\left(\p_t\phi,\na\phi\right)\right\|_{W_T^{s-1}}\leq  C\bigl(\|g\|_{ W^{s+\frac12}_T(\R^2)}+\|(\na \phi_0^{\rm{in}},\,\phi_1^{\rm{in}})\|_{H^{s-1}}+\|F\|_{W^{s-1}_T}\bigr).
\eeq
}
\end{thm}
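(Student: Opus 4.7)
The strategy mirrors the proof of Theorem \ref{thm-hj-order-0}. Because $\phi|_{z=0}=0$ and $\Ta=(\p_t,\na_\h)$ consists only of tangential vector fields, one has $\Ta^\ell\phi|_{z=0}=0$ for every $0\leq\ell\leq s-1$, so all integrations by parts against $\p_t\Ta^\ell\phi$ produce no boundary contribution. I would first derive an \emph{a priori} tangential energy estimate, then recover the normal derivatives from the equation using the uniform positivity of $a_0$, close by Gronwall, and finally handle existence via a standard approximation.

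For the energy estimate, apply $\Ta^\ell$ to $P(\vf_0,D)\phi=F$ for $0\leq\ell\leq s-1$ and pair with $\p_t\Ta^\ell\phi$. The principal block $\p_t^2\phi-\dive(a_0^2\na\phi)$ produces the classical contribution $\|\p_t\Ta^\ell\phi\|_{L^2_+}^2+\int_{\R^3_+}a_0^2|\na\Ta^\ell\phi|^2\,dx$, while the quadratic term $\dive((\na\vf_0\cdot\na\phi)\na\vf_0)$ (coupled with $2\na\vf_0\cdot\na\p_t\phi$) generates the corrector $-\frac{d}{dt}\int_{\R^3_+}|\na\vf_0\cdot\na\Ta^\ell\phi|^2\,dx$ exactly as in \eqref{outer-order-ellh-7}--\eqref{outer-order-ellh-12}. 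This motivates the modified energy
$$\wt E_s(t)\eqdefa\sum_{\ell=0}^{s-1}\int_{\R^3_+}\bigl(|\p_t\Ta^\ell\phi|^2+a_0^2|\na\Ta^\ell\phi|^2-|\na\vf_0\cdot\na\Ta^\ell\phi|^2\bigr)\,dx,$$
whose coercivity $\wt E_s(t)\sim\sum_{\ell}\bigl(\|\p_t\Ta^\ell\phi\|_{L^2_+}^2+\|\na\Ta^\ell\phi\|_{L^2_+}^2\bigr)$ is guaranteed by $a_0^2-|\na\vf_0|^2\geq c_1>0$, a consequence of the smallness afforded by Proposition \ref{S2prop1}. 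The remaining subprincipal terms $\na\p_t\vf_0\cdot\na\phi,\ \D\vf_0\,\p_t\phi$ and every commutator $[\Ta^\ell;\text{coef.}]$ are controlled by Lemma \ref{S3lem1} together with $\|(a_0-1,\p_t\vf_0,\na\vf_0)\|_{W_{T_0}^{s_0-1}}\lesssim c$; the source contributes $\|F(t)\|_{W^{s-1}}\,\wt E_s(t)^{1/2}$. Thus $\frac{d}{dt}\wt E_s(t)\leq C\bigl(\wt E_s(t)+\|F(t)\|_{W^{s-1}}\wt E_s(t)^{1/2}\bigr)$. To upgrade tangential control to the full $W^{s-1}_T$ norm of $(\p_t\phi,\na\phi)$, I would exploit $a_0\geq c_1>0$ to solve the equation for $\p_z^2\phi$ in terms of $\p_t^2\phi$, $\D_\h\phi$, first-order tangential terms and $F$, and then iterate on $\p_t^j\na_\h^\alpha$ multi-indices exactly as in the passage \eqref{S3eq15}--\eqref{S3eq16}. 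The assumed compatibility $\p_t^\ell\phi(0,y,0)=0$ for $\ell\leq s-1$ makes all initial traces self-consistent.

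The main obstacle, parallel to Remark \ref{rmk3.1}, is the highest-order commutator $[\Ta^{s-1};a_0^2]\na\phi$ and its sibling coming from the coefficient $\na\vf_0\otimes\na\vf_0$: one cannot afford more than $s_0-1$ tangential derivatives on the coefficients, which is the ceiling furnished by Proposition \ref{S2prop1} and precisely what forces the constraint $s\leq s_0$. Once this top-order commutator is correctly distributed via Lemma \ref{S3lem1}, Gronwall's inequality on $[0,T]\subset[0,T_0]$ yields \eqref{S8eq8}. Uniqueness follows by applying the same estimate to the difference of two candidate solutions with vanishing data. For existence, I would use a Galerkin scheme in the tangential Fourier variables (or equivalently a viscous regularisation of the form $\delta(-\D_\h)^N\p_t\phi$); the resulting approximate problems are solved by ODE theory, obey the same uniform bound, and a weak-$\star$ limit in $\bigcap_{j=0}^{s-1}C^j([0,T];H^{s-j}(\R^3_+))$ can be extracted, with strong compactness at a lower order securing convergence of the variable coefficients and the Dirichlet trace being preserved thanks to the compatibility conditions.
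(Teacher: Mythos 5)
Your proposal reproduces the paper's proof essentially step for step: the same modified tangential energy with corrector $-\int_{\R^3_+}|\na\vf_0\cdot\na\Ta^\ell\phi|^2\,dx$, the same coercivity argument from $a_0^2-|\na\vf_0|^2\geq c_1>0$ (furnished by the smallness in Proposition \ref{S2prop1}), the same commutator control via \eqref{S3eq10} and Lemma \ref{S3lem1}, the same recovery of $\p_z^2\phi$ from the equation followed by induction to upgrade tangential to full regularity, and Gronwall to close -- these are exactly Lemmas \ref{S8lem1}, \ref{S8lem3} and \ref{S8lem4} of the paper. The only additions are your Galerkin sketch for existence (which the paper leaves implicit) and your invocation of Remark \ref{rmk3.1}, which is slightly off-target here since in this linear problem the unknown $\phi$ and the coefficients $(\r_0,\na\vf_0)$ are decoupled, so $\Ta^\ell$ can be applied directly to $P(\vf_0,D)\phi=F$ without the ``apply $\Ta$ first'' reformulation that Remark \ref{rmk3.1} shows is necessary in the quasilinear setting.
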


In what follows, we shall always denote $E_s(\phi)$, ${E}_{s, \text{tan}}(\phi)$, and $\widetilde{{E}}_{s, \text{tan}}(\phi)$ to be the energy functionals determined by  \eqref{S3eq11}.

Let us separate the proof of Theorem \ref{thm-hlin-order} into the following lemmas:

\begin{lem}\label{S8lem1}
{\sl Let $\phi$ be a smooth enough solution of \eqref{S8eq4} on $[0,T].$ Then for $t\leq T,$ one has
\beq \label{S8eq5}
\begin{split}
&\frac{d}{dt}\int_{\R^3_+}\left((\p_t\phi)^2+\r_0|\na\phi|^2-(\na\vf_0\cdot\na\phi)^2\right)\,dx\leq C\left(1+ E_4(\vf_0)\right)E_1(\phi(t))
+\|F\|_{L^2_+}^2.
\end{split}
\eeq}
\end{lem}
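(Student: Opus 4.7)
The plan is to multiply the equation $P(\vf_0,D)\phi=F$ by $\partial_t\phi$, integrate over $\R^3_+$, and extract a $\frac{d}{dt}$ of the target energy. The boundary condition $\phi|_{z=0}=0$ forces $\partial_t\phi|_{z=0}=0$, so every boundary term produced by integration by parts in $z$ (or in $x$ more generally on $\partial\R^3_+$) will vanish. This is the reason the estimate closes on this particular energy.

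First I would process the six summands of $P(\vf_0,D)\phi$ one by one when paired with $\partial_t\phi$: the $\partial_t^2\phi$ term gives $\tfrac12\tfrac{d}{dt}\int(\partial_t\phi)^2$; the $-\dive(a_0^2\nabla\phi)$ term, after IBP, gives $\tfrac12\tfrac{d}{dt}\int\rho_0|\nabla\phi|^2 -\tfrac12\int\partial_t\rho_0|\nabla\phi|^2$; the convection term $2\nabla\vf_0\cdot\nabla\partial_t\phi$ rewrites as $\nabla\vf_0\cdot\nabla(\partial_t\phi)^2$ and, integrating by parts, becomes $-\int\Delta\vf_0(\partial_t\phi)^2$; the divergence term $\dive((\nabla\vf_0\cdot\nabla\phi)\nabla\vf_0)$ gives, after IBP and one more $\partial_t$ transfer, $-\tfrac12\tfrac{d}{dt}\int(\nabla\vf_0\cdot\nabla\phi)^2 +\int(\nabla\vf_0\cdot\nabla\phi)(\nabla\partial_t\vf_0\cdot\nabla\phi)$; finally the two zeroth-order-in-$\phi$-derivative terms contribute $\int\nabla\partial_t\vf_0\cdot\nabla\phi\,\partial_t\phi$ and $\int\Delta\vf_0(\partial_t\phi)^2$. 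The key point is that the $\Delta\vf_0(\partial_t\phi)^2$ contributions from the convection and zero-order terms cancel exactly; this built-in cancellation is precisely why the lower-order pieces $\nabla\partial_t\vf_0\cdot\nabla\phi$ and $\Delta\vf_0\partial_t\phi$ are included in the definition of $P(\vf_0,D)$.

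Collecting what remains,
\begin{equation*}
\tfrac12\tfrac{d}{dt}\int_{\R^3_+}\!\!\bigl((\partial_t\phi)^2+\rho_0|\nabla\phi|^2-(\nabla\vf_0\cdot\nabla\phi)^2\bigr)dx =\tfrac12\!\int\!\partial_t\rho_0|\nabla\phi|^2dx-\!\int\!(\nabla\partial_t\vf_0\cdot\nabla\phi)(\nabla\vf_0\cdot\nabla\phi)dx-\!\int\!\nabla\partial_t\vf_0\cdot\nabla\phi\,\partial_t\phi\,dx+\!\int\! F\partial_t\phi\,dx.
\end{equation*}
To finish, I would bound the four right-hand terms by Cauchy--Schwarz, putting the $\vf_0$-factors in $L^\infty$ and the $\phi$-factors in $L^2$. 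Using $\partial_t\rho_0=-\partial_t^2\vf_0-\nabla\vf_0\cdot\nabla\partial_t\vf_0$ from \eqref{S3eq2}--\eqref{S3eq5} and $H^2\hookrightarrow L^\infty$ on $\R^3_+$, one gets $\|\partial_t\rho_0\|_{L^\infty_+}+\|\nabla\vf_0\|_{L^\infty_+}\|\nabla\partial_t\vf_0\|_{L^\infty_+}\lesssim 1+E_4(\vf_0)$, while $\|\nabla\partial_t\vf_0\|_{L^\infty_+}\lesssim E_4(\vf_0)^{1/2}$. Thus every cross-term is dominated by $C(1+E_4(\vf_0))E_1(\phi)$, and the source contribution is handled by $\int F\partial_t\phi\le \tfrac12\|F\|_{L^2_+}^2+\tfrac12 E_1(\phi)$, which yields \eqref{S8eq5}.

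The main obstacle---really the only delicate point---is organising the integrations by parts so that the two $\Delta\vf_0(\partial_t\phi)^2$ contributions with opposite signs are produced and can cancel. Once that cancellation is identified, the remaining terms are genuinely lower order and the estimate is routine; I do not expect any issue from the boundary, since $\partial_t\phi$ vanishes on $\{z=0\}$ and no normal derivative of $\phi$ is ever differentiated onto the boundary.
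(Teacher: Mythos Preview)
Your proof is correct and follows essentially the same route as the paper's: multiply $P(\vf_0,D)\phi=F$ by $\partial_t\phi$, integrate by parts using $\partial_t\phi|_{z=0}=0$, and arrive at the same energy identity (the paper records it as \eqref{S8PE}). Your write-up is in fact more explicit than the paper's, which states the identity \eqref{S8PE} directly without spelling out the cancellation of the two $\int\Delta\vf_0(\partial_t\phi)^2$ contributions coming from the convection term $2\nabla\vf_0\cdot\nabla\partial_t\phi$ and the zero-order term $\Delta\vf_0\,\partial_t\phi$; you correctly identify this as the one structural point that makes the estimate close.
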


\begin{proof} By taking $L^2$ inner product of the $\phi$ equation of \eqref{S8eq4} with $\p_t\phi$ and
using integration by parts, one has
\beq \label{S8PE}
\begin{split}
\frac12\frac{d}{dt}&\int_{\R^3_+}\left((\p_t\phi)^2+\r_0|\na\phi|^2-(\na\vf_0\cdot\na\phi)^2\right)\,dx\\
=&\int_{\R^3_+}\left(\frac12\p_t\r_0|\na\phi|^2-(\na\vf_0\cdot\na\phi) (\na\p_t\vf_0\cdot\na\phi)+(\na\p_t\vf_0\cdot\na\phi) \p_t\phi+F \p_t\phi\right)\,dx\\
\leq & C\left(1+\|\p_t^2\vf_0\|_{L^\infty_+}+\bigl(1+\|\na\vf_0\|_{L^\infty_+}\bigr)\|\na\p_t\vf_0\|_{L^\infty_+}\right)
\bigl(\|\p_t\phi\|_{L^2_+}^2+\|\na\phi\|_{L^2_+}^2\bigr)+\|F\|_{L^2_+}^2,
\end{split}
\eeq
from which,  and \eqref{S3eq11},  we deduce \eqref{S8eq5}.
\end{proof}

\begin{lem}[High-order tangential derivatives estimates]\label{S8lem3}
{\sl Let $\phi$ be a smooth enough solution of \eqref{S8eq4} on $[0,T].$ Then for $t\leq T,$ one has
\begin{equation}\label{S8ht-re-106}
\begin{split}
&\frac{d}{dt} \widetilde{E}_{s, \text{tan}}(\phi(t)) \lesssim \bigl(1+E_{s+1}(\vf_0(t))\bigr) E_{s}(\phi(t))+\sum_{\ell=0}^{s-1}\|{\Ta}^{\ell}\,F\|_{L^2_+}^2.
\end{split}
\end{equation}}
\end{lem}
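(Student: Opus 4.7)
The plan is to mimic Steps~2--3 of the proof of Theorem~\ref{thm-hj-order-0}, adapted to the linear operator $P(\vf_0,D)$ and the inhomogeneity $F$. For each $\ell\in\{0,1,\dots,s-1\}$ I would apply the tangential derivative $\Ta^\ell$ directly to the equation $P(\vf_0,D)\phi=F$ in \eqref{S8eq4}, obtaining
\begin{equation*}
\begin{split}
&\p_t^2\Ta^\ell\phi-\dive(a_0^2\na\Ta^\ell\phi)+2\na\vf_0\cdot\na\p_t\Ta^\ell\phi+\dive\bigl((\na\vf_0\cdot\na\Ta^\ell\phi)\na\vf_0\bigr)\\
&\qquad\qquad+\na\p_t\vf_0\cdot\na\Ta^\ell\phi+\D\vf_0\,\p_t\Ta^\ell\phi=\Ta^\ell F+g_\ell,
\end{split}
\end{equation*}
where $g_\ell$ collects the commutators $\dive([\Ta^\ell;a_0^2]\na\phi)$, $-2[\Ta^\ell;\na\vf_0]\cdot\na\p_t\phi$, a divergence-form term produced by $\na\vf_0\otimes\na\vf_0$, together with the lower-order commutators $-[\Ta^\ell;\na\p_t\vf_0]\cdot\na\phi$ and $-[\Ta^\ell;\D\vf_0]\p_t\phi$. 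Since $\Ta=(\p_t,\na_{\rm h})$ preserves the trace on $\{z=0\}$ and the compatibility hypothesis gives $\p_t^j\phi(0,y,0)=0$ for $j\leq s-1$, one has $\p_t\Ta^\ell\phi|_{z=0}=0$, so every forthcoming integration by parts produces no boundary contribution.

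Taking the $L^2_+$-inner product with $\p_t\Ta^\ell\phi$ and rearranging the second-order and quadratic-coupling terms exactly as in the derivation of \eqref{outer-order-ellh-12}, I would observe the pleasant cancellation between the term $\int\D\vf_0|\p_t\Ta^\ell\phi|^2\,dx$ generated by $2\na\vf_0\cdot\na\p_t$ and the same integral produced by the lower-order coefficient $\D\vf_0\,\p_t$, and thus obtain the perfect-derivative identity
\begin{equation*}
\tfrac12\tfrac{d}{dt}\!\int_{\R^3_+}\!\!\bigl(|\p_t\Ta^\ell\phi|^2+\rho_0|\na\Ta^\ell\phi|^2-(\na\vf_0\cdot\na\Ta^\ell\phi)^2\bigr)\,dx=R_\ell+\int_{\R^3_+}(\Ta^\ell F+g_\ell)\,\p_t\Ta^\ell\phi\,dx,
\end{equation*}
with $R_\ell$ gathering $\tfrac12\int\p_t\rho_0\,|\na\Ta^\ell\phi|^2\,dx$, $-\int(\na\vf_0\cdot\na\Ta^\ell\phi)(\na\p_t\vf_0\cdot\na\Ta^\ell\phi)\,dx$ and $-\int(\na\p_t\vf_0\cdot\na\Ta^\ell\phi)\,\p_t\Ta^\ell\phi\,dx$. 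Using $\rho_0=1-\p_t\vf_0-\tfrac12|\na\vf_0|^2$ and the Sobolev embedding $H^2(\R^3_+)\hookrightarrow L^\infty(\R^3_+)$, each such coefficient is bounded in $L^\infty_+$ by a polynomial in $E_4^{1/2}(\vf_0)$, so that $|R_\ell|\lesssim(1+E_{s+1}(\vf_0))E_s(\phi)$ after summation in $\ell$.

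The main obstacle is the highest-order commutator in $g_{s-1}$; specifically $\dive([\Ta^{s-1};a_0^2]\na\phi)$ cannot be treated by transferring the divergence onto $\p_t\Ta^{s-1}\phi$, since this would produce the factor $\na\p_t\Ta^{s-1}\phi$ with $s+1$ total derivatives, which lies outside $E_s(\phi)$. Instead I would keep the divergence in place and apply the product-type bound \eqref{S3eq10} with its index shifted from $s$ to $s+1$, yielding
\[
\|\na\cdot([\Ta^{s-1};a_0^2]\na\phi)\|_{L^2_+}\lesssim\|a_0^2-1\|_{W^s}\,\|\na\phi\|_{W^{s-1}}\lesssim\bigl(1+E_{s+1}^{1/2}(\vf_0)\bigr)E_{s+1}^{1/2}(\vf_0)\,E_s^{1/2}(\phi),
\]
where $\|a_0^2-1\|_{W^s}$ is controlled via Lemma~\ref{S3lem1} and the formula $\rho_0-1=-\p_t\vf_0-\tfrac12|\na\vf_0|^2$; this is precisely why $E_{s+1}(\vf_0)$, rather than $E_s(\vf_0)$, appears in \eqref{S8ht-re-106}. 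The divergence-form commutator coming from $\na\vf_0\otimes\na\vf_0$ is handled identically, while the pieces $[\Ta^\ell;\na\vf_0]\cdot\na\p_t\phi$, $[\Ta^\ell;\na\p_t\vf_0]\cdot\na\phi$ and $[\Ta^\ell;\D\vf_0]\p_t\phi$ are estimated in $L^2_+$ directly through Lemma~\ref{S3lem1}. Summing the identity over $\ell\in\{0,1,\dots,s-1\}$ and separating the $F$-contribution via $|\int\Ta^\ell F\,\p_t\Ta^\ell\phi\,dx|\leq\tfrac12\|\Ta^\ell F\|_{L^2_+}^2+\tfrac12\|\p_t\Ta^\ell\phi\|_{L^2_+}^2$, with the second term absorbed into $E_s(\phi)$, then produces \eqref{S8ht-re-106}.
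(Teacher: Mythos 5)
Your proposal is correct and follows essentially the same route as the paper: apply $\Ta^\ell$, use the same perfect-derivative energy identity (built on the cancellation between the two $\int\D\vf_0|\p_t\Ta^\ell\phi|^2\,dx$ contributions, which the paper packages as Lemma \ref{S8lem1}), and bound the divergence-form commutator $\dive([\Ta^\ell;\r_0]\na\phi)$ directly in $L^2$ via \eqref{S3eq10} rather than integrating by parts, which is exactly why $E_{s+1}(\vf_0)$ appears. The only tiny imprecision is attributing $\p_t\Ta^\ell\phi|_{z=0}=0$ to the compatibility hypothesis, whereas it follows directly from the boundary condition $\phi|_{z=0}=0$ holding for all $t$.
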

\begin{proof}
Let $\ell\leq s-1$ be an integer. By applying the operator ${\Ta}^{\ell} $ (with ${\Ta}=(\partial_t, \nabla_{\h})$ and $\Ta^\ell=\p_t^{\al_1}\na_\h^{\al_2}$ for  $\al_1+|\al_2|=\ell \in \mathbb{N}$) to the $\phi$ equation in \eqref{S8eq4}, we find
\beq \label{S8eq-ht-1}
\begin{split}
 P(\vf_0, D)\Ta^{\ell}\phi=-[\Ta^\ell; P(\vf_0, D)]\phi+ \Ta^{\ell}\,F,
\end{split}
\eeq
where
\beno \label{S8ht-re}
\begin{split}
[{\Ta}^\ell; P(\vf_0, D)]\phi=&- \dive \bigl([{\Ta}^\ell;\r_0]\nabla\phi\bigr)+2 [{\Ta}^\ell; \nabla \varphi_{0}] \cdot \nabla \partial_t\phi \\
 &+\sum_{k=1}^3\dive \bigl([{\Ta}^\ell;\p_k\varphi_0 \nabla\varphi_0 ]\p_k\phi\bigr)+[{\Ta}^\ell; \na\partial_t\varphi_0] \nabla\phi+[{\Ta}^\ell,\D\vf_0]\p_t\phi.
\end{split}
\eeno
Due to $\p_t{\Ta}^{\ell}\phi|_{z=0}=0,$ by
taking the $L^2$ inner product of \eqref{S8eq-ht-1} with $\partial_t{\Ta}^{\ell} \phi$, we deduce  from \eqref{S8PE} that
\begin{equation}\label{hlin-thh-2}
\begin{split}
&\frac{d}{dt}\int_{\R^3_+}\left((\p_t{\Ta}^{\ell}\phi)^2+\r_0|\na{\Ta}^{\ell}\phi|^2
-(\na\vf_0\cdot\na{\Ta}^{\ell}\phi)^2\right)\,dx\\
&\lesssim \bigl(\|\p_t\r_0\|_{L^\infty_+}+\|\na\vf_0 \|_{L^\infty_+} \|\na\p_t\vf_0 \|_{L^\infty_+}\bigr) \|\na{\Ta}^{\ell}\phi\|_{L^2_+}^2
\\
 &\quad\ +\bigl(\|\na\p_t\vf_0\|_{L^\infty_+}\|\na{\Ta}^{\ell}\phi\|_{L^2_+}+\|[\Ta^\ell; P(\vf_0, D)]\phi\|_{L^2_+}+\|{\Ta}^{\ell}\,F\|_{L^2_+}\bigr)\|\p_t{\Ta}^{\ell}\phi\|_{L^2_+}.
\end{split}
\end{equation}
It follows from \eqref{S3eq10} that
\beno
\begin{split}
&\sum_{\ell=1}^{s-1}\|\dive \bigl([{\Ta}^\ell;\r_0]\nabla \phi\bigr)\|_{L^2_+} \lesssim \|\r_0-1\|_{W^{s}}\|\nabla \phi\|_{W^{s-1}},\\
&\sum_{\ell=1}^{s-1}\|\dive \bigl([{\Ta}^\ell;\p_k\varphi_0 \nabla\varphi_0 ]\p_k\phi\bigr)\|_{L^2_+} \lesssim \|\nabla\varphi_0 \|_{W^{s}}^2\|\nabla\phi\|_{W^{s-1}}.
\end{split}
\eeno
For $s\geq 4,$ we get, by applying Lemma \ref{S3lem1}, that
\beno
\begin{split}
&\sum_{\ell=1}^{s-1}\|[{\Ta}^\ell; \nabla \varphi_{0}] \cdot \nabla \partial_t\phi \|_{L^2_+} \lesssim \|\nabla \varphi_{0}\|_{W^{s-1}}\|\nabla \partial_t\phi\|_{W^{s-2}},\\
&\sum_{\ell=1}^{s-1}\|[{\Ta}^\ell; \na\partial_t\varphi_0] \nabla\phi\|_{L^2_+} \lesssim \|\nabla \partial_t\varphi_{0}\|_{W^{s-1}}\|\nabla \phi\|_{W^{s-2}},\\
&\sum_{\ell=1}^{s-1}\|[{\Ta}^\ell,\D\vf_0]\p_t\phi\|_{L^2_+} \lesssim \|\D\vf_0\|_{W^{s-1}}\|\p_t\phi\|_{W^{s-2}}.
\end{split}
\eeno
This gives rise to
\beno
\begin{split}
 &\sum_{\ell=1}^{s-1}\|[{\Ta}^\ell; P(\vf_0, D)]\phi\|_{L^2_+} \lesssim E_{1+s}^{\frac{1}{2}}(\varphi_0(t))(1+E_{1+s}^{\frac{1}{2}}(\varphi_0(t)))E_{s}^{\frac{1}{2}}(\phi(t)).
\end{split}
\eeno
Inserting the above estimate into \eqref{hlin-thh-2} gives \eqref{S8ht-re-106}. This completes the proof of the lemma.
\end{proof}

\begin{lem}[Full energy estimates]\label{S8lem4}
{\sl Let $\phi$ be a smooth enough solution of \eqref{S8eq4} on $[0,T].$ Then for $t\leq T,$ one has
\begin{equation}\label{full-hell-0}
\begin{split}
&\frac{d}{dt} \widetilde{E}_{s, \text{tan}}(\phi(t)) \lesssim \bigl(1+E_{s+1}(\vf_0(t))\bigr)  {E}_{s, \text{tan}}(\phi(t)) +\|F\|_{W^{s-1}}^2.
\end{split}
\end{equation}}
\end{lem}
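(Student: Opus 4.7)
The strategy is to upgrade the bound \eqref{S8ht-re-106} from Lemma 5.3 by replacing the full Sobolev energy $E_s(\phi)$ on the right-hand side with the tangential energy ${E}_{s,\text{tan}}(\phi)$ (plus a forcing contribution), then use the near-equivalence of $\widetilde{E}_{s,\text{tan}}$ and ${E}_{s,\text{tan}}$. This is the exact analogue of Step~4 in the proof of Theorem \ref{thm-hj-order-0}, the new ingredient being the forcing term $F$.

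First I would establish the equivalence
\[
C^{-1}\,{E}_{s,\text{tan}}(\phi(t))\ \leq\ \widetilde{E}_{s,\text{tan}}(\phi(t))\ \leq\ C\,{E}_{s,\text{tan}}(\phi(t)),
\]
which follows at once from the smallness of $\na\vf_0$ and of $\r_0-1$ guaranteed by Proposition \ref{S2prop1}: the term $|\na\vf_0\cdot\na\Ta^{\ell}\phi|^{2}$ in $\widetilde E_{s,\text{tan}}$ is dominated by $\|\na\vf_0\|_{L^\infty_+}^2\,\r_0|\na\Ta^{\ell}\phi|^2$, and $\r_0\geq 1/2$ for $c$ small.

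The core step is the inequality
\begin{equation}\label{fullsob-tan}
E_s(\phi(t))\ \lesssim\ {E}_{s,\text{tan}}(\phi(t))\ +\ \|F\|_{W^{s-1}}^{2},
\end{equation}
which I would prove by induction on the number of normal derivatives, exactly as in Step~4 of Theorem \ref{thm-hj-order-0}. The starting point is to isolate $\p_{3}^{2}\phi$ in \eqref{S8eq1a}. Expanding $\dive(a_{0}^{2}\na\phi)$ and $\dive((\na\vf_0\cdot\na\phi)\na\vf_0)$, the coefficient of $\p_3^2\phi$ is $-a_0^2+(\p_3\vf_0)^2$, which by Proposition \ref{S2prop1} satisfies
\[
a_0^2-(\p_3\vf_0)^2\ \geq\ \tfrac{1}{2}\qquad\text{on }[0,T_0]\times\R^3_+
\]
for $c$ small enough. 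Hence the equation can be solved for $\p_3^2\phi$ in the form
\[
\p_3^2\phi\ =\ \frac{1}{a_0^2-(\p_3\vf_0)^2}\bigl(\p_t^2\phi-a_0^2\D_\h\phi+\mathcal{L}(\vf_0,a_0;\Ta\na\phi,\na\Ta\phi,\p_t\phi)-F\bigr),
\]
where $\mathcal{L}$ is a first-order operator in $(\na,\p_t)\phi$ whose coefficients involve $(a_0,\na\vf_0,\p_t\vf_0)$ and at most one tangential derivative thereof. The right-hand side contains only tangential derivatives of $\phi$ and $F$, which is precisely the mechanism used in Step~4. One then iterates this identity to express $\p_3^{k}\p_t^{\al_1}\na_\h^{\al_2}\phi$ as a combination of purely tangential derivatives of order $\leq s$ and tangential derivatives of $F$ of order $\leq s-1$, with coefficients controlled by $\|\vf_0\|_{W^{s}}$ through Lemma \ref{S3lem1} and the product law \eqref{S3eq7}. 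Combining the resulting $L^2_+$ bounds for every $\p_3^{k}\p_t^{\al_1}\na_\h^{\al_2}\phi$ with $k+\al_1+|\al_2|\leq s$ yields \eqref{fullsob-tan}, provided the smallness $E_{s+1}(\vf_0)\leq \d$ absorbs the nonlinear couplings exactly as in \eqref{S3eq16}.

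Finally, inserting \eqref{fullsob-tan} into \eqref{S8ht-re-106} and using $\|\Ta^{\ell}F\|_{L^2_+}\leq\|F\|_{W^{s-1}}$ for $\ell\leq s-1$ delivers
\[
\frac{d}{dt}\widetilde{E}_{s,\text{tan}}(\phi(t))\ \lesssim\ \bigl(1+E_{s+1}(\vf_0(t))\bigr)\bigl({E}_{s,\text{tan}}(\phi(t))+\|F\|_{W^{s-1}}^{2}\bigr)+\|F\|_{W^{s-1}}^{2},
\]
which after the equivalence $\widetilde E_{s,\text{tan}}\sim E_{s,\text{tan}}$ is exactly \eqref{full-hell-0}. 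The main obstacle I anticipate is bookkeeping in the inductive reduction of $\p_3$-derivatives: each use of the wave equation reintroduces lower-order terms with coefficients depending on $(a_0,\vf_0)$ up to order $s$, and one has to check at each step that the product and commutator estimates from Section \ref{sec-hj} (in particular Lemma \ref{S3lem1} and the bound \eqref{S3eq7}) produce factors controlled uniformly by $E_{s+1}(\vf_0)$ without ever generating a term like $E_s(\phi)$ on the right-hand side. Once this is carried through as in Step~4 of Theorem \ref{thm-hj-order-0}, the rest is straightforward.
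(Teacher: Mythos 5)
Your proposal is correct and follows essentially the same approach as the paper: the key step \eqref{fullsob-tan} is precisely the paper's inequality \eqref{full-hell-1}, proved identically by isolating $\p_3^2\phi$ from the wave equation (noting the normal coefficient $\r_0-(\p_3\vf_0)^2\geq\frac12$ under the smallness assumption) and inducting on the number of normal derivatives, then combining with Lemma \ref{S8lem3}. The only cosmetic slip is the final invocation of the equivalence $\widetilde E_{s,\text{tan}}\sim E_{s,\text{tan}}$; what you actually need there is simply that $E_{s+1}(\vf_0)\lesssim 1$ on $[0,T_0]$ (from \eqref{S3eq28}) so that the factor $\bigl(1+E_{s+1}(\vf_0)\bigr)\|F\|_{W^{s-1}}^2$ collapses into $\|F\|_{W^{s-1}}^2$, matching \eqref{full-hell-0} as stated.
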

\begin{proof}

Let $E_s(\phi)$, ${E}_{s, \text{tan}}(\phi)$, and $\widetilde{{E}}_{s, \text{tan}}(\phi)$ be determined by \eqref{S3eq11}.
Along the same line to the proof of \eqref{S3eq12},  we claim that
\beq \label{full-hell-1}
E_\ell(\phi(t))\leq C_\ell \bigl({E}_{\ell, \text{tan}}(\phi(t))+\|F(t)\|_{W^{\ell-2}}^2\bigr)\quad \mbox{for}\quad t\leq T \andf \ell=2,\cdots,s.
\eeq
In what follows, we just outline the proof.

We first observe from  \eqref{S8eq4} that
 \beq\label{S8eq-vert-1}
\begin{split}
(\r_0-(\partial_3\varphi_0)^2)\partial_3^2\phi=&\partial_t^2 \phi-\r_0\D_{\h}\phi-\na \r_0\cdot\nabla\phi+2 \nabla \varphi_{0} \cdot \nabla \partial_t\phi+(\nabla\varphi_0 \cdot \nabla\phi)\D\varphi_0\\
 &+\D\vf_0\p_t\phi +\sum_{j=1}^3(\nabla\p_j\varphi_0 \cdot \nabla\phi)\p_j\varphi_0+ (\nabla_{\h}\varphi_0 \cdot \partial_3\nabla_{\h}\phi)\partial_3\varphi_0 \\
 &+ (\nabla\varphi_0 \cdot \na_{\h}\nabla\phi)\nabla_{\h}\varphi_0 +\na\partial_t \varphi_0 \cdot\nabla\phi-F,
\end{split}
\eeq
from which, we infer
 \beno
\begin{split}
&E_{2}^{\frac{1}{2}}(\phi(t))\lesssim E_{2, \text{tan}}^{\frac{1}{2}}(\phi(t))+E_{4}^{\frac{1}{2}}(\varphi_0(t))\bigl(1+E_{4}^{\frac{1}{2}}(\varphi_0(t))\bigr)
E_{2}^{\frac{1}{2}}(\phi(t))+\|F\|_{L^2_+}.
\end{split}
\eeno
This together with \eqref{S3eq28} ensures that
 \beq\label{S8eq-vert-2}
\begin{split}
&E_{2}(\phi(t))\lesssim E_{2, \text{tan}}(\phi(t))+\|F\|_{L^2_+}^2.
\end{split}
\eeq
This proves \eqref{full-hell-1} for $\ell=2.$

Now we assume that \eqref{full-hell-1} holds for $\ell=k,$ we are going to prove that \eqref{full-hell-1} holds for
$\ell=k+1\leq s.$ We first notice that
\beno
E_{k+1}(\phi(t))\leq \sum_{j=0}^k\bigl(\|\p_t^{j+1}\phi(t)\|_{H^{k-j}}^2+\|\na\p_t^j\phi(t)\|_{H^{k-j}}^2\bigr).
\eeno

\noindent$\bullet$\underline{
When $j=k.$}

We observe from  \eqref{S3eq11} that
\beno
\|\p_t^{k+1}\phi(t)\|_{L^2_+}^2+\|\na \p_t^{k}\phi(t)\|_{L^2_+}^2\leq {E}_{k+1, \text{tan}}(\phi(t)).
\eeno

\noindent$\bullet$\underline{
When $j=k-1.$}

It follows from  \eqref{S3eq11} that
\beno
\|\p_t^{k}\phi\|_{H^1}^2=\|\p_t^{k}\phi\|_{L^2_+}^2+\|\na \p_t^{k}\phi\|_{L^2_+}^2\leq {E}_{k+1, \text{tan}}(\phi(t)).
\eeno
Whereas notice that
\beno
\begin{split}
\|\na\p_t^{k-1}\phi(t)\|_{H^1}^2=&\|\na\p_t^{k-1}\phi(t)\|_{L^2_+}^2+\|\na^2\p_t^{k-1}\phi(t)\|_{L^2_+}^2\\
\lesssim &\|\p_t\phi\|_{W^{k-1}}^2+\|\na\na_\h\p_t^{k-1}\phi(t)\|_{L^2_+}^2+\|\p_3^2\p_t^{k-1}\phi(t)\|_{L^2_+}^2.
\end{split}
\eeno
Similar to the proofs of \eqref{S3eq19} and \eqref{S3eq20}, we deduce from \eqref{S8eq-vert-1} that
\beno
\|\p_3^2\p_t^{k-1}\phi\|_{L^2_+}\leq C\bigl({E}^{\frac12}_{k+1, \text{tan}}(\phi(t))+\bigl(1+{E}^{\frac12}_{k+1}(\vf_0(t))\bigr){E}^{\frac12}_{k+1}(\vf_0(t)) {E}^{\frac12}_{k+1}(\phi(t))\bigr)+
\|\p_t^{k-1}F\|_{L^2_+}^2.
\eeno
$\|\na\p_t^{k-1}\phi(t)\|_{H^1}$ shares the same estimate.

As a result, it comes out
\beno
\begin{split}
\|\p_t^{k}\phi\|_{H^1}+\|\na\p_t^{k-1}\phi(t)\|_{H^1}\leq C\bigl({E}^{\frac12}_{k+1, \text{tan}}(\phi(t))+{E}^{\frac12}_{k+1}(\vf_0(t)) {E}^{\frac12}_{k+1}(\phi(t))\bigr)+\|\p_t^{k-1}F\|_{L^2_+}^2.
\end{split}
\eeno

\noindent$\bullet$\underline{
When $k-j\geq 2.$}

We have
\beno
\begin{split}
\|\p_t^{j+1}\phi\|_{H^{k-j}}^2=&\|\p_t^{j+1}\phi\|_{H^{k-j-1}}^2+\|\na^2\p_t^{j+1}\phi\|_{H^{k-j-2}}^2\\
\lesssim &\|\p_t\phi\|_{W^{k-1}}^2+\|\na\na_\h\p_t^{j+1}\phi\|_{H^{k-j-2}}^2+\|\p_3^2\p_t^{j+1}\phi\|_{H^{k-j-2}}^2.
\end{split}
\eeno
By virtue of \eqref{S8eq-vert-1}, we find
 \beq\label{full-hell-101}
\begin{split}
&\|\p_3^2\p_t^{j+1}\phi\|_{H^{k-j-2}}\lesssim\|\p_t^{j+3}\phi\|_{H^{k-j-2}}+ \|\p_t\phi\|_{W^{k-1}}+\|\na\na_\h\p_t^{j+1}\phi\|_{H^{k-j-2}}\\
&+\|\p_t^{j+1}\D_\h\phi\|_{H^{k-j-2}}
+\bigl(1+{E}^{\frac12}_{k+1}(\vf_0(t))\bigr){E}^{\frac12}_{k+1}(\vf_0(t)) E_{k+1}^{\frac12}(\phi(t))+\|\p_t^{j+1}F\|_{H^{k-j-2}}.
\end{split}
\eeq
The same estimate holds for $\|\p_t^{j+1}\phi\|_{H^{k-j}}.$

In the case when $k-j\geq 3,$ we have
\beno
\begin{split}
\|\na\na_\h\p_t^{j+1}\phi\|_{H^{k-j-2}}^2=&\|\na\na_\h\p_t^{j+1}\phi\|_{H^{k-j-3}}^2
+\|\na^2\na_\h\p_t^{j+1}\phi\|_{H^{k-j-3}}^2\\
\lesssim & \|\p_t\phi\|_{W^{k-1}}^2+\|\na\na_\h^2\p_t^{j+1}\phi\|_{H^{k-j-3}}^2+\|\p_3^2\na_\h\p_t^{j+1}\phi\|_{H^{k-j-3}}^2.
\end{split}
\eeno
Yet it follows from \eqref{S8eq-vert-1} and Lemma \ref{S3lem1} that
\beno
\begin{split}
\|\p_3^2\na_\h\p_t^{j+1}\phi\|_{H^{k-j-3}}\lesssim&  \|\na_\h\p_t^{j+3}\phi\|_{H^{k-j-3}}+\|\p_t^{j+1}\na^3_\h\phi\|_{H^{k-j-3}}\\
&+\bigl(1+{E}^{\frac12}_{k+1}(\vf_0(t))\bigr){E}^{\frac12}_{k+1}(\vf_0(t))  E_{k+1}^{\frac12}(\phi(t))+\|\na_\h\p_t^{j+1}F\|_{H^{k-j-3}}.
\end{split}
\eeno
Inserting the above estimate into \eqref{full-hell-101} gives rise to
\beno
\begin{split}
\|\p_t^{j+1}\phi\|_{H^{k-j}}\lesssim & \|\p_t\phi\|_{W^{k-1}}+\|\p_t^{j+3}\phi\|_{H^{k-j-2}}+\|\p_t^{j+1}\D_\h\phi\|_{H^{k-j-2}}\\
&+\|\na\na_\h^2\p_t^{j+1}\phi\|_{H^{k-j-3}}+\|\na_\h\p_t^{j+3}\phi\|_{H^{k-j-3}}+\|\p_t^{j+1}\na^3_\h\phi\|_{H^{k-j-3}}\\
&+C\bigl(1+{E}^{\frac12}_{k+1}(\vf_0(t))\bigr){E}^{\frac12}_{k+1}(\vf_0(t))  E_{k+1}^{\frac12}(\phi(t))+\|F\|_{W^{k-1}}.
\end{split}
\eeno
By finite steps of iteration and using the inductive assumption for $\ell=k,$ we deduce that
\beno
\|\p_t^{j+1}\phi\|_{H^{k-j}}\leq C_k\bigl({E}_{k+1, \text{tan}}^{\frac12}(\phi(t))+\|F\|_{W^{k-1}}\bigr)+C{E}^{\frac12}_{k+1}(\vf_0(t))  E_{k+1}^{\frac12}(\phi(t)).
\eeno
The same estimate holds for $\|\na\p_t^j\phi(t)\|_{H^{k-j}}.$

Therefore, by virtue of \eqref{S3eq28}, we conclude that
\beno
E_{k+1}(t)\leq C_k\bigl({E}_{k+1, \text{tan}}(\phi(t))+\|F\|_{W^{k-1}}^2\bigr).
\eeno
This proves \eqref{full-hell-1} for $\ell=k+1.$ By combining \eqref{S8ht-re-106} with \eqref{full-hell-1},
 we achieve \eqref{full-hell-0}. This completes the proof of Lemma \ref{S8lem4}.
\end{proof}

\begin{proof}[Proof of Theorem \ref{thm-hlin-order}]
By applying Gronwall's inequality to \eqref{full-hell-0} and using \eqref{full-hell-1}, we deduce \eqref{S8eq8}.
This completes the proof of Theorem \ref{thm-hlin-order}.
\end{proof}

\renewcommand{\theequation}{\thesection.\arabic{equation}}
\setcounter{equation}{0}

\section{The existence of solutions to the other asymptotic equations}\label{sec-asy}

Let us first present the proof of Proposition \ref{S2prop3}.

\begin{proof}[Proof of Proposition \ref{S2prop3}] It follows from \eqref{varphi-order-0-bc} and Proposition \ref{S2prop2} that
\beno
\vf_1|_{z=0}=-\Phi_1|_{Z=0}\in W^{s_0-\frac32}_{T_0}(\R^2).
\eeno
Then we deduce from Theorem \ref{S8thm1} that
the wave equation \eqref{outer-conti-1-2} with boundary condition  \eqref{varphi-order-0-bc} and initial condition \eqref{varphi-1-initial-c}
has a unique solution $\vf_1$ on $[0,T_0].$ Furthermore, we have
\beq \label{S5eq5}
\begin{split}
\bigl\|(\p_t\vf_1,\na\vf_1)\|_{W^{s_0-3}_{T_0}}\leq& C\bigl(\bigl\|\bigl( \na\vf_{1,0}^{\rm in}, \vf_{1,0}^{\rm in},\bigr)\bigr\|_{ H^{s_0-3}}
+\bigl\|\Phi_1|_{z=0}\bigr\|_{ W^{s_0-\frac32}_T}\bigr)\\
\leq & C\bigl(\bigl\|\bigl(a_{1,0}^{\rm in}, \na\vf_{1,0}^{\rm in}\bigr)\bigr\|_{ H^{s_0-3}}
+\bigl\|\bigl(a_{0,0}^{\rm in}, \na\vf_{0,0}^{\rm in}\bigr)\bigr\|_{ H^{s_0-1}}\bigr). \end{split}
\eeq
Note from the $\vf_1$ equation of \eqref{outer-order-1} that \beno
a_1=-\frac1{2a_0}\left(\p_t\vf_1+\na\vf_0\cdot\na\vf_1\right),
\eeno
from which, \eqref{S5eq5} and Lemma \ref{S3lem1}, we deduce \eqref{S5eq4}.
This completes the proof of the proposition.
\end{proof}

Next let us present the proof of Proposition \ref{S2prop4}.

\begin{proof}[Proof of Proposition \ref{S2prop4}] Inductively, we assume that we already have
\beq \label{S5eq11ab}
\begin{split}
&(a_0-1, \na\vf_0)\in W^{s_0-1}_{T_0},\quad
(a_{j+1},\na\vf_{j+1})\in W^{s_0-1-2(j+1)}_{T_0} \andf\\
& (A_j,\Phi_{j+1})\in W^{s_0-2(j+1)+\frac12}_{1,T_0}\quad \mbox{ for}\quad j=0,\cdots, k,
\end{split}
\eeq
 we consider
the boundary layer problem (\ref{phi-order-m}-\ref{a-order-m1}) with boundary condition \eqref{a-order-m1-bc}.
We first get, by inserting \eqref{phi-negative-1-4} into \eqref{a-order-m1}, that
\begin{equation}\label{a-order-m1-1}
\begin{split}
\frac{1}{2}\partial_Z^2A_{k+1}=&\frac{\overline{a}_0^2 \overline{\partial_z \varphi}_{0}}{A_0+\overline{a}_0}  \partial_Z \Phi_{k+2}+G_k\\
&+\bigl(3A_0^2+6 \overline{a}_0 A_0+2 \overline{a}_0^2+\partial_Z\Phi_{1}\overline{\partial_z \varphi}_{0}+\frac{1}{2}|\partial_Z\Phi_{1}|^2\bigr) A_{k+1}.
\end{split}
\end{equation}
Here according to \eqref{S5eq11ab} and \eqref{app-u-bern-2} in the Appendix \ref{appA-re},  $G_k\in W^{s_{0,k}}_{1,T_0},$
where and in what follows, we always denote
$$s_{0, k}\eqdefa s_0-2(k+2)+\frac12.$$

Whereas by multiplying \eqref{phi-order-m} by $(A_0+\overline{a}_0),$ we find
\begin{equation*}\label{phi-order-m-1}
\begin{split}
\frac{1}{2}\partial_Z\bigl((A_0+\overline{a}_0)^2 \partial_Z\Phi_{k+2}\bigr) &+(A_0+\overline{a}_0)(\partial_Z \Phi_{1}+\overline{\partial_z \varphi}_{0}) \partial_Z A_{k+1}\\
&+\frac{1}{2} A_{k+1}(A_0+\overline{a}_0)\partial_Z^2 \Phi_{1} =(A_0+\overline{a}_0)\,F_k,
\end{split}
\end{equation*}
where  $F_k\in W^{s_{0,k}}_{1,T_0}$ according to
\eqref{S5eq11ab} and \eqref{app-u-conti-1} in the Appendix \ref{appA-re},

Then we deduce from  \eqref{phi-negative-1} that
\begin{equation*}\label{phi-order-m-2}
\begin{split}
\frac{1}{2}\partial_Z\bigl((A_0+\overline{a}_0)^2 \partial_Z\Phi_{k+2}\bigr) &+(A_0+\overline{a}_0)(\partial_Z \Phi_{1}+\overline{\partial_z \varphi}_{0}) \partial_Z A_{k+1}\\
&- A_{k+1}(\partial_Z \Phi_{1}+\overline{\partial_z \varphi}_{0}) \partial_Z A_0 =(A_0+\overline{a}_0)\,F_k.
\end{split}
\end{equation*}
By virtue of \eqref{phi-negative-1-4}, we write
\begin{equation*}\label{phi-order-m-3}
\begin{split}
\frac{1}{2}\partial_Z\bigl((A_0+\overline{a}_0)^2 \partial_Z\Phi_{k+2}\bigr) +(\overline{a}_0)^2 \overline{\partial_z \varphi}_{0} \Bigl(\frac{\partial_Z A_{k+1}}{ A_0+\overline{a}_0} -\frac{A_{k+1} \partial_Z A_0}{ (A_0+\overline{a}_0)^{2}}\Bigr) =(A_0+\overline{a}_0)\,F_k,
\end{split}
\end{equation*}
that is,
\begin{equation}\label{phi-order-m-4}
\begin{split}
\partial_Z\Bigl(\frac{1}{2}(A_0+\overline{a}_0)^2 \partial_Z\Phi_{k+2} +(\overline{a}_0)^2 \overline{\partial_z \varphi}_{0}\frac{ A_{k+1}}{ A_0+\overline{a}_0} \Bigr) =(A_0+\overline{a}_0)\,F_k.
\end{split}
\end{equation}
Integrating  \eqref{phi-order-m-4} over $[Z,\infty[$ gives rise to
\begin{equation}\label{phi-order-m-6}
\begin{split}
\partial_Z\Phi_{k+2} =-2(\overline{a}_0)^2 \overline{\partial_z \varphi}_{0}\frac{ A_{k+1}}{ (A_0+\overline{a}_0)^{3}}
+\frac2{ (A_0+\overline{a}_0)^{2}}\int_Z^\infty(A_0+\overline{a}_0)\,F_k\,dZ'.
\end{split}
\end{equation}

Plugging \eqref{phi-order-m-6} into \eqref{a-order-m1-1} leads to
\begin{equation}\label{a-order-m1-2}
\begin{split}
\partial_Z^2A_{k+1}=\fg A_{k+1}+\widetilde{G}_k,
\end{split}
\end{equation}
Here and in all that follows, we always denote
\begin{equation}\label{phi-order-m-ind-7-1}
\begin{split}
\fg \eqdefa&  6A_0^2+12 \overline{a}_0 A_0+4 \overline{a}_0^2+2\partial_Z\Phi_{1}\overline{\partial_z \varphi}_{0}+|\partial_Z\Phi_{1}|^2-4\frac{(\overline{a}_0)^4 (\overline{\partial_z \varphi}_{0})^2}{ (A_0+\overline{a}_0)^{4}},\\
\widetilde{G}_k\eqdefa &2{G}_k+ \frac{4\overline{a}_0^2 \overline{\partial_z \varphi}_{0}}{ (A_0+\overline{a}_0)^{3}}\int_Z^\infty(A_0+\overline{a}_0)\,F_k\,dZ' \in W^{s_{0,k}}_{1,T_0}.
\end{split}
\end{equation}

Recalling the notation from \eqref{bdyvalue-1} that $\bar{a}_{k+1}(y)={a}_{k+1}(y,0),$ we reduce the resolution of  the problem \eqref{phi-order-m}, \eqref{a-order-m1} and \eqref{a-order-m1-bc} to the following  system
\begin{equation}\label{phi-order-m-7}
\begin{cases}
&\partial_Z^2A_{k+1}=\fg A_{k+1}+\widetilde{G}_k,\\
&\partial_Z\Phi_{k+2} =-\frac{2(\overline{a}_0)^2 \overline{\partial_z \varphi}_{0}}{ (A_0+\overline{a}_0)^{3}}  A_{k+1} +\frac2{ (A_0+\overline{a}_0)^{2}}\int_Z^\infty(A_0+\overline{a}_0)\,F_k\,dZ',\\
& A_{k+1}|_{Z=0}=-\bar{a}_{k+1},\quad A_{k+1}|_{Z=+\infty}=0, \quad \Phi_{k+2}|_{Z=+\infty}=0.
\end{cases}
\end{equation}

Let's now handle the system \eqref{phi-order-m-7}. In order to do it, let $
 \widetilde{A}_{k+1}\eqdefa A_{k+1}+e^{-3Z}\bar{a}_{k+1}.
$
Then $\widetilde{A}_{k+1}$ verifies
\begin{equation}\label{phi-order-m-ind-7-1a}
\begin{cases}
&\partial_Z^2\widetilde{A}_{k+1}=\fg\, \widetilde{A}_{k+1}+\widetilde{G}_k-(9+\fg)\,e^{-3Z}\bar{a}_{k+1},\\
& \widetilde{A}_{k+1}|_{Z=0}=0,\quad \widetilde{A}_{k+1}|_{Z=+\infty}=0.
\end{cases}
\end{equation}
By applying  the operator $\cT^\ell$ (recalling that $\cT=(\p_t,\na_\h)$) with  $\ell\in \bigl[0,[s_{0, k}]\bigr]$ to the equation \eqref{phi-order-m-ind-7-1a}, and then taking the $L^2$ inner product of the resulting equation with $-e^{2Z}\cT^\ell\widetilde{A}_{k+1}$, we have
\beno
 \begin{split}
 \bigl\|e^{Z}\cT^\ell\partial_Z\widetilde{A}_{k+1}\bigr\|_{L^2_+}^2
 -2\bigl\|e^{Z}\cT^\ell\widetilde{A}_{k+1}&\bigr\|_{L^2_+}^2
 = -\int_{\mathbb{R}^3_+}\cT^\ell(\fg\, \widetilde{A}_{k+1}) | e^{2Z}\cT^\ell\widetilde{A}_{k+1}\,dy\,dZ\\
 &-\int_{\mathbb{R}^3_+}\cT^\ell\bigl(\widetilde{G}_k-(9+\fg)\,e^{-3Z}\bar{a}_{k+1}\bigr) | e^{2Z}\cT^\ell\widetilde{A}_{k+1}\,dy\,dZ.
 \end{split}
 \eeno
Notice that
 \beno
 \begin{split}
 -\int_{\mathbb{R}^3_+}\cT^\ell(\fg\, \widetilde{A}_{k+1}) | e^{2Z}\cT^\ell\widetilde{A}_{k+1}\,dy\,dZ
 &=-\int_{\mathbb{R}^3_+}\fg\,|e^{Z}\cT^\ell\widetilde{A}_{k+1}|^2\,dy\,dZ\\
 &\quad
 -\int_{\mathbb{R}^3_+}e^{Z}\bigl[\cT^\ell;\fg\bigr]\, \widetilde{A}_{k+1} | e^{Z}\cT^\ell\widetilde{A}_{k+1}\,dy\,dZ,
  \end{split}
 \eeno
we infer
\begin{equation*}
 \begin{split}
 \bigl\|e^{Z}&\cT^\ell\partial_Z\widetilde{A}_{k+1}\bigr\|_{L^2_+}^2
 +\int_{\mathbb{R}^3_+}(\fg-2)\,|e^{Z}\cT^\ell\widetilde{A}_{k+1}|^2\,dy\,dZ\\
 \leq& \bigl\|e^{Z}\bigl[\cT^\ell;\fg\bigr]\, \widetilde{A}_{k+1}\|_{L^2_+}\bigl\|e^{Z}\cT^\ell\widetilde{A}_{k+1}\bigr\|_{L^2_+}+C\Bigl(\|\widetilde{G}_k\|_{W^{s_{0,k}}_{1,T_0}}\\
 &+\bigl\| e^{-2Z}\cT^\ell(\bar{a}_{k+1})\bigr\|_{L^2_+}+\bigl\| e^{-2Z}\cT^\ell(\fg\bar{a}_{k+1})\bigr\|_{L^2+}\Bigr)
 \bigl\|e^{Z}\cT^\ell\widetilde{A}_{k+1}\bigr\|_{L^2_+}.
 \end{split}
\end{equation*}
Whereas it follows from \eqref{S3eq28}, \eqref{phi-negative-1-solution} and \eqref{phi-order-m-ind-7-1} that
 $\fg-2\geq 2-\|\fg-4\|_{L^\infty_+}\geq \frac{3}{2}$ as long as $c$ is small enough in \eqref{S3eq27}. So that we obtain
 \beno \label{phi-order-m-ind-7-7}
 \begin{split}
 \|e^{Z}\cT^\ell\partial_Z\widetilde{A}_{k+1}\|_{L^2_+}^2
 &+\|e^{Z}\cT^\ell\widetilde{A}_{k+1}\|_{L^2_+}^2
 \lesssim \bigl\|e^{Z}[\cT^\ell,\fg]\, \widetilde{A}_{k+1}\bigr\|_{L^2_+}^2\\
 &+\|\widetilde{G}_k\|_{W^{s_{0,k}}_{1,T_0}}^2+\| e^{-2Z}\cT^\ell\bar{a}_{k+1})\|_{L^2_+}^2+\| e^{-2Z}\cT^\ell(\fg\bar{a}_{k+1})\|_{L^2_+}^2.
 \end{split}
 \eeno
We deduce from   trace theorem and the proof of Lemma \ref{S3lem1} that
\beno
\begin{split}
&\bigl\|e^{Z}\bigl[\cT^\ell;\fg\bigr]\, \widetilde{A}_{k+1}\|_{L^2_+}\lesssim \|\cT\fg\|_{L^\infty_{\rm v}(W^{[s_{0,k}]-1}_{T_0})_\h}
\|e^Z\widetilde{A}_{k+1}\|_{L^2_{\rm v}((W^{[s_{0,k}]}_{T_0})_\h},\\
&\bigl\| e^{-2Z}\cT^\ell(\fg\bar{a}_{k+1})\bigr\|_{L^2+}\lesssim \|\fg\|_{L^2_{\rm v}(W^{[s_{0,k}]}_{T_0})_\h}
\|{a}_{k+1}\|_{W^{[s_{0,k}]+\frac12}_{T_0}}.
\end{split}
\eeno
Therefore, we obtain
  \beno
 \begin{split}
 \|e^{Z}&\cT^\ell\partial_Z\widetilde{A}_{k+1}\|_{L^2_+}^2
 +\frac{5}{4}\|e^{Z}\cT^\ell\widetilde{A}_{k+1}\|_{L^2_+}^2\\
 \lesssim & \Bigl(\bigl\|(a_0-1, \na\vf_0)\|_{W^{s_0-1}_{T_0}}^2+\|(A_0,\Phi_{1})\|_{W^{s_0-\frac32}_{1,T_0}}^2\Bigr)\|e^Z\widetilde{A}_{k+1}\|_{L^2_{\rm v}((W^{[s_{0,k}]}_{T_0})_\h}\\
 &\,+\|\widetilde{G}_k\|_{W^{s_{0,k}}_{1,T_0}}^2+\|a_{k+1}\|_{W^{s_{0,k}}_{T_0}}^2\Bigl(1+\bigl\|(a_0-1, \na\vf_0)\bigr\|_{W^{s_0-1}_{T_0}}^2+\|(A_0,\Phi_{1})\|_{W^{s_0-\frac32}_{1,T_0}}^2\Bigr).
 \end{split}
 \eeno
 In view of \eqref{S3eq28} and \eqref{phi-negative-1-solution}, we get, by
 summing the above inequality for $\ell$ from $0$ to  $[s_{0, k}],$ that
   \beno
 \begin{split}
 &\|e^{Z}\partial_Z\widetilde{A}_{k+1}\|_{L^2_{\rm v}((W^{[s_{0,k}]}_{T_0})_\h}^2
 +\|e^{Z}\widetilde{A}_{k+1}\|_{L^2_{\rm v}((W^{[s_{0,k}]}_{T_0})_\h}^2\lesssim \|\widetilde{G}_k\|_{W^{s_{0,k}}_{1,T_0}}^2+\|a_{k+1}\|_{W^{s_{0,k}}_{T_0}}^2,
 \end{split}
 \eeno
 which in particular implies that
  \beq \label{phi-order-m-ind-7-9}
 \|e^{Z}\widetilde{A}_{k+1}\|_{L^\infty_{\rm v}((W^{[s_{0,k}]}_{T_0})_\h}^2\lesssim \|\widetilde{G}_k\|_{W^{s_{0,k}}_{1,T_0}}^2+\|a_{k+1}\|_{W^{s_{0,k}}_{T_0}}^2.
 \eeq

In general by apply $\p_Z^k$ for $k\leq [s_{0,k}]$ and performing the above energy estimate, we achieve
     \beq \label{phi-order-m-ind-7-10}
 \begin{split}
 &\|\widetilde{A}_{k+1}\|_{W^{s_{0,k}}_{1,T_0}}^2 \lesssim \|\widetilde{G}_k\|_{W^{s_{0,k}}_{1,T_0}}^2+\|a_{k+1}\|_{W^{s_{0,k}}_{T_0}}^2.
  \end{split}
 \eeq
 With the above estimate, we deduce from  the second equation of \eqref{phi-order-m-7} that
      \beq \label{phi-order-m-ind-7-11}
 \begin{split}
 &\|\Phi_{k+2}\|_{W^{s_{0,k}}_{1,T_0}}^2 \lesssim \|\widetilde{G}_k\|_{W^{s_{0,k}}_{1,T_0}}^2+\|F_k\|_{W^{s_{0,k}}_{1,T_0}}^2.
  \end{split}
 \eeq
 Thanks to the definitions of$\widetilde{A}_{k+1}$, $\widetilde{G}_k$, and $F_k$, $G_k$ in Appendix \ref{appA-re}, we deduce \eqref{phi-negative-1-solution}.  This ends the proof of Proposition \ref{S2prop4}.
\end{proof}

Finally let us present the proof of Proposition \ref{S2prop5}.

\begin{proof}[Proof of Proposition \ref{S2prop5}]
We first observe from the $\vf_{k+2}$ equation of \eqref{outer-order-m} that
 \beq \label{S5eq3}
 \begin{split}
 \partial_t \varphi_{k+2}\bigl|_{t=0}=&- \nabla \varphi_{0} \cdot \nabla \varphi_{k+2}\bigl|_{t=0}-2a_0 a_{k+2}\bigl|_{t=0}+
 \frac{1}{2a_0}\left(\Delta a_{k}+g_{k+1}^{\varphi}\right)\bigl|_{t=0}\\
 =&- \nabla \varphi_{0,0}^{\rm in} \cdot \nabla \varphi_{k+2,0}^{\rm in}-2a_{0,0}^{\rm in} a_{k+2,0}^{\rm in}+\frac{1}{2a_{0,0}^{\rm in}}\left(\Delta a_{k,0}^{\rm in}+g_{k+1}^{\varphi}\bigl|_{t=0}\right)\\
 \eqdefa& \vf_{k+2,1}^{\rm in}\in {W^{\bar{s}_{0,k}}_{T_0}} \with \bar{s}_{0,k}\eqdefa s_0-1-2(k+2)\geq 2,
 \end{split}
 \eeq
 we are going to inductively solve the linear  equations \eqref{outer-order-m} with the initial-boundary conditions
\begin{equation}\label{bc-order-m}
\varphi_{k+2}|_{z=0}=-\Phi_{k+2}|_{Z=0}, \quad \varphi_{k+2}|_{t=0}=\vf_{k+2,0}^{\rm in} \andf \partial_t \varphi_{k+2}|_{t=0}=\vf_{k+2,1}^{\rm in}.
\end{equation}

In fact, according to the first equation in \eqref{S2eq4} and \eqref{outer-order-m}, we write
\begin{equation*}\label{outer-conti-m-1}
\begin{split}
\partial_t (a_0 a_{k+2})+\dive ( a_{0}a_{k+2}  \nabla \varphi_{0})+\frac{1}{2}\dive( \r_{0}\nabla\varphi_{k+2})=a_0 f_{k+1}^{a}.
\end{split}
\end{equation*}
By taking $\p_t$ to the
 the $\vf_{k+1}$ equation of \eqref{outer-order-m} and inserting the above equation into the resulting one, we obtain
\begin{equation}\label{outer-bern-m-1}
\begin{split}
&\partial_t^2 \varphi_{k+2} -\dive \bigl(\r_0\nabla\varphi_{k+2} \bigr)+ \partial_t(\nabla \varphi_{0} \cdot \nabla \varphi_{k+2} )\\
 &\quad+\dive (\partial_t \varphi_{k+2} \nabla\varphi_0 )+\dive \bigl((\nabla\varphi_0 \cdot \nabla\varphi_{k} )\nabla\varphi_0 \bigr)\\
 &=-2a_0 \, f_{k+1}^a+\partial_t\Bigl(\frac{1}{2a_0}\bigl(\Delta a_{k}+g_{k+1}^{\varphi}\bigr)\Bigr)+\dive\Bigl(\frac{1}{2a_0}\bigl(\Delta a_{k}+g_{k+1}^{\varphi}\bigr)\nabla\varphi_0\Bigr)\eqdefa F_{k+2}.
\end{split}
\end{equation}
Note that $F_{k+2}$ belongs to ${W^{\bar{s}_{0,k}}_{T_0}}$ according to the definitions of  $f_{k+1}^a$ and $g_{k+1}^{\varphi}$ in \eqref{remainder-inner-1}. Then we deduce from Theorem
\ref{S8thm1} that the system \eqref{outer-bern-m-1} with the boundary condition \eqref{S2eq23} has a unique solution $\vf_{k+2}$ so that
\beq \label{S5eq11}
\begin{split}
\bigl\|(\p_t\vf_{k+2},\na\vf_{k+2})\|_{W^{\bar{s}_{0,k}}_{T_0}}
\leq & C\bigl(\bigl\|\bigl( \na\vf_{k+2,0}^{\rm in}, \vf_{k+2,1}^{\rm in}\bigr)\bigr\|_{ H^{\bar{s}_{0,k}}}+\bigl\|\Phi_{k+2}|_{Z=0}\bigr\|_{ W^{s_{0,k}}_T}+\|F_{k+2}\|_{W^{\bar{s}_{0,k}}_{T_0}}\bigr)\\
\leq & C\bigl(\|(a_{0,0}^{\rm{in}}-1,\na\vf_{0,0}^{\rm{in}})\|_{H^{s_0-1}}
+\sum_{j=1}^{k+2}\|(a_{j,0}^{\rm{in}},\na\vf_{j,0}^{\rm{in}})\|_{H^{s_0-2j-1}}\bigr). \end{split}
\eeq
With $\vf_{k+2}$ thus obtained, it follows from the $a_{k+2}$ equation of \eqref{outer-order-m} and Lemma \ref{S3lem1}  that
\beno
a_{k+2} =\frac1{2a_0}
\Bigl(\frac{1}{2a_0}\left(\Delta a_{k}+g_{k+1}^{\varphi}\right)-\partial_t \varphi_{k+2}-\nabla \varphi_{0} \cdot \nabla \varphi_{k+2}\Bigr)
\in W^{\bar{s}_{0,k}}_{T_0},
\eeno in case $\bar{s}_{0,k}\geq 2.$ Moreover, there holds
\beno
\|\|a_{k+2}\|_{W^{\bar{s}_{0,k}}_{T_0}} \leq C\Bigl(\|(a_{0,0}^{\rm{in}}-1,\na\vf_{0,0}^{\rm{in}})\|_{H^{s_0-1}}
+\sum_{j=1}^{k+2}\|(a_{j,0}^{\rm{in}},\na\vf_{j,0}^{\rm{in}})\|_{H^{s_0-2j-1}}\Bigr).
\eeno
This completes the proof of \eqref{Saeq1}.
\end{proof}

\renewcommand{\theequation}{\thesection.\arabic{equation}}
\setcounter{equation}{0}

\section{Some technical lemmas}\label{sec-tech}

Let $(a^{\text{int}, \varepsilon, m}, \varphi^{\text{int}, \varepsilon, m}))$ and $(a^{\text{b}, \varepsilon, m},\varphi^{\text{b}, \varepsilon, m}))$
be determined by \eqref{S6eq1}, we denote
\begin{equation}\label{space-norm-E-0}
\begin{split}
& \cE_{m, T}\eqdefa  \bigl\|(a^{\text{int}, \varepsilon, m}-1,\,\p_t\varphi^{\text{int}, \varepsilon, m}, \na\varphi^{\text{int}, \varepsilon, m})\bigr\|_{W^{s_0-2m-5}_{T}}^2+
\bigl\|(a^{\text{b}, \varepsilon, m},\,\varphi^{\text{b}, \varepsilon, m})\bigr\|_{W^{s_0-2m-\frac72}_{1,T}}^2,\\
 & u^{\e,m}\eqdefa \na \vf^{\e,m}, \quad \mathcal{S}_{f}(g)\eqdefa f\cdot\nabla g+\frac{1}{2} g\,\nabla\cdot f, \quad{\Ta} \eqdefa (\partial_t,\, \nabla_{\h}).
 \end{split}
\end{equation}
Then thanks to Propositions \ref{S2prop1}-\ref{S2prop5}, we have
\begin{equation}\label{space-norm-E-0a}
\begin{split}
& \cE_{m, T} \leq C\,\mathcal{E}_{0},
 \end{split}
\end{equation}
for  $\mathcal{E}_{0}$ being given by   \eqref{initial-0}.

\begin{lem}\label{lem-est-singu-1}
{\sl Let  $s_0\geq 2m+9$ be an integer and $a^{\e,m}$ be defined by \eqref{S6eq1}. Let $f$ and $g$ be smooth enough functions satisfying the homogeneous boundary conditions $f|_{z=0}=g|_{z=0}=0$. Then one has
\begin{equation}\label{est-singu-1a}
\begin{split}
&\varepsilon^2 \bigl|\int_{\mathbb{R}^3_{+}} \frac{\Delta a^{\varepsilon, m}}{a^{\varepsilon, m}}\, f\,g\, dx\bigr|\lesssim \mathcal{E}_0^{\frac{1}{2}}\,\|\varepsilon f\|_{H^1}\,\|\varepsilon g\|_{H^1} \andf \varepsilon^2 \Bigl\|\frac{\Delta a^{\varepsilon, m}}{a^{\varepsilon, m}}\, f\Bigr\|_{L^2_{+}} \lesssim\mathcal{E}_0^{\frac{1}{2}}\,\|\varepsilon f\|_{H^1},\end{split}
\end{equation}
if $j\in [0,s_0-2m-9],$ we also have
\begin{equation}\label{est-singu-1b}
\begin{split}
&\varepsilon^2\bigl|\int_{\mathbb{R}^3_{+}}  \Ta^j\Bigl(\frac{\Delta a^{\varepsilon, m}}{a^{\varepsilon, m}}\Bigr)\, f\,g\, dx\bigr|\lesssim \mathcal{E}_0^{\frac{1}{2}}\,\|\varepsilon f\|_{H^1}\,\|\varepsilon g\|_{H^1},\\
& \e\Bigl\|\bigl[{\Ta}^j;\, \frac{\Delta a^{\varepsilon, m}}{a^{\varepsilon, m}}\bigr] f\Bigr\|_{L^2_+}\lesssim\mathcal{E}_0^{\frac{1}{2}}\,\sum_{k=0}^{j-1}
\|{\Ta}^k f\|_{H^1}.
\end{split}
\end{equation}}
\end{lem}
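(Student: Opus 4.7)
The plan is to isolate the singular boundary-layer part of $\varepsilon^{2}\Delta a^{\varepsilon,m}/a^{\varepsilon,m}$ and to exchange its apparent singularity for derivatives on $f,g$ via Hardy's inequality, which is legitimate thanks to the homogeneous boundary conditions. Writing $a^{\varepsilon,m}=a^{\text{int},\varepsilon,m}+[a^{\text{b},\varepsilon,m}]_\varepsilon$ and using $\partial_z[F]_\varepsilon=\varepsilon^{-1}[\partial_Z F]_\varepsilon$, I would first record the identity
$$
\varepsilon^{2}\Delta a^{\varepsilon,m}=[\partial_Z^{2}a^{\text{b},\varepsilon,m}]_\varepsilon+\varepsilon^{2}[\Delta_\h a^{\text{b},\varepsilon,m}]_\varepsilon+\varepsilon^{2}\Delta a^{\text{int},\varepsilon,m}.
$$
By Propositions \ref{S2prop1} and \ref{S2prop2}, $(A_0+\bar a_0)|_{Z=0}=1$ and $A_0+\bar a_0\geq c_1>0$, so $a^{\varepsilon,m}$ stays uniformly bounded below for small $\varepsilon$, and division yields a decomposition
$$
\varepsilon^{2}\frac{\Delta a^{\varepsilon,m}}{a^{\varepsilon,m}}=[H]_\varepsilon+\varepsilon^{2}R,
$$
where $H(t,y,Z)$ is a boundary-layer profile with exponential decay in $Z$ and both $\|H\|_{W^{s_0-2m-7/2}_{1,T_0}}$ and $\|R\|_{W^{s_0-2m-5}_{T_0}}$ are bounded by $C\mathcal{E}_{0}^{1/2}$ in view of \eqref{space-norm-E-0a}.

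For the bilinear form in \eqref{est-singu-1a} I would split the integral according to this decomposition. The $\varepsilon^{2}R$-contribution is handled directly by Cauchy--Schwarz and the uniform $L^\infty$-bound on $R$: $\varepsilon^{2}|\int Rfg\,dx|\leq\varepsilon^{2}\|R\|_{L^\infty_+}\|f\|_{L^2_+}\|g\|_{L^2_+}\lesssim\mathcal{E}_{0}^{1/2}\|\varepsilon f\|_{L^2_+}\|\varepsilon g\|_{L^2_+}$. For the boundary-layer part I would estimate
$$
\Bigl|\int_{\mathbb{R}^3_+}[H]_\varepsilon fg\,dx\Bigr|\leq\|z^{2}[H]_\varepsilon\|_{L^\infty_+}\,\|f/z\|_{L^2_+}\,\|g/z\|_{L^2_+},
$$
and observe that $z^{2}[H]_\varepsilon(t,y,z)=\varepsilon^{2}Z^{2}H(t,y,Z)$, so $\|z^{2}[H]_\varepsilon\|_{L^\infty_+}\lesssim\varepsilon^{2}\mathcal{E}_{0}^{1/2}$ by the exponential decay of $H$ in $Z$; Hardy's inequality then gives $\|f/z\|_{L^2_+}\leq 2\|\partial_z f\|_{L^2_+}\leq 2\varepsilon^{-1}\|\varepsilon f\|_{H^1}$ precisely because $f|_{z=0}=0$, yielding the first inequality of \eqref{est-singu-1a}. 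The $L^2_+$-inequality of \eqref{est-singu-1a} follows by the same scheme, now using the one-power weight $\|z[H]_\varepsilon\|_{L^\infty_+}\lesssim\varepsilon\mathcal{E}_{0}^{1/2}$ and a single Hardy step.

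For \eqref{est-singu-1b} the key point is that $\Ta=(\partial_t,\nabla_\h)$ is purely tangential: $\Ta^j$ commutes with the rescaling $[\,\cdot\,]_\varepsilon$, preserves exponential decay in $Z$, and preserves the vanishing of $f$ on $\{z=0\}$. Therefore $\Ta^j(\varepsilon^{2}\Delta a^{\varepsilon,m}/a^{\varepsilon,m})=[\Ta^j H]_\varepsilon+\varepsilon^{2}\Ta^j R$, with norms controlled by $\mathcal{E}_{0}^{1/2}$ as long as $j\leq s_0-2m-9$, and the first inequality of \eqref{est-singu-1b} is identical to the first inequality of \eqref{est-singu-1a} with $H$ replaced by $\Ta^j H$. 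For the commutator, I would expand
$$
\bigl[\Ta^{j};\,\tfrac{\Delta a^{\varepsilon,m}}{a^{\varepsilon,m}}\bigr]f=\sum_{\ell=1}^{j}\binom{j}{\ell}\,\Ta^\ell\Bigl(\tfrac{\Delta a^{\varepsilon,m}}{a^{\varepsilon,m}}\Bigr)\Ta^{j-\ell}f,
$$
decompose each factor as $\Ta^\ell(\Delta a^{\varepsilon,m}/a^{\varepsilon,m})=\varepsilon^{-2}([\Ta^\ell H]_\varepsilon+\varepsilon^{2}\Ta^\ell R)$, and use $\Ta^{j-\ell}f|_{z=0}=0$. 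Hardy applied to $\Ta^{j-\ell}f$ together with $\|z[\Ta^\ell H]_\varepsilon\|_{L^\infty_+}\lesssim\varepsilon\mathcal{E}_{0}^{1/2}$ gives, after multiplying by the leading $\varepsilon$ on the left, the claimed bound $\mathcal{E}_{0}^{1/2}\sum_{k=0}^{j-1}\|\Ta^k f\|_{H^1}$; note that the restriction $\ell\geq 1$ in the commutator sum is exactly what guarantees that the index $j-\ell$ lies in $\{0,\dots,j-1\}$.

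The main difficulty is a careful $\varepsilon$-bookkeeping: a naive $L^\infty$-estimate of $\Delta a^{\varepsilon,m}/a^{\varepsilon,m}$ would blow up like $\varepsilon^{-2}$ since each vertical derivative of the boundary-layer profile produces an $\varepsilon^{-1}$, and only the combination of (i) the exponential localization of the boundary-layer profile in $Z=z/\varepsilon$, which converts into free powers of $\varepsilon$ after multiplication by suitable powers of $z$, and (ii) Hardy's inequality on $f,g$, which converts the resulting $z^{-1}$ weights into $\partial_z$ derivatives, can turn the apparent singularity into the stated bounds \eqref{est-singu-1a} and \eqref{est-singu-1b}.
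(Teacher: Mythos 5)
Your proposal is correct and follows essentially the same route as the paper: the same decomposition of $\varepsilon^{2}\Delta a^{\varepsilon,m}$ into the non-singular interior/horizontal part and the singular $[\partial_Z^{2}a^{\text{b},\varepsilon,m}]_\varepsilon$ part, the same conversion of $Z$-decay into powers of $z$ followed by Hardy's inequality on $f$ and $g$, and the same observation that tangential derivatives $\Ta^j$ commute with the boundary-layer structure. The only cosmetic looseness is calling $[\partial_Z^{2}a^{\text{b},\varepsilon,m}]_\varepsilon/a^{\varepsilon,m}$ a profile $[H]_\varepsilon$ with $H=H(t,y,Z)$, since the denominator also depends on $z$; this does not affect the estimates because the exponential decay comes from the numerator and the denominator is uniformly bounded below.
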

\begin{proof}
In view of \eqref{S6eq1},  $a^{\varepsilon, m}=a^{\text{int},\varepsilon, m}+[a^{\text{b},\varepsilon, m}]_\e,$  we
write
\begin{equation*}\label{diff-eqns-im-lin-2-1}
\begin{split}
\varepsilon^2  \int_{\mathbb{R}^3_{+}}  \frac{\Delta a^{\varepsilon, m}}{a^{\varepsilon, m}}\, f\,g\, dx
=&\varepsilon^2\int_{\mathbb{R}^3_{+}} \frac{\Delta a^{\text{int},\varepsilon, m}+[\Delta_{\h} a^{\text{b},\varepsilon, m}]_\e}{a^{\varepsilon, m}}
\, f\,g\, dx+\int_{\mathbb{R}^3_{+}}\frac{[\partial_Z^2 a^{\text{b},\varepsilon, m}]_\e}{a^{\varepsilon, m}}\,  f\,g\, dx.
\end{split}
\end{equation*}
Note that $f|_{z=0}=0,$ Hardy's inequality ensures that $\bigl\|z^{-1}f\|_{L^2(\R^3_+)}\leq C\|\p_zf\|_{L^2(\R^3_+)},$ so that we infer
\begin{equation*}\label{diff-eqns-im-lin-2-1}
\begin{split}
{\varepsilon^2}\bigl|\int_{\mathbb{R}^3_{+}}\frac{\Delta a^{\text{int},\varepsilon, m}+[\Delta_{\h} a^{\text{b},\varepsilon, m}]_\e}{a^{\varepsilon, m}}\, f\,g\, dx\bigr|
&\lesssim\Bigl\|\frac{\Delta a^{\text{int},\varepsilon, m}+[\Delta_{\h} a^{\text{b},\varepsilon, m}]_\e}{a^{\varepsilon, m}}\Bigr\|_{L^\infty_+}\, \|\varepsilon f\|_{L^2_+}\|\varepsilon g\|_{L^2_+}\\
&\lesssim \mathcal{E}_0^{\frac{1}{2}}\,\|\varepsilon\,f\|_{L^2_+}\,\|\varepsilon\,g\|_{L^2_+},\\
\end{split}
\end{equation*}
and
\begin{equation*}\label{diff-eqns-im-lin-2-1}
\begin{split}
\bigl|\int_{\mathbb{R}^3_{+}}\frac{[\partial_Z^2 a^{\text{b},\varepsilon, m}]_\e}{a^{\varepsilon, m}}\, f\,g\, dx\bigr|
&=\bigl|\int_{\Omega}\frac{[Z^2\partial_Z^2 a^{\text{b},\varepsilon, m}]_\e}{a^{\varepsilon, m}}\, (\varepsilon\,z^{-1} f)\,(\varepsilon\,z^{-1} g)\, dx\bigr|\\
&\lesssim\Bigl\|\frac{Z^2\partial_Z^2 a^{\text{b},\varepsilon, m}}{a^{\varepsilon, m}}\Bigr\|_{L^\infty_+} \|\varepsilon\,z^{-1}f\|_{L^2_+}\,\|\varepsilon\,z^{-1}g\|_{L^2_+}\\
&\lesssim\Bigl\|\frac{Z^2\partial_Z^2 a^{\text{b},\varepsilon, m}}{a^{\varepsilon, m}}\Bigr\|_{L^\infty_+} \|\varepsilon\,\partial_zf\|_{L^2_+}\|\varepsilon\,\partial_zg\|_{L^2_+}\lesssim \mathcal{E}_{0}^{\frac{1}{2}}\|\varepsilon\,\partial_zf\|_{L^2_+}\|\varepsilon\,\partial_zg\|_{L^2_+},\\
\end{split}
\end{equation*}
where we used the fact that
$\|\frac{Z^2\partial_Z^2 a^{\text{b},\varepsilon, m}}{a^{\varepsilon, m}}\|_{L^\infty_+} \lesssim \mathcal{E}_{0}^{\frac{1}{2}}$
due to $\partial_Z^2 a^{\text{b},\varepsilon, m} \in \mathcal{W}^{s_0-2m-\frac{11}2}_{1, T_0}$. This leads to the first inequality of \eqref{est-singu-1a}.

Along the same line, we observe that
\begin{equation}\label{import-est-1-1}
\begin{split}
\varepsilon^2 \Bigl\|\frac{\Delta a^{\varepsilon, m}}{a^{\varepsilon, m}}\, f\Bigr\|_{L^2_+}
\leq&\varepsilon^2\Bigl\|\frac{\Delta a^{\text{int},\varepsilon, m}+[\Delta_{\h} a^{\text{b},\varepsilon, m}]_\e}{a^{\varepsilon, m}}\Bigr\|_{L^\infty_+}
\|f\|_{L^2_+}+\Bigl\|\frac{[\partial_Z^2 a^{\text{b},\varepsilon, m}]_\e}{a^{\varepsilon, m}}\,f\Bigr\|_{L^2_+}\\
 \lesssim&\varepsilon\,\mathcal{E}_0^{\frac{1}{2}}\,\|\varepsilon\,f\|_{L^2_+}+\Bigl\|\frac{[\partial_Z^2 a^{\text{b},\varepsilon, m}]_\e}{a^{\varepsilon, m}}\,f\Bigr\|_{L^2_+}.
\end{split}
\end{equation}
Whereas it follows from  inequality, $\bigl\|z^{-1}f\|_{L^2(\R^3_+)}\leq C\|\p_zf\|_{L^2(\R^3_+)},$  that we
\begin{equation*}\label{import-est-1-2}
\begin{split}
\Bigl\|\frac{[\partial_Z^2 a^{\text{b},\varepsilon, m}]_\e}{a^{\varepsilon, m}}\, f\Bigr\|_{L^2_+}
=&\Bigl\|\frac{[Z\partial_Z^2 a^{\text{b},\varepsilon, m}]_\e}{a^{\varepsilon, m}}\, (\varepsilon\,z^{-1} f)\Bigr\|_{L^2_+}\\
\lesssim&\Bigl\|\frac{Z\partial_Z^2 a^{\text{b},\varepsilon, m}}{a^{\varepsilon, m}}\Bigr\|_{L^\infty_+} \|\varepsilon\,z^{-1}f\|_{L^2_+}\lesssim \mathcal{E}_{0}^{\frac{1}{2}}\|\varepsilon\,\partial_zf\|_{L^2_+},
\end{split}
\end{equation*}
which together with \eqref{import-est-1-1} ensures  the second inequality of \eqref{est-singu-1a}.

The inequalities of \eqref{est-singu-1b} can be proved along the same line, we omit the details here.
This completes the proof of Lemma \ref{lem-est-singu-1}.
\end{proof}

\begin{lem}\label{lem-est-conv-2}
{\sl Let  $f$ and $g$ be smooth enough functions which satisfy the homogenous boundary condition $f|_{z=0}=g|_{z=0}=0.$
 Then one has
 \begin{equation}\label{conv-id-1}
\begin{split}
&\int_{\mathbb{R}^3_+} \mathcal{S}_{u^{\varepsilon, m}}(f) \,g\,dx=-\int_{\mathbb{R}^3_+} \mathcal{S}_{u^{\varepsilon, m}}(g) \,\,f\,dx,\quad\int_{\mathbb{R}^3_+} \mathcal{S}_{u^{\varepsilon, m}}(f) \,\,f\,dx=0,\\
&\int_{\mathbb{R}^3_+} \bigl(\mathcal{S}_{u^{\varepsilon, m}}(g)  \p_tf-\mathcal{S}_{u^{\varepsilon, m}}(f)\p_tg \bigr)\, dx=\frac{d}{dt}\int_{\mathbb{R}^3_+}\mathcal{S}_{u^{\varepsilon, m}}(g)\,f\,dx
- \int_{\mathbb{R}^3_+}\mathcal{S}_{\p_tu^{\varepsilon,m}}(g)  \,f \,dx.
\end{split}
\end{equation}
Moreover, for $j\in [0, s_0-2m-6],$
there holds
\ben\label{est-conv-2-1}
& \|\mathcal{S}_{\Ta^ju^{\varepsilon, m}}(f)\|_{L^2_+}\lesssim \mathcal{E}_0^{\frac{1}{2}}\|f\|_{H^1}
\andf  \|[{\Ta}^j;\,\mathcal{S}_{u^{\varepsilon, m}}](f)\|_{L^2_+}\lesssim  \mathcal{E}_0^{\frac{1}{2}}\sum_{k=0}^{j-1}\|{\Ta}^kf\|_{H^1}.
\een}
\end{lem}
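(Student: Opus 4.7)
The first identity in \eqref{conv-id-1} comes from integration by parts: expanding
$\int_{\mathbb{R}^3_+}\mathcal{S}_{u^{\varepsilon,m}}(f)\,g\,dx = \int_{\mathbb{R}^3_+}(u^{\varepsilon,m}\cdot\nabla f)\,g\,dx + \frac12\int_{\mathbb{R}^3_+}fg\,\dive u^{\varepsilon,m}\,dx$
and transferring the gradient off $f$, the trace contribution $\int_{\{z=0\}}u_3^{\varepsilon,m}fg\,dy$ vanishes because $f=g=0$ on the boundary, and the two divergence terms combine into $-\mathcal{S}_{u^{\varepsilon,m}}(g)$. Setting $g=f$ then gives the second identity. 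For the third, I would compute
\begin{equation*}
\frac{d}{dt}\int_{\mathbb{R}^3_+}\mathcal{S}_{u^{\varepsilon,m}}(g)\,f\,dx = \int_{\mathbb{R}^3_+}\mathcal{S}_{\partial_t u^{\varepsilon,m}}(g)\,f\,dx + \int_{\mathbb{R}^3_+}\mathcal{S}_{u^{\varepsilon,m}}(\partial_t g)\,f\,dx + \int_{\mathbb{R}^3_+}\mathcal{S}_{u^{\varepsilon,m}}(g)\,\partial_t f\,dx,
\end{equation*}
and use the first identity on the middle term to replace $\int\mathcal{S}_{u^{\varepsilon,m}}(\partial_t g)\,f\,dx$ by $-\int\mathcal{S}_{u^{\varepsilon,m}}(f)\,\partial_t g\,dx$, which rearranges to the stated formula.

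For the pointwise-in-$j$ estimate $\|\mathcal{S}_{\Ta^j u^{\varepsilon,m}}(f)\|_{L^2_+}\lesssim \mathcal{E}_0^{1/2}\|f\|_{H^1}$, I would split $u^{\varepsilon,m} = \nabla\varphi^{\text{int},\varepsilon,m} + \nabla[\varphi^{\text{b},\varepsilon,m}]_\varepsilon$. The outer piece is harmless: $\|\Ta^j\nabla\varphi^{\text{int},\varepsilon,m}\|_{L^\infty_+}\lesssim \mathcal{E}_0^{1/2}$ by Sobolev embedding together with \eqref{space-norm-E-0a}. The horizontal components $[\nabla_\h\varphi^{\text{b},\varepsilon,m}]_\varepsilon$ are $O(1)$ in $L^\infty$ because $\varphi^{\text{b},\varepsilon,m}$ starts at order $\varepsilon$, and the vertical component $\partial_z[\varphi^{\text{b},\varepsilon,m}]_\varepsilon = [\partial_Z\Phi_1]_\varepsilon + O(\varepsilon)$ is likewise uniformly bounded, so the transport term $\Ta^j u^{\varepsilon,m}\cdot\nabla f$ gives no trouble. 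The genuine obstacle comes from
\begin{equation*}
\partial_z^2[\varphi^{\text{b},\varepsilon,m}]_\varepsilon = \varepsilon^{-1}[\partial_Z^2\Phi_1]_\varepsilon + \sum_{k\geq 2}\varepsilon^{k-1}[\partial_Z^2\Phi_k]_\varepsilon,
\end{equation*}
which contributes to $\dive u^{\varepsilon,m}$ and is multiplied by $f$ in $\mathcal{S}_{u^{\varepsilon,m}}(f)$. I would rewrite this singular piece as $\varepsilon^{-1}[\Ta^j\partial_Z^2\Phi_1]_\varepsilon\,f = z^{-1}[Z\Ta^j\partial_Z^2\Phi_1]_\varepsilon\,f$, note that $Z\Ta^j\partial_Z^2\Phi_1$ is bounded in $L^\infty_{t,y,Z}$ thanks to the exponential weight of $\mathcal{W}^s_{1,T_0}$ furnished by Propositions \ref{S2prop2} and \ref{S2prop4}, and then absorb the $z^{-1}$ factor using Hardy's inequality $\|z^{-1}f\|_{L^2_+}\lesssim\|\partial_z f\|_{L^2_+}$, which is valid precisely because of the hypothesis $f|_{z=0}=0$. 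This Hardy step, which is the main point of the lemma, converts the apparent $\varepsilon^{-1}$ loss into a gradient bound on $f$.

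Finally, the commutator estimate follows from Leibniz. Since $(u,f)\mapsto\mathcal{S}_u(f) = u\cdot\nabla f + \frac12 f\,\dive u$ is bilinear and $\Ta$ commutes with the scaling $[\cdot]_\varepsilon$, one has
\begin{equation*}
[\Ta^j;\mathcal{S}_{u^{\varepsilon,m}}](f) = \sum_{k=1}^{j}\binom{j}{k}\,\mathcal{S}_{\Ta^k u^{\varepsilon,m}}(\Ta^{j-k}f).
\end{equation*}
Each summand is estimated by $\mathcal{E}_0^{1/2}\|\Ta^{j-k}f\|_{H^1}$ via the first inequality of \eqref{est-conv-2-1} already established (noting that $\Ta^{j-k}f$ still vanishes on $\{z=0\}$ because $\Ta=(\partial_t,\nabla_\h)$ preserves the trace), so summing in $k$ from $1$ to $j$ produces exactly $\mathcal{E}_0^{1/2}\sum_{k=0}^{j-1}\|\Ta^k f\|_{H^1}$, as required.
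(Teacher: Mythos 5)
Your proof is correct and follows essentially the same route as the paper: integration by parts for the algebraic identities (the paper uses the equivalent observation that $\mathcal{S}_{u}(\p_t g)f+\mathcal{S}_{u}(f)\p_t g=\dive(u\,f\,\p_t g)$, which integrates to zero by the boundary condition on $f$, while you invoke antisymmetry with $(f,\p_t g)$, noting $\p_t g|_{z=0}=0$ since $\p_t$ is tangential), then the split $u^{\e,m}=\nabla\varphi^{\text{int},\e,m}+\nabla[\varphi^{\text{b},\e,m}]_\e$ with Hardy's inequality $\|z^{-1}f\|_{L^2_+}\lesssim\|\p_z f\|_{L^2_+}$ to absorb the $\e^{-1}$ loss from $\p_z^2[\varphi^{\text{b}}]_\e$ in $\dive u^{\e,m}$, and the Leibniz expansion for the commutator. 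One cosmetic slip: since $\p_z^2[\varphi^{\text{b},\e,m}]_\e=\sum_{k\geq1}\e^{k-2}[\p_Z^2\Phi_k]_\e$, the $k\geq2$ tail carries $\e^{k-2}$ rather than $\e^{k-1}$ (so the $k=2$ term is $O(1)$, not $O(\e)$), but this does not affect the conclusion since those terms are bounded anyway.
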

\begin{proof} The first two equalities in \eqref{conv-id-1} can be obtained by using integration by parts.
Whereas observing that
\begin{equation*}\label{conv-id-1-1}
\begin{split}
\int_{\mathbb{R}^3_+} \bigl(\mathcal{S}_{u^{\varepsilon, m}}(g)\,\partial_t f \, -& \mathcal{S}_{u^{\varepsilon, m}}(f)  \,\partial_t g)\, dx\\
=&\frac{d}{dt}\int_{\mathbb{R}^3_+}\mathcal{S}_{u^{\varepsilon, m}}(g)
\,f\,dx-\int_{\R^3_+}\dive\bigl(u^{\varepsilon, m}f\p_tg\bigr)- \int_{\mathbb{R}^3_+}\mathcal{S}_{\p_tu^{\varepsilon,m}}(g)  \,f \,dx.
\end{split}
\end{equation*}
Then the second equation in \eqref{conv-id-1} follows from the homogeneous boundary condition of $f.$

Next we just prove the first inequality of \eqref{est-conv-2-1} for the case $j=0$.
 We observe that
\begin{equation*}\label{valid-soln-3}
\begin{split}
\|f\,\nabla\cdot u^{\varepsilon, m}\|_{L^2_+} =&\bigl\|f\bigl(\dive u^{\text{int},\varepsilon, m}+
[\dive_\h u^{\text{b},\varepsilon, m}]_\e+\varepsilon^{-1}[\p_Z u^{\text{b},\varepsilon, m}]_\e\bigr)\bigr\|_{L^2_+}\\
\lesssim &\|f\|_{L^6_+} \|\dive u^{\text{int},\varepsilon, m} \|_{L^3_+} +\|f\|_{L^2_+}\|\dive_\h u^{\text{b},\varepsilon, m}\|_{L^\infty_+} \\ &+\|z^{-1}\,f\|_{L^2_+}\,\|Z\,\p_{Z} u^{\text{b},\varepsilon, m}\|_{L^\infty_+}\\
\lesssim& \|f\|_{H^1}\bigl( \| u^{\text{int},\varepsilon, m} \|_{H^2} +\|u^{\text{b},\varepsilon, m}\|_{L^\infty_\v(H^{\frac52}_\h)}\bigr)+\|\partial_zf\|_{L^2_+}\,\|Z\,\p_{Z}u^{\text{b},\varepsilon, m}\|_{L^\infty_\v(H^{\frac32}_\h)}.
\end{split}
\end{equation*}
As a result, we achieve
\begin{equation*}
\begin{split}
 \|\mathcal{S}_{u^{\varepsilon, m}}(f)\|_{L^2_+}\lesssim& \|u^{\varepsilon, m}\cdot\nabla\,f\|_{L^2_+}+\|f\,\nabla\cdot u^{\varepsilon, m}\|_{L^2_+}\\
\lesssim &\|f\|_{H^1} \Bigl(\|u^{\varepsilon, m}\|_{L^\infty_+}+\| u^{\text{int},\varepsilon, m} \|_{H^2} +\|u^{\text{b},\varepsilon, m}\|_{L^\infty_\v(H^{\frac52}_\h)}+\|Z\,\p_{Z}u^{\text{b},\varepsilon, m}\|_{L^\infty_\v(H^{\frac32}_\h)}\Bigr)\\
\lesssim &\mathcal{E}_0^{\frac{1}{2}}\|f\|_{H^1},
\end{split}
\end{equation*}
which leads to the first inequality of  \eqref{est-conv-2-1} for $j=0.$

The second inequality of  \eqref{est-conv-2-1} follows from the first one. This ends the proof of  Lemma \ref{lem-est-conv-2}.
\end{proof}

\begin{lem}\label{lem-linear-high-1}
{\sl Let $(\frak{A}_1,\frak{A}_2)$ be smooth enough solution to the following system:
\begin{equation}\label{diff-eqns-gene-lin}
\begin{cases}
&\varepsilon\bigl(\partial_{t}+\mathcal{S}_{u^{\varepsilon, m}}(\cdot)\bigr)\frak{A}_1
+\frac{\varepsilon^{2}}{2}\Delta \frak{A}_2=\Bigl(\frac{\varepsilon^2}{2}\frac{\Delta a^{\varepsilon, m}}{a^{\varepsilon, m}}-\varepsilon^{m+2} \hbar_1\Bigr)\frak{A}_2-\varepsilon^{m+2} \hbar_2+f_1-f_2,\\
&\varepsilon \bigl(\partial_{t}+\mathcal{S}_{u^{\varepsilon, m}}(\cdot)\bigr)\frak{A}_2-\frac{\varepsilon^{2}}{2} \Delta \frak{A}_1 +2(a^{\varepsilon,m})^{2}\frak{A}_1=-\chi_1\frak{A}_1+\varepsilon^{m+1} \chi_2-g_1-g_2,\\
&\frak{A}_1|_{z=0}=0,\quad\,\frak{A}_2|_{z=0}=0.
\end{cases}
\end{equation}
 Then if $s_0-2m-10\geq 0,$ one has
\begin{equation}\label{1-28-10}
\begin{split}
&\frac{d}{dt}\Bigl\{\frac{\varepsilon^2}{4}\bigl\|(\nabla\frak{A}_1,\,\nabla\frak{A}_2)\bigr\|_{L^2_+}^2
+\int_{\mathbb{R}^3_+}\Bigl(\bigl((a^{\varepsilon,m})^{2}
+\frac12\chi_1\bigr)\frak{A}_1^2+\mathcal{S}_{u^{\varepsilon, m}}(\varepsilon \frak{A}_2 )\frak{A}_1\Bigr)\,dx\\
&\,+\frac{1}{2}\int_{\mathbb{R}^3_+} \Bigl(\Bigl(\frac{\varepsilon^2}{2}\frac{\Delta a^{\varepsilon, m}}{a^{\varepsilon, m}}- \varepsilon^{m+2} \hbar_1\Bigr)\frak{A}_2^2- 2\varepsilon^{m+2} \hbar_2\frak{A}_2-2\varepsilon^{m+1} \chi_2 \frak{A}_1 \Bigr)\, dx\Bigr\}\\
&\,+\int_{\mathbb{R}^3_+} \bigl(g_2 \partial_t\frak{A}_1-f_2 \partial_t\frak{A}_2\bigr)\, dx
\lesssim \bigl(1+\|\hbar_1\|_{L^\infty_+}+\|\p_t\hbar_1\|_{L^\infty_+}\bigr)\|\varepsilon\left(\frak{A}_1,\frak{A}_2\right) \|_{H^1}^2\\
&\,+\bigl(1+\|\chi_1\|_{L^\infty_+}+\|\p_t\chi_1\|_{L^\infty_+}\bigr)\|\frak{A}_1\|_{L^2_+}^2
+\|(\varepsilon^{-1}f_1,\,\varepsilon^{-1}g_1,\,g_2,\,f_2)\|_{L^2_+}^2\\
&\,+\|\nabla\,(f_1,\,g_1)\|_{L^2_+}^2+\varepsilon^{2(m+1)}\bigl(\|(\chi_2,\e\hbar_2)\|_{L^2_+}^2+\|(\p_t \chi_2,\partial_t\hbar_2)\|_{L^2_+}^2\bigr).
\end{split}
\end{equation}}
\end{lem}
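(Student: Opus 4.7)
The plan is to derive the differential energy identity by testing the first equation of \eqref{diff-eqns-gene-lin} against $\partial_t\frak{A}_2$ and the second against $\partial_t\frak{A}_1$, then forming the combination (second)$\cdot\partial_t\frak{A}_1$ minus (first)$\cdot\partial_t\frak{A}_2$. Two features force this choice: the cross terms $\varepsilon\,\partial_t\frak{A}_1\partial_t\frak{A}_2$ on the left cancel exactly, and the remaining convection combination
\[
\varepsilon\int_{\R^3_+}\bigl(\mathcal{S}_{u^{\varepsilon,m}}(\frak{A}_2)\partial_t\frak{A}_1-\mathcal{S}_{u^{\varepsilon,m}}(\frak{A}_1)\partial_t\frak{A}_2\bigr)\,dx
\]
is handled via the third identity of Lemma \ref{lem-est-conv-2}, producing the perfect derivative $\frac{d}{dt}\int\mathcal{S}_{u^{\varepsilon,m}}(\varepsilon\frak{A}_2)\frak{A}_1\,dx$ and a residual $-\int\mathcal{S}_{\partial_tu^{\varepsilon,m}}(\varepsilon\frak{A}_2)\frak{A}_1\,dx$ that is absorbed into $\mathcal{E}_0^{1/2}\|\varepsilon(\frak{A}_1,\frak{A}_2)\|_{H^1}^2$ via \eqref{est-conv-2-1}.

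The Laplacian contributions $\frac{\varepsilon^2}{2}\Delta\frak{A}_j\,\partial_t\frak{A}_j$ integrate by parts in space---the boundary terms vanishing because $\partial_t\frak{A}_j|_{z=0}=0$---and jointly give $\frac{d}{dt}\frac{\varepsilon^2}{4}\|(\nabla\frak{A}_1,\nabla\frak{A}_2)\|_{L^2_+}^2$. For each coefficient $c\in\bigl\{(a^{\varepsilon,m})^2,\,\chi_1,\,\tfrac{\varepsilon^2}{2}\tfrac{\Delta a^{\varepsilon,m}}{a^{\varepsilon,m}}-\varepsilon^{m+2}\hbar_1\bigr\}$, the product $c\,\frak{A}_j\partial_t\frak{A}_j$ splits as $\tfrac12\frac{d}{dt}(c\,\frak{A}_j^2)-\tfrac12\partial_tc\cdot\frak{A}_j^2$; the time-derivative piece joins the energy, while the residual is estimated either by the $L^\infty$ norms $\|\partial_t c\|_{L^\infty_+}$ (for $c=(a^{\varepsilon,m})^2,\chi_1,\hbar_1$) or, for the singular quantum-pressure factor $c=\tfrac{\varepsilon^2}{2}\tfrac{\Delta a^{\varepsilon,m}}{a^{\varepsilon,m}}$, by Lemma \ref{lem-est-singu-1} applied with a single $\Ta$. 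The source terms $\varepsilon^{m+2}\hbar_2\partial_t\frak{A}_2$ and $\varepsilon^{m+1}\chi_2\partial_t\frak{A}_1$ are likewise rewritten as $\frac{d}{dt}$ terms minus $\partial_t\hbar_2$ and $\partial_t\chi_2$ pieces, the latter bounded by $\varepsilon^{2(m+1)}\|\partial_t(\chi_2,\hbar_2)\|_{L^2_+}^2$ plus small multiples of $\|\frak{A}_1\|_{L^2_+}^2+\|\varepsilon\frak{A}_2\|_{H^1}^2$ through Young's inequality.

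At this stage the identity reads $\frac{d}{dt}E+\bigl(\text{residuals already controlled}\bigr)=-\int(g_1+g_2)\partial_t\frak{A}_1\,dx-\int(f_1-f_2)\partial_t\frak{A}_2\,dx$, where $E$ is precisely the functional enclosed by the braces in \eqref{1-28-10}. The pieces $\int g_2\partial_t\frak{A}_1\,dx$ and $-\int f_2\partial_t\frak{A}_2\,dx$ are carried over to the left, matching the second term on the LHS of \eqref{1-28-10}; the $\|(f_2,g_2)\|_{L^2_+}^2$ terms on the right then arise when this identity is later integrated in time against $\partial_t\frak{A}_j$ controlled by $\varepsilon^{-1}$ and the energy. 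It remains to dispose of $-\int g_1\partial_t\frak{A}_1\,dx-\int f_1\partial_t\frak{A}_2\,dx$. Since $\partial_t\frak{A}_j$ is not in the energy, the idea is to substitute $\varepsilon\partial_t\frak{A}_j$ using \eqref{diff-eqns-gene-lin} itself, exchanging each time derivative for $\varepsilon\mathcal{S}_{u^{\varepsilon,m}}(\frak{A}_j)$, $\varepsilon^2\Delta\frak{A}_k$, the zero-order factors $(a^{\varepsilon,m})^2\frak{A}_1,\chi_1\frak{A}_1$, the singular coefficient acting on $\frak{A}_2$, and the remaining sources. The nonsingular products are paired with $\varepsilon^{-1}g_1$ or $\varepsilon^{-1}f_1$ by Cauchy--Schwarz to produce $\|\varepsilon^{-1}(f_1,g_1)\|_{L^2_+}^2+\|\varepsilon(\frak{A}_1,\frak{A}_2)\|_{H^1}^2$, while the $\varepsilon^2\Delta$ contributions are integrated by parts in space to move one derivative onto $f_1$ or $g_1$, generating the $\|\nabla(f_1,g_1)\|_{L^2_+}^2$ term; Lemma \ref{lem-est-singu-1} ensures that the singular factor $\tfrac{\Delta a^{\varepsilon,m}}{a^{\varepsilon,m}}$ costs no power of~$\varepsilon$.

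The main obstacle is this last step: the substitution of $\varepsilon\partial_t\frak{A}_j$ via \eqref{diff-eqns-gene-lin} followed by spatial integration by parts must be carried out without losing powers of $\varepsilon$ at the boundary layer---which is precisely what Lemma \ref{lem-est-singu-1} achieves for the $\tfrac{\Delta a^{\varepsilon,m}}{a^{\varepsilon,m}}$ factor by combining Hardy's inequality with the rapid decay of the boundary profile in $Z$---and without producing uncontrolled boundary contributions at $\{z=0\}$, which relies on the Dirichlet vanishing $\frak{A}_1|_{z=0}=\frak{A}_2|_{z=0}=0$ together with the antisymmetry of $\mathcal{S}_{u^{\varepsilon,m}}$ from Lemma \ref{lem-est-conv-2}. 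Gathering all estimates and regrouping the perfect time derivatives into the energy of \eqref{1-28-10} delivers the claimed inequality.
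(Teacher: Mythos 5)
Your proposal reproduces the paper's argument: test the first equation against $-\partial_t\frak{A}_2$ and the second against $\partial_t\frak{A}_1$, sum so the $\partial_t\frak{A}_1\partial_t\frak{A}_2$ terms cancel and the convection pair is handled by the third identity of Lemma~\ref{lem-est-conv-2}, split coefficient terms into perfect time derivatives plus residuals, and dispose of $-\int f_1\partial_t\frak{A}_2$ and $-\int g_1\partial_t\frak{A}_1$ by substituting $\varepsilon\partial_t\frak{A}_j$ back from \eqref{diff-eqns-gene-lin} and integrating the resulting $\varepsilon^2\Delta$ pieces by parts onto $f_1,g_1$, with Lemma~\ref{lem-est-singu-1} controlling the quantum pressure factor. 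One small inaccuracy: the $\|(f_2,g_2)\|_{L^2_+}^2$ term on the right-hand side of \eqref{1-28-10} does not ``arise when this identity is later integrated in time''; it is produced immediately inside the substitution step, from the cross terms $\varepsilon^{-1}\int(f_1 g_2+g_1 f_2)\,dx$ paired with $\|\varepsilon^{-1}(f_1,g_1)\|_{L^2_+}^2$ by Young's inequality. This misattribution does not affect the validity of the argument if carried out as you describe.
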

\begin{proof}
We first get, by taking the $L^2$ inner product of the $\frak{A}_1$ equation of \eqref{diff-eqns-gene-lin} with $-\partial_t \frak{A}_2$, that
\begin{equation}\label{diff-eqns-im-lin-2-1}
\begin{split}
& -\varepsilon\int_{\mathbb{R}^3_+}\bigl(\partial_{t}\frak{A}_1+\mathcal{S}_{u^{\varepsilon, m}}(\frak{A}_1)\bigr)\partial_t \frak{A}_2 \, dx+\frac{\varepsilon^{2}}{4}\frac{d}{dt}\|\nabla\,\frak{A}_2\|_{L^2_+}^2\\
&=-\int_{\mathbb{R}^3_+} \Bigl(\frac{\varepsilon^2}{2}\frac{\Delta a^{\varepsilon, m}}{a^{\varepsilon, m}}- \varepsilon^{m+2} \hbar_1\Bigr)\, \frak{A}_2\, \partial_t \frak{A}_2\, dx-\int_{\mathbb{R}^3_+} \bigl(f_1-\varepsilon^{m+2} \hbar_2-f_2\bigr)\, \partial_t \frak{A}_2\, dx.
\end{split}
\end{equation}
While by substituting the $\frak{A}_2$  equation  of \eqref{diff-eqns-gene-lin} into  $-\int_{\mathbb{R}^3_+} f_1\, \partial_t \frak{A}_2\, dx,$
 one has
\begin{equation*}\label{diff-eqns-im-lin-2-5}
\begin{split}
&-\int_{\mathbb{R}^3_+} f_1\, \partial_t \frak{A}_2\, dx=\frac{\varepsilon}{2}\int_{\mathbb{R}^3_+} \,\nabla\,f_1\cdot  \nabla \frak{A}_1\, dx\\
&\quad+\varepsilon^{-1}\int_{\mathbb{R}^3_+} \,f_1\, \bigl(\varepsilon\mathcal{S}_{u^{\varepsilon, m}}(\frak{A}_2)+\bigl(2(a^{\varepsilon,m})^{2}+\chi_1\bigr) \frak{A}_1 -\varepsilon^{m+1} \chi_2+g_1+g_2\bigr)\, dx.
\end{split}
\end{equation*}
By inserting the above equality into \eqref{diff-eqns-im-lin-2-1} and using integrating by parts, we obtain
\begin{equation}\label{diff-eqns-im-lin-2-6}
\begin{split}
&\frac{d}{dt}\Bigl\{\frac{\e^2}{4}\|\na\frak{A}_2\|_{L^2_+}^2+\frac{1}{2}\int_{\mathbb{R}^3_+} \Bigl(\Bigl(\frac{\varepsilon^2}{2}\frac{\Delta a^{\varepsilon, m}}{a^{\varepsilon, m}}-
\varepsilon^{m+2} \hbar_1\Bigr)\, \frak{A}_2^2- 2\varepsilon^{m+2} \hbar_2\,  \frak{A}_2\Bigr)\, dx\Bigr\}\\
&- \varepsilon\int_{\mathbb{R}^3_+}\bigl(\partial_{t}\frak{A}_1+\mathcal{S}_{u^{\varepsilon, m}}( \frak{A}_1)\bigr) \,\partial_t \frak{A}_2 \, dx-\int_{\mathbb{R}^3_+} f_2\, \partial_t \frak{A}_2\, dx=-\varepsilon^{m+2} \int_{\mathbb{R}^3_+}  \partial_t\hbar_2\,  \frak{A}_2\, dx\\
&+\frac{1}{2}\int_{\mathbb{R}^3_+} \partial_t\Bigl(\frac{\varepsilon^2}{2}\frac{\Delta a^{\varepsilon, m}}{a^{\varepsilon, m}}- \varepsilon^{m+2} \hbar_1\Bigr)\, \frak{A}_2^2\, dx+ \frac{\varepsilon}{2}\int_{\mathbb{R}^3_+} \,\nabla\,f_1\cdot  \nabla \frak{A}_1\, dx\\
&+\varepsilon^{-1}\int_{\mathbb{R}^3_+} \,f_1\, \bigl(\varepsilon\mathcal{S}_{u^{\varepsilon, m}}(\frak{A}_2)+\bigl(2(a^{\varepsilon,m})^{2}+\chi_1\bigr) \frak{A}_1 -\varepsilon^{m+1} \chi_2+g_1+g_2\bigr)\, dx.
\end{split}
\end{equation}

On the other hand,  we get, by taking the $L^2$ inner product of the $\frak{A}_2$ equation of \eqref{diff-eqns-gene-lin} with $\partial_t \frak{A}_1 $, that
\begin{equation}\label{diff-eqns-re-lin-2-1}
\begin{split}
&\varepsilon\int_{\mathbb{R}^3_+} \bigl(\partial_{t}\frak{A}_2 +\mathcal{S}_{u^{\varepsilon, m}}(\frak{A}_2)\bigr) \,\partial_t \frak{A}_1  \, dx+\frac{\varepsilon^2}{4}\frac{d}{dt}\|\nabla\,\frak{A}_1 \|_{L^2_+}^2+\frac{d}{dt}\int_{\mathbb{R}^3_+} \bigl((a^{\varepsilon,m})^{2}
+\frac12 \chi_1\bigr)\frak{A}_1^2 \, dx\\
&=\int_{\mathbb{R}^3_+} \bigl(\varepsilon^{m+1} \chi_2-g_1-g_2\bigr)\partial_t \frak{A}_1\, dx+\int_{\mathbb{R}^3_+} \bigl(2\,a^{\varepsilon,m}\partial_ta^{\varepsilon,m}+\p_t\chi_1\bigr)\frak{A}_1^2 \, dx.
\end{split}
\end{equation}
By using the $\frak{A}_1$ equation of \eqref{diff-eqns-gene-lin}, we write
\begin{equation*}
\begin{split}
&-\int_{\mathbb{R}^3_+} g_1\, \partial_t \frak{A}_1 \, dx
=- \frac{\varepsilon}{2}\int_{\mathbb{R}^3_+} \nabla\,g_1\cdot\,\nabla \frak{A}_2\, dx-\varepsilon^{-1}\int_{\mathbb{R}^3_+} g_1\,(f_1-f_2)\, dx\\
&\qquad+\varepsilon^{m+1}\int_{\mathbb{R}^3_+} g_1\,\hbar_2\, dx+\varepsilon^{-1}\int_{\mathbb{R}^3_+} g_1\, \Bigl(\varepsilon\mathcal{S}_{u^{\varepsilon, m}}(\frak{A}_1)-\bigl(\frac{\varepsilon^2}{2}\frac{\Delta a^{\varepsilon, m}}{a^{\varepsilon, m}}-\varepsilon^{m+2} \hbar_1\bigr)\frak{A}_2\Bigr)\, dx.
\end{split}
\end{equation*}
By inserting the above equality into \eqref{diff-eqns-re-lin-2-1} and using integrating by parts, we find
\begin{equation}\label{diff-eqns-re-lin-2-4}
\begin{split}
&\frac{\varepsilon^2}{4}\frac{d}{dt}\|\nabla\,\frak{A}_1 \|_{L^2_+}^2+\frac{d}{dt}\int_{\mathbb{R}^3_+} \bigl(\bigl((a^{\varepsilon,m})^{2}
+\frac12 \chi_1\bigr)\frak{A}_1^2- \varepsilon^{m+1}\chi_2 \frak{A}_1\bigr) \, dx\\
&\quad+\varepsilon\int_{\mathbb{R}^3_+} \bigl(\partial_{t}\frak{A}_2 +\mathcal{S}_{u^{\varepsilon, m}}(\frak{A}_2)\bigr) \,\partial_t \frak{A}_1  \, dx+\int_{\mathbb{R}^3_+}g_2\,\partial_t \frak{A}_1\, dx=-\varepsilon^{m+1}\int_{\mathbb{R}^3_+}  \partial_t\chi_2\, \frak{A}_1\, dx\\
&\quad+\int_{\mathbb{R}^3_+} \bigl(2\,a^{\varepsilon,m}\partial_ta^{\varepsilon,m}+\p_t\chi_1\bigr)\frak{A}_1^2 \, dx- \frac{\varepsilon}{2}\int_{\mathbb{R}^3_+} \nabla\,g_1\cdot\,\nabla \frak{A}_2\, dx+\varepsilon^{m-1}\int_{\mathbb{R}^3_+} g_1\,\hbar_2\, dx\\
&\quad+\varepsilon^{-1}\int_{\mathbb{R}^3_+} g_1\, \Bigl(f_2-f_1+\varepsilon\mathcal{S}_{u^{\varepsilon, m}}(\frak{A}_1)-\bigl(\frac{\varepsilon^2}{2}\frac{\Delta a^{\varepsilon, m}}{a^{\varepsilon, m}}-\varepsilon^{m+2} \hbar_1\bigr)\Bigr)\frak{A}_2\Bigr)\, dx.
\end{split}
\end{equation}

Thanks to \eqref{conv-id-1}, we get, by summing up \eqref{diff-eqns-im-lin-2-6} and \eqref{diff-eqns-re-lin-2-4}, that
\begin{equation}\label{diff-eqns-imre-lin-2}
\begin{split}
&\frac{d}{dt}\Bigl\{\frac{\e^2}{4}\|(\nabla\,\frak{A}_1,\,\nabla\,\frak{A}_2)\|_{L^2_+}^2
+\frac{1}{2}\int_{\mathbb{R}^3_+} \Bigl(\Bigl(\frac{\varepsilon^2}{2}\frac{\Delta a^{\varepsilon, m}}{a^{\varepsilon, m}}- \varepsilon^{m+2} \hbar_1\Bigr)\, \frak{A}_2^2- 2\varepsilon^{m+2} \hbar_2\,  \frak{A}_2\Bigr)\, dx\\
&\quad+\int_{\mathbb{R}^3_+}\bigl((a^{\varepsilon,m})^{2}
+\frac12 \chi_1\bigr)\frak{A}_1^2-\varepsilon^{m+1} \chi_2 \frak{A}_1+\mathcal{S}_{u^{\varepsilon, m}}(\varepsilon\frak{A}_2 )\,\frak{A}_1\, \bigr)dx\Bigr\}\\
&\quad+\int_{\mathbb{R}^3_+} (g_2\,\partial_t \frak{A}_1-f_2\, \partial_t \frak{A}_2)\, dx=\mathfrak{R},
\end{split}
\end{equation}
where
\begin{equation*}
\begin{split}
\mathfrak{R}\eqdefa & \int_{\mathbb{R}^3_+} \bigl(\mathcal{S}_{\p_tu^{\varepsilon,m}}(\varepsilon\frak{A}_2 )\,\frak{A}_1+\bigl(2\,a^{\varepsilon,m}\partial_ta^{\varepsilon,m}+\p_t\chi_1\bigr)\frak{A}_1^2 \bigr)\, dx\\
&\,+\frac{1}{2}\int_{\mathbb{R}^3_+} \partial_t\Bigl(\frac{\varepsilon^2}{2}\frac{\Delta a^{\varepsilon, m}}{a^{\varepsilon, m}}- \varepsilon^{m+2} \hbar_1\Bigr)\, \frak{A}_2^2\, dx-\varepsilon^{m+1}\int_{\mathbb{R}^3_+}  \bigl(\p_t \chi_2 \frak{A}_1+\varepsilon \partial_t\hbar_2\,  \frak{A}_2\bigr)\, dx\\
&\,+ \frac{\varepsilon}{2}\int_{\mathbb{R}^3_+} \bigl(\nabla\,f_1\cdot  \nabla \frak{A}_1- \nabla\,g_1\cdot\,\nabla \frak{A}_2\bigr)\, dx+\varepsilon^{-1}\int_{\mathbb{R}^3_+} \,\bigl(f_1\,\mathcal{S}_{u^{\varepsilon, m}}(\varepsilon\frak{A}_2)+g_1\, \mathcal{S}_{u^{\varepsilon, m}}(\varepsilon\frak{A}_1)\bigr)\, dx\\
&\,+\varepsilon^{-1}\int_{\mathbb{R}^3_+} \,\Bigl(f_1\, \bigl(2(a^{\varepsilon,m})^{2}+\chi_1\bigr) \frak{A}_1 -g_1\,\bigl(\frac{\varepsilon^2}{2}\frac{\Delta a^{\varepsilon, m}}{a^{\varepsilon, m}}-\varepsilon^{m+2}\hbar_1\Bigr)\frak{A}_2\\
&\qquad\qquad\qquad\qquad\qquad\qquad\qquad+f_1\,g_2+g_1\,f_2-\varepsilon^{m+1}( f_1 \chi_2-\varepsilon g_1\,\hbar_2)\Bigr)\, dx.
\end{split}
\end{equation*}
Notice that $s_0-2m\geq 10,$ we get, by applying Lemmas \ref{lem-est-singu-1} and \ref{lem-est-conv-2}, that
\begin{equation*}\label{diff-eqns-imre-lin-3}
\begin{split}
&\bigl|\int_{\mathbb{R}^3_+}\mathcal{S}_{\pa_tu^{\varepsilon, m}}(\varepsilon\frak{A}_2  )\,\frak{A}_1\,dx\bigr|\lesssim \|\mathcal{S}_{\pa_tu^{\varepsilon, m}}(\varepsilon\frak{A}_2  )\|_{L^2_+}\,\|\frak{A}_1\|_{L^2_+}\lesssim \mathcal{E}_0^{\frac{1}{2}}\|\varepsilon\frak{A}_2 \|_{H^1}\,\|\frak{A}_1\|_{L^2_+},\\
&\frac{\varepsilon^2}{4}\bigl|\int_{\mathbb{R}^3_+} \partial_t\Bigl(\frac{\Delta a^{\varepsilon, m}}{a^{\varepsilon, m}}\Bigr)\frak{A}_1^2\, dx\bigr|\lesssim \mathcal{E}_0^{\frac{1}{2}}\|\varepsilon\frak{A}_1 \|_{H^1}^2,\\
\end{split}
\end{equation*}
and
\begin{equation*}
\begin{split}
&\varepsilon^{-1}\bigl|\int_{\mathbb{R}^3_+} \,\bigl(f_1\,\mathcal{S}_{u^{\varepsilon, m}}(\varepsilon\frak{A}_2)+g_1\, \mathcal{S}_{u^{\varepsilon, m}}(\varepsilon\frak{A}_1)\bigr)\, dx\bigr|\lesssim \mathcal{E}_0^{\frac{1}{2}}\varepsilon^{-1}\|(f_1,\,g_1)\|_{L^2_+}\|\varepsilon(\frak{A}_1,\, \frak{A}_2)\|_{H^1},\\
&\varepsilon^{-1}\bigl|\int_{\mathbb{R}^3_+} \,g_1\,\frac{\varepsilon^2}{2}\frac{\Delta a^{\varepsilon, m}}{a^{\varepsilon, m}}\frak{A}_2\, dx\bigr|\lesssim \varepsilon^{-1}\|g_1\|_{L^2_+}\Bigl\|\frac{\varepsilon^2}{2}\frac{\Delta a^{\varepsilon, m}}{a^{\varepsilon, m}}\frak{A}_2\Bigr\|_{L^2_+}\lesssim \mathcal{E}_0^{\frac{1}{2}}\varepsilon^{-1}\|g_1\|_{L^2_+}\|\varepsilon\frak{A}_2\|_{H^1},
\end{split}
\end{equation*}
and
\begin{equation*}
\begin{split}
&\bigl|\int_{\mathbb{R}^3_+}\bigl(2\,a^{\varepsilon,m}\partial_ta^{\varepsilon,m}+\p_t\chi_1\bigr)\frak{A}_1^2\, dx\bigr|\lesssim \bigl(\mathcal{E}_0+\|\p_t\chi_1\|_{L^\infty_+}\bigr)\|\frak{A}_1^2\|_{L^2_+}^2,\\
&\frac{\varepsilon^{m+2}}{2} \bigl|\int_{\mathbb{R}^3_+} \partial_t\hbar_1\, \frak{A}_2^2\, dx\bigr|\lesssim \varepsilon^{m}\|\p_t\hbar_1\|_{L^\infty_+}\|\varepsilon \frak{A}_2\|_{L^2_+}^2,\\
&\varepsilon^{m+1}\bigl|\int_{\mathbb{R}^3_+}  \bigl(\p_t \chi_2 \frak{A}_1+\varepsilon \partial_t\hbar_2\,  \frak{A}_2\bigr)\, dx\bigr|\lesssim \varepsilon^{m+1}\|(\p_t \chi_2,\partial_t\hbar_2)\|_{L^2_+}\|(\frak{A}_1,\,\varepsilon \frak{A}_2)\|_{L^2_+},
\end{split}
\end{equation*}
and
\begin{equation*}
\begin{split}
&\varepsilon^{-1}\bigl|\int_{\mathbb{R}^3_+} \,f_1\, \bigl(2(a^{\varepsilon,m})^{2}+\chi_1\bigr) \frak{A}_1 \, dx\bigr|\lesssim \varepsilon^{-1}\bigl(\cE_0+\|\chi_1\|_{L^\infty_+}\bigr)\|f_1\|_{L^2_+}\|\frak{A}_1\|_{L^2_+},\\
&\varepsilon^{m+1}\bigl|\int_{\mathbb{R}^3_+} \,g_1 \hbar_1\frak{A}_2\, dx\bigr|\lesssim \varepsilon^m\|\hbar_1\|_{L^\infty_+}\|g_1\|_{L^2_+}\|\varepsilon\frak{A}_2\|_{L^2_+},\\
&\varepsilon^{m}\bigl|\int_{\mathbb{R}^3_+} \bigl(-f_1 \chi_2+\varepsilon g_1\,\hbar_2\bigr)\, dx\bigr|\lesssim\varepsilon^{m}\bigl(\|f_1\|_{L^2_+}\, \|\chi_2\|_{L^2_+}+\varepsilon \|g_1\|_{L^2_+}\,\|\hbar_2\|_{L^2_+}\bigr),
\end{split}
\end{equation*}
and
\begin{equation*}
\begin{split}
&\frac{\varepsilon}{2}\bigl|\int_{\mathbb{R}^3_+} \bigl(\nabla\,f_1\cdot  \nabla \frak{A}_1- \nabla\,g_1\cdot\,\nabla \frak{A}_2\bigr)\, dx\bigr|\lesssim \|(\nabla\,f_1,\nabla\,g_1)\|_{L^2_+}\|\varepsilon(\nabla \frak{A}_1,\,\nabla \frak{A}_2)\|_{L^2_+},\\
&\varepsilon^{-1}\bigl|\int_{\mathbb{R}^3_+} \,(f_1\,g_2+g_1\,f_2)\, dx\bigr|\lesssim \varepsilon^{-1} \bigl(\|f_1\|_{L^2_+}\,\|g_2\|_{L^2_+}+\|g_1\|_{L^2_+}\,\|f_2\|_{L^2_+}\bigr).
\end{split}
\end{equation*}
By substituting the above inequalities into \eqref{diff-eqns-imre-lin-2}, we obtain \eqref{1-28-10}. This completes
the proof of Lemma \ref{lem-linear-high-1}.
\end{proof}

\renewcommand{\theequation}{\thesection.\arabic{equation}}
\setcounter{equation}{0}

\section{Validity of the WKB expansion}\label{sec-valid-wkb}

The goal of this section is to present the proof of Theorem \ref{thmmain}.

\begin{prop}\label{lem-appro-solns-1}
{\sl  Let $\Psi^{a, m}$ be given by \eqref{S6eq1}. Then one has
 \begin{equation}\label{Diri-appro-BC-1}
a^{\varepsilon, m}|_{z=0}=1,\quad \varphi^{\varepsilon, m}|_{z=0}=0
 \end{equation}
and
\begin{equation}\label{appr-soln-1}
{\rm GP}(\Psi^{a, m})\eqdefa  i\varepsilon \partial_t\Psi^{a, m}+\frac{\varepsilon^2}{2}\Delta \Psi^{a, m}-\Psi^{a, m}(|\Psi^{a, m}|^2-1)=R^{\varepsilon, m}e^{\frac{i}{\varepsilon}\varphi^{\varepsilon, m}}
\end{equation}
where $R^{\varepsilon, m}$ is of the form:
\begin{equation}\label{S6eq5}
\begin{split}
R^{\varepsilon, m}=&-\varepsilon^{m+1}\,a^{\varepsilon, m}\bigl(\varepsilon\,R_{a}^{\text{int}, m}+[R_{a}^{\text{b}, m}]_\e\bigr)+i\varepsilon^{m+2}\,\bigl(\varepsilon\,R_{\varphi}^{\text{int}, m}+[R_{\varphi}^{\text{b}, m}]_\e\bigr).
\end{split}
\end{equation}
Moreover, there holds
\begin{equation}\label{S6eq5a}
\begin{split}
&\bigl\|(R_{a}^{\text{int}, m},\,\,R_{\varphi}^{\text{int}, m})\bigr\|_{ W^{s_0-2(m+3)}_{T_0}}
+\bigl\|(R_{a}^{\text{b}, m},\,\, R_{\varphi}^{\text{b}, m})\bigr\|_{\mathcal{W}^{s_0-2(m+3)+\frac12}_{1, T_0}}\lesssim \cE_0.
\end{split}
\end{equation}
}
\end{prop}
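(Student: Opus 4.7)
The plan is first to dispose of the two boundary identities \eqref{Diri-appro-BC-1}, and then to read off the residual $R^{\varepsilon,m}$ directly from the Madelung form of ${\rm GP}(\Psi^{a,m})$, matching the resulting $\varepsilon$-expansion against the cascades that already determined $(a_j,\varphi_j)$ and $(A_j,\Phi_j)$ in Sections \ref{sec-hj}--\ref{sec-asy}.

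\textbf{Step 1 (boundary identities).} The matching condition \eqref{a-order-0-bc} gives $A_0|_{Z=0}=1-a_0|_{z=0}$, while for $k=0,\dots,m$ the matching condition \eqref{a-order-m1-bc} gives $A_{k+1}|_{Z=0}+a_{k+1}|_{z=0}=0$. Multiplying by $\varepsilon^k$ and summing yields $a^{\mathrm{int},\varepsilon,m}|_{z=0}+a^{\mathrm{b},\varepsilon,m}|_{Z=0}=1$, which is the first half of \eqref{Diri-appro-BC-1}. The second half follows analogously from $\Phi_0\equiv 0$ (see \eqref{phi-negative-2-a}), $\varphi_0|_{z=0}=0$, together with \eqref{varphi-order-0-bc} and \eqref{S2eq23}.

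\textbf{Step 2 (Madelung reformulation of the residual).} Writing $\Psi^{a,m}=a^{\varepsilon,m}e^{i\varphi^{\varepsilon,m}/\varepsilon}$ and a direct computation gives
\begin{equation*}
\begin{split}
e^{-i\varphi^{\varepsilon,m}/\varepsilon}\,{\rm GP}(\Psi^{a,m})
=&\,-a^{\varepsilon,m}\Bigl(\partial_t\varphi^{\varepsilon,m}+\tfrac12|\nabla\varphi^{\varepsilon,m}|^2+((a^{\varepsilon,m})^2-1)\Bigr)+\tfrac{\varepsilon^2}{2}\Delta a^{\varepsilon,m}\\
&\,+i\varepsilon\bigl(\partial_t a^{\varepsilon,m}+\nabla\varphi^{\varepsilon,m}\!\cdot\!\nabla a^{\varepsilon,m}+\tfrac12 a^{\varepsilon,m}\Delta\varphi^{\varepsilon,m}\bigr),
\end{split}
\end{equation*}
so the real block of $R^{\varepsilon,m}$ inherits the common factor $a^{\varepsilon,m}$ while the imaginary block carries an extra power of~$\varepsilon$.

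\textbf{Step 3 (cancellation and structural decomposition).} Substituting $a^{\varepsilon,m}=a^{\mathrm{int},\varepsilon,m}+[a^{\mathrm{b},\varepsilon,m}]_\varepsilon$ and $\varphi^{\varepsilon,m}=\varphi^{\mathrm{int},\varepsilon,m}+[\varphi^{\mathrm{b},\varepsilon,m}]_\varepsilon$ and expanding the products exactly as in \eqref{u-conti}--\eqref{u-bern}, the residual naturally splits into a ``purely interior'' part depending on $(y,z)$ and a ``boundary-layer'' part depending on $(y,Z)$ after Taylor-expanding the interior profiles at $z=0$ via \eqref{S2eq8}. By construction:
\begin{itemize}
\item the coefficient of every power $\varepsilon^k$ with $0\le k\le m+1$ in the interior block is killed by the cascade \eqref{S2eq4}, \eqref{outer-order-1}, \eqref{outer-order-m};
\item the coefficient of every power $\varepsilon^k$ with $-2\le k\le m+1$ in the boundary-layer block is killed by \eqref{phi-negative-2-a}, \eqref{phi-negative-1}, \eqref{a-order-0}, \eqref{phi-order-m} and \eqref{a-order-m1}.
\end{itemize}
Regrouping the surviving terms according to their dependence on the fast variable $Z$ produces exactly the decomposition \eqref{S6eq5}, with $R_a^{\mathrm{int},m},R_\varphi^{\mathrm{int},m}$ polynomial in the outer profiles $(a_j,\nabla\varphi_j,\partial_t\varphi_j)$ and their derivatives up to order two, and $R_a^{\mathrm{b},m},R_\varphi^{\mathrm{b},m}$ polynomial in $(A_j,\partial_Z\Phi_{j+1})$, $(\overline{\partial_z^\ell a_j},\overline{\partial_z^\ell\varphi_j})$ and their tangential derivatives, with $j\le m+2$ and with a finite number of Taylor coefficients in $Z$.

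\textbf{Step 4 (Sobolev estimate \eqref{S6eq5a}).} Each $R_\ast^{\mathrm{int},m}$ is a finite sum of products of the outer profiles and two of their space-time derivatives, so the product law (Lemma \ref{S3lem1}) combined with Propositions \ref{S2prop1}, \ref{S2prop3} and \ref{S2prop5} gives the desired bound in $W^{s_0-2(m+3)}_{T_0}$. Likewise, each $R_\ast^{\mathrm{b},m}$ is a finite sum of products of boundary-layer profiles and trace values of outer profiles; Propositions \ref{S2prop2} and \ref{S2prop4}, together with trace theorem and the anisotropic product law in $\mathcal{W}^{s}_{\gamma_0,T}$ (with exponential decay rate $\gamma_0=1$), yield the bound in $\mathcal{W}^{s_0-2(m+3)+\frac12}_{1,T_0}$. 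The loss of two derivatives in $s_0$ reflects the worst term $\varepsilon^2\Delta a^{\varepsilon,m}$, while the $\frac12$-loss in the boundary-layer bound is the trace loss.

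\textbf{Expected main obstacle.} The analytical ingredients are all in place; the only real difficulty is bookkeeping: carefully identifying which monomials in the expansion belong to the interior block versus the boundary-layer block, and checking that the separation corresponds precisely to the equations \eqref{phi-order-m}--\eqref{a-order-m1} solved for $(A_{k+1},\Phi_{k+2})$. The cleanest way to organise this is to first cancel the purely interior terms (those with no $Z$-dependence) using the outer cascade, and then observe that what remains regroups into a polynomial expression in $Z$ with rapidly decaying coefficients, in which the boundary-layer cascade produces the required cancellations order by order in $\varepsilon$.
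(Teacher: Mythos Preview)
Your proposal is correct and follows essentially the same route as the paper: verify \eqref{Diri-appro-BC-1} from the matching conditions, compute ${\rm GP}(\Psi^{a,m})$ in Madelung form, and identify the residual by observing that the cascade equations annihilate all low-order coefficients, leaving remainders controlled by the product law and Propositions \ref{S2prop1}--\ref{S2prop5}.

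One organizational point is worth flagging. The truncation in \eqref{S6eq1} is asymmetric: the phase $\varphi^{\varepsilon,m}$ runs to order $\varepsilon^{m+2}$ while the amplitude $a^{\varepsilon,m}$ stops at $\varepsilon^{m+1}$. The paper handles this by first writing the Madelung residual for the symmetric pair $\bigl(a^{\varepsilon,m},\,\varphi^{\varepsilon,m}-\varepsilon^{m+2}\varphi_{m+2}\bigr)$, where the outer and boundary-layer cascades cancel transparently, and then absorbs the extra $\varphi_{m+2}$-terms (namely $\varepsilon^{m+2}\nabla\varphi_{m+2}\cdot\nabla a^{\varepsilon,m}$, $\tfrac12\varepsilon^{m+2}a^{\varepsilon,m}\Delta\varphi_{m+2}$, $\varepsilon^{m+2}\partial_t\varphi_{m+2}$, etc.) into redefined remainders obeying the same bounds. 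Your direct-expansion approach in Step~3 is equally valid, but when you carry it out you will have to track this asymmetry explicitly; in particular the ranges of cancelled orders differ between the continuity block (from \eqref{u-conti}, cancelled for $-2\le k\le m$) and the Bernoulli block (from \eqref{u-bern}, cancelled for $-2\le k\le m+1$), rather than the uniform range you state. This is exactly the bookkeeping you anticipated as the main obstacle, and the paper's subtract-then-reabsorb trick is a clean way to organise it.
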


\begin{proof}
Let  $a^{\varepsilon, m}$ and $\varphi^{\varepsilon, m}$ be given by \eqref{S6eq1}. Then we observe from the computations
presented in Section \ref{Sect2} that
\begin{equation*}\label{appro-nls-1}
\begin{split}
&\partial_t a^{\varepsilon, m}+\nabla (\varphi^{\varepsilon, m}-\varepsilon^{m+2}\varphi_{m+2})\cdot \nabla a^{\varepsilon, m}+\frac{1}{2}a^{\varepsilon, m}\Delta (\varphi^{\varepsilon, m}-\varepsilon^{m+2}\varphi_{m+2})\\
&\qquad\qquad\qquad\qquad\qquad\qquad\qquad\qquad\qquad\qquad=\varepsilon^{m+2}\bigl(R_a^{\text{int}, m}+\varepsilon^{-1}[R_a^{\text{b}, m}]_\e\bigr),\\
&\partial_t (\varphi^{\varepsilon, m}-\varepsilon^{m+2}\varphi_{m+2})+\frac{1}{2}|\nabla (\varphi^{\varepsilon, m}-\varepsilon^{m+2}\varphi_{m+2})|^2+(a^{\varepsilon, m})^2-1-\frac{\varepsilon^2}{2}\frac{\Delta a^{\varepsilon, m}}{a^{\varepsilon, m}}\\
&\qquad\qquad\qquad\qquad\qquad\qquad\qquad\qquad\qquad\qquad=\varepsilon^{m+2}\bigl(R_{\varphi}^{\text{int}, m}+\varepsilon^{-1}[R_{\varphi}^{\text{b}, m}]_\e\bigr),
\end{split}
\end{equation*}
where $R_{a}^{\text{int}, m},\,R_{\varphi}^{\text{int}, m},\,R_{a}^{\text{b}, m}$ and $ R_{\varphi}^{\text{b}, m}$ satisfy
\begin{equation}\label{S6eq5b}
\begin{split}
&\bigl\|(R_{a}^{\text{int}, m},\,\,R_{\varphi}^{\text{int}, m})\bigr\|_{ W^{s_0-2(m+3)}_{T_0}}
+\bigl\|(R_{\varphi}^{\text{b}, m},\,\, R_{\varphi}^{\text{b}, m})\bigr\|_{\mathcal{W}^{s_0-2(m+3)+\frac12}_{1, T_0}}\lesssim \cE_0.
\end{split}
\end{equation}

The above equations can also be written as
\begin{equation*}\label{appro-nls-1a}
\begin{split}
&\partial_t a^{\varepsilon, m}+\nabla\varphi^{\varepsilon, m}\cdot \nabla a^{\varepsilon, m}+\frac{1}{2}a^{\varepsilon, m}\Delta\varphi^{\varepsilon, m}\\
&\quad=  \varepsilon^{m+2}\bigl(\nabla\varphi_{m+2} \cdot \nabla a^{\varepsilon, m}+\frac{1}{2}a^{\varepsilon, m}\Delta\varphi_{m+2}+R_a^{\text{int}, m}+\varepsilon^{-1}[R_a^{\text{b}, m}]_\e\bigr),\\
&\partial_t \varphi^{\varepsilon, m}+\frac{1}{2}|\nabla \varphi^{\varepsilon, m}|^2+(a^{\varepsilon, m})^2-1-\frac{\varepsilon^2}{2}\frac{\Delta a^{\varepsilon, m}}{a^{\varepsilon, m}}\\
&\quad=\varepsilon^{m+2}\Bigl(\partial_t \varphi_{m+2}+\frac{1}{2}(2\nabla \varphi^{\varepsilon, m} \cdot\nabla\varphi_{m+2}-\varepsilon^{m+2}|\nabla\varphi_{m+2}|^2)+R_{\varphi}^{\text{int}, m}+\varepsilon^{-1}
[R_{\varphi}^{\text{b}, m}]_\e\Bigr),
\end{split}
\end{equation*}
from which, \eqref{S6eq5b} and  \eqref{space-norm-E-0a}, we infer
\begin{equation}\label{appro-nls-1b}
\begin{split}
&\partial_t a^{\varepsilon, m}+\nabla\varphi^{\varepsilon, m}\cdot \nabla a^{\varepsilon, m}+\frac{1}{2}a^{\varepsilon, m}\Delta\varphi^{\varepsilon, m}=\varepsilon^{m+2}\bigl(R_a^{\text{int}, m}+\varepsilon^{-1}[R_a^{\text{b}, m}]_\e\bigr),\\
&\partial_t \varphi^{\varepsilon, m}+\frac{1}{2}|\nabla \varphi^{\varepsilon, m}|^2+(a^{\varepsilon, m})^2-1-\frac{\varepsilon^2}{2}\frac{\Delta a^{\varepsilon, m}}{a^{\varepsilon, m}}=\varepsilon^{m+2}\bigl(R_{\varphi}^{\text{int}, m}+\varepsilon^{-1}[R_{\varphi}^{\text{b}, m}]_\e\bigr),
\end{split}
\end{equation}
where $R_{a}^{\text{int}, m},\,R_{\varphi}^{\text{int}, m},\,R_{a}^{\text{b}, m}$ and $ R_{\varphi}^{\text{b}, m}$ satisfy \eqref{S6eq5}.

 On the other hand, it is easy to observe that
\begin{equation*}\label{valid-soln-4}
\begin{split}
&GP(\Psi^{a, m})=R^{\varepsilon, m}e^{\frac{i}{\varepsilon}\varphi^{\varepsilon, m}}\quad \text{with}\quad R^{\varepsilon, m}=\bigl(-a^{\varepsilon, m}R^m_{\varphi}+\frac{\varepsilon^2}{2}\Delta a^{\varepsilon, m}\bigr)+i\varepsilon R^m_{a},\\
\end{split}
\end{equation*}
where
\begin{equation}\label{S6eq7}
\begin{split}
&R^m_{\varphi}=\partial_t \varphi^{\varepsilon, m}+\frac{1}{2}|\grad \varphi^{\varepsilon, m}|^2+(|a^{\varepsilon, m}|^2-1),\\
&R^m_{a}=\partial_t a^{\varepsilon, m}+\grad \varphi^{\varepsilon, m}\cdot \grad a^{\varepsilon, m}+\frac{1}{2} a^{\varepsilon, m} \Delta \varphi^{\varepsilon, m},
\end{split}
\end{equation}
which along with \eqref{appro-nls-1b} implies \eqref{S6eq5}. This ends the proof of Proposition \ref{lem-appro-solns-1}.
\end{proof}

Let us now turn to the proof of Proposition \ref{S2prop6}.

\begin{proof}[Proof of Proposition \ref{S2prop6}]
Once again we shall only present the
{\it a priori} estimates. Let $w$ and $\phi$ be  real-valued functions, we are going to seek the true solution of \eqref{NLS-0}
 with the form \eqref{S7eq1}.
In view of \eqref{S1eq1}, $(w,\phi)$ satisfies the following  initial condition
\begin{equation*}
\begin{split}
&w|_{t=0}=\e^{m+2}\bigl(a^{\rm in}_{m+2,0}+R_{a,0}^\e\bigr),\quad \phi|_{t=0}=\e^{m+2} R_{\vf,0}^\e \with \lim_{\e\to 0}\left\|(R_{a,0}^\e, \na R_{\vf,0}^\e)\right\|_{H^{s_0-2m-5}}=0.
\end{split}
\end{equation*}

We shall divide the proof of  Proposition \ref{S2prop6} into the following steps:

\no{\bf Step 1.} The derivation of the error equation

Substituting \eqref{expr-Phi-1} into \eqref{NLS-0} yields
\begin{equation}\label{eqns-tilde-w-0}
\begin{split}
&i\varepsilon\bigl(\partial_t{\fw}+(u^{\varepsilon,m}\cdot\nabla){\fw}+\frac{1}{2}{\fw}\nabla\cdot u^{\varepsilon,m}\bigr)+\frac{\varepsilon^2}{2}\Delta{\fw}-2\wr(a^{\varepsilon,m})^2\\
&\qquad\qquad\qquad\qquad\qquad\qquad\qquad\qquad\qquad\qquad=R_{\varphi}^m{\fw}-R^{\varepsilon, m}+Q^{\varepsilon}({\fw}),
\end{split}
\end{equation}
where $R_{\varphi}^m$ and $R^{\varepsilon, m}$ are defined in  \eqref{S6eq7} and \eqref{S6eq5}, and
\begin{equation}\label{def-Q-0}
\begin{split}
Q^\varepsilon({\fw})&\eqdefa \bigl(a^{\varepsilon,m}+{\fw}\bigr)\bigl(|a^{\varepsilon,m}+{\fw}|^2
-|a^{\varepsilon, m}|^2\bigr)-2\wr(a^{\varepsilon,m})^2\\
&=a^{\varepsilon,m}\bigl(\wr^2+\wi^2\bigr)
+{\fw}\bigl(\wr^2+\wi^2+2\,a^{\varepsilon,m}\,
\wr\bigr).
\end{split}
\end{equation}

Notice that $\wr=w\,\cos\phi+a^{\varepsilon,m}(\cos\phi-1)$, $\wi=(a^{\varepsilon,m}+w)\,\sin\phi$, we have the following
initial boundary condition for $(\wr,\wi):$
\beq \label{S7eq12a}
\begin{split}
& \wr|_{z=0}=0,\quad \wi|_{z=0}=0,\\
& \wr|_{t=0}=\e^{m+2} \bigl(a^{\rm in}_{m+2,0}+R_{a,0}^\e\bigr)\cos\left(\e^{m+2} R_{\vf,0}^\e\right)\\
&\qquad\qquad+a^{\e,m}(0)\left(\cos\left(\e^{m+2} R_{\vf,0}^\e\right)-1\right)\eqdefa w_{{\rm R},0}^\e,\\
& \wi|_{t=0}=\left(a^{\e,m}(0)+\e^{m+2} \bigl(a^{\rm in}_{m+2,0}+R_{a,0}^\e\bigr)\right)\sin\left(\e^{m+2} R_{\vf,0}^\e\right)\eqdefa w_{{\rm I},0}^\e.
\end{split}
\eeq
Then by taking separating the imaginary and real parts of \eqref{eqns-tilde-w-0}, we derive the system \eqref{w-phi-eqns} for $(\wr,\wi)$
with
\beq \label{defrap}
r^m_a\eqdefa \varepsilon\,R_{a}^{\text{int}, m}+[R_{a}^{\text{b}, m}]_\e \andf r^m_\vf\eqdefa \varepsilon\,R_{\varphi}^{\text{int}, m}+[R_{\varphi}^{\text{b}, m}]_\e.
\eeq

\no{\bf Step 2.}  The estimate of $\|\varepsilon(\wr, \wi)\|_{L^\infty_T(H^1)}$\

In view of \eqref{conv-id-1} and \eqref{w-phi-eqns}, we get, by using $L^2_+$ energy estimate, that
\begin{equation}\label{1-24-zero-1}
\begin{split}
\frac{1}{2}\frac{d}{dt}\int_{\mathbb{R}^3_+} \bigl(|\varepsilon\wi|^2&+| \varepsilon\wr|^2\bigr)\,dx=\int_{\mathbb{R}^3_+} \Bigl(2(a^{\varepsilon,m})^{2}+\frac{\varepsilon^2}{2}\frac{\Delta a^{\varepsilon, m}}{a^{\varepsilon, m}}
+\varepsilon^{m+2} r^m_\vf \Bigr) \wr | \varepsilon\wi \,dx\\
&+\int_{\mathbb{R}^3_+} \bigl( \varepsilon^{m+1} a^{\e,m}r_a^m-\text{Re}Q^\varepsilon({\fw})
\bigr) | \varepsilon\wi\,dx\\
&+\int_{\mathbb{R}^3_+}\Bigl(\bigl(\frac{\varepsilon^2}{2}\frac{\Delta a^{\varepsilon, m}}{a^{\varepsilon, m}}+\varepsilon^{m+2} r^m_\vf\bigr)\,\wi+\text{Im}Q^\varepsilon({\fw})-\varepsilon^{m+2} r^m_\vf\Bigr) \bigl| \varepsilon \wr\,dx.
\end{split}
\end{equation}
If $s_0-2m\geq 9,$ we deduce from \eqref{space-norm-E-0a} and \eqref{S6eq5a} that
\begin{equation*}
\begin{split}
&\bigl|\int_{\mathbb{R}^3_+} \Bigl(2(a^{\varepsilon,m})^{2}+\frac{\varepsilon^2}{2}\frac{\Delta a^{\varepsilon, m}}{a^{\varepsilon, m}}
+\varepsilon^{m+2} r^m_\vf \Bigr) \wr | \varepsilon\wi \,dx\bigr|\\
&\lesssim\Bigl(\|a^{\varepsilon, m}\|_{L^\infty_+}^2+\e^{m+2}\|r^m_\vf\|_{L^\infty_+}+\|\frac{\Delta a^{\varepsilon, m}}{a^{\varepsilon, m}}\Bigr\|_{L^\infty_+}\Bigr)\|\wr\|_{L^2_+}\, \|\varepsilon\wi\|_{L^2_+}\\
&\lesssim \|\wr\|_{L^2_+}^2+ \|\varepsilon\wi\|_{L^2_+}^2,
\end{split}
\end{equation*}
and
\begin{equation*}
\begin{split}
\bigl|&\int_{\mathbb{R}^3_+} \bigl( \varepsilon^{m+1} a^{\e,m}r_a^m-\text{Re}Q^\varepsilon({\fw})
\bigr) | \varepsilon\wi\,dx\bigr|\\
&\lesssim \bigl(\varepsilon^{m+1}\|a^{\e,m}\|_{L^\infty_+} \| r^m_{\varphi}\|_{L^2_+}+\|\text{Re}Q^\varepsilon({\fw})\|_{L^2_+} \bigr) \|\varepsilon\wi\|_{L^2_+}\\
&\lesssim \bigl(\varepsilon^{m+1} \mathcal{E}_0+\|\text{Re}Q^\varepsilon({\fw})\|_{L^2_+}\bigr) \|\varepsilon\wi\|_{L^2_+},
\end{split}
\end{equation*}
and
\begin{equation*}
\begin{split}
&\bigl|\int_{\mathbb{R}^3_+}\Bigl(\bigl(\frac{\varepsilon^2}{2}\frac{\Delta a^{\varepsilon, m}}{a^{\varepsilon, m}}+\varepsilon^{m+2} r^m_\vf\bigr)\,\wi+\text{Im}Q^\varepsilon({\fw})-\varepsilon^{m+2} r^m_\vf\Bigr) \bigl| \varepsilon \wr\,dx\bigr|\\
&\lesssim\Bigl((\frac{\varepsilon^2}{2}\Bigl\|\frac{\Delta a^{\varepsilon, m}}{a^{\varepsilon, m}}\Bigr\|_{L^\infty_+}+\varepsilon^{m+2} \|r^m_\vf\|_{L^\infty_+})\, \|\varepsilon\wi\|_{L^2_+}+\|\varepsilon\,\text{Im}Q^\varepsilon({\fw})\|_{L^2_+}+\varepsilon^{m+3} \|r^m_\vf\|_{L^2_+}\Bigr)\|\wr\|_{L^2_+}\\
&\lesssim \bigl(\mathcal{E}_0^{\frac{1}{2}}\, \|\varepsilon\wi\|_{L^2_+}+\|\varepsilon\,\text{Im}Q^\varepsilon({\fw})\|_{L^2_+}+\varepsilon^{m+2} \mathcal{E}_0^{\frac{1}{2}})\| \wr\|_{L^2_+}.
\end{split}
\end{equation*}
By inserting the above inequalities into \eqref{1-24-zero-1}, we get
\begin{equation}\label{1-24-zero-4}
\begin{split}
\frac{d}{dt}\|\varepsilon(\wr,\,\wi)\|_{L^2_+}^2\lesssim \|(\wr,\e\wi\|_{L^2_+}^2+\|\text{Re}Q^\varepsilon({\fw})\|_{L^2_+}^2
+\|\varepsilon\,\text{Im}Q^\varepsilon({\fw})\|_{L^2_+}^2 +\varepsilon^{2m+2} \mathcal{E}_0.
\end{split}
\end{equation}

On the other hand, by applying Lemma \ref{lem-linear-high-1} with $\hbar_1=\hbar_2=r^m_\vf,$ $f_1=\text{Im}Q^\varepsilon({\fw}),$
$f_2=0,$ $\chi_1=\frac{\varepsilon^2}{2}\frac{\Delta a^{\varepsilon, m}}{a^{\varepsilon, m}}
+\varepsilon^{m+2} r^m_\vf,$ $\chi_2=a^{\e,m}r^m_a,$ $g_1=\text{Re}Q^\varepsilon({\fw})$ and $g_2=0,$ we achieve
 \begin{equation}\label{1-29-7}
\begin{split}
\frac{d}{dt} \widetilde{\mathfrak{E}}_1 \lesssim & \mathfrak{E}_1+\varepsilon^{-2} \bigl(\|Q^\varepsilon({\fw})\|_{L^2_+}^2+
\|\varepsilon\nabla Q^\varepsilon({\fw})\|_{L^2_+}^2\bigr),
\end{split}
\end{equation}
 where
\begin{equation}\label{S8eq1}
\begin{split}
\mathfrak{E}_1\eqdefa &\varepsilon^{2m+2}\mathcal{E}_0+ \|\varepsilon(\,\wi,\,\wr)\|_{H^1}^2
+\|\wr \|_{L^2_+}^2 \andf\\
\widetilde{\mathfrak{E}}_1\eqdefa & C_0\,\varepsilon^{2m+2}\mathcal{E}_0
+ \frac{\varepsilon^2}{4}\bigl\|(\nabla\,\wi,\,\nabla\,\wr)\bigr\|_{L^2_+}^2+\int_{\mathbb{R}^3_+}\mathcal{S}_{u^{\varepsilon, m}}(\varepsilon\wi ) | \wr\,dx\\
&+\frac12\int_{\mathbb{R}^3_+}\Bigl(2(a^{\varepsilon,m})^{2}
+\frac{\varepsilon^2}{2}\frac{\Delta a^{\varepsilon, m}}{a^{\varepsilon, m}}
+\varepsilon^{m+2} r^m_\vf\Bigr)\wr^2\,dx\\
&+\frac{1}{2}\int_{\mathbb{R}^3_+} \Bigl(\Bigl(\frac{\varepsilon^2}{2}\frac{\Delta a^{\varepsilon, m}}{a^{\varepsilon, m}}- \varepsilon^{m+2} r_\vf^m\Bigr)\wi^2- 2\varepsilon^{m+2}r^m_\vf\,  \wi-2\varepsilon^{m+1} a^{\e,m}r_a^m \, \wr\Bigr)\, dx.
\end{split}
\end{equation}

\no{\bf Step 3.} High-order tangential derivatives estimates\

The main result states as follow, the proof of which will be postponed after the proof of Proposition \ref{S2prop6}.
\begin{lem}\label{S8lem5}
{\sl Let $s_0\geq 2m+9+N$ be an integer, we denote \begin{equation}\label{diff-t1-ener-2}
\begin{split}
\mathfrak{E}_{N}\eqdefa&\varepsilon^{2m+2}\mathcal{E}_0+\sum_{j=0}^{N-1}\bigl(\|{\Ta}^j\wr\|_{L^2_+}^2 +\|\varepsilon{\Ta}^j(\wr, \,\wi)\|_{H^1}^2\bigr)\andf\\
 \widetilde{\mathfrak{E}}_{N}\eqdefa & C_N\varepsilon^{2m+2}\mathcal{E}_0+\sum_{j=0}^{N-1}\dot{{E}}_j \with\\
\dot{E}_j\eqdefa & \Bigl\{\frac{1}{4}\bigl\|\varepsilon\left(\nabla\,{\Ta}^j\,\wr,\,
\nabla\,{\Ta}^j\,\wi\right)\bigr\|_{L^2_+}^2+\int_{\mathbb{R}^3_+}(a^{\varepsilon,m})^{2}
|{\Ta}^j\,\wr|^2\,dx\\
&+\frac{1}{2}\int_{\mathbb{R}^3_+} \Bigl(\frac{\varepsilon^2}{2}\frac{\Delta a^{\varepsilon, m}}{a^{\varepsilon, m}}+\varepsilon^{m+2} r^m_\vf\Bigr)\bigl(|{\Ta}^j\,\wi|^2+|{\Ta}^j\,\wr|^2\bigr)\,dx\\
&-\int_{\mathbb{R}^3_+} \bigl(\varepsilon^{m+2} \cT^jr^m_a +\frac12R_{\partial_t, 1, j}\bigr)| {\Ta}^j\wi\,dx\\
&+\int_{\mathbb{R}^3_+}\bigl(\mathcal{S}_{u^{\varepsilon, m}}(\varepsilon {\Ta}^j\,\wi )-\varepsilon^{m+1}\cT^j(a^{\e,m}r^m_a)+\frac12
R_{\partial_t, 2, j}\bigr) | {\Ta}^j\,\wr\,dx,
\end{split}
\end{equation} where
\begin{equation}\label{w-phi-eqns-j-rt}
\begin{split}
&R_{\partial_t, 1, j}\eqdefa [{\Ta}^j;\,\mathcal{S}_{u^{\varepsilon, m}}](\varepsilon\,\wr)
-\Bigl[{\Ta}^j;\,\frac{\varepsilon^2}{2}\frac{\Delta a^{\varepsilon, m}}{a^{\varepsilon, m}}+\varepsilon^{m+2} r^m_\vf\Bigr] \wi,\\
&R_{\partial_t, 2, j}\eqdefa [{\Ta}^j;\,\mathcal{S}_{u^{\varepsilon, m}}]
(\varepsilon\wi)
+\Bigl[{\Ta}^j;\,2(a^{\varepsilon,m})^{2}+\frac{\varepsilon^2}{2}\frac{\Delta a^{\varepsilon, m}}{a^{\varepsilon, m}}
+\varepsilon^{m+2} r^m_\vf \Bigr]\wr.
\end{split}
\end{equation}
Then we have
\begin{equation}\label{diff-t1-ener-3}
\begin{split}
&\frac{d}{dt} \widetilde{\mathfrak{E}}_N
\leq C\, \mathfrak{E}_N
+\varepsilon^{-2}\sum_{j=0}^{N-1}\bigl(\|{\Ta}^j Q^\varepsilon({\fw})\|_{L^2_+}^2+
\|\varepsilon\nabla\,{\Ta}^jQ^\varepsilon({\fw})\|_{L^2_+}^2\bigr).
\end{split}
\end{equation}
}
\end{lem}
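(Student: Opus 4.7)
The strategy would be to apply the tangential operator $\cT^j$ (for each $j\in\{0,1,\dots,N-1\}$) to both equations of \eqref{w-phi-eqns} and then invoke Lemma \ref{lem-linear-high-1} for the resulting linear system with unknowns $(\mathfrak{A}_1,\mathfrak{A}_2)=(\cT^j\wr,\cT^j\wi)$. Because $\cT=(\partial_t,\nabla_\h)$ contains only tangential derivatives, the homogeneous boundary conditions $\mathfrak{A}_1|_{z=0}=\mathfrak{A}_2|_{z=0}=0$ are automatically preserved, which is exactly what is required to carry out the integrations by parts underlying \eqref{1-28-10}. The energy $\dot E_j$ in \eqref{diff-t1-ener-2} is precisely the functional one obtains on the left-hand side of \eqref{1-28-10} after this substitution, so the lemma reduces to verifying that the commutator errors generated by commuting $\cT^j$ through \eqref{w-phi-eqns} are controllable by $\mathfrak{E}_N$.

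Concretely, commuting $\cT^j$ into \eqref{w-phi-eqns} casts it in the form \eqref{diff-eqns-gene-lin} with $\hbar_1=-r^m_\vf$, $\hbar_2=\cT^j r^m_\vf$, $\chi_1=\frac{\varepsilon^2}{2}\frac{\Delta a^{\varepsilon,m}}{a^{\varepsilon,m}}+\varepsilon^{m+2}r^m_\vf$, $\chi_2=\cT^j(a^{\varepsilon,m}r^m_a)$, the ``nice'' source terms $f_1=\mathrm{Im}(\cT^j Q^\varepsilon(\fw))$, $g_1=\mathrm{Re}(\cT^j Q^\varepsilon(\fw))$, and the ``rough'' source terms $f_2=R_{\partial_t,1,j}$, $g_2=R_{\partial_t,2,j}$ defined in \eqref{w-phi-eqns-j-rt}. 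The motivation for placing the commutators into the $f_2,g_2$ slot rather than the $f_1,g_1$ slot is that they would otherwise have to be divided by $\varepsilon$, producing a non-integrable singularity; instead, $f_2,g_2$ are absorbed into the modified energy via the identity $\int(g_2\partial_t\mathfrak{A}_1-f_2\partial_t\mathfrak{A}_2)\,dx=\frac{d}{dt}\int(g_2\mathfrak{A}_1-f_2\mathfrak{A}_2)\,dx-\int(\partial_t g_2\,\mathfrak{A}_1-\partial_t f_2\,\mathfrak{A}_2)\,dx$, which is the reason the terms $\tfrac12 R_{\partial_t,1,j}$ and $\tfrac12 R_{\partial_t,2,j}$ already appear in the defining integrand of $\dot E_j$.

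With Lemma \ref{lem-linear-high-1} applied, it remains only to estimate the commutators and the tangential derivatives of the coefficients and sources. The operator-commutator bound \eqref{est-conv-2-1} of Lemma \ref{lem-est-conv-2} immediately delivers
\[
\|[\cT^j;\,\cS_{u^{\varepsilon,m}}](\varepsilon\wr)\|_{L^2_+}+\|[\cT^j;\,\cS_{u^{\varepsilon,m}}](\varepsilon\wi)\|_{L^2_+}\lesssim \mathcal{E}_0^{1/2}\sum_{k=0}^{j-1}\|\varepsilon\cT^k(\wr,\wi)\|_{H^1},
\]
while the boundary-layer-adapted inequality \eqref{est-singu-1b} of Lemma \ref{lem-est-singu-1} yields
\[
\bigl\|\bigl[\cT^j;\,\tfrac{\varepsilon^2}{2}\tfrac{\Delta a^{\varepsilon,m}}{a^{\varepsilon,m}}\bigr]\wi\bigr\|_{L^2_+}\lesssim \varepsilon\,\mathcal{E}_0^{1/2}\sum_{k=0}^{j-1}\|\cT^k\wi\|_{H^1}.
\]
The remaining commutators $[\cT^j;(a^{\varepsilon,m})^2]\wr$ and $\varepsilon^{m+2}[\cT^j;r^m_\vf]\wi$ are routine products handled by Moser-type estimates using \eqref{space-norm-E-0a} and \eqref{S6eq5a}, and the source pieces $\varepsilon^{m+2}\cT^j r^m_\vf$, $\varepsilon^{m+1}\cT^j(a^{\varepsilon,m}r^m_a)$ together with their $\partial_t$-derivatives are $O(\varepsilon^{m+1}\mathcal{E}_0^{1/2})$, contributing to the $\varepsilon^{2m+2}\mathcal{E}_0$ piece of $\mathfrak{E}_N$. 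Adding the resulting inequalities over $j=0,\dots,N-1$, and using \eqref{1-29-7} for the base case $j=0$, produces exactly \eqref{diff-t1-ener-3}.

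The principal technical obstacle is controlling the commutator $[\cT^j;\,\frac{\varepsilon^2}{2}\frac{\Delta a^{\varepsilon,m}}{a^{\varepsilon,m}}]\wi$. Since $a^{\varepsilon,m}$ contains the boundary-layer contribution $[A^{\mathrm{b},\varepsilon,m}]_\varepsilon$, each normal derivative of $a^{\varepsilon,m}$ produces a factor $\varepsilon^{-1}$ in $L^\infty_+$, so the coefficient $\frac{\varepsilon^2}{2}\frac{\Delta a^{\varepsilon,m}}{a^{\varepsilon,m}}$ is merely $O(1)$ in $L^\infty_+$ and cannot be treated as a zero-order perturbation. Lemma \ref{lem-est-singu-1} is the decisive tool here: by combining Hardy's inequality $\|z^{-1}f\|_{L^2_+}\lesssim\|\partial_z f\|_{L^2_+}$ (which applies because $\wi|_{z=0}=0$) with the polynomial-weighted decay $\|Z^\ell\partial_Z^\ell A^{\mathrm{b},\varepsilon,m}\|_{L^\infty_+}\lesssim\mathcal{E}_0^{1/2}$ inherited from the functional space $\mathcal{W}^{\cdot}_{1,T_0}$, it trades each singular factor $\varepsilon^{-1}$ against an $\varepsilon\partial_z$ acting on $\wi$, yielding a uniform $L^2_+$ bound. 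Once this delicate commutator estimate is secured, the remainder of the proof is a direct bookkeeping exercise extending Step 2 of the proof of Proposition \ref{S2prop6} to higher tangential orders.
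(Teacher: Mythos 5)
Your strategy matches the paper's exactly: apply $\cT^j$ to \eqref{w-phi-eqns}, cast the commuted system in the form \eqref{diff-eqns-gene-lin} with the source decomposition you indicate (in particular putting the commutator remainders $R_{\partial_t,1,j},R_{\partial_t,2,j}$ into the $f_2,g_2$ slots so they do not inherit the $\varepsilon^{-1}$ penalty), invoke Lemma \ref{lem-linear-high-1}, and close the estimates with Lemmas \ref{lem-est-singu-1} and \ref{lem-est-conv-2}. The identity you cite for $\int(g_2\partial_t\mathfrak{A}_1-f_2\partial_t\mathfrak{A}_2)\,dx$ is precisely the one the paper uses to push $R_{\partial_t,i,j}$ into the modified energy $\dot E_j$, and your explanation of why Hardy's inequality plus the weighted decay of the boundary-layer profile is needed for the singular coefficient $\varepsilon^2\Delta a^{\varepsilon,m}/a^{\varepsilon,m}$ is the same mechanism the paper relies on.

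One genuine subtlety is, however, glossed over. After the time-derivative identity, you must bound $\int_{\R^3_+}\partial_t R_{\partial_t,1,j}\,\cT^j\wi\,dx$, and the dangerous piece therein is the commutator $[\partial_t\cT^j;\mathcal{S}_{u^{\varepsilon,m}}](\varepsilon\wr)$. If you estimate this directly in $L^2$ against $\|\cT^j\wi\|_{L^2_+}$, you are forced to write $\|\cT^j\wi\|_{L^2_+}=\varepsilon^{-1}\|\varepsilon\cT^j\wi\|_{L^2_+}$, because $\mathfrak{E}_N$ controls only $\varepsilon\cT^j\wi$ and not $\cT^j\wi$ itself; the resulting bound $\varepsilon^{-1}\mathcal{E}_0^{1/2}\sum_k\|\varepsilon\cT^k\wr\|_{H^1}\|\varepsilon\cT^j\wi\|_{L^2_+}$ is \emph{not} controlled by $\mathfrak{E}_N$. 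The paper resolves this by exploiting the skew-symmetry of $\mathcal{S}_f$ recorded in \eqref{conv-id-1}: expanding the commutator and integrating by parts,
\begin{equation*}
\int_{\R^3_+}\mathcal{S}_{\cT^{k}u^{\varepsilon,m}}\bigl(\varepsilon\cT^{j+1-k}\wr\bigr)\,\cT^j\wi\,dx
=-\int_{\R^3_+}\mathcal{S}_{\cT^{k}u^{\varepsilon,m}}\bigl(\varepsilon\cT^j\wi\bigr)\,\cT^{j+1-k}\wr\,dx,
\end{equation*}
so the $\varepsilon$ stays attached to the $H^1$ norm of $\cT^j\wi$ and the other factor becomes $\|\cT^{j+1-k}\wr\|_{L^2_+}$, which \emph{is} part of $\mathfrak{E}_N$. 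This is not a bookkeeping point but the reason the estimate closes at all for the $\wi$ pairing; the analogous pairing of $\partial_t R_{\partial_t,2,j}$ with $\cT^j\wr$ is harmless precisely because $\|\cT^j\wr\|_{L^2_+}$ itself appears in $\mathfrak{E}_N$. You should state this asymmetry and the skew-symmetry trick explicitly before claiming the remainder is a ``direct bookkeeping exercise.''
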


\no{\bf Step 4.}
 {Estimates of nonlinear terms}\

 \begin{lem}\label{S8lem6}
 {\sl Let $N\geq 4$ and $s_0\geq 2m+6+N$  be integers. Then one has
 \begin{equation}\label{nonl-est-15}
\begin{split}
\varepsilon^{-2}\sum_{j=0}^{N-1}\bigl(\|{\Ta}^j Q^\varepsilon({\fw})\|_{L^2_+}^2+
\|\varepsilon\nabla\,{\Ta}^jQ^\varepsilon({\fw})\|_{L^2_+}^2\bigr)\lesssim \varepsilon^{-8}\mathfrak{E}_{N}\bigl(1+\varepsilon^{-2}\mathfrak{E}_{N}\bigr)\,\mathfrak{E}_{N}.
\end{split}
\end{equation}}
\end{lem}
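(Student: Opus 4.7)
The point is that $Q^\varepsilon(\fw)$ is a polynomial of degree $3$ in $(\wr, \wi)$ with smooth coefficients coming from $a^{\varepsilon,m}$,
\begin{equation*}
Q^\varepsilon(\fw) = a^{\varepsilon,m}(\wr^2 + \wi^2) + (\wr + i\wi)(\wr^2 + \wi^2 + 2a^{\varepsilon,m}\wr),
\end{equation*}
so the proof reduces to tame multilinear estimates at the level of the tangential $\Ta^j$-hierarchy controlled by $\mathfrak{E}_N$. The background profile $a^{\varepsilon,m}$ together with all of its derivatives is uniformly bounded in $L^\infty_+$ thanks to \eqref{space-norm-E-0a} and the hypothesis $s_0 \geq 2m+6+N$, so such factors do not enter the $\varepsilon$-counting. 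The remaining pieces are treated by Leibniz distribution, putting one factor into $L^\infty_+$ via Sobolev embedding and the other(s) into $L^2_+$ through the defining bounds of $\mathfrak{E}_N$.

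\textbf{$L^\infty$ bounds from $\mathfrak{E}_N$.} The homogeneous boundary conditions $\wr|_{z=0}=\wi|_{z=0}=0$ allow one to apply Hardy's inequality together with the anisotropic embedding
\begin{equation*}
\|f\|_{L^\infty(\R^3_+)}^2 \lesssim \|f\|_{L^\infty_z(H^2(\R^2_\h))}\,\|\partial_z f\|_{L^\infty_z(H^2(\R^2_\h))}
\end{equation*}
(combined with 1D Sobolev in $z$). The $j=0,1,2$ summands of $\mathfrak{E}_N$ (which require $N\geq 4$) supply the full $H^2_\h H^1_z$-norm of $\varepsilon\wr$ and $\varepsilon\wi$, so that
\begin{equation*}
\|\varepsilon\Ta^k(\wr,\wi)\|_{L^\infty_+} \lesssim \mathfrak{E}_N^{1/2} \quad \text{for } 0\leq k\leq N-3.
\end{equation*}
Equivalently, $\|\wr\|_{L^\infty_+}, \|\wi\|_{L^\infty_+} \lesssim \varepsilon^{-1}\mathfrak{E}_N^{1/2}$; at the same time, $\|\Ta^j\wr\|_{L^2_+}\lesssim \mathfrak{E}_N^{1/2}$ carries no $\varepsilon^{-1}$ loss, which is the decisive gain exploited below.

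\textbf{Quadratic and cubic contributions.} For a quadratic term such as $a^{\varepsilon,m}\wr^2$, the Leibniz rule expands $\Ta^j(a^{\varepsilon,m}\wr^2)$ into sums $(\Ta^{\alpha}a^{\varepsilon,m})(\Ta^{\beta}\wr)(\Ta^{\gamma}\wr)$ with $\alpha+\beta+\gamma=j\leq N-1$. Placing the factor with the smaller number of derivatives in $L^\infty_+$ via the preceding step and the other in $L^2_+$ yields, after the $\varepsilon^{-2}$ prefactor,
\begin{equation*}
\varepsilon^{-2}\|\Ta^j(a^{\varepsilon,m}\wr^2)\|_{L^2_+}^2 \lesssim \varepsilon^{-2}\|\wr\|_{L^\infty_+}^2\,\mathfrak{E}_N \lesssim \varepsilon^{-4}\mathfrak{E}_N^2,
\end{equation*}
well inside the claimed bound. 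The $\wi^2$ analog costs an extra $\varepsilon^{-2}$ from $\|\wi\|_{L^2_+}\lesssim\varepsilon^{-1}\mathfrak{E}_N^{1/2}$, giving at worst $\varepsilon^{-6}\mathfrak{E}_N^2$. For cubic pieces such as $\wi(\wr^2+\wi^2)$, the same Leibniz-and-Sobolev scheme produces contributions of order $\varepsilon^{-2}\|\wi\|_{L^\infty_+}^2\|(\wr,\wi)\|_{L^2_+}^2\lesssim \varepsilon^{-8}\mathfrak{E}_N^3$, which matches the term $\varepsilon^{-8}\mathfrak{E}_N(\varepsilon^{-2}\mathfrak{E}_N)\mathfrak{E}_N\cdot\varepsilon^2$ on the right-hand side. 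The gradient piece $\|\varepsilon\nabla\Ta^j Q^\varepsilon(\fw)\|_{L^2_+}^2$ is treated identically: one Leibniz summand carries the extra $\varepsilon\nabla$, which is exactly what the $H^1$ part of $\mathfrak{E}_N$ controls in $L^2_+$, so no further loss occurs.

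\textbf{Main obstacle.} The only delicate point is the anisotropic $L^\infty$ control of $(\wr,\wi)$: $\mathfrak{E}_N$ directly supplies just one vertical derivative through the $H^1$-part of $\|\varepsilon\Ta^j(\wr,\wi)\|_{H^1}$. The strategy is to compensate for this by consuming two tangential derivatives horizontally, which is why $N\geq 4$ is required, and to use the homogeneous Dirichlet condition to extend the Sobolev embedding cleanly to the half-space. Once this $L^\infty$ bound is in hand, summing the above multilinear estimates over $0 \leq j \leq N-1$ yields \eqref{nonl-est-15}.
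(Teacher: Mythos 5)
Your plan is essentially the paper's plan for the quadratic/cubic counting, but the treatment of the gradient piece has a genuine gap, and it is precisely the place where the paper does something more delicate than ``put one factor in $L^\infty_+$ by Sobolev embedding.''

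\textbf{The gap.} When you distribute $\varepsilon\nabla\Ta^j$ over a cubic term such as $\wi^3$ with $j=N-1$, you inevitably meet Leibniz summands of the form $(\nabla\wi)\,(\Ta^{N-1}\wi)\,\wi$, where the gradient lands on a factor carrying $\emph{no}$ tangential derivatives while the top tangential count $\Ta^{N-1}$ lands on a different factor. Your proposal is to place the $\varepsilon\nabla$-carrying factor in $L^2_+$ (using the $H^1$ part of $\mathfrak{E}_N$) and the rest in $L^\infty_+$; but here the $L^2_+$ slot must be given to $\Ta^{N-1}\wi$ (it cannot go in $L^\infty_+$, which would need two extra $\Ta$-derivatives past the top index $N-1$). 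That forces $\nabla\wi$ into $L^\infty_+$, and the anisotropic embedding you quote,
\[
\|f\|_{L^\infty_+}^2 \lesssim \|f\|_{L^2_+}^{1/2}\|f\|_{H^1}^{1/2}\|(1+\nabla_\h^2)f\|_{L^2_+}^{1/2}\|(1+\nabla_\h^2)f\|_{H^1}^{1/2},
\]
applied to $f=\nabla\wi$ requires control of $\|\partial_z\nabla\wi\|_{L^2_+}$, i.e.\ a \emph{second} vertical derivative of $\wi$, which $\mathfrak{E}_N$ (through $\|\varepsilon\Ta^j\wi\|_{H^1}$) simply does not supply. The paper sidesteps this by never using the full $L^\infty_+$ bound on $\nabla(\wr,\wi)$; instead (see \eqref{nonl-est-6a}, \eqref{nonl-est-6b}, \eqref{nonl-est-18}) it pairs the high-$\Ta$ factor in the mixed norm $L^\infty_\v(L^2_\h)$ (one vertical derivative, via 1D Sobolev in $z$) with the gradient factor in $L^2_\v(L^\infty_\h)$ (two horizontal derivatives, via 2D Sobolev in $y$). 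Each of these mixed norms costs only what $\mathfrak{E}_N$ pays for. Your scheme, as stated, cannot close for these cross terms in $\|\varepsilon\nabla\Ta^jQ^\varepsilon(\fw)\|_{L^2_+}$.

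\textbf{A secondary imprecision.} Your uniform bound $\|\wr\|_{L^\infty_+}\lesssim\varepsilon^{-1}\mathfrak{E}_N^{1/2}$ is not sharp. Because $\mathfrak{E}_N$ controls $\|\Ta^j\wr\|_{L^2_+}^2$ with \emph{no} $\varepsilon$-weight (unlike $\wi$), the interpolation gives the better $\|\wr\|_{L^\infty_+}^2\lesssim\varepsilon^{-1}\mathfrak{E}_4$ (paper's \eqref{nonl-est-6a}), a gain of $\varepsilon^{1/2}$. You correctly flag this unweighted $L^2$-control as ``the decisive gain,'' but you do not use it in the $L^\infty$ estimate itself. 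In this lemma the slack in the target bound $\varepsilon^{-8}\mathfrak{E}_N^2+\varepsilon^{-10}\mathfrak{E}_N^3$ happens to absorb the loss for the non-gradient terms, so this alone is not fatal; but combined with the missing anisotropic split for the gradient terms, the estimate does not close as written.
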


The proof of this lemma will be postponed below.

Next, we claim that
\begin{equation}\label{equiv-2}
\widetilde{\mathfrak{E}}_N\thicksim  \mathfrak{E}_N, \quad \text{i.e.} \quad C_1^{-1} \mathfrak{E}_N \leq \widetilde{\mathfrak{E}}_N \leq C_1\mathfrak{E}_N
\end{equation}
for some positive constant $C_1.$

We first get, by a similar the proof of \eqref{S7eq26} and \eqref{diff-t1-ener-j-9}, that
\begin{equation*}\label{diff-t1-rem-1}
\begin{split}
\bigl|\int_{\mathbb{R}^3_+} \bigl(R_{\partial_t, 2, j} | {\Ta}^j\, \,\wr -R_{\partial_t, 1, j} | {\Ta}^j\, \wi\bigr)\, dx\bigr|
\lesssim &  \mathcal{E}_0^{\frac{1}{2}}\|{\Ta}^j\, \varepsilon\,\wi\|_{L^2_+}\sum_{k=0}^{j-1}\|\varepsilon\,
{\Ta}^k\,(\wr,\,\wi)\|_{H^1}\\
&+\mathcal{E}_0^{\frac{1}{2}}\|{\Ta}^j\, \,\wr\|_{L^2_+}\sum_{k=0}^{j}
\bigl(\|\varepsilon\,{\Ta}^k\,\wi\|_{H^1}
+
\|{\Ta}^k\,\wr\|_{L^2_+}\bigr).
\end{split}
\end{equation*}
Whereas it follows from Lemmas \ref{lem-est-singu-1} and \ref{lem-est-conv-2}  that
\begin{equation*}\label{equiv-1}
\begin{split}
&\bigl|\int_{\mathbb{R}^3_+} \Bigl(\frac{\varepsilon^2}{2}\frac{\Delta a^{\varepsilon, m}}{a^{\varepsilon, m}}+\varepsilon^{m+2} r^m_\vf\Bigr)\bigl(|{\Ta}^j\,\wi|^2+|{\Ta}^j\,\wr|^2\bigr)\,dx\bigr|\\
&\lesssim \bigl(\mathcal{E}_0^{\frac{1}{2}}+\e^m\|r^m_\vf\|_{L^\infty_+}\bigr)\bigl\|\varepsilon\,{\Ta}^j(\wr,\wi)\bigr\|_{H^1}^2
\lesssim \mathcal{E}_0^{\frac{1}{2}}\bigl\|\varepsilon\,{\Ta}^j(\wr,\wi)\bigr\|_{H^1}^2,
\end{split}
\end{equation*}
and
\begin{equation*}
\begin{split}
\bigl|\int_{\mathbb{R}^3_+}\mathcal{S}_{u^{\varepsilon, m}}(\varepsilon {\Ta}^j\,\wi ) | {\Ta}^j\,\wr\,dx\bigr|
\lesssim   \mathcal{E}_{0}^{\frac{1}{2}}\|\varepsilon{\Ta}^j\,\wi\|_{H^1}\|{\Ta}^j\,\wr\|_{L^2_+},
\end{split}
\end{equation*}
and
\beno
\e^{m+1}\bigl|\int_{\mathbb{R}^3_+} \bigl(\varepsilon \cT^jr^m_a | {\Ta}^j\wi-\cT^j(a^{\e,m}r^m_a) | {\Ta}^j\,\wr\bigr)\,dx\bigr|
\lesssim \cE_0^{\frac12}\e^{m+1}\bigl(\|\varepsilon{\Ta}^j\,\wi\|_{L^2}+\|{\Ta}^j\,\wr\|_{L^2_+}\bigr).
\eeno
Finally, by virtue of \eqref{space-norm-E-0a}, we have
$\|a^{\varepsilon,m}-1\|_{L^\infty_+}\lesssim \mathcal{E}_{0}^{\frac{1}{2}},$ from which and \eqref{diff-t1-ener-2}, we deduce that
\beno
\begin{split}
\dot{E}_j\geq & \frac{1}{4}\left(\bigl\|\varepsilon\left(\nabla\,{\Ta}^j\,\wr,\,
\nabla\,{\Ta}^j\,\wi\right)\bigr\|_{L^2_+}^2+
\|{\Ta}^j\,\wr\|_{L^2_+}^2\right)\\
&- C\cE_0^{\frac12}\bigl(\|\varepsilon\left({\Ta}^j\,\wr,\,
{\Ta}^j\,\wi\right)\bigr\|_{H^1}^2+\|{\Ta}^j\,\wr\|_{L^2_+}^2\bigr)-C\cE_0\e^{2m+2}.
\end{split}
\eeno
This ensures \eqref{equiv-2} as long as $c$ in \eqref{S3eq27} and $\e$ are small enough and $C_N$ in \eqref{diff-t1-ener-2} satisfies
$C_N\geq C+1.$

Now we are in a position to complete the proof of Proposition \ref{S2prop6}. Indeed
by inserting \eqref{nonl-est-15} into \eqref{diff-t1-ener-3}, we find
\begin{equation}\label{nonl-est-16}
\begin{split}
&\frac{d}{dt} \widetilde{\mathfrak{E}}_N
\leq C\bigl( \mathfrak{E}_N +\varepsilon^{-8}\mathfrak{E}_{N}\bigl(1+\varepsilon^{-2}\mathfrak{E}_{N}\bigr)\,\mathfrak{E}_{N}\bigr).
\end{split}
\end{equation}

 Let $T_0$ be determined by  Proposition \ref{S2prop1} and  $N \geq 4$,
we define
 \begin{equation}\label{def-T-max}
\begin{split}
T^{\star}_2\eqdefa\sup\bigl\{T' \in (0,\, T],\quad \mathfrak{E}_{N}(t)\leq \frak{C}\mathcal{E}_0 \varepsilon^{2m+2}\quad \forall\quad t\in [0, T']\,\bigr\}
\end{split}
\end{equation}
for some positive constant $\frak{C}$ to be determined later on.
 We are going to prove that $T^\star_2=T_0$ provided that $c$ in \eqref{S3eq27} and $\e$ are sufficiently small.

Indeed  for $m\geq 4$ and $\e\leq \Bigl(\frac1{4\frak{C}\cE_0}\Bigr)^{\frac1{2(m-3)}},$  one has
\begin{equation*}\label{def-T-max-1}
\begin{split}
& \varepsilon^{-8}\mathfrak{E}_{N}(t)\leq \frak{C}\mathcal{E}_0 \varepsilon^{2m-6}\leq \frac{1}{4}\quad \forall\ t \in [0, T^{\star}_2],
\end{split}
\end{equation*}
 from which, \eqref{equiv-2} and \eqref{nonl-est-16},   we infer
\begin{equation}\label{est-max-t-2}
\begin{split}
&\frac{d}{dt} \widetilde{\mathfrak{E}}_N \leq  2C\, \mathfrak{E}_N\leq  2CC_1 \widetilde{\mathfrak{E}}_N.
\end{split}
\end{equation}
Thanks to \eqref{S7eq12a}, we get, by
applying Gronwall's inequality to \eqref{est-max-t-2}, that
\begin{equation}\label{est-max-t-4}
\begin{split}
&\widetilde{\mathfrak{E}}_N(t) \leq  Ce^{2CC_1T_0}\,\mathcal{E}_0\varepsilon^{2m+2}\quad\forall t\leq T^\star_2.
\end{split}
\end{equation}
Then by taking $\frak{C}=2CC_1e^{2CC_1T_0}$ in \eqref{def-T-max}, we deduce from
\eqref{equiv-2} that
\begin{equation*}\label{est-max-t-5}
\begin{split}
&\mathfrak{E}_N(t) \leq C_1\widetilde{\mathfrak{E}}_N(t) \leq  \frac{1}{2}\frak{C}\mathcal{E}_0\varepsilon^{2m+2} \quad\forall t\leq T^\star_2.
\end{split}
\end{equation*}
This contradicts with \eqref{def-T-max}, and this in turn shows that $T^\star_2=T_0,$ moreover, there holds \eqref{zhang4}.
This completes the proof of Proposition \ref{S2prop6}. \end{proof}

Proposition \ref{S2prop6} has been proved provided that we present  the proof of Lemmas \ref{S8lem5} and \ref{S8lem6}.

\begin{proof}[Proof of Lemma \ref{S8lem5}]
By applying ${\Ta}^j$ with $j \in \{ 1, 2, ..., N-1\ \}$ to \eqref{w-phi-eqns}, we find
\begin{equation}\label{w-phi-eqns-j}
\begin{cases}
&\varepsilon\bigl(\partial_t
+\cS_{u^{\varepsilon, m}}(\cdot)\bigr)({\Ta}^j\,\wr)+\frac{\varepsilon^{2}}{2}\Delta \left({\Ta}^j\,\wi\right)\\
&\quad=\Bigl(\frac{\varepsilon^2}{2}\frac{\Delta a^{\varepsilon, m}}{a^{\varepsilon, m}}+\varepsilon^{m+2} r^m_\vf
\Bigr) {\Ta}^j\wi-\varepsilon^{m+2} {\Ta}^jr^m_\vf+{\Ta}^j\text{Im}Q^\varepsilon({\fw})
-R_{\partial_t, 1,j},\\
&\varepsilon\bigl( \partial_{t}+\mathcal{S}_{u^{\varepsilon, m}}(\cdot)\bigr)({\Ta}^j\wi)-\frac{\varepsilon^{2}}{2} \Delta ({\Ta}^j\wr )+\Bigl(2(a^{\varepsilon,m})^{2}+\frac{\varepsilon^2}{2}\frac{\Delta a^{\varepsilon, m}}{a^{\varepsilon, m}}
+\varepsilon^{m+2} r^m_\vf \Bigr){\Ta}^j\,\wr\\
&\quad =\varepsilon^{m+1} {\Ta}^j( a^{\e,m}r_a^m)-{\Ta}^j\text{Re}Q^\varepsilon({\fw})
-R_{\partial_t, 2, j},\\
&{\Ta}^j\,\wr|_{z=0}=0,\quad {\Ta}^j\wi|_{z=0}=0,
\end{cases}
\end{equation}
with $R_{\partial_t, 1, j}$ and $R_{\partial_t, 2, j}$ being given by \eqref{w-phi-eqns-j-rt}.

Notice that
\begin{equation*}\label{diff-t1-ener-j-1}
\begin{split}
\int_{\mathbb{R}^3_+} \bigl(R_{\partial_t, 2, j} | {\Ta}^j\partial_t \,\wr -R_{\partial_t, 1, j} | {\Ta}^j\partial_t \wi\bigr)\, dx
=&\frac{d}{dt}\int_{\mathbb{R}^3_+} \bigl(R_{\partial_t, 2, j} | {\Ta}^j\, \,\wr -R_{\partial_t, 1, j} | {\Ta}^j\, \wi\bigr)\, dx\\
&-\int_{\mathbb{R}^3_+} \bigl(\partial_t\,R_{\partial_t, 2, j} | {\Ta}^j \,\wr -\partial_t\,R_{\partial_t, 1, j} | {\Ta}^j\, \wi\bigr)\, dx.
\end{split}
\end{equation*}
Then for $\dot E_j$ given by \eqref{diff-t1-ener-2},
we get, by applying Lemma \ref{lem-linear-high-1}, that
\begin{equation}\label{diff-t1-ener-j-1}
\begin{split}
\frac{d}{dt}&\dot{E}_j(t)-\int_{\mathbb{R}^3_+} \bigl(\partial_t\,R_{\partial_t, 2, j} \, {\Ta}^j \,\wr -\partial_t\,R_{\partial_t, 1, j} \,{\Ta}^j\, \wi\bigr)\, dx\\
\lesssim &\mathcal{E}_0\,\varepsilon^{2m+2}+\|\varepsilon{\Ta}^j(\wr,\wi)\|_{H^1}^2+\|{\Ta}^j\,\wr \|_{L^2_+}^2\\
&+\|(R_{\partial_t, 1, j},R_{\partial_t, 2, j})\|_{L^2_+}^2+\varepsilon^{-2}\bigl(\|{\Ta}^j Q^\varepsilon({\fw})\|_{L^2_+}^2+
\|\varepsilon\nabla\,{\Ta}^jQ^\varepsilon({\fw})\|_{L^2_+}^2\bigr).
\end{split}
\end{equation}
 Observing that
\beno
\begin{split}
\p_t\bigl[{\Ta}^j; \cS_u\bigr] f=&\p_t{\Ta}^j\cS_u(f)-\cS_u({\Ta}^j\p_tf)-\cS_{\p_tu}({\Ta}^jf)\\
=&\bigl[\p_t{\Ta}^j; \cS_u\bigr] f-\cS_{\p_tu}({\Ta}^jf),
\end{split}
\eeno
and
\beno
\begin{split}
\p_t\bigl[{\Ta}^j; g\bigr] f=&\p_t{\Ta}^j(gf)- g{\Ta}^j\p_tf-\p_tg {\Ta}^jf=\bigl[\p_t{\Ta}^j; g\bigr] f- \p_tg{\Ta}^jf.
\end{split}
\eeno
In view of \eqref{w-phi-eqns-j-rt}, we write
\begin{equation*}
\begin{split}
\p_tR_{\partial_t, 2, j}=&\bigl[\p_t{\Ta}^j;\,\mathcal{S}_{u^{\varepsilon, m}}\bigr]
(\varepsilon\wi)
+\Bigl[\p_t{\Ta}^j;\,2(a^{\varepsilon,m})^{2}+\frac{\varepsilon^2}{2}\frac{\Delta a^{\varepsilon, m}}{a^{\varepsilon, m}}
+\varepsilon^{m+2} r^m_\vf\Bigr]\wr\\
&-\mathcal{S}_{\p_tu^{\varepsilon, m}}\left(\e{\Ta}^{j}\wi\right)-\p_t\Bigl(2(a^{\varepsilon,m})^{2}+\frac{\varepsilon^2}{2}\frac{\Delta a^{\varepsilon, m}}{a^{\varepsilon, m}}
+\varepsilon^{m+2} r^m_\vf\Bigr){\Ta}^{j}\wr.
\end{split}
\end{equation*}
It follows from  Lemmas \ref{lem-est-singu-1} and \ref{lem-est-conv-2} that for $s_0-2m-9\geq j+1,$
\begin{equation} \label{S7eq26}
\begin{split}
&\bigl|\int_{\mathbb{R}^3_+}\partial_t\,R_{\partial_t, 2, j} | {\Ta}^j \,\wr\, dx\bigr|\lesssim
\mathcal{E}_0^{\frac{1}{2}}\sum_{k=0}^{j}\bigl(\|{\Ta}^k\varepsilon\wi\|_{H^1}+\|{\Ta}^k \wr\|_{L^2_+}\bigr)\,\|{\Ta}^j \,\wr\|_{L^2_+}.
\end{split}
\end{equation}
Along the same line, we write
\begin{equation}\label{S7eq27}
\begin{split}
\p_tR_{\partial_t, 1, j}\eqdefa& \bigl[\p_t{\Ta}^{j};\,\mathcal{S}_{u^{\varepsilon, m}}\bigr](\varepsilon\,\wr)-\mathcal{S}_{\p_tu^{\varepsilon, m}}\left(\e{\Ta}^{j}\wr\right)\\
&
-\frac{\varepsilon^2}{2}\Bigl[\p_t{\Ta}^{j};\,\frac{\Delta a^{\varepsilon, m}}{a^{\varepsilon, m}}\Bigr] \wi+\frac{\varepsilon^2}{2}\p_t\Bigl(\frac{\Delta a^{\varepsilon, m}}{a^{\varepsilon, m}}\Bigr){\Ta}^{j}\wi+\varepsilon^{m-1} \p_t\bigl[{\Ta}^{j}; r^m_\vf\bigr]\wi.
\end{split}
\end{equation}
Notice that
$$
\bigl[{\Ta}^{j+1};\,\mathcal{S}_{u^{\varepsilon, m}}\bigr](\varepsilon\,\wr)
=\varepsilon\sum_{k=1}^{j+1}C_{j}^k\mathcal{S}_{{\Ta}^{k}u^{\varepsilon, m}}({\Ta}^{j+1-k}\wr).
$$
In view of \eqref{conv-id-1}, we write
\begin{equation*}
\begin{split}
\int_{\mathbb{R}^3_+}  \bigl[{\Ta}^{j+1};\,\mathcal{S}_{u^{\varepsilon, m}}\bigr](\varepsilon\,\wr) | {\Ta}^j\, \wi\, dx=&
\varepsilon\sum_{k=1}^{j+1}C_{j}^k\int_{\mathbb{R}^3_+}  \mathcal{S}_{{\Ta}^{k}u^{\varepsilon, m}}({\Ta}^{j+1-k}\wr) | {\Ta}^j\, \wi\, dx\\
=&-\sum_{k=1}^{j+1}C_{j}^k\int_{\mathbb{R}^3_+}  \mathcal{S}_{{\Ta}^{k}u^{\varepsilon, m}}(\varepsilon{\Ta}^j\, \wi) | {\Ta}^{j+1-k}\wr\, dx,
\end{split}
\end{equation*}
from which and  \eqref{est-conv-2-1}, we infer
\begin{equation*}
\begin{split}
\bigl|\int_{\mathbb{R}^3_+}  \bigl[{\Ta}^{j+1};\,\mathcal{S}_{u^{\varepsilon, m}}\bigr](\varepsilon\,\wr) | {\Ta}^j\, \wi\, dx\bigr|
\lesssim   \mathcal{E}_0^{\frac{1}{2}}\|\varepsilon{\Ta}^j\, \wi\|_{H^1}\sum_{k=0}^{j}\,\|{\Ta}^{k}\wr\|_{L^2_+}.
\end{split}
\end{equation*}
The same estimate holds for $\int_{\mathbb{R}^3_+} \mathcal{S}_{\p_tu^{\varepsilon, m}}\left(\e{\Ta}^{j}\wr\right) | {\Ta}^j\, \wi\, dx.$

While applying Lemma \ref{lem-est-singu-1} yields
\begin{equation*}
\begin{split}
\bigl|\int_{\mathbb{R}^3_+}  \Bigl(-\frac{\varepsilon^2}{2}\bigl[\p_t{\Ta}^{j};\,\frac{\Delta a^{\varepsilon, m}}{a^{\varepsilon, m}}\bigr] \wi+\frac{\varepsilon^2}{2}\p_t\Bigl(\frac{\Delta a^{\varepsilon, m}}{a^{\varepsilon, m}}\Bigr){\Ta}^{j}\wi\Bigr) \bigl|& {\Ta}^j\, \wi\, dx|\\
\lesssim & \mathcal{E}_0^{\frac{1}{2}}\|\varepsilon{\Ta}^j\, \wi\|_{H^1}\sum_{k=0}^{j}\,\|\varepsilon{\Ta}^{k}\wi\|_{H^1}.
\end{split}
\end{equation*}
And it follows from \eqref{S6eq5a} and \eqref{defrap} that
\begin{equation*}
\begin{split}
&\varepsilon^{m+2}\bigl|\int_{\mathbb{R}^3_+}  \p_t\bigl[{\Ta}^{j}; r^m_\vf\bigr]\wi |{\Ta}^j\, \wi\, dx\bigr|\\
&\lesssim \varepsilon^{m}\sum_{k=1}^{j}\bigl( \|{\Ta}^{k}\partial_t r^m_\vf\e{\Ta}^{j-k}\wi\|_{L^2_+}+ \|{\Ta}^{k} r^m_\vf\,\varepsilon{\Ta}^{j-k}\partial_t\wi\|_{L^2_+}\bigr)
\|\varepsilon{\Ta}^j\, \wi\|_{L^2_+}\\
&\lesssim \varepsilon^{m}\sum_{k=1}^{j}\mathcal{E}_0^{\frac{1}{2}}\bigl(
\|\varepsilon\,{\Ta}^{j-k}\wi\|_{L^2_+}+
\|\varepsilon{\Ta}^{j-k}\partial_t\wi\|_{L^2_+}\bigr)
\|\varepsilon{\Ta}^j\, \wi\|_{L^2_+}.
\end{split}
\end{equation*}
As a result, thanks to \eqref{S7eq27}, we conclude
\begin{equation}\label{diff-t1-ener-j-9}
\begin{split}
\bigl|\int_{\mathbb{R}^3_+} \partial_t\,R_{\partial_t, 1, j} | {\Ta}^j\, \wi\, dx\bigr|\lesssim \mathcal{E}_0^{\frac{1}{2}}\|\varepsilon\,{\Ta}^j\, \wi\|_{H^1}
\sum_{k=0}^{j}\bigl(\|{\Ta}^{k}\wr\|_{L^2_+}+
\|{\Ta}^{k}\varepsilon\wi\|_{H^1}\bigr).
\end{split}
\end{equation}
Finally, it follows from  Lemmas \ref{lem-est-singu-1} and \ref{lem-est-conv-2}  that
\beq\label{zhang1}
\|R_{\partial_t, 1, j}\|_{L^2_+}+\|R_{\partial_t, 2, j}\|_{L^2_+}\lesssim \sum_{k=0}^{j-1}\bigl(\|{\Ta}^{k}\wr\|_{L^2_+}+
\|{\Ta}^{k}\varepsilon(\wr,\wi)\|_{H^1}\bigr).
\eeq

By inserting the estimates \eqref{S7eq26}, \eqref{diff-t1-ener-j-9} and \eqref{zhang1} into \eqref{diff-t1-ener-j-1}
gives rise to
\begin{equation}\label{zhang2}
\begin{split}
\frac{d}{dt}\dot{E}_j(t)
\lesssim &\mathcal{E}_0\,\varepsilon^{2m+2}+\sum_{k=0}^j\bigl(\|\varepsilon{\Ta}^k(\wr,\wi)\|_{H^1}^2+\|{\Ta}^k\,\wr \|_{L^2_+}^2\bigr)\\
&+\varepsilon^{-2}\bigl(\|{\Ta}^j Q^\varepsilon({\fw})\|_{L^2_+}^2+
\|\varepsilon\nabla\,{\Ta}^jQ^\varepsilon({\fw})\|_{L^2_+}^2\bigr).
\end{split}
\end{equation}
Summing up \eqref{1-29-7} and \eqref{zhang2} for $j$ from $1$ to $N-1$ leads to \eqref{diff-t1-ener-3}.
This completes the proof of Lemma \ref{S8lem5}.
\end{proof}

\begin{proof}[Proof of Lemma \ref{S8lem6}]
Let's estimate the nonlinear terms in \eqref{diff-t1-ener-3}. Indeed in view of
 \eqref{def-Q-0}, we have
\begin{equation}\label{def-Q-0a}
\begin{split}
&\text{Re}Q^\varepsilon({\fw})
=3a^{\varepsilon,m}\wr^2+a^{\varepsilon,m}\wi^2
+\wr^3+\wr\,\wi^2,\\
&\text{Im}Q^\varepsilon({\fw})=\wi\wr^2+\wi^3+2\,a^{\varepsilon,m}\,
\wi\,\wr.
\end{split}
\end{equation}

Recalling the Sobolev embedding and the classical interpolation inequality that
\beno
\|f\|_{L^\infty_\v(L^2_\h)}^2\lesssim \varepsilon^{-1}\|f\|_{L^2_+}\|\varepsilon\p_zf\|_{L^2},\
\quad\|f\|_{L^2_\v(L^\infty_\h)}^2\lesssim  \|f\|_{L^2_+}\| (1+\na_\h^2)f\|_{L^2_+},\eeno
and
\begin{equation}\label{aniso-sobolev-infty}
\begin{split}
\|f\|_{L^\infty_+}^2\lesssim &\|f\|_{L^\infty_\v(L^2_\h)}\|f\|_{L^\infty_\v(H^2_\h)}\\
\lesssim & \|f\|_{L^2_+}^{\frac{1}{2}}\|f\|_{H^1}^{\frac{1}{2}}
\|(1+{\na}_\h^2)f\|_{L^2_+}^{\frac{1}{2}} \|(1+{\na}_\h^2)\,f\|_{H^1}^{\frac{1}{2}}.
\end{split}
\end{equation}
As a result, it comes out
\begin{equation}\label{nonl-est-6a}
\begin{split}
&\|\wr\|_{L^\infty_+}^2+\|{\Ta}\wr\|_{L^\infty_+}^2
\lesssim \varepsilon^{-1}\,\mathfrak{E}_4,\quad \|{\Ta}^{j-\ell}\,\wr\|_{L^\infty_\v(L^2_\h)}^2\lesssim \varepsilon^{-1}\mathfrak{E}_{j-\ell+1},\\
&\|{\Ta}^{\ell}\wr\|_{L^2_\v(L^\infty_\h)}^2\lesssim \mathfrak{E}_{\ell+2},\quad \|{\Ta}^{\ell}\nabla\wr\|_{L^2_\v(L^\infty_\h)}^2\lesssim \varepsilon^{-2}\mathfrak{E}_{\ell+3},
\end{split}
\end{equation}
and
\begin{equation}\label{nonl-est-6b}
\begin{split}
&\|\wi\|_{L^\infty_+}^2+\|{\Ta}\wi\|_{L^\infty_+}^2
\lesssim \varepsilon^{-2}\,\mathfrak{E}_4,\quad \|{\Ta}^{j-\ell}\,\wi\|_{L^\infty_\v(L^2_\h)}^2\lesssim \varepsilon^{-2}\mathfrak{E}_{j-\ell+1},\\
&\|{\Ta}^{\ell}\wi\|_{L^2_\v(L^\infty_\h)}^2\lesssim \varepsilon^{-2}\mathfrak{E}_{\ell+2},\quad \|{\Ta}^{\ell}\nabla\wi\|_{L^2_\v(L^\infty_\h)}^2\lesssim \varepsilon^{-2}\mathfrak{E}_{\ell+3}.
\end{split}
\end{equation}
It follows from \eqref{nonl-est-6a}  and \eqref{nonl-est-6b} that for $j\leq N-1,$
 \begin{equation}\label{nonl-est-19}
\begin{split}
\|{\Ta}^{j}\,(\wr^2)\|_{L^2_+}^2
\lesssim &\sum_{k=0}^{(j-2)_+}\|{\Ta}^{j-\ell}\,\wr\|_{L^\infty_\v(L^2_\h)}^2
\|{\Ta}^{\ell}\wr\|_{L^2_\v(L^\infty_\h)}^2\\
&\,+
\bigl(\|{\Ta}^{j-1}\,\wr\|_{L^2_+}^2
+\|{\Ta}^{j}\,\wr\|_{L^2_+}^2\bigr)
\bigl(\|\wr\|_{L^\infty_+}^2+\|{\Ta}\wr\|_{L^\infty_+}^2\bigr)\\
\lesssim &\varepsilon^{-1}\Bigl(  \sum_{\ell=0}^{(j-2)_+}\mathfrak{E}_{j-\ell+1}
\mathfrak{E}_{\ell+2}+\mathfrak{E}_4\mathfrak{E}_j\Bigr)\lesssim  \varepsilon^{-1}\mathfrak{E}_{N}^2\quad\mbox{if}\quad N\geq 4.
\end{split}
\end{equation}
Along the same line, we have
\begin{equation*}\label{nonl-est-q-1}
\begin{split}
&\|{\Ta}^{j}\,(\wi^2)\|_{L^2_+}^2+\|{\Ta}^{j}\,(\wi\wr)\|_{L^2_+}^2  \lesssim\varepsilon^{-4}\mathfrak{E}_{N}^2,\\
&\|{\Ta}^{j}\,(\wi^3,\,\wr^3,\,\wi^2\wr,\,\wi\wr^2)\|_{L^2_+}^2  \lesssim\varepsilon^{-6}\mathfrak{E}_{N}^3.
\end{split}
\end{equation*}
Then by virtue of \eqref{space-norm-E-0a}, for $s_0-2m\geq j+7,$ we have
\begin{equation*}\label{nonl-est-1a}
\begin{split}
\|{\Ta}^{j}\,(a^{\varepsilon,m}\wr^2)\|_{L^2_+}^2\lesssim &\sum_{\ell=0}^{j}\|{\Ta}^{j-\ell}\,a^{\varepsilon,m}\|_{L^\infty_+}^2
\|{\Ta}^{\ell} \wr^2\|_{L^2_+}^2
\lesssim \e^{-1}\sum_{k=0}^{N-1}\mathfrak{E}_{\ell+1}^2\lesssim \e^{-1} \mathfrak{E}_{N}^2.
\end{split}
\end{equation*}
Similarly, we have
\begin{equation*}\label{nonl-est-1b}
\begin{split}
\|{\Ta}^{j}(a^{\varepsilon,m}\wi\wr)\|_{L^2_+}^2+\|{\Ta}^{j}(a^{\varepsilon,m}\wi^2)\|_{L^2_+}^2\lesssim \varepsilon^{-4}\mathfrak{E}_{N}^2.
\end{split}
\end{equation*}
Therefore, we conclude that
\beq\label{nonl-est-1c}
\varepsilon^{-2}\sum_{j=0}^{N-1}
\bigl(\|{\Ta}^j\text{Re}(Q^\varepsilon({\fw})\|_{L^2_+}^2
+\|{\Ta}^j\text{Im}(Q^\varepsilon({\fw})\|_{L^2_+}^2\bigr)\lesssim \varepsilon^{-6}\mathfrak{E}_{N}^2+\varepsilon^{-8}\mathfrak{E}_{N}^3.
\eeq

On the other hand, we observe that
\begin{equation}\label{nonl-est-17}
\begin{split}
\|{\Ta}^{j}\,\nabla(a^{\varepsilon,m}\wr^2)\|_{L^2_+}^2
\lesssim &\sum_{\ell=0}^{j}\bigl(\|{\Ta}^{j-\ell}\,\nabla\,a^{\varepsilon,m}\|_{L^\infty_+}^2
\|{\Ta}^{\ell} (\wr)^2\|_{L^2_+}^2\\
&\qquad+\|{\Ta}^{j-\ell}\,a^{\varepsilon,m}\|_{L^\infty_+}^2
\|{\Ta}^{\ell} (\wr\,\nabla\wr)\|_{L^2_+}^2\bigr)\\
\lesssim& \varepsilon^{-2}\sum_{\ell=0}^{j}
\|{\Ta}^{\ell} \wr^2\|_{L^2_+}^2+\sum_{\ell=0}^{j}
\|{\Ta}^{\ell} (\wr\,\nabla\wr)\|_{L^2_+}^2.
\end{split}
\end{equation}
Yet notice that
\begin{equation*}\label{nonl-est-18}
\begin{split}
\|{\Ta}^{\ell}&\,(\wr\,\nabla\,\wr)\|_{L^2_+}^2
\lesssim  \sum_{k=0}^{(\ell-2)_+}\|{\Ta}^{\ell-k}\,\wr\|_{L^\infty_\v(L^2_\h)}^2
\|{\Ta}^{k}\nabla\wr\|_{L^2_\v(L^\infty_\h)}^2\\
&\,+
\|{\Ta}^{\ell-1}\,\wr\|_{L^\infty_\v(L^2_\h)}^2
\|{\Ta}\nabla\wr\|_{L^2_\v(L^\infty_\h)}^2
+\|{\Ta}^{\ell}\,\wr\|_{L^\infty_\v(L^2_\h)}^2
\|\nabla\wr\|_{L^2_\v(L^\infty_\h)}^2,
\end{split}
\end{equation*}
 which together with \eqref{nonl-est-6a} and \eqref{nonl-est-6b} ensures that
\begin{equation}\label{nonl-est-6c}
\begin{split}
\|{\Ta}^{\ell}&\,(\wr\,\nabla\,\wr)\|_{L^2_+}^2
\lesssim  \varepsilon^{-3}\sum_{k=0}^{(\ell-2)_+}\mathfrak{E}_{\ell-k+1}
\mathfrak{E}_{k+3}+
\varepsilon^{-3}\mathfrak{E}_{\ell+1}
\mathfrak{E}_{4}\lesssim \varepsilon^{-3}\mathfrak{E}_{N}^2.
\end{split}
\end{equation}
By inserting \eqref{nonl-est-19} and \eqref{nonl-est-6c} into \eqref{nonl-est-17} gives rise to
\begin{equation*}\label{nonl-est-20}
\begin{split}
\|{\Ta}^{j}\,\nabla(a^{\varepsilon,m}\wr^2)\|_{L^2_+}^2
\lesssim \varepsilon^{-3}\mathfrak{E}_{N}^2\quad\mbox{for}\ \ j\leq N-1.
\end{split}
\end{equation*}
Exactly along the same line, we achieve
\begin{equation*}\label{nonl-est-12}
\begin{split}
&\|{\Ta}^{N-1}\,\nabla(a^{\varepsilon,m}\wr\wi)\|_{L^2_+}^2+\|{\Ta}^{N-1}\,\nabla(a^{\varepsilon,m}\wi^2)\|_{L^2_+}^2\lesssim \varepsilon^{-6}\,\mathfrak{E}_N^2,\\
&\|{\Ta}^{N-1}\,\nabla (\wr^3, \,\wi^3,\,\wr^2\wr,\,\wr\wi^2)\|_{L^2_+}^2\lesssim\,\,(\varepsilon^{-4}\mathfrak{E}_{N})^2\,\mathfrak{E}_{N}.
\end{split}
\end{equation*}

As a result, it comes out
\begin{equation*}
\begin{split}
&\varepsilon^{-2}\sum_{j=0}^{N-1}
\bigl(\|{\Ta}^j\na\text{Re}Q^\varepsilon({\fw})\|_{L^2_+}^2
+\|{\Ta}^j\na\text{Im}Q^\varepsilon({\fw})\|_{L^2_+}^2\bigr)\lesssim \varepsilon^{-8}\mathfrak{E}_{N}^2+\varepsilon^{-10}\mathfrak{E}_{N}^3.\\
\end{split}
\end{equation*}
Along with \eqref{nonl-est-1c}, we obtain \eqref{nonl-est-15}. This completes the proof of Lemma \ref{S8lem6}. \end{proof}

Now we are in a position to complete the proof of Theorem \ref{thmmain}.

\begin{proof}[Proof of Theorem \ref{thmmain}]

In order to get the second order full  derivatives of  $(\wr, \wi)$, we may make use of the system \eqref{w-phi-eqns-j} for $j =0, 1, 2$.
In fact, according to the $\wr$ equation of \eqref{w-phi-eqns-j}, we get
\begin{equation*}\label{diff-im-eqns-lin-t1}
\begin{split}
\varepsilon^{2}\bigl\| \Delta \left({\Ta}^j\,\wi\right)\bigr\|_{L^2_+}\lesssim & \varepsilon\|\partial_t{\Ta}^j\,\wr\|_{L^2_+}
+\varepsilon\|\mathcal{S}_{u^{\varepsilon, m}}({\Ta}^j\,\wr)\|_{L^2_+}+\|{\Ta}^j\text{Im}(Q^\varepsilon({\fw})\|_{L^2_+}\\
&+ \Bigl\|\Bigl(\frac{\varepsilon^2}{2}\frac{\Delta a^{\varepsilon, m}}{a^{\varepsilon, m}}+\varepsilon^{m+2} r^m_\vf\Bigr) {\Ta}^j\wi\Bigr\|_{L^2_+}+\varepsilon^{m+2} \|{\Ta}^jr^m_\vf\|_{L^2_+}+\|R_{\partial_t, 1,j}\|_{L^2_+}^2.
\end{split}
\end{equation*}
Thanks to \eqref{space-norm-E-0a}, \eqref{zhang1} and \eqref{nonl-est-1c}, we get, by
applying Lemmas \ref{lem-est-singu-1} and \ref{lem-est-conv-2}, that
\beno\begin{split}
\varepsilon^{2}\bigl\| \Delta \left({\Ta}^j\,\wi\right)\bigr\|_{L^2_+}\lesssim & \varepsilon\|\partial_t{\Ta}^j\,\wr\|_{L^2_+}+\e^{-4}\cE_4^2\bigl(1+\e^{-2}\cE_4\bigr)\\
&+\cE_0^{\frac12}\e^{{m}+1}+\sum_{\ell=0}^j\bigl(\|\cT^\ell\wr\|_{L^2_+}
+\e\bigl\|\cT^\ell(\wr,\wi)\bigr\|_{H^1}\bigr),
\end{split}
\eeno
from which and Proposition \ref{S2prop6}, we infer
\begin{equation*}\label{est-sec-order-1a}
\begin{split}
&\sum_{j=0}^2\bigl\|\Delta \left({\Ta}^j\,\wi\right)\bigr\|_{L^\infty_{T_0}(L^2_+)}\lesssim  \mathcal{E}_0^{\frac12} \varepsilon^{m-1}.
\end{split}
\end{equation*}
The same estimate holds for $\Delta \left({\Ta}^j\,\wr\right)$, and then
\begin{equation}\label{est-sec-order-1}
\begin{split}
&\sum_{j=0}^2\bigl\|\Delta {\Ta}^j\,(\wr,\,\wi)\bigr\|_{L^\infty_{T_0}(L^2_+)}\lesssim  \mathcal{E}_0^{\frac12} \varepsilon^{m-1}.
\end{split}
\end{equation}

While it follows from Proposition \ref{S2prop6} that
\beq \label{zhang5}
\sum_{j=0}^{3}\|\cT^j(\wr,\wi)\|_{L^\infty_{T_0}(H^1)}\lesssim \cE_0^{\frac12}\e^m.
\eeq

Let us now turn to estimate $\|\frak{w}\|_{W^{1,\infty}}$ for $\frak{w}$ given by \eqref{expr-Phi-1}. We first deduce from
\eqref{nonl-est-6a} and \eqref{nonl-est-6b} that
\beq\label{zhang6}
\|\frak{w}\|_{L^\infty_+}\leq \|\wr\|_{L^\infty_+}+\|\wi\|_{L^\infty_+}\leq C\e^{-2}\frak{E}_4^{\frac12}\leq C\cE_0^{\frac12}\e^{m-1}.
\eeq

On the other hand, we deduce from \eqref{aniso-sobolev-infty} that
 \begin{equation*}
\begin{split}
\|\nabla\,f\|_{L^\infty_+}^2&=\|\nabla_\h\,f\|_{L^\infty_+}^2+\|\partial_z\,f\|_{L^\infty_+}^2\\
\lesssim &\|\nabla_\h\,f\|_{H^1}
\| (1+\na_\h^2)\,\nabla_\h\,f\|_{H^1}+\|\partial_zf\|_{H^1}
 \|(1+\na_\h^2)\,\partial_zf\|_{H^1},
\end{split}
\end{equation*}
which along with the fact that
 \begin{equation*}
\begin{split}
\|\partial_zf\|_{H^1}&\lesssim \|\nabla\,f\|_{L^2_+}+\|\nabla_\h\,\partial_zf\|_{L^2_+}+\|\partial_z^2f\|_{L^2_+}\\
&\lesssim\|f\|_{H^1}+\|\na_\h\,f\|_{H^1}+\|\D\,f\|_{L^2_+}+\|\na_\h^2\,f\|_{L^2_+}.
\end{split}
\end{equation*}
ensures that
 \begin{equation*}
\begin{split}
&\|\nabla\,f\|_{L^\infty_+}^2\lesssim \|\nabla_\h\,f\|_{H^1}
\|(1+\na_\h^2)\,\nabla_\h\,f\|_{H^1}+\bigl(\|f\|_{H^1}+\|\na_\h\,f\|_{H^1}
+\|\D\,f\|_{L^2_+}+\|\na_\h^2\,f\|_{L^2_+}\bigr)\\
&\qquad\times\bigl(\|(1+\na_\h^2)f\|_{H^1}+\|\na_\h(1+\na_\h^2)\,f\|_{H^1}
+\|\D(1+\na_\h^2)\,f\|_{L^2_+}+\|\na_\h^2(1+\na_\h^2)\,f\|_{L^2_+}\bigr)\\
&\lesssim \sum_{j=0}^3\|\Ta\,f\|_{H^1}^2 +\sum_{j=0}^2\|\D\Ta\,f\|_{L^2_+}^2.
\end{split}
\end{equation*}
Therefore, we obtain from \eqref{est-sec-order-1} and \eqref{zhang5} that for any $t\in[0, T_0]$
\begin{equation*}
\begin{split}
\|\na\frak{w}(t)\|_{L^\infty}&\leq \|\na{\wr}(t)\|_{L^{\infty}}+\|\na{\wi}(t)\|_{L^{\infty}}\\
& \leq C (\sum_{j=0}^3\|\Ta\,(\wr,\,\wi)\|_{H^1} +\sum_{j=0}^2\|\D\Ta\,(\wr,\,\wi)\|_{L^2_+})\\
&
\leq C \mathcal{E}^{\frac{1}{2}}_0\varepsilon^{m-1}.
\end{split}
\end{equation*}
This together with \eqref{zhang6} ensures \eqref{ener-infty-1}. This ends the proof of Theorem \ref{thmmain}.
\end{proof}

\smallskip

\renewcommand{\theequation}{\thesection.\arabic{equation}}
\setcounter{equation}{0}

\appendix

\renewcommand{\theequation}{\thesection.\arabic{equation}}
\setcounter{equation}{0}

\section{The source  terms $F_k$ in \eqref{phi-order-m} and $G_k$ in \eqref{a-order-m1} }\label{appA-re}

Indeed we observe from \eqref{u-conti} that
\begin{equation}\label{app-u-conti-1}
\begin{split}
F_k\eqdefa&-\partial_tA_{k}-\sum_{\ell=0}^k F_{1,\ell}-\sum_{\ell=2}^{k+1}F_{2,\ell}-\sum_{\ell=1}^{k}F_{3,\ell}
-\sum_{\ell=1}^{k+1}F_{4,\ell}+\sum_{\ell_1+ \ell_2=k-1} F_{5,\ell_1, \ell_2}\\
&-\sum_{\substack{\ell_1+\ell_2+j=k\\
 2\leq j\leq k}}F_{6,\ell_1, \ell_2, j}-\sum_{\substack{\ell_1+\ell_2+j=k+1\\
  2\leq j\leq k+1}}F_{7,\ell_1, \ell_2, j}-\sum_{\substack{\ell_1+\ell_2+j=k+2\\ 2\leq j\leq k+2}} F_{8,\ell_1, \ell_2, j}
\end{split}
\end{equation}
where
\begin{equation*}\label{app-u-conti-1a}
\begin{split}
F_{1,\ell}\eqdefa &\nabla_{\rm h} \Phi_{\ell}\cdot \nabla_{\rm h} A_{k-\ell}+\frac{A_{\ell}}{2}(\Delta_{\h} \Phi_{k-\ell}+\overline{\Delta  \varphi_{k-\ell}})+\overline{\nabla_y\varphi_{\ell}}\cdot \nabla_{\h} A_{k-\ell}+\nabla_{\h} \Phi_{\ell}\cdot \overline{\nabla_{\h} a_{k-\ell}}\\
&+\frac{\overline{a_{\ell}}}{2} \Delta_{\h} \Phi_{k-\ell}+Z
\bigl(\overline{\partial_{z}^{2}\varphi_{\ell}}\partial_Z A_{k-\ell}+\partial_{Z}\Phi_{\ell} \overline{\partial_{z}^{2} a_{k-\ell}}\bigr)+
\frac{Z^{2}}{2}
\frac{\overline{\partial_{z}^2a_{\ell}}}{2}\partial_Z^2 \Phi_{k-\ell},\\
F_{2,\ell}\eqdefa& \,\partial_Z \Phi_{\ell}\partial_Z A_{k+2-\ell},\quad F_{3,\ell}\eqdefa\frac{1}{2}(A_{\ell}+\overline{a_{\ell}})\partial_Z^2 \Phi_{k+2-\ell},
\end{split}
\end{equation*}
and
\begin{equation*}\label{app-u-conti-1b}
\begin{split}
F_{4,\ell}\eqdefa&\overline{\partial_{z}\varphi_{\ell}}\partial_Z A_{k+1-\ell}+\partial_{Z}\Phi_{\ell} \overline{\partial_{z} a_{k+1-\ell}}+Z
\frac{\overline{\partial_{z} a_{k+1-\ell}}}{2}\partial_Z^2 \Phi_{\ell},\\
F_{5,\ell_1, \ell_2}\eqdefa& Z
\bigl(\overline{\nabla_y\partial_{z}\varphi_{\ell_1}}\cdot \nabla_{\h} A_{\ell_2}+\nabla_{\h} \Phi_{\ell_1}\cdot \overline{\nabla_{\h} \partial_{z} a_{\ell_2}}+\frac{\overline{\partial_{z}a_{\ell_1}}}{2} \Delta_{\h} \Phi_{\ell_2}+\frac{A_{\ell_1}}{2}\overline{\Delta \partial_{z} \varphi_{\ell_2}}\bigr)\\
&\qquad\qquad\qquad\qquad\qquad\qquad\qquad\qquad+\frac{Z^{2}}{2}
\bigl(\overline{\partial_{z}^{3}\varphi_{\ell_1}}\partial_Z A_{\ell_2}+\partial_{Z}\Phi_{\ell_1} \overline{\partial_{z}^{3} a_{\ell_2}}\bigr),
\end{split}
\end{equation*}
and
\begin{equation*}\label{app-u-conti-1c}
\begin{split}
F_{6,\ell_1, \ell_2, j}\eqdefa&\frac{Z^{j}}{j!}
\Bigl(\overline{\nabla_y\partial_{z}^{j}\varphi_{\ell_1}}\cdot \nabla_{\h} A_{\ell_2}+\nabla_{\h} \Phi_{\ell_1}\cdot \overline{\nabla_{\h} \partial_{z}^{j} a_{\ell_2}} +\frac{\overline{\partial_{z}^{j}a_{\ell_1}}}{2} \Delta_{\h} \Phi_{\ell_2}+\frac{A_{\ell_1}}{2}\overline{\Delta \partial_{z}^{j} \varphi_{\ell_2}}\Bigr),\\
F_{7,\ell_1, \ell_2, j}\eqdefa&\frac{Z^{j}}{j!}
\bigl(\overline{\partial_{z}^{j+1}\varphi_{\ell_1}}\partial_Z A_{\ell_2}+\partial_{Z}\Phi_{\ell_1} \overline{\partial_{z}^{j+1} a_{\ell_2}}\bigr),\quad F_{8,\ell_1, \ell_2, j}\eqdefa\frac{Z^{j}}{j!}
\frac{\overline{\partial_{z}^{j}a_{\ell_1}}}{2}\partial_Z^2 \Phi_{\ell_2}.
\end{split}
\end{equation*}

Whereas we observe from \eqref{u-bern} that
\begin{equation}\label{app-u-bern-2}
\begin{split}
&G_k\eqdefa \sum_{\substack{\ell_1+\ell_2+j=k+1\\
 0\leq \ell_1\leq k}} \frac{Z^{j}}{j!}\bigl(\overline{\partial_z^ja_{\ell_1}}\partial_t \Phi_{\ell_2}+A_{\ell_1}\overline{\partial_t\partial_z^j \varphi_{\ell_2}}\bigr)
 +\sum_{\ell=0}^{k}A_{\ell}\partial_t \Phi_{k+1-\ell}\\
 &+\sum_{\ell_1+\ell_2+\ell_3+j_1+j_2=k+1} \frac{Z^{j_1+j_2}}{j_1!j_2!}\Bigl(\overline{\partial_{z}^{j_1}a_{\ell_1}}  \overline{\nabla_{\h} \partial_{z}^{j_2}\varphi_{\ell_2}} \cdot \nabla_{\h} \Phi_{\ell_3}\Bigr)\\
&+\sum_{\substack{\ell_1+\ell_2+\ell_3+j_1+j_2=k+1\\
0\leq \ell_1\leq k}}\frac{Z^{j_1+j_2}}{j_1!j_2!}A_{\ell_1}\Bigl(\frac{1}{2} \overline{\nabla \partial_{z}^{j_1}\varphi_{\ell_2}} \cdot \overline{\nabla\partial_{z}^{j_2} \varphi_{\ell_3}}+3\overline{\partial_{z}^{j_1}a_{\ell_2}}\overline{\partial_{z}^{j_2}a_{\ell_3}}\Bigr)
\\
&+\sum_{\substack{\ell_1+\ell_2+\ell_3+j_1+j_2=k+2\\
 1\leq \ell_3\leq k+1}}\frac{Z^{j_1+j_2}}{j_1!j_2!}\bigl(\overline{\partial_{z}^{j_1}a_{\ell_1}}
\overline{\partial_{z}^{j_2+1}\varphi_{\ell_2}}\partial_Z\Phi_{\ell_3}\bigr)\\
&+\sum_{\substack{\ell_1+\ell_2+\ell_3+j=k+1\\
0\leq \ell_1\leq k}} \frac{Z^{j}}{j!}
A_{\ell_1}(\overline{\nabla_{\h}\partial_{z}^{j}\varphi_{\ell_2}} \cdot \nabla_{\h}\Phi_{\ell_3})
+\sum_{\ell_1+\ell_2+\ell_3+j=k+1} \frac{Z^{j}}{j!}
\frac{\overline{\partial_{z}^{j}a_{\ell_1}}}{2}\nabla_{\h}\Phi_{\ell_2} \cdot \nabla_{h}\Phi_{\ell_3}\\
&+\sum_{\substack{\ell_1+\ell_2+\ell_3+j=k+1\\
0\leq \ell_1, \ell_2\leq k}} \frac{Z^{j}}{j!}
\,3\overline{\partial_{z}^{j}a_{\ell_1}}A_{\ell_2}A_{\ell_3}
+\sum_{\substack{\ell_1+\ell_2+\ell_3+j=k+2\\
0\leq \ell_1\leq k, \, 1\leq \ell_3\leq k+1}} \frac{Z^{j}}{j!}
A_{\ell_1}\overline{\partial_{z}^{j+1}\varphi_{\ell_2}}\partial_Z\Phi_{\ell_3}\\
&+\sum_{\substack{\ell_1+\ell_2+\ell_3+j=k+3 \\ 1\leq \ell_2+\ell_3\leq k+1}} \frac{Z^{j}}{j!}
\frac{\overline{\partial_{z}^{j}a_{\ell_1}}}{2}
\partial_{Z}\Phi_{\ell_2}\partial_Z\Phi_{\ell_3}
-\frac{1}{2} \Delta_{\h} A_{k-1}.
\end{split}
\end{equation}

\setcounter{equation}{0}
\section{The proof of Lemma \ref{S3lem1} }\label{appA}

\begin{proof}[Proof of Lemma \ref{S3lem1}] We split the proof of \eqref{S3eq6} into the following cases:

\noindent$\bullet$\underline{
When $k<s-\frac32.$} In this case, $s-k >\frac32,$ so that $H^{s-k}(\R^3_+)$  is an algebra. As a result,
it comes out
\beno
\begin{split}
\|\p_t^k(fg)(t)\|_{H^{s-k}}\lesssim &\sum_{\ell=0}^k\|\p_t^\ell f(t)\p_t^{k-\ell}g(t)\|_{H^{s-k}}\\
\lesssim &\sum_{\ell=0}^k\|\p_t^\ell f(t)\|_{H^{s-k}}\|\p_t^{k-\ell}g(t)\|_{H^{s-k}}\\
\lesssim &\sum_{\ell=0}^k\|f(t)\|_{W^{s+\ell-k}}\|g(t)\|_{W^{s-\ell}}\lesssim \|f(t)\|_{W^s}\|g(t)\|_{W^s}.
\end{split}
\eeno

\noindent$\bullet$\underline{
When $k=[s]-1\geq s-\frac32.$} We first observe that
\beq \label{S3eq7a}
\|fg\|_{H^\tau}\lesssim \|f\|_{H^\tau}\|g\|_{H^2} \quad\forall \tau\in [0,2).
\eeq
The proof of the above inequality can be obtained by first extend the function to the whole
space and then using the law of product in the classical Sobolev space (see \cite{bcdbookk}). We skip the details here.

When $[s]\geq 3,$ we get, by applying \eqref{S3eq7a}, that
\beno
\begin{split}
\|\p_t^{[s]-1}(fg)(t)\|_{H^{s+1-[s]}}\lesssim & \|\p_t^{[s]-1}f(t)g(t)\|_{H^{s+1-[s]}}+\|f(t)\p_t^{[s]-1}g(t)\|_{H^{s+1-[s]}}\\
&+\sum_{\ell=1}^{[s]-2}\|\p_t^\ell f(t)\p_t^{[s]-1-\ell}g(t)\|_{H^{s+1-[s]}}\\
\lesssim & \|\p_t^{[s]-1}f(t)\|_{H^{s+1-[s]}}\|g(t)\|_{H^2}+\|f(t)\|_{H^2}\|\p_t^{[s]-1}g(t)\|_{H^{s+1-[s]}}\\
&+\sum_{\ell=1}^{[s]-2}\|\p_t^\ell f(t)\|_{H^2}\|\p_t^{[s]-1-\ell}g(t)\|_{H^{s+1-[s]}}\\
\lesssim & \|f(t)\|_{W^s}\|g(t)\|_{H^2}+\|f(t)\|_{H^2}\|g(t)\|_{W^s}+\sum_{\ell=1}^{[s]-2}\|f(t)\|_{W^{\ell+2}}\|g(t)\|_{W^{s-\ell}}\\
\lesssim & \|f(t)\|_{W^s}\|g(t)\|_{W^s}.
\end{split}
\eeno
The case for $[s]=2$ can be proved along the same line.

\noindent$\bullet$\underline{
When $k=[s].$} We first recall the following law of product in Sobolev space from \cite{bcdbookk}
\beq \label{S3eq7b}
\|fg\|_{s_1+s_2-\frac32}\leq C\|f\|_{H^{s_1}}\|g\|_{H^{s_2}}\quad\mbox{if}\quad s_1, s_2\in ]0,3/2[.
\eeq

In the case when $s-[s]\in \bigl]0,\frac12\bigr[,$
we get, by applying \eqref{S3eq7b}, that
\beno
\|fg\|_{H^{s-[s]}}\lesssim \|fg\|_{H^{\frac12}}\lesssim \|f\|_{H^1}\|g\|_{H^1}.
\eeno
Then applying the above inequality and \eqref{S3eq7a} yields
\beno
\begin{split}
\|\p_t^{[s]}(fg)(t)\|_{H^{s-[s]}}\lesssim & \|\p_t^{[s]}f(t)g(t)\|_{H^{s-[s]}}+\|f(t)\p_t^{[s]}g(t)\|_{H^{s-[s]}}+\sum_{\ell=1}^{[s]-1}\|\p_t^\ell f(t)\p_t^{[s]-\ell}g(t)\|_{H^{s-[s]}}\\
\lesssim & \|\p_t^{[s]}f(t)\|_{H^{s-[s]}}\|g(t)\|_{H^2}+\|f(t)\|_{H^2}\|\p_t^{[s]}g(t)\|_{H^{s-[s]}}\\
&+\sum_{\ell=1}^{[s]-1}\|\p_t^\ell f(t)\|_{H^{1}}\|\p_t^{[s]-\ell}g(t)\|_{H^1}\\
\lesssim & \|f(t)\|_{W^s}\|g(t)\|_{H^2}+\|f(t)\|_{H^2}\|g(t)\|_{W^s}+\sum_{\ell=1}^{[s]-1}\|f(t)\|_{W^{\ell+1}}\|g(t)\|_{W^{s-\ell+1}}\\
\lesssim & \|f(t)\|_{W^s}\|g(t)\|_{W^s}.
\end{split}
\eeno
In the case when $s-[s]\in \bigl]\frac12,1\bigr[,$
we get, by applying \eqref{S3eq7b}, that
\beno
\begin{split}
\|fg\|_{H^{s-[s]}}\lesssim&\|fg\|_{H^{2(s-[s])-\frac12}}
 \lesssim \|f\|_{H^{s-[s]+\frac12}}\|g\|_{H^{s-[s]+\frac12}}.
 \end{split}
\eeno
Applying the above inequality gives rise to
\beno
\begin{split}
\sum_{\ell=1}^{[s]-1}\|\p_t^\ell f(t)\p_t^{[s]-\ell}g(t)\|_{H^{s-[s]}}
\lesssim & \sum_{\ell=1}^{[s]-1}\|\p_t^\ell f(t)\|_{H^{s-[s]+\frac12}}\|\p_t^{[s]-\ell}g(t)\|_{H^{s-[s]+\frac12}}\\
\lesssim & \sum_{\ell=1}^{[s]-1}\|f(t)\|_{W^{s-[s]+\ell+\frac12}}\|g(t)\|_{W^{s-\ell+\frac12}}\\
\lesssim & \|f(t)\|_{W^s}\|g(t)\|_{W^s}.
\end{split}
\eeno

By summing up the above estimates, we obtain \eqref{S3eq6}. This completes the proof of Lemma \ref{S3lem1}.
\end{proof}

\bigbreak \noindent {\bf Acknowledgments.}
G. Gui is supported in part by the National Natural Science Foundation of China under the Grant 11571279. P. Zhang is partially supported by the National Natural Science Foundation of China under Grants 11688101 and 11371347, Morningside Center of Mathematics of The Chinese Academy of Sciences and innovation grant from National Center for Mathematics and Interdisciplinary Sciences.


\begin{thebibliography}{50}
\bibitem{Alazard09a} T. Alazard and R. Carles,  Loss of regularity for supercritical nonlinear Schr\"odinger equations, {\it Math. Ann.},
  {\bf 343}  (2009),   397-420.

\bibitem{Alazard09b} T. Alazard and R. Carles,  Supercritical geometric optics for nonlinear Schr\"odinger equations, {\it Arch. Ration. Mech. Anal.},
 {\bf 194}  (2009),   315-347.

 \bibitem {bcdbookk}
 H. Bahouri, J.-Y. Chemin and R. Danchin,
{\it Fourier Analysis and Nonlinear Partial Differential Equations}, Grundlehren der mathematischen Wissenschaften, {\bf 343}, Springer-Verlag Berlin Heidelberg, 2011.

\bibitem{Brenier00} Y. Brenier,  Convergence of the Vlasov-Poisson system to the incompressible Euler equations,
 {\it Comm. Partial Differential Equations}, {\bf 25} (2000),  737-754.


\bibitem{C08} R. Carles, {\it Semi-classical analysis for nonlinear Schr\"odinger equations.} World Scientific Publishing Co. Pte. Ltd., Hackensack, NJ, 2008.

\bibitem{CR2009} D. Chiron and F. Rousset, Geometric optics and boundary layers for nonlinear-Schr\"{o}dinger equations, {\it Commun. Math. Phys.},
{\bf 288} (2009)  503-546.

 \bibitem{CM07}  J.  Cousteix and J. Mauss, {\it Asymptotic analysis and boundary layers}, Springer, Berlin, 2007.




\bibitem{GMM18} D. G\'erard-Varet,  Y. Maekawa and N. Masmoudi, Gevrey stability of Prandtl expansions for 2-dimensional Navier-Stokes flows,
 {\it Duke Math. J.}, {\bf 167} (2018),  2531-2631.

\bibitem{FPR92} T. Frisch, Y. Pomeau and S. Rica, Transition to dissipation in a model of superflow, {\it Phys. Rev. Lett.}, {\bf 69} (1992), 1644-1647.

\bibitem{Gerard92} P.    G\'erard,  {\it Remarques sur l'analyse semi-classique de l'\'equation de Schr\"odinger non lin$\acute{e}$aire,}
 S$\acute{e}$minaire sur les $\acute{e}$quations aux D$\acute{e}$riv$\acute{e}$es Partielles, 1992-1993, Exp. No. XIII, 13 pp., \'Ecole Polytech., Palaiseau, 1993.

\bibitem{Grenier98} E. Grenier,  Semiclassical limit of the nonlinear Schr\"odinger equation in small time, {\it Proc. Amer. Math. Soc.},
 {\bf 126} (1998),  523-530.

\bibitem{JP01} C. Josserand and Y.  Pomeau,  Nonlinear aspects of the theory of Bose-Einstein condensates,
{\it  Nonlinearity}, {\bf 14} (2001), R25-R62.

\bibitem{GN17} Y. Guo and T. Nguyen,  Prandtl boundary layer expansions of steady Navier-Stokes flows over a moving plate, {\it Ann. PDE}, {\bf 3} (2017),  Art. 10, 58 pp.

\bibitem{LL85} L. D. Landau and E. M. Lifschtiz, {\it Lehrbuch der Theoretischen Physik III. Quantenmechanik}, Academie, Berlin, 1985.

\bibitem{lz3} F. Lin and P. Zhang, On the semiclassical limit of Gross-Pitaevski
equations in an exterior domain, {\it   Arch. Ration. Mech. Anal.}, {\bf 179} (2005), 79-107.

\bibitem{LP93} P.~L. Lions and T. Paul,
Sur les mesures de Wigner,
{\it Rev. Mat. Iberoamericana}, {\bf 9} (1993),  553-618.

\bibitem{OS99} O.~ A. Oleinik and V.~ N. Samokhin, {\it Mathematical models in boundary layer theory.}   Applied Mathematics and Mathematical Computation, {\bf 15}. Chapman $\&$ Hall/CRC, Boca Raton, FL, 1999.



\bibitem{PNB2005} C. T. Pham, C. Nore and M. E. Brachet, Boundary layers and emitted excitations in nonlinear Schr\"{o}dinger superflow past a disk, {\it Phys. D}, {\bf  210} (2005), 203-226.

\bibitem{Serfaty17} S. Serfaty,  Mean field limits of the Gross-Pitaevskii and parabolic Ginzburg-Landau equations,
 {\it J. Amer. Math. Soc.}, {\bf 30} (2017),  713-768.

\bibitem{Z1} P. Zhang, Wigner measure and the semi-classical limit of
Schr\"odinger-Poisson equation, {\it  SIAM J. Math. Anal.},
{\bf 34} (2002), 700-718.

\bibitem{Z2} P. Zhang, On the semiclassical limit of nonlinear Schr\"odinger
equations (II), {\it  J. Partial Differential Equations}, {\bf 15}
(2002),  83--96.

\bibitem{Z08} P. Zhang, {\it Wigner Measure and Semiclassical
Limit of Nonlinear Schr\"odinger equations},  Courant Lecture
Notes in Mathematics, {\bf 17.} Courant Institute of Mathematical
Sciences, New York; American Mathematical Society, Providence, RI,
2008.

\end{thebibliography}
\end{document}